\documentclass[11pt,letterpaper]{amsart}
\usepackage[margin=1.25in]{geometry}
\usepackage[T1]{fontenc}
\usepackage[utf8]{inputenc}
\usepackage{lmodern}
\usepackage{microtype}
\usepackage{tikz}
\usetikzlibrary{arrows.meta,positioning,fit,calc}
\usepackage{booktabs,longtable,array}
\usepackage{amsmath,amssymb,amsthm,mathtools,mathrsfs,bm}
\usepackage{mathtools}
\mathtoolsset{showonlyrefs=true}
\numberwithin{equation}{section}

\usepackage{enumitem}
\usepackage[hidelinks,pdfusetitle]{hyperref}

\makeatletter
\def\@tocline#1#2#3#4#5#6#7{\relax
  \ifnum #1>\c@tocdepth 
  \else
    \par \addpenalty\@secpenalty\addvspace{#2}%
    \begingroup \hyphenpenalty\@M
    \@ifempty{#4}{
      \@tempdima\csname r@tocindent\number#1\endcsname\relax
    }{
      \@tempdima#4\relax
    }
    \parindent\z@ \leftskip#3\relax \advance\leftskip\@tempdima\relax
    \rightskip\@pnumwidth plus4em \parfillskip-\@pnumwidth
    #5\leavevmode\hskip-\@tempdima
      \ifcase #1
       \or\or \hskip 2.5em \or \hskip 2em \else \hskip 3em \fi
      #6\nobreak\relax
    \hfill\hbox to\@pnumwidth{\@tocpagenum{#7}}\par
    \nobreak
    \endgroup
  \fi}
\makeatother

\setlist[enumerate]{itemsep=0.25em,topsep=0.5em}
\setlist[description]{itemsep=0.35em,topsep=0.5em}

\newtheorem{theorem}{Theorem}[section]
\newtheorem{proposition}[theorem]{Proposition}
\newtheorem{lemma}[theorem]{Lemma}
\newtheorem{corollary}[theorem]{Corollary}

\theoremstyle{definition}
\newtheorem{definition}[theorem]{Definition}

\theoremstyle{remark}
\newtheorem{remark}[theorem]{Remark}

\newcommand{\N}{\mathbb N}
\newcommand{\Nzero}{\mathbb N_0}
\newcommand{\Z}{\mathbb Z}
\newcommand{\R}{\mathbb R}
\newcommand{\C}{\mathbb C}
\newcommand{\T}{\mathbb T}
\newcommand{\HDir}{\mathscr H}
\newcommand{\ZN}{\mathbb Z_N}
\newcommand{\Bohr}{\mathcal B}

\newcommand{\E}{\mathbb E}
\newcommand{\Prob}{\mathbb P}
\newcommand{\e}{\mathrm e}
\newcommand{\dd}{\,\mathrm d}
\newcommand{\one}{\mathbf 1}
\newcommand{\Ree}{\operatorname{Re}}
\newcommand{\Imm}{\operatorname{Im}}
\newcommand{\Var}{\operatorname{Var}}
\newcommand{\Cov}{\operatorname{Cov}}
\newcommand{\supp}{\operatorname{supp}}
\newcommand{\dist}{\operatorname{dist}}

\newcommand{\abs}[1]{\left|#1\right|}
\newcommand{\norm}[1]{\left\lVert#1\right\rVert}
\newcommand{\ip}[2]{\left\langle#1,#2\right\rangle}

\newcommand{\boldparagraph}[1]{\par\addvspace{0.5em}
  \paragraph{\textbf{#1}}
}

\title[The local embedding problem]
{The Local Embedding Problem for Hardy Spaces of Dirichlet Series}

\author{Bonan Chen}
\address{
School of Mathematical Sciences,
Soochow University,
Suzhou 215006, P. R. China
}
\email{bnchen@suda.edu.cn}

\author{Xiang Fang}
\address{
Department of Applied Mathematics,
National Yang Ming Chiao Tung University,
Hsinchu, Taiwan
}
\email{xfang@nycu.edu.tw}

\author{Feng Guo}
\address{
School of Mathematics, South China University of Technology, Guangzhou 510640, P. R. China
}
\email{70207994@nuaa.edu.cn}

\author{Shengzhao Hou}
\address{
School of Mathematical Sciences,
Soochow University,
Suzhou 215006, P. R. China
}
\email{shou@suda.edu.cn}

\author{Yizhou Shao}
\address{
School of Mathematical Sciences,
Soochow University,
Suzhou 215006, P. R. China
}
\email{20244207001@stu.suda.edu.cn}

\author{Qi Zhou}
\address{
School of Mathematical Sciences,
Soochow University,
Suzhou 215006, P. R. China
}
\email{zhouqi@suda.edu.cn}
\date{}

\keywords{Hardy spaces of Dirichlet series, local embedding problem,
Gaussian replacement, finite-cyclic square functions,
log-correlated Gaussian fields, branching random walk}
\subjclass[2020]{Primary 30B50; Secondary 30H10, 42B25, 60G15}

\hypersetup{
  pdftitle={The Local Embedding Problem for Hardy Spaces of Dirichlet Series},
  pdfauthor={Bonan Chen, Xiang Fang, Feng Guo, Shengzhao Hou, Yizhou Shao, Qi Zhou}
}
\begin{document}

\begin{abstract}
We solve the local embedding problem for Hardy spaces of Dirichlet series, which is a dimension-free trace problem asking whether the global \(\HDir^p\)-norm controls local \(L^p\)-mass on the critical line \(\Ree s=1/2\).
More precisely, for every \(2<p<\infty\), there exists a constant
\(C_p<\infty\) such that every Dirichlet polynomial \(P\) satisfies
\[
 \sup_{\theta\in\R}
 \int_{\theta}^{\theta+1}
 \abs{P\!\left(\frac12+it\right)}^p\dd t
 \le
 C_p\norm{P}_{\HDir^p}^{p},
\]
with \(C_p\) independent of the number and choice of prime variables on which \(P\) depends.
Before the present work, the embedding was known at \(p=2\) and, by taking integer powers, at the even exponents \(p=2k\); it had been conjectured that these exhaust the finite positive cases above \(2\).
Together with the known failure for \(0<p<2\), our theorem gives the sharp finite-exponent classification: the local embedding property holds exactly for \(p\ge2\).
Thus,  the true threshold is \(p=2\), rather than even integrality.

The proof passes to the dual exponent
\(q=p/(p-1)\in(1,2)\), where an exact frequency decomposition isolates
a single resonant Euler-product term.
A covariance-preserving replacement of the shared prime factors reduces the resulting fractional-moment estimate to a log-correlated Gaussian field, and a critical branching-random-walk bound supplies the required multiscale decay.
A finite-cyclic square-function estimate assembles the resonant scales, and Hardy-quotient duality converts the resulting vector-valued bound into the critical-line trace.
For \(1\le p<\infty\), known equivalences give the same sharp threshold in several classical problems, including the conformally invariant half-plane embedding, the reverse local Carleson-measure transfer, and boundedness of all characteristic-zero Gordon--Hedenmalm composition operators.
\end{abstract}

\maketitle

\bigskip
\bigskip
\bigskip
\tableofcontents

\newpage 

\section{Introduction and main results}
\label{sec:introduction}

\subsection{Hardy spaces of Dirichlet series and the local embedding problem}
\label{subsec:local-embedding-problem}
\label{subsec:earlier-work}

Prime factorization gives ordinary Dirichlet series an exact infinite-dimensional holomorphic model.
If
\[
 P(s)=\sum_n a_n n^{-s}
 \qquad\text{and}\qquad
 n=\prod_r r^{\nu_r(n)},
\]
then the Bohr lift of \(P\) is the analytic polynomial
\[
 \Bohr P(z)=\sum_n a_n z^{\nu(n)}
\]
on the finite coordinate section of \(\T^\infty\) determined by the primes occurring in \(P\).
For every \(0<p<\infty\), one has
\[
 \norm{P}_{\HDir^p}^{p}
 =
 \int_{\T^\infty}\abs{\Bohr P(z)}^p\dd m(z)
 =
 \lim_{T\to\infty}\frac1{2T}
 \int_{-T}^{T}\abs{P(it)}^p\dd t,
\]
where only finitely many torus coordinates are active.
Thus,  the
\(\HDir^p\)-norm simultaneously records the global vertical mean of a
Dirichlet polynomial and the Hardy norm of its Bohr lift.
In this sense, the spaces \(\HDir^p\) form the natural Hardy scale in which the multiplicative structure of the integers and analytic function theory on the infinite polydisc meet; see \cite{Bayart02, HLS97}.

Write
\[
 \C_{1/2}=\{s\in\C:\Ree s>1/2\}.
\]
Point evaluations on \(\HDir^p\) are bounded in this half-plane
\cite{Bayart02}, so its boundary is the natural endpoint at which to ask for a trace.
More precisely, for \(\sigma>1/2\) the point-evaluation norm is
\(\zeta(2\sigma)^{1/p}\), which diverges as
\(\sigma\downarrow1/2\); thus the elementary interior estimate cannot simply be passed to the critical line
\cite[(4.1)]{BayartQueffelecSeip16}.
Under the Bohr correspondence,
\[
 P\!\left(\frac12+it\right)
 =
 \Bohr P\bigl((r^{-1/2-it})_r\bigr).
\]
Consequently, restriction to \(\Ree s=1/2\) amounts to sampling an infinite-dimensional Hardy function along the prime-flow orbit
\[
 t\longmapsto (r^{-1/2-it})_r
\]
in the infinite polydisc.
The ambient \(\HDir^p\)-norm averages over all prime coordinates, whereas the trace follows a single one-dimensional orbit.
The difficulty is therefore intrinsically short-time: the long-time mean defining the
\(\HDir^p\)-norm can exploit the asymptotic distribution of the prime flow,
but by itself gives no automatic control on a fixed unit segment.
The local embedding problem asks whether these unit-scale traces nevertheless admit a bound uniform along the entire prime-flow orbit.

In the original Dirichlet-series variables, the local embedding problem is therefore the uniform trace estimate
\begin{equation}
 \sup_{\theta\in\R}
 \int_{\theta}^{\theta+1}
 \abs{P\!\left(\frac12+it\right)}^p\dd t
 \le
 C_p\norm{P}_{\HDir^p}^{p},
 \label{eq:intro-local-embedding}
\end{equation}
for every Dirichlet polynomial \(P\), with \(C_p\) depending only on \(p\).
In particular, the bound must be uniform in the location of the interval and in the number and choice of active prime variables.
This dimension-free uniformity forms the central substantive content of the local embedding problem.

The modern Hilbert-space theory of Hardy spaces of Dirichlet series was initiated by Hedenmalm, Lindqvist, and Seip in a seminal work \cite{HLS97}, and Bayart introduced and developed the general scale \(\HDir^p\), \(0<p<\infty\),
\cite{Bayart02}.  The local embedding question was subsequently formulated explicitly by Saksman and Seip \cite[Section~3]{SaksmanSeip09} and was again recorded as Problem~2.1 in their list of open problems for Dirichlet series
\cite[Problem~2.1]{SaksmanSeip16}. 
Subsequent work revealed several equivalent formulations of the same problem.
Bayart and Brevig showed that the local embedding property is equivalent to the conformally invariant half-plane embedding and to boundedness of all characteristic-zero Gordon--Hedenmalm composition operators; in fact, boundedness of a single explicit canonical operator suffices.
Olsen and Saksman identified it with the reverse transfer of local Carleson measures and showed that this transfer may be tested on a distinguished class of atomic measures whose masses are proportional to their distance from the critical boundary.
For \(2\le p<\infty\), Saksman and Seip further showed that the local embedding problem is equivalent to a strengthened Carlson-type ergodic identity on the critical distinguished boundary, required to hold for every starting point.
See \cite{BayartBrevig19,OlsenSaksman12} and \cite[Sections~3--4]{SaksmanSeip09}.

The precise equivalence map, its exponent ranges, and its provenance are recorded in Subsection~\ref{subsec:main-results} and Section~\ref{sec:consequences}.
Against this background, Brevig, Ortega-Cerd{\`a}, and Seip described the local embedding problem as ``perhaps the most important open problem'' concerning the spaces
\(\HDir^p\) \cite{BrevigOrtegaCerdaSeip21}.

Before the present work, the known exponent range exhibited a striking parity pattern.
The estimate holds at \(p=2\), by the foundational Hilbert-space theory, and hence at every even exponent \(p=2k\) by applying the \(p=2\) case to \(P^k\).
On the other hand, it fails for every
\(0<p<2\), as follows from Harper's low-moment estimates for random multiplicative functions; see \cite{Harper20} and
\cite[Section~4, especially equation~(4.3)]{BrevigOrtegaCerdaSeip21}.
Thus,  the only unresolved finite exponents were the non-even values \(p>2\).
The available evidence even suggested that the even integers might be the only positive cases above \(2\)
\cite[p.~274]{QueffelecQueffelec21}; see also
\cite{BrevigOrtegaCerdaSeip21}.  
Accordingly, both the known results and the prevailing conjectural picture pointed to parity, rather than the threshold \(p=2\), as the governing principle.

The known cases do not determine the missing range by standard interpolation arguments.
The power method is intrinsically algebraic and has no analogue for a nonintegral exponent.
Direct comparison of Hardy norms also loses the required uniformity in the number of active primes.
For example, if
\[
 F_d(z)=\prod_{j=1}^{d}(1+z_j)
\]
and \(0<p_0<p_1<\infty\), then
\[
 \frac{\norm{F_d}_{L^{p_1}(\T^d)}}
      {\norm{F_d}_{L^{p_0}(\T^d)}}
 =
 \left(
  \frac{\norm{1+z}_{L^{p_1}(\T)}}
       {\norm{1+z}_{L^{p_0}(\T)}}
 \right)^d,
\]
and the one-variable ratio is strictly larger than one.
Thus,  even a basic norm comparison can deteriorate exponentially with the prime dimension.
The relevant interpolation theory on \(\T^\infty\) is itself nonclassical, and the available interpolation results do not yield the local trace estimate in the missing range
\cite{BayartMastylo19,BrevigOrtegaCerdaSeip21}.  
Resolving the non-even range therefore requires a direct dimension-free argument that preserves the joint arithmetic structure: it must remain uniform in the number of active prime variables while retaining the correlations induced by the single time parameter \(t\).

\subsection{Main theorem, interpretation, and equivalent formulations}
\label{subsec:main-results}
\label{subsec:intro-consequences}

Our main result supplies the missing dimension-free estimate throughout the non-even range.

\begin{theorem}
\label{thm:local-embedding}
For every \(2<p<\infty\), there exists a constant \(C_p<\infty\) such that
\[
 \sup_{\theta\in\R}
 \int_{\theta}^{\theta+1}
 \abs{P\!\left(\frac12+it\right)}^p\dd t
 \le
 C_p\norm{P}_{\HDir^p}^{p}
\]
for every Dirichlet polynomial \(P\). 
\end{theorem}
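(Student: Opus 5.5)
We follow the route indicated in the abstract and argue by duality. Fix \(2<p<\infty\) and set \(q=p/(p-1)\in(1,2)\). By translation invariance of \(\norm{\cdot}_{\HDir^p}\) (replace \(a_n\) by \(a_n n^{-i\theta}\)) it suffices to take \(\theta=0\). The estimate of Theorem~\ref{thm:local-embedding} is then equivalent to the boundedness of the adjoint of the trace map
\[
 \HDir^p\ni P\longmapsto P\!\left(\tfrac12+it\right)\big|_{[0,1]}\in L^p[0,1].
\]
For \(g\in L^q[0,1]\) this adjoint sends \(g\) to the Dirichlet series
\[
 Tg(s)=\sum_n \hat g(\log n)\, n^{-1/2}\, n^{-s},
 \qquad
 \hat g(\xi)=\int_0^1 g(t)e^{-it\xi}\dd t ,
\]
viewed as an element of the dual of \(\HDir^p\). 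That dual is the Hardy quotient \(L^q(\T^\infty)/\overline{H^q_0}\), which for \(q\ne2\) is not \(\HDir^q\). So the target bound is
\[
 \operatorname{dist}_{L^q(\T^\infty)}\bigl(\Bohr Tg,\ \overline{H^q_0}\bigr)\le C\norm{g}_{q}.
\]
Rather than bounding \(\norm{\Bohr Tg}_{L^q}\) directly, which would mean solving the problem for \(q<2\) where it is false, we will produce an explicit \(L^q(\T^\infty)\) function whose analytic projection agrees with \(\Bohr Tg\). We will build it from vector-valued pieces and use the finite-cyclic square function to control it.

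\textbf{Step 1: frequency decomposition.} We decompose \(g\) dyadically on the frequency side, \(g=\sum_k g_k\) with \(\hat g_k\) essentially supported where \(\log n\sim 2^k\). Each piece is a product over prime blocks \(\prod_{r\in I_j}(1-r^{-1/2-it}z_r)^{-1}\), truncated at scale \(e^{2^k}\). We expect an exact identity that splits each scale into a single resonant Euler-product term, namely the partial product over primes up to \(e^{2^k}\) multiplied by the localized piece \(g_k\), plus a remainder. The remainder should lie in \(\overline{H^q_0}\) after a harmless modification, or be bounded trivially in \(L^2\).

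\textbf{Step 2: fractional moments.} The resonant term at scale \(k\) requires a bound of the shape
\[
 \E\,\Bigl|\int_0^1 g_k(t)\prod_{r\le e^{2^k}}\bigl(1-r^{-1/2-it}z_r\bigr)^{-1}\dd t\Bigr|^{q}
 \lesssim 2^{-\delta k}\norm{g_k}_q^q
\]
with \(q<2\), or at least a summable decay after the square function is applied. Here we replace the shared prime factors by a Gaussian field with the same covariance. The logarithm of the Euler product is \(\sum_r r^{-1/2-it}z_r\) plus higher-order terms with absolutely convergent contributions. Its covariance in \(t\) is \(\sum_r r^{-1}\cos((t-t')\log r)\approx\log\min(2^k,1/|t-t'|)\), so it is log-correlated. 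The \(q\)-th moment of \(\int g_k e^{X}\), with \(X\) a critical-variance field (since \(|\cdot|\) of the product gives exactly critical multiplicative chaos at exponent 2), is then controlled by a branching-random-walk bound: fractional moments \(q<1\), or between 1 and 2, of critical chaos over an interval decay like a power of the depth, in the spirit of Harper's and Saksman--Webb's estimates. This is the source of the multiscale decay.

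\textbf{Step 3: assembly.} We combine the scales with a square-function estimate on \(\T^\infty\) adapted to a finite cyclic grading: grade by \(\lfloor \log_2\log n\rfloor \bmod M\). The pieces are then martingale-like differences, and \(\norm{\sum_k F_k}_q\lesssim\norm{(\sum_k|F_k|^2)^{1/2}}_q\) holds for \(q<2\) without dimension loss. Summing the Step 2 decay over \(k\) gives the quotient-norm bound, and duality gives the theorem.

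\textbf{Main obstacle.} The main obstacle is Step 2. The Gaussian replacement must preserve covariance exactly across shared primes, with errors uniform in the number of primes. The critical branching-random-walk estimate must then deliver genuine decay rather than mere boundedness at criticality. We would first try a comparison inequality that makes the \(q\)-th moment concave in the covariance, together with a barrier argument that truncates the walk above its running maximum.
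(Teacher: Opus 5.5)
Your Step~2 estimates the wrong object, and the inequality stated there is false. The resonant term you propose, \(\int_0^1 g_k(t)\prod_{r\le e^{2^k}}(1-r^{-1/2-it}z_r)^{-1}\,dt\), is built from the analytic Euler product \(S_b\). For that product \(\norm{S_b(t)}_{L^q}\asymp(\log b)^{q/4}\), whereas a single critical point evaluation has dual norm only \(M_b^{1/p}\asymp(\log b)^{1-1/q}\). Since \(q/4-(1-1/q)=(2-q)^2/(4q)>0\), the \(L^q\)-norm of the analytic product already exceeds the true dual norm for one evaluation. Now take a wave packet \(g_k\) of width \(2^{-k}\) at frequency \(2^k\). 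The primes \(r\le e^{\epsilon 2^k}\) are essentially frozen on its support, so \(\E\abs{\int g_kS}^q\approx 2^{-kq}2^{kq^2/4}\), while \(\norm{g_k}_q^q\asymp 2^{-k}\). The ratio grows like \(2^{k(1-q/2)^2}\), so there is neither decay \(2^{-\delta k}\) nor even a uniform bound. The direct \(L^q\) estimate you rightly rejected for \(\Bohr Tg\) therefore re-enters scale by scale.

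The missing idea is the paper's choice of quotient representative, \(f_b(t)=M_b^{-\alpha}\prod_{r\le b}(1-r^{-1/2}\e^{it\log r}\zeta_r)^{-1}\prod_{r\le b}(1-r^{-1/2}\e^{-it\log r}\overline{\zeta_r})^{-\alpha}\) with \(\alpha=1-2/p\). It has the same Fourier coefficients on \(\N_0^d\) as the analytic product, because each one-prime factor satisfies \(\widehat{\eta_x}(k)=x^{k/2}\) for \(k\ge0\). But \(\abs{f_b}^q=M_b^{-\alpha q}\abs{S_b}^2\), hence \(\norm{f_b(t)}_q=M_b^{1/p}\), which is sharp. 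Note that \(f_b-S_b\) has mixed-sign Fourier support. This works only because the annihilator of \(H^p(\T^d)\) is \(\{G:\widehat G(\nu)=0 \text{ for all } \nu\in\N_0^d\}\), which for \(d\ge2\) is much larger than the \(\overline{H^q_0}\) you quotient by. The paper uses mixed-sign characters again to discard \(\zeta_rP^{r^-}_{(-\infty,0)}A_r(c)\) and reach the projected vector \(\mathcal T_yc\).

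Even with the right representative, the quantity to control is not a \(q\)-th moment of a complex integral, and the decay is not indexed by frequency alone. In the paper, the ordered expansion \(f_y=1+\sum_r f_{r^-}(\eta_{1/r}-1)\) together with Burkholder--Gundy reduces the quotient norm to \(\norm{\mathcal T_yc}_{L^q(\ell^2)}\). After the five-piece frequency split, the resonant piece is converted pathwise, by the prime-sampling (large-sieve) inequality, into the \(s=q/2\) moment of the positive sum \(\sum_h\frac N{w_h}M_y^{-\delta}\sum_j\abs{z_h(j)}^2\abs{f_h(t_j)}^2\). Since \(\abs{f_h}^2=M^{-2\alpha}\abs{S}^{2\beta}\) with \(s\beta=1\), this is how the critical functional \(Y_n(\beta)=(\sum_{\abs v=n}\e^{\beta X_v})^{1/\beta}\) of Theorem~\ref{thm:critical-tree} enters.

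The decay obtained, \(d_n=\log^2(n+2)/\sqrt{n+1}\), is only polynomial. It is measured in the depth \(n=h-h_I\), where \(2^{-h_I}\) is the length of a tent's owner interval. Consequently a per-scale bound \(\E\abs{F_k}^q\lesssim d_k\norm{g_k}_q^q\), as your Step~3 requires, cannot hold. A wave packet of width \(2^{-k}\) at frequency \(2^k\) has depth zero. Testing against \(P(s)=\prod_{r\le e^{2^k}}(1-r^{-s-\overline{s_0}})^{-2/p}\), with \(s_0=\frac12+2^{-k}+it_*\) centred at the packet, shows its functional has dual norm \(\gtrsim\norm{g_k}_q\). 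Yet pieces spread over the whole interval do need decay, because for \(q<2\) the sum \(\sum_k\norm{g_k}_q^q\) is not controlled by \(\norm g_q^q\).

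The paper resolves this with several tools:
\begin{enumerate}
\item the tent allocation on \(\ZN\); the finite cyclic structure is the group of grid sites \(t_j=t_0+j/N\), not a grading of \(\lfloor\log_2\log n\rfloor\) mod \(M\);
\item one common shift, averaged by Haar invariance;
\item H\"older against \(\sum_nd_n^{1/(1-s)}<\infty\);
\item the cyclic conical and coefficient square-function estimates.
\end{enumerate}

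Finally, your Step~1 claim that the nonresonant remainder lies in the annihilator or is trivially bounded in \(L^2\) is unsupported. The high-frequency, nonpositive, positive-low and outside-shell pieces are genuine \(L^q(\ell^2)\) terms. The paper controls them through the finite Cauchy calculus with signed spectral gaps, Laplace representations, and the boundary-trace estimates.
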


Combining Theorem~\ref{thm:local-embedding} with the classical \(p=2\) case and the known failure below \(2\) gives the exact finite-exponent range.

\begin{corollary}
\label{cor:sharp-finite-range}
For \(0<p<\infty\), the local embedding inequality
\eqref{eq:intro-local-embedding} holds for every Dirichlet polynomial if and only if
\[
 p\ge2.
\]
\end{corollary}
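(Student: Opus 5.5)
The corollary follows by combining three inputs. Theorem~\ref{thm:local-embedding} covers \(2<p<\infty\), the classical Hilbert-space case covers \(p=2\), and the known failure covers \(0<p<2\). The plan is to split the range of \(p\) accordingly and treat each piece separately.

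\textbf{Sufficiency, \(p\ge2\).} For \(2<p<\infty\) the inequality \eqref{eq:intro-local-embedding} is exactly Theorem~\ref{thm:local-embedding}. For \(p=2\), write \(P(s)=\sum_{n\le N}a_nn^{-s}\) and translate in \(t\) so that \(\theta=0\); this is harmless because \(P(s+i\theta)\) has coefficients \(a_nn^{-i\theta}\), which have the same moduli and therefore the same \(\HDir^2\)-norm \(\bigl(\sum|a_n|^2\bigr)^{1/2}\). The required bound
\[
\int_0^1\abs{P\!\left(\tfrac12+it\right)}^2\dd t\le C\sum_n|a_n|^2
\]
is the classical local embedding of \(\HDir^2\). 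I would prove it by dominating \(\one_{[0,1]}\) by a nonnegative weight \(w\) whose Fourier transform \(\hat w\) is supported in \([-1,1]\). Expanding the square gives the bilinear form \(\sum_{m,n}a_m\overline{a_n}(mn)^{-1/2}\hat w(\log(m/n))\). Only pairs with \(|\log(m/n)|\le1\) contribute, and by Schur's test this form is bounded by \(\sum_n|a_n|^2\), since
\[
\sup_n \sum_{m:\,|\log m/n|\le1}(mn)^{-1/2}\lesssim 1.
\]
Alternatively one may simply cite \cite{HLS97}, as the introduction does. Uniformity in \(\theta\) is automatic from the translation remark.

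\textbf{Necessity, \(0<p<2\).} Here I would invoke the cited failure. Using Harper's low-moment estimate \cite{Harper20}, one constructs polynomials \(P_x(s)=\sum_{n\le x}f(n)n^{-s}\) for a suitable (Steinhaus-type) multiplicative \(f\), as in \cite[equation~(4.3)]{BrevigOrtegaCerdaSeip21}, such that the ratio of \(\int_0^1|P_x(\frac12+it)|^p\dd t\) to \(\norm{P_x}_{\HDir^p}^p\) tends to infinity as \(x\to\infty\). Heuristically, Harper's better-than-squareroot cancellation makes the \(\HDir^p\)-norm, which is the expectation over \(\T^\infty\), smaller by a factor of size \((\log\log x)^{-p/4}\) than the local integral, which sees a specific orbit. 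No constant \(C_p\) can then exist.

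The only real obstacle is the \(2<p<\infty\) case, and that is precisely the content of Theorem~\ref{thm:local-embedding}, which is assumed here. The remaining work is bookkeeping: checking that the cited counterexample is stated for every \(0<p<2\) (not just \(p=1\)) and for Dirichlet polynomials rather than infinite series. The latter is handled by truncation, since all quantities involved are finite sums.
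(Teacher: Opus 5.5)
Your proposal is correct and follows the paper's argument: Theorem~\ref{thm:local-embedding} for \(2<p<\infty\), the classical Hilbert-space bound for \(p=2\), and Harper's low-moment estimate \cite{Harper20}, as used in \cite[(4.3)]{BrevigOrtegaCerdaSeip21}, for \(0<p<2\); your Schur-test argument with a nonnegative majorant whose \(\widehat w\) is supported in \([-1,1]\) is a valid self-contained replacement for citing \cite{HLS97}. One wording point needs fixing: the counterexample polynomial must be deterministic, e.g.\ \(P_x(s)=\sum_{n\le x}n^{-s}\), for which \(\abs{P_x(\frac12+it)}\gtrsim x^{1/2}\) on \([0,1]\) for large \(x\), while \(\norm{P_x}_{\HDir^p}^p=\E\abs{\sum_{n\le x}f(n)}^p\asymp(x/\sqrt{\log\log x})^{p/2}\) for a Steinhaus \(f\). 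The Steinhaus randomness therefore enters only through the norm (a typical random choice of coefficients would not give a counterexample), and the ratio grows like \((\log\log x)^{p/4}\).
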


The place of the local embedding problem in the existing theory is illustrated by the following equivalence map.
Let
\(H_{\mathrm i}^p(\C_{1/2})\) be the conformally invariant half-plane Hardy space, let \(H^p(\C_{1/2})\) denote the classical half-plane Hardy space, and let \(\mathcal G_0\) be the characteristic-zero Gordon--Hedenmalm class.
For
\(1\le p<\infty\), one has
$$
\begin{aligned}
 \mathrm{LEP}_p
 &\Longleftrightarrow
 \left[\HDir^p\hookrightarrow H_{\mathrm i}^p(\C_{1/2})\right]
 \\
 &\Longleftrightarrow
 \left[
 \begin{array}{c}
  \text{every local }H^p(\C_{1/2})\text{-Carleson measure}\\[-1mm]
  \text{is a Carleson measure for }\HDir^p
 \end{array}
 \right]
 \\
 &\Longleftrightarrow
 \left[
 \begin{array}{c}
  \text{the Olsen--Saksman atomic testing condition holds}
 \end{array}
 \right]
 \\
 &\Longleftrightarrow
 \left[
 \begin{array}{c}
  \mathcal C_\varphi\colon\HDir^p\to\HDir^p\text{ is bounded for every }\varphi\in\mathcal G_0
 \end{array}
 \right]
 \\
 &\Longleftrightarrow
 \left[\begin{array}{c}
  \mathcal C_\psi\colon\HDir^p\to\HDir^p\text{ is bounded} \end{array}
 \right]
 \\
 &\Longleftrightarrow
 \left[
 \begin{array}{c}
  \text{the strengthened \(p\)-Carlson critical-boundary}\\[-1mm]
  \text{ergodic identity holds}\quad (2\le p<\infty)
 \end{array}
 \right]
 .
\end{aligned}
$$
Here \(\mathcal C_\varphi f=f\circ\varphi\), and the canonical characteristic-zero symbol is
$$
 \psi(s)=\frac12+\frac{1-2^{-s}}{1+2^{-s}}.
$$
The half-plane and characteristic-zero composition-operator equivalences are due to Bayart and Brevig
\cite[Section~2.1 and Theorem~3]{BayartBrevig19}.  
Olsen and Saksman proved the local Carleson-measure equivalence and showed that the reverse transfer may be tested on the special atomic measures
$$
 \mu_S
 =
 \sum_n(2\sigma_n-1)\,\delta_{\sigma_n+it_n},
 \qquad \sigma_n>\frac12,
$$
subject to the corresponding classical half-plane Carleson condition
\cite[Theorem~4]{OlsenSaksman12}.
Combining the characteristic-zero equivalence with Bayart's positive-characteristic theory yields the full Gordon--Hedenmalm boundedness characterization.
For \(2\le p<\infty\), Saksman and Seip showed that the local embedding problem is further equivalent to their \(p\)-Carlson identity
\cite[Section~3, equation~(17), and Section~4,
equation~(20)]{SaksmanSeip09}.
Its strengthened ergodic character is that the corresponding time-average identity is required to hold for every starting point on the critical distinguished boundary.

These statements are recorded in Proposition~\ref{prop:classical-equivalence-map}.
Consequently, our main theorem determines the sharp finite-exponent range throughout this classical equivalence network: within the stated ranges, these properties hold exactly for \(p\ge2\).

In addition to these literature equivalences, we prove another finite-band reformulation at the reciprocal-bandwidth scale.
If
$$
 P(s)=\sum_{1\le n<X}a_n n^{-s},
 \qquad L_X=\log X,
$$
then \(t\mapsto P(1/2+it)\) has frequencies in
\([-L_X,0]\), so \(L_X^{-1}\) is the natural sampling scale.  
For fixed
\(0<c\le1\), the critical atomic sampling property
\(\mathrm{CAS}_{p,c}\) is the uniform estimate
$$
 \frac1{L_X}
 \sum_{\tau\in\mathcal T}
 \abs{P\!\left(\frac12+i\tau\right)}^p
 \lesssim_{p,c}
 \norm P_{\HDir^p}^{p}
$$
for every \(c/L_X\)-separated set \(\mathcal T\) in a unit interval.
Its finite-dimensional dual formulation
\(\mathrm{DCP}_{p,c}\) controls linear combinations
$$
 L_X^{-1/p}
\sum_{\tau\in\mathcal T}
b_\tau\,\operatorname {ev}_{1/2+i\tau}
$$
in the corresponding dual norm.
Theorem~\ref{thm:critical-sampling-bridge} proves, for every \(1<p<\infty\) and every fixed \(0<c\le1\),
$$
 \mathrm{LEP}_p
 \quad\Longleftrightarrow\quad
 \mathrm{CAS}_{p,c}
 \quad\Longleftrightarrow\quad
 \mathrm{DCP}_{p,c}.
$$
Here \(\mathrm{LEP}_p\) implies critical atomic sampling for every fixed \(c\), while sampling for one fixed \(c>0\) already recovers
\(\mathrm{LEP}_p\); the equivalence between \(\mathrm{CAS}\) and
\(\mathrm{DCP}\) is finite-dimensional duality.  
Thus,  the local embedding problem is equivalently a uniform sampling problem for as many as
\(O(\log X)\) separated critical-line evaluations, or the corresponding adjoint estimate.

This formulation also has a natural relation to the Olsen--Saksman atomic geometry.
At the flat depth \(d_{\mathrm{flat}}/L_X\), an Olsen--Saksman atom has mass \(2d_{\mathrm{flat}}/L_X\), while the critical sampling separation is of order \(L_X^{-1}\).
Hence
$$
 \text{depth}
 \asymp
 \text{atomic mass}
 \asymp
 \text{sampling separation}
 \asymp
 (\log X)^{-1}.
$$
Thus,  the two formulations share the same finite-band critical geometry; see Remark~\ref{rem:critical-sampling-geometry}.

\subsection{Proof structure for Theorem~\ref{thm:local-embedding} and main ideas}
\label{subsec:proof-overview}

The proof is built around a single dimension-free estimate for normalized Euler products: the projected-vector estimate, Theorem~\ref{thm:corrected-column}.
After separating the nonresonant contributions from the genuinely critical one, the latter is controlled by a combination of probabilistic and harmonic-analytic arguments, and the resulting estimate is transferred back to the original Dirichlet-series problem.
This separation is essential: the nonresonant part is treated deterministically, while specialized estimates concern only the resonant contribution.
Throughout the argument, all sites and scales retain the same prime coordinates, so dimension-free uniformity is obtained without discarding the correlations created by the common time parameter.
Figure~\ref{fig:proof-architecture} records these main dependencies.

A decisive feature of the argument is the passage to the dual, subquadratic range.
For fixed \(p>2\), set
\[
 q=\frac{p}{p-1}\in(1,2),
 \qquad
 s=\frac q2\in\left(\frac12,1\right).
\]
The inequality \(s<1\) supplies the concavity used in the positive-sum replacement and moment arguments.
The inequality \(s>1/2\) gives the summability of the critical multiscale decay.
These two requirements hold simultaneously exactly when \(2<p<\infty\).

\medskip
\noindent\emph{From critical-line samples to the projected vector.}
Take the finite model
\(N=2^H\), \(y=\e^N\), and
\(t_j=t_0+j/N\), \(0\le j<N\), and write
\(M_y=\prod_{r\le y}(1-r^{-1})^{-1}\).
Let \(g_{y,t}=M_y^{-1/p}f_y(t)\), where \(f_y(t)\) is the normalized Euler product defined in Subsection~\ref{subsec:euler-sources}.
Its nonnegative multi-index coefficients are chosen so that, whenever the prime support of \(P\) lies below \(y\),
\begin{equation}
 \ip{g_{y,t}}{\Bohr P}
 =
 M_y^{-1/p}
 \overline{P\!\left(\frac12+it\right)}.
 \label{eq:intro-source-pairing}
\end{equation}
Thus, an estimate for \(\sum_{j<N}c_jg_{y,t_j}\) controls the corresponding vector of critical-line samples.

The ordered Euler expansion separates a constant term, the positive first harmonic of each new prime, and higher harmonics.
The constant term and the higher harmonics are controlled directly.
The smaller-prime coefficient of the first harmonic is the projected vector \(\mathcal T_yc\) introduced in Section~\ref{sec:corrected-column}; the main intermediate theorem is
\begin{equation}
 \norm{\mathcal T_yc}_{L^q(\ell^2(\{r\le y\}))}
 \le
 C_p\norm c_{\ell^q(\ZN)}.
 \label{eq:intro-corrected-column}
\end{equation}
Theorem~\ref{thm:corrected-column} proves
\eqref{eq:intro-corrected-column} uniformly in the cyclic size, the grid
shift, the prime cutoff and dimension, and the coefficient support.

\begin{figure}[t]
\centering
\begin{tikzpicture}[
  scale=0.94,
  transform shape,
  >={Stealth[length=2mm]},
  flow/.style={
    ->,
    line width=0.55pt
  },
  box/.style={
    draw,
    rounded corners=1.5pt,
    align=center,
    inner xsep=4pt,
    inner ysep=3.5pt,
    text width=3.2cm,
    minimum height=0.82cm,
    font=\footnotesize
  },
  widebox/.style={
    box,
    text width=4cm
  },
  keybox/.style={
    box,
    text width=5cm,
    minimum height=1.02cm,
    very thick
  },
  finalbox/.style={
    box,
    text width=5cm,
    very thick,
    double,
    double distance=0.5pt
  },
  group/.style={
    draw,
    dashed,
    rounded corners=2pt,
    inner sep=6pt
  },
  grouplabel/.style={
    font=\footnotesize\bfseries,
    fill=white,
    inner xsep=2.5pt,
    inner ysep=0.8pt
  }
]

\node[widebox] (s1) at (-0.3,0.25)
  {Frequency decomposition\\[-1pt]
   {\scriptsize Prop.~\ref{prop:exact-frequency-partition}}};

\node[box] (s2) at (-2.1,-1.55)
  {Nonresonant and\\
   high-frequency terms};

\node[box] (s3) at (1.5,-1.55)
  {Resonant term\\[-1pt]
   {\scriptsize Prop.~\ref{prop:resonant-column}}};

\node[keybox] (s4) at (-0.3,-3.55)
  {\textbf{Projected-vector estimate}\\
  {\scriptsize Thm.~\ref{thm:corrected-column}}};

\coordinate (sgheadL) at (-3.65,1.30);
\coordinate (sgheadR) at (3.05,1.30);

\node[
  group,
  fit=(s1)(s2)(s3)(s4)(sgheadL)(sgheadR)
] (sg) {};

\node[
  grouplabel,
  anchor=south west
] at ($(sg.north west)+(1.40,0.05)$)
  {Frequency synthesis (Sec.~\ref{sec:corrected-column})};

\draw[flow] (s1.south) -- (s2.north);
\draw[flow] (s1.south) -- (s3.north);
\draw[flow] (s2.south) -- (s4.north);
\draw[flow] (s3.south) -- (s4.north);

\node[box] (h1) at (-6.25,1.10)
  {Cyclic conical and\\
   frame estimates\\[-1pt]
   {\scriptsize Thm.~\ref{thm:cyclic-conical-square-function}}};

\node[box] (h2) at (-6.25,-0.80)
  {Multiplier and\\
   spectral-tail estimates};

\coordinate (hgheadL) at (-7.85,1.82);
\coordinate (hgheadR) at (-4.65,1.82);

\node[
  group,
  fit=(h1)(h2)(hgheadL)(hgheadR)
] (hg) {};

\node[
  grouplabel,
  anchor=south west
] at ($(hg.north west)+(-0.20,0.05)$)
  {Harmonic and sampling estimates (Secs.~\ref{sec:cyclic-analysis}--\ref{sec:frame-and-tails})
  };

\draw[flow]
  (h1.east)
  -- (-3.30,1.10)
  -- (-3.30,-0.45)
  -- (1.5,-0.45)
  -- (s3.north);

\draw[flow] (h2.east) -- (s2.west);

\node[box] (p1) at (5.95,2.60)
  {Shared-prime\\
   replacement\\[-1pt]
   {\scriptsize Prop.~\ref{prop:one-prime-replacement}}};

\node[box] (p2) at (5.95,1.00)
  {Critical branching-\\
   random-walk estimate\\[-1pt]
   {\scriptsize Thm.~\ref{thm:critical-tree}}};

\node[box] (p3) at (5.95,-0.60)
  {Localized Gaussian\\
   moment\\[-1pt]
   {\scriptsize Prop.~\ref{prop:localized-gaussian-occupation}}};

\node[box] (p4) at (5.95,-2.2)
  {Localized Euler\\
   moment\\[-1pt]
   {\scriptsize Prop.~\ref{prop:euler-localized-occupation}}};

\node[box] (p5) at (5.95,-3.80)
  {Common-shift\\
   moment bound\\[-1pt]
   {\scriptsize Prop.~\ref{prop:common-shift-occupation}}};

\coordinate (pgheadL) at (4.55,2.73);
\coordinate (pgheadR) at (7.75,2.73);

\node[
  group,
  fit=(p1)(p2)(p3)(p4)(p5)(pgheadL)(pgheadR)
] (pg) {};

\node[
  grouplabel,
  anchor=south west
] at ($(pg.north west)+(-0.20,0.05)$)
  {Probabilistic estimates (Secs.~\ref{sec:replacement}--\ref{sec:euler-occupation})};

\draw[flow] (p2.south) -- (p3.north);
\draw[flow] (p3.south) -- (p4.north);

\draw[flow]
  (p1.east)
  -- ++(0.35,0)
  |- (p4.east);

\draw[flow] (p4.south) -- (p5.north);

\draw[flow]
  (p5.west)
  -- (3.85,-3.80)
  -- (3.85,-1.55)
  -- (s3.east);

\node[widebox] (q1) at (-0.3,-6.05)
  {Euler quotient and\\
   martingale reduction\\[-1pt]
   {\scriptsize Prop.~\ref{prop:quotient-representation}}};

\node[widebox] (q2) at (-0.3,-7.55)
  {Discrete local sampling\\[-1pt]
   {\scriptsize Prop.~\ref{prop:discrete-local-sampling}}};

\coordinate (qgheadL) at (-2.00,-5.12);
\coordinate (qgheadR) at (1.40,-5.12);

\node[
  group,
  fit=(q1)(q2)(qgheadL)(qgheadR)
] (qg) {};

\node[
  grouplabel,
  anchor=south west
] at ($(qg.north west)+(-0.05,0.05)$)
  {Return to the trace (Sec.~\ref{sec:main-proof}) };

\draw[flow] (s4.south) -- (q1.north);
\draw[flow] (q1.south) -- (q2.north);

\node[keybox] (f1) at (-0.3,-9.10)
  {\textbf{Local embedding theorem}\\[-1pt]
   {\scriptsize Thm.~\ref{thm:local-embedding}}};

\draw[flow] (q2.south) -- (f1.north);

\end{tikzpicture}

\caption{Proof structure for
Theorem~\ref{thm:local-embedding}.  
}
\label{fig:proof-architecture}
\end{figure}

\medskip

\noindent\emph{The exact frequency split.}
Proposition~\ref{prop:exact-frequency-partition} decomposes the projected vector into a high-frequency term, three nonresonant low-frequency terms, and one resonant term \(\mathcal R_y\Pi_+c\).
Schematically, it reduces
\eqref{eq:intro-corrected-column} to
\[
\norm{\mathcal R_y\Pi_+c}_{L^q(\ell^2)}\lesssim_p
\norm c_{\ell^q}.
\]
On every nonresonant piece, the shifted Euler-product and coefficient frequencies are separated by a signed gap.
This separation allows the nonresonant and high-frequency contributions to be controlled uniformly by frequency and spectral-tail estimates from Sections~\ref{sec:cyclic-analysis} and~\ref{sec:frame-and-tails}.
These estimates are carried out on the common Euler product before the smaller-prime components are recovered by conditional expectation, so the shared prime structure is preserved.

No such frequency separation remains in the resonant term, and a different argument is required.
Here the moment estimates and the cyclic conical square function control the nearby scales, while a deformation of the common Euler product supplies rapid decay across widely separated scales.
Only here are shared-prime and critical-tree estimates used.

\medskip
\noindent\emph{Moment estimates and the critical tree.}
The resonant frame estimate leads to a fractional moment of a jointly sampled Euler-weighted sum.
Because all sites share the same prime variables, a sitewise independent replacement would destroy the relevant covariance.
Proposition~\ref{prop:one-prime-replacement} instead replaces each prime factor by one covariance-matched Gaussian variable common to all sites; the prime-by-prime losses are summable, yielding uniformity in both the number of sites and the prime cutoff.

After localization, the resulting Gaussian prime field separates into a coarse component and a log-correlated fine component.
The coarse field is removed at bounded cost, while the fine field is compared with the critical branching random walk of Section~\ref{sec:critical-gaussian}.
The \(1/(1-s)\)-power of the decay in Theorem~\ref{thm:critical-tree} is summable precisely when \(s>1/2\), which, together with \(s<1\), gives
\(2<p<\infty\).
Covariance comparison then transfers this control back to the Gaussian moments, and the shared-prime replacement returns it to the Euler model.
Combined with the finite-cyclic conical estimate, this yields the resonant term bound, Proposition~\ref{prop:resonant-column}, and hence the projected-vector estimate.

\medskip

\noindent\emph{Return to the local trace.}
Proposition~\ref{prop:quotient-representation} represents the relevant dual element in a finite-torus Hardy quotient.
Its first-harmonic terms are centered prime coordinates with earlier-prime coefficients and therefore form a martingale-difference sequence.
The Burkholder--Gundy inequality, together with Theorem~\ref{thm:corrected-column}, controls the quotient norm, while quotient duality and \eqref{eq:intro-source-pairing} yield the discrete sampling estimate
\[
 \sum_{j=0}^{N-1}
 \abs{P\!\left(\frac12+it_j\right)}^p
 \le
 C_p M_y\norm P_{\HDir^p}^{p}.
\]
This discrete estimate recovers the local trace by averaging over translated grids.
Taking \(y=\e^N\) and using Mertens' theorem
\(M_{\e^N}\asymp N\) then converts the sampling bound into the
unit-interval estimate of Theorem~\ref{thm:local-embedding}.
The projected-vector estimate then returns to the local trace through Hardy-quotient duality, the ordered-prime martingale inequality, and averaging over translated grids.

\subsection{Organization of the paper}
\label{subsec:organization}

Section~\ref{sec:preliminaries} fixes the notation and finite models.
Sections~\ref{sec:replacement}--\ref{sec:euler-occupation} contain the probabilistic estimates, while Sections~\ref{sec:cyclic-analysis}--\ref{sec:corrected-column} contain the cyclic, sampling, and frequency analysis leading to Theorem~\ref{thm:corrected-column}.
Section~\ref{sec:main-proof} deduces Theorem~\ref{thm:local-embedding}, and Section~\ref{sec:consequences} presents the equivalent formulations and develops the critical sampling reformulation.

\section{Preliminaries and finite models}
\label{sec:preliminaries}

\subsection{Hardy spaces and exponent conventions}
\label{subsec:hardy-dirichlet-spaces}

We begin by fixing notation and conventions.
All tori are equipped with normalized Haar measure.
The spaces used below originate in the Hilbert-space theory of Dirichlet series and its \(H^p\)-extension; see
\cite{Bayart02, HLS97}.

Let
\[
 P(s)=\sum_{n\in\mathcal F}a_n n^{-s}
\]
be a Dirichlet polynomial, where \(\mathcal F\subset\N\) is finite.
If
\[
 n=\prod_r r^{\nu_r(n)},
\]
write
\[
 \nu(n)=(\nu_r(n))_r\in\Nzero^{(\N)}
\]
for the associated finitely supported prime multi-index.

\begin{definition}
\label{def:bohr-lift}
The Bohr lift of \(P\) is the analytic polynomial
\[
 \Bohr P(z)=\sum_{n\in\mathcal F}a_n z^{\nu(n)}.
\]
If \(y\ge2\) contains every prime divisor of every \(n\in\mathcal F\), we regard \(\Bohr P\) as a polynomial on \(\T^{\pi(y)}\) and define, for
\(0<p<\infty\),
\begin{equation}
 \norm{P}_{\HDir^p}^{p}
 =
 \int_{\T^{\pi(y)}}\abs{\Bohr P(z)}^p\dd m(z).
 \label{eq:hardy-dirichlet-norm}
\end{equation}
For \(p\ge1\), the \(p\)-th root of
\eqref{eq:hardy-dirichlet-norm} is a norm.  
For \(0<p<1\), it is a quasi-norm, and the corresponding completion is quasi-Banach.
The completion of the Dirichlet polynomials in the appropriate norm or quasi-norm is denoted by \(\HDir^p\).
\end{definition}

\begin{definition}[The local embedding property]
\label{def:lep-p}
For \(0<p<\infty\), we write \(\mathrm{LEP}_p\) for the assertion that there is a constant \(C_p<\infty\) such that
\[
 \sup_{\theta\in\R}
 \int_{\theta}^{\theta+1}
 \abs{P\!\left(\frac12+it\right)}^p\dd t
 \le
 C_p\norm{P}_{\HDir^p}^{p}
\]
for every Dirichlet polynomial \(P\).
\end{definition}

For the remainder of the proof of Theorem~\ref{thm:local-embedding}, we fix \(p>2\) and introduce the associated exponent parameters:
\begin{equation}
p>2,\qquad
 q=\frac{p}{p-1},\qquad
 s=\frac q2,\qquad
 \alpha=1-\frac2p,\qquad 
\delta=\frac2p,\qquad
 a=q-1=\frac qp,\qquad
 \beta=1+\alpha.
\label{eq:parameter-definitions}
\end{equation}
These parameters satisfy
\begin{equation}
 q\beta=2,\qquad
 s=\beta^{-1},\qquad
 \alpha s=1-s,\qquad
 \delta s=a,\qquad
 2s=q,\qquad
 \beta s=1.
 \label{eq:parameter-identities}
\end{equation}
In particular,
\begin{equation}
 1<q<2,\qquad
 \frac12<s<1,\qquad
 0<\alpha,\delta,a<1,\qquad
 1<\beta<2.
 \label{eq:parameter-ranges}
\end{equation}

Unless explicitly stated otherwise, all constants may depend on the fixed exponent \(p\) (equivalently, on \(q\)), but are uniform in the interval position, the finite-model parameters, all prime cutoffs and dimensions, all coefficient supports, and all auxiliary discretization parameters.
Constants may change from line to line.

\subsection{Finite prime tori and prime-frequency operators}
\label{subsec:finite-prime-tori}

Fix for the moment a finite prime cutoff \(y\ge2\), and let
\[
 \Omega_y=\T^{\pi(y)}
\]
carry product normalized Haar measure.
Its coordinate functions are denoted by
\[
 (\zeta_r)_{r\le y},
\]
where \(r\) always denotes a prime. 

For
\[
 \nu=(\nu_r)_{r\le y}\in\Z^{\pi(y)},
 \qquad
 \zeta^\nu=\prod_{r\le y}\zeta_r^{\nu_r},
\]
our Fourier and Hermitian-pairing conventions are
\begin{equation}
 \widehat F(\nu)
 =
 \int_{\Omega_y}
 F(\zeta)\overline{\zeta^\nu}\dd m(\zeta),
 \qquad
 \ip{F}{G}
 =
 \int_{\Omega_y}
 F(\zeta)\overline{G(\zeta)}\dd m(\zeta).
 \label{eq:torus-fourier-pairing}
\end{equation}

For \(u\in\R\), define the prime flow
\[
 U_uF((\zeta_r)_{r\le y})
 =
 F((\e^{iu\log r}\zeta_r)_{r\le y}).
\]
The operators \((U_u)_{u\in\R}\) form a strongly continuous group of measure-preserving isometries on every \(L^v(\Omega_y)\),
\(1\le v<\infty\).  
On characters,
\[
 U_u\zeta^\nu
 =
 \e^{iu\sum_{r\le y}\nu_r\log r}\zeta^\nu.
\]
We therefore define the full prime-frequency generator by
\begin{equation}
 D_{\mathrm{full}}\zeta^\nu
 =
 \left(\sum_{r\le y}\nu_r\log r\right)\zeta^\nu.
 \label{eq:full-prime-generator}
\end{equation}

For a prime \(r\le y\), let
\[
 \mathscr F_{r^-}
 =
 \sigma(\zeta_\rho:\rho<r),
 \qquad
 \E_{r^-}F=\E(F\mid\mathscr F_{r^-}),
\]
and define the generator associated with primes below \(r\) by
\begin{equation}
 D_{r^-}\zeta^\nu
 =
 \left(\sum_{\rho<r}\nu_\rho\log\rho\right)\zeta^\nu.
 \label{eq:old-prime-generator}
\end{equation}

For a Borel set \(E\subset\R\), define the corresponding spectral projections on trigonometric polynomials by
\[
 P_E^{\mathrm{full}}
 =
 \one_E(D_{\mathrm{full}}),
 \qquad
 P_E^{r^-}
 =
 \one_E(D_{r^-}).
\]
Equivalently, on a Fourier character \(\zeta^\nu\),
\[
 P_E^{\mathrm{full}}\zeta^\nu
 =
 \one_E\!\left(
 \sum_{\rho\le y}\nu_\rho\log\rho
 \right)\zeta^\nu,
 \qquad
 P_E^{r^-}\zeta^\nu
 =
 \one_E\!\left(
 \sum_{\rho<r}\nu_\rho\log\rho
 \right)\zeta^\nu.
\]
For intervals and half-lines, we also use notation such as
\[
 P_{D\ge0}^{r^-}
 =
 \one_{[0,\infty)}(D_{r^-}).
\]

For every bounded Borel function \(m\colon\R\to\C\) and every trigonometric polynomial \(F\), one has the characterwise identity
\begin{equation}
 \E_{r^-}m(D_{\mathrm{full}})F
 =
 m(D_{r^-})\E_{r^-}F.
 \label{eq:conditional-spectral-commutation}
\end{equation}
Indeed, \(\E_{r^-}\) annihilates every character involving a prime
\(\rho\ge r\), while on every surviving character
\(D_{\mathrm{full}}\) and \(D_{r^-}\) have the same eigenvalue.
The identity extends by absolute convergence to functions with absolutely summable Fourier coefficients.
In particular, it applies to the Euler products introduced below at every fixed finite prime cutoff.

For a finite index set \(\mathcal R\) and a measurable vector
\(F=(F_r)_{r\in\mathcal R}\), we write
\[
 \norm{F}_{L^q(\ell^2(\mathcal R))}
 =
 \left[
 \int_{\Omega_y}
 \left(
 \sum_{r\in\mathcal R}\abs{F_r(\zeta)}^2
 \right)^{q/2}
 \dd m(\zeta)
 \right]^{1/q}.
\]
This mixed norm defines the natural space for the vector-valued prime-indexed estimates used below.

\subsection{Euler products}
\label{subsec:euler-sources}

We now introduce the Euler products used throughout the dual argument.
They are exactly compatible with conditional expectation with respect to the variables indexed by primes below \(r\).

For \(2\le b\le y\), define
\begin{equation}
 M_b
 =
 \prod_{r\le b}(1-r^{-1})^{-1},
 \qquad
 S_b(t)
 =
 \prod_{r\le b}
 (1-r^{-1/2}\e^{it\log r}\zeta_r)^{-1}.
 \label{eq:mertens-and-sb}
\end{equation}
The normalized Euler product is
\begin{align}
 f_b(t)
 ={}
 M_b^{-\alpha}
 \prod_{r\le b}
 (1-r^{-1/2}\e^{it\log r}\zeta_r)^{-1}
 \prod_{r\le b}
 (1-r^{-1/2}\e^{-it\log r}\overline{\zeta_r})^{-\alpha}.
 \label{eq:euler-source}
\end{align}
The normalization makes each one-prime factor have mean one, so the Euler products are compatible with conditional expectation onto the sigma-fields generated by smaller primes.
For \(\abs z<1\), we use the binomial expansion
\begin{equation}
 (1-z)^{-\alpha}
 =
\sum_{k=0}^{\infty}\frac{(\alpha)_k}{k!}z^k,
 \label{eq:binomial-branch}
\end{equation}
where \((\alpha)_k\) denotes the rising Pochhammer symbol.
Since
\(r^{-1/2}<1\), this defines every fractional factor in
\eqref{eq:euler-source} absolutely convergently and agrees with the principal analytic branch.

The prime flow realizes physical translation:
\begin{equation}
 U_uf_b(t)=f_b(t+u),
 \qquad t,u\in\R.
 \label{eq:euler-source-flow}
\end{equation}

The normalization in \eqref{eq:euler-source} is chosen so that the
\(L^q\)-norm is explicit.  
Indeed, by
\eqref{eq:parameter-identities},
\[
 \abs{f_b(t)}^q
 =
 M_b^{-\alpha q}\abs{S_b(t)}^{q(1+\alpha)}
 =
 M_b^{-\alpha q}\abs{S_b(t)}^2.
\]
This is the key algebraic trick behind \eqref{eq:euler-source}.
Since
\[
 \E\abs{S_b(t)}^2
 =
 \prod_{r\le b}
 \sum_{k=0}^{\infty}r^{-k}
 =
 M_b,
\]
we obtain
\begin{equation}
\norm{f_b(t)}_{L^q(\Omega_y)}
 =
 M_b^{1/p}.
 \label{eq:euler-source-normalization}
\end{equation}
Both quantities are independent of \(t\).

For a prime \(r\le y\), we use the notation
\[
 M_{r^-}
 =
 \prod_{\rho<r}(1-\rho^{-1})^{-1}
\]
and denote by \(f_{r^-}(t)\) the corresponding Euler product with all products restricted to primes \(\rho<r\); explicitly,
\begin{align}
 f_{r^-}(t)
 ={}
 M_{r^-}^{-\alpha}
 \prod_{\rho<r}
 (1-\rho^{-1/2}\e^{it\log\rho}\zeta_\rho)^{-1}
 \prod_{\rho<r}
 (1-\rho^{-1/2}\e^{-it\log\rho}
       \overline{\zeta_\rho})^{-\alpha}.
 \label{eq:old-prime-source}
\end{align}

For a single prime factor, set \(x=\rho^{-1}\).
Using
\eqref{eq:binomial-branch} and Fourier orthogonality on \(\T\),
\begin{align*}
 \int_{\T}
 (1-x)^\alpha
 (1-\sqrt{x}z)^{-1}
 (1-\sqrt{x}\bar z)^{-\alpha}
 \dd m(z)
 =
 1.
\end{align*}
Hence each normalized one-prime factor has mean one.
Integrating successively over the prime coordinates
\(\zeta_\rho\) with \(r\le\rho\le b\), we obtain
\begin{equation}
 \E_{r^-}f_b(t)=f_{r^-}(t),
 \qquad r\le b.
 \label{eq:euler-source-conditioning}
\end{equation}
Thus,  the Euler products are compatible with conditional expectation with respect to the variables indexed by primes below \(r\).

\subsection{The cyclic model and deformed Euler products}
\label{subsec:cyclic-model}

For the finite cyclic model used below, fix
\begin{equation}
 H\in\N,\qquad H\ge1,\qquad
 N=2^H,\qquad
 y=\e^N,\qquad
 t_j=t_0+\frac jN
 \quad(0\le j<N),
 \label{eq:finite-grid-parameters}
\end{equation}
where \(t_0\in\R\) is arbitrary.
The common parameter \(N\) is both the number of grid points and the physical scale, with \(y=\e^N\) and spacing \(1/N\).
Thus,  \(N\) grid cells have total physical length one.

We identify
\[
 \ZN=\{0,\ldots,N-1\}
\]
with the cyclic group under addition modulo \(N\), and all
\(\ell^u(\ZN)\)-norms use counting measure.  
Represent the cyclic dual by
\[
 \Lambda_N=\{-N/2,\ldots,N/2-1\}.
\]
For \(\kappa\in\Lambda_N\), put
\[
 \theta_\kappa=\frac{2\pi\kappa}{N},
 \qquad
 \xi_\kappa=N\theta_\kappa=2\pi\kappa.
\]
Our discrete Fourier conventions are
\begin{equation}
 \widehat c(\kappa)
 =
 \frac1N\sum_{j=0}^{N-1}c_j\e^{ij\theta_\kappa},
 \qquad
 c_j
 =
 \sum_{\kappa\in\Lambda_N}
 \widehat c(\kappa)\e^{-ij\theta_\kappa}.
 \label{eq:cyclic-dft}
\end{equation}
Thus, 
\[
 (\mathcal F_N^{-1}g)(j)
 =
 \sum_{\kappa\in\Lambda_N}
 g(\kappa)\e^{-ij\theta_\kappa}.
\]

For \(\sigma\colon\Lambda_N\to\C\), let
\[
 K_\sigma(j)
 =
 \frac1N
 \sum_{\kappa\in\Lambda_N}
 \sigma(\kappa)\e^{-ij\theta_\kappa}.
\]
Then, with \(*\) denoting counting convolution on \(\ZN\),
\[
 K_\sigma*c
 =
 \mathcal F_N^{-1}(\sigma\widehat c).
\]

For an integer representative \(j\), define the cyclic distance
\[
 \rho_N(j)
 =
 \dist(j,N\Z)
 =
 \min_{k\in\Z}\abs{j+kN}.
\]
When \(N\) is fixed, we abbreviate this by
\[
 \rho_j=\rho_N(j).
\]

For \(0\le h\le H\), put
\begin{equation}
 w_h=2^h,
 \qquad
 m_h=\frac{N}{w_h}=N2^{-h}.
 \label{eq:dyadic-scale-parameters}
\end{equation}
Let \(\mathcal D_h\) be the partition of \(\ZN\) into the consecutive blocks
\begin{equation}
 \mathcal D_h
 =
 \left\{
 \{km_h,\ldots,(k+1)m_h-1\}:
 0\le k<2^h
 \right\}.
 \label{eq:nonwrapping-dyadic-grid}
\end{equation}
For \(x\in\ZN\), let \(Q_h(x)\) be the unique member of
\(\mathcal D_h\) containing \(x\).

We also fix once and for all an even cutoff
\[
 \chi\in C_c^\infty((-1/8,1/8)),
 \qquad
 0\le\chi\le1,
 \qquad
 \chi=1\ \text{on }[-1/16,1/16],
\]
and choose
\[
 \phi\in C_c^\infty((1/2,2)),
 \qquad
 0\le\phi\le1,
\]
such that
\begin{equation}
 \sum_{v\in2^\Z}\phi(\lambda/v)=1,
 \qquad \lambda>0.
 \label{eq:dyadic-frequency-partition}
\end{equation}

For each dyadic level \(h\), we introduce the following deformation of the Euler product \(f_{\e^{w_h}}\).

For \(0\le h\le H\) and \(0\le\vartheta\le1/32\), define
\begin{align}
 f_h^\vartheta(t)
 =
 M_{\e^{w_h}}^{-\alpha}
 \prod_{r\le\e^{w_h}}
 \left(
 1-r^{-1/2}\e^{\vartheta\log r/w_h}
   \e^{it\log r}\zeta_r
 \right)^{-1}
 \prod_{r\le\e^{w_h}}
 \left(
 1-r^{-1/2}\e^{-\vartheta\log r/w_h}
   \e^{-it\log r}\overline{\zeta_r}
 \right)^{-\alpha}.
 \label{eq:deformed-euler-source}
\end{align}
The deformation \(f_h^\vartheta\) introduces a controlled spectral tilt into the Euler product, preserving its basic structure while creating exponential decay for components whose prime-frequency contribution lies far from the resonant scale.
The factors are well defined by the branch fixed in
\eqref{eq:binomial-branch}, since for \(r\le\e^{w_h}\),
\[
 r^{-1/2}\e^{\vartheta\log r/w_h}
 \le
 2^{-1/2}\e^{1/32}
 <1.
\]
At \(\vartheta=0\), the deformation reduces to the Euler product \eqref{eq:euler-source}:
\begin{equation}
 f_h^0(t)=f_{\e^{w_h}}(t).
 \label{eq:undeformed-source}
\end{equation}

Since \(w_h\le N\), we have \(\e^{w_h}\le y\).
Thus,  each
\(f_h^\vartheta(t)\) depends only on a subset of the prime coordinates
\((\zeta_r)_{r\le y}\) of \(\Omega_y\).  We use the same coordinate
\(\zeta_r\) at every level in which the prime \(r\) occurs.  The family \((f_h^\vartheta)_{0\le h\le H}\) also satisfies the common-flow identity
\begin{equation}
 U_uf_h^\vartheta(t)=f_h^\vartheta(t+u).
 \label{eq:deformed-source-flow}
\end{equation}

\section{Shared-prime replacement and Gaussian comparison}
\label{sec:replacement}

The purpose of this section is to replace the finite Euler factors, one prime at a time, by covariance-matched Gaussian first-chaos increments.
Two features of the argument will be used repeatedly.
First, the replacement estimate is independent of the number of sites at which the Euler product is sampled.
Second, at a given prime a single Gaussian variable is used simultaneously at every site.
Thus, the spatial correlations generated by the common prime coordinate are preserved.

\subsection{One-prime shared replacement}
\label{subsec:one-prime-replacement}

Let \(M\in\N\), let \(\varepsilon\ge0\), let \(b_i\ge0\), and define
\begin{equation}
 \Phi_\varepsilon(u)
 =
 \left(
 \varepsilon+\sum_{i=1}^{M}b_i\e^{u_i}
 \right)^s,
 \qquad
 u=(u_1,\ldots,u_M)\in\R^M.
 \label{eq:positive-sum-functional}
\end{equation}
Recall that \(s\) is defined in \eqref{eq:parameter-definitions}, and \(0<s<1\) is fixed throughout the proof.

\begin{lemma}[Dimension-free derivatives]
\label{lem:positive-sum-derivatives}
For \(1\le k\le4\),
\begin{equation}
 \abs{
 D^k\Phi_\varepsilon(u)
 [v_1,\ldots,v_k]
 }
 \le
 C_{k,p}\Phi_\varepsilon(u)
 \prod_{\nu=1}^{k}\norm{v_\nu}_{\ell^\infty},
 \label{eq:positive-sum-derivative-bound}
\end{equation}
where the constant is independent of \(M\), \(\varepsilon\), the weights
\(b_i\), and \(u\).  Moreover, for every \(h\in\R^M\),
\begin{equation}
 \e^{-s\norm h_\infty}\Phi_\varepsilon(u)
 \le
 \Phi_\varepsilon(u+h)
 \le
 \e^{s\norm h_\infty}\Phi_\varepsilon(u).
 \label{eq:positive-sum-shift-bound}
\end{equation}
\end{lemma}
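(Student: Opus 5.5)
Write \(Z(u)=\varepsilon+\sum_i b_i\e^{u_i}\), so that \(\Phi_\varepsilon=Z^s\). The shift bound \eqref{eq:positive-sum-shift-bound} comes first and is immediate. Since \(b_i\ge0\) and \(\varepsilon\ge0\), each term satisfies \(b_i\e^{u_i+h_i}\le\e^{\norm h_\infty}b_i\e^{u_i}\) and \(\varepsilon\le\e^{\norm h_\infty}\varepsilon\). Hence \(Z(u+h)\le\e^{\norm h_\infty}Z(u)\), and symmetrically \(Z(u+h)\ge\e^{-\norm h_\infty}Z(u)\). Raising to the power \(s>0\) gives both inequalities. (If \(Z(u)=0\), everything vanishes and the statement is trivial. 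We may assume \(Z>0\), which holds whenever some \(b_i>0\) or \(\varepsilon>0\).)

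For the derivative bounds, the plan is to introduce the probability weights \(\pi_i(u)=b_i\e^{u_i}/Z(u)\). These satisfy \(\pi_i\ge0\) and \(\sum_i\pi_i\le1\); the defect \(1-\sum\pi_i=\varepsilon/Z\) is harmless. The key observation is that every directional derivative of \(Z\) is a weighted average. Indeed, \(\partial_{i_1}\cdots\partial_{i_k}Z=0\) unless all indices coincide, in which case it equals \(b_i\e^{u_i}\). Therefore
\[
 D^kZ(u)[v_1,\ldots,v_k]=\sum_i b_i\e^{u_i}\prod_{\nu}v_{\nu,i},
\]
and so
\[
 \abs{D^kZ[v_1,\ldots,v_k]}\le Z(u)\prod_\nu\norm{v_\nu}_{\ell^\infty}.
\]
This bound holds for every \(k\ge1\), with constant \(1\) and no dependence on \(M\).

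Next I apply the multivariate Faà di Bruno formula to \(g\circ Z\) with \(g(x)=x^s\). The \(k\)-th derivative \(D^k(g\circ Z)[v_1,\ldots,v_k]\) is a sum over set partitions \(\pi\) of \(\{1,\ldots,k\}\). Each partition contributes the term \(g^{(\abs\pi)}(Z)\prod_{B\in\pi}D^{\abs B}Z[v_B]\).

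To bound a single term, note that \(\abs{g^{(m)}(Z)}=\abs{s(s-1)\cdots(s-m+1)}\,Z^{s-m}\). By the previous paragraph, each block satisfies \(\abs{D^{\abs B}Z[v_B]}\le Z\prod_{\nu\in B}\norm{v_\nu}_\infty\). So the term is at most a combinatorial constant times
\[
 Z^{s-m}\cdot Z^{m}\prod_\nu\norm{v_\nu}_\infty=\Phi_\varepsilon(u)\prod_\nu\norm{v_\nu}_\infty .
\]
Summing over the finitely many partitions of a set of size \(k\le4\) yields \(C_{k,p}\), which is at most the Bell number \(B_k\) times \(\max_{m\le k}\abs{(s)_m^{\downarrow}}\). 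This constant depends only on \(s\), and hence only on \(p\).

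There is no real obstacle here. The only point needing care is the homogeneity bookkeeping: each block of a partition contributes exactly one factor of \(Z\), which cancels one negative power from \(g^{(m)}\). This cancellation is what makes the bound dimension-free. Positivity of the \(b_i\) is essential for the bound \(\sum_i b_i\e^{u_i}\abs{\prod_\nu v_{\nu,i}}\le Z\prod_\nu\norm{v_\nu}_\infty\) used in each block.
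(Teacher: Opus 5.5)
Your proposal is correct and is essentially the paper's argument. The paper also applies the multivariate Fa\`a di Bruno formula and writes each block factor as $\sum_i p_i\prod_{\nu\in B}v_{\nu,i}$ with $p_i=b_i\e^{u_i}/W$; this is exactly your $D^{|B|}Z[v_B]/Z$, bounded by $\prod_{\nu\in B}\norm{v_\nu}_\infty$. The shift bound is obtained the same way, from termwise monotonicity of $\varepsilon+\sum_i b_i\e^{u_i}$.
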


\begin{proof}
The shift estimate \eqref{eq:positive-sum-shift-bound} follows directly from \eqref{eq:positive-sum-functional}.
If \(R=\|h\|_\infty\), then
\[
 \e^{-R}
 \left(
 \varepsilon+\sum_{i=1}^M b_i\e^{u_i}
 \right)
 \le
 \varepsilon+\sum_{i=1}^M b_i\e^{u_i+h_i}
 \le
 \e^R
 \left(
 \varepsilon+\sum_{i=1}^M b_i\e^{u_i}
 \right),
\]
and raising to the power \(s\) gives
\eqref{eq:positive-sum-shift-bound}.

We prove \eqref{eq:positive-sum-derivative-bound}.
Put
\[
 W=\varepsilon+\sum_{i=1}^{M}b_i\e^{u_i}.
\]
If \(W=0\), then \(\varepsilon=0\) and every \(b_i=0\), so all assertions are immediate.
Suppose therefore that \(W>0\), and set
\[
 p_0=\frac{\varepsilon}{W},
 \qquad
 p_i=\frac{b_i\e^{u_i}}{W},
 \qquad 1\le i\le M.
\]
Then,
\[
 p_0+\sum_{i=1}^{M}p_i=1.
\]
For \(t=(t_1,\ldots,t_k)\),
\[
 \Phi_\varepsilon
 \left(
 u+\sum_{\nu=1}^{k}t_\nu v_\nu
 \right)
 =
 \Phi_\varepsilon(u)
 \left[
 p_0+
 \sum_{i=1}^{M}
 p_i
 \exp\left(
 \sum_{\nu=1}^{k}t_\nu v_{\nu,i}
 \right)
 \right]^s.
\]
Differentiation at \(t=0\) gives, for the first two orders,
\begin{align*}
 D\Phi_\varepsilon(u)[v]
 &=
 s\Phi_\varepsilon(u)
 \sum_{i=1}^{M}p_iv_i,
 \\
 D^2\Phi_\varepsilon(u)[v_1,v_2]
 &=
 \Phi_\varepsilon(u)
 \left\{
 s\sum_{i=1}^{M}p_iv_{1,i}v_{2,i}
 +s(s-1)
 \left(\sum_{i=1}^{M}p_iv_{1,i}\right)
 \left(\sum_{i=1}^{M}p_iv_{2,i}\right)
 \right\}.
\end{align*}
Let \(\mathfrak P_k\) denote the set of partitions of
\(\{1,\ldots,k\}\), and write
\[
 (s)^{\downarrow}_m=s(s-1)\cdots(s-m+1).
\]
More generally, for every integer \(k\ge1\), the multivariate Faà di Bruno formula yields
\begin{equation}
D^k\Phi_\varepsilon(u)[v_1,\ldots,v_k]
=
 \Phi_\varepsilon(u)
 \sum_{\pi\in\mathfrak P_k}
 (s)^{\downarrow}_{\abs\pi}
 \prod_{B\in\pi}
 \left(
 \sum_{i=1}^{M}
 p_i\prod_{\nu\in B}v_{\nu,i}
 \right).
 \label{eq:positive-sum-partition-formula}
\end{equation}
For every block \(B\),
\[
 \abs{
 \sum_{i=1}^{M}
 p_i\prod_{\nu\in B}v_{\nu,i}
 }
 \le
 \prod_{\nu\in B}\norm{v_\nu}_\infty.
\]
The constant is independent of \(M\), since
\(\sum_{i=1}^{M}p_i\le1\) and the number of partitions depends only on
\(k\).
This proves \eqref{eq:positive-sum-derivative-bound}.
\end{proof}

Fix
\[
 0\le\vartheta\le\frac1{32},
 \qquad
 0<x\le\frac12.
\]
For parameters
\[
 \eta_i\in\{0,1\},
 \qquad
 a_i\in[0,1],
 \qquad
 \varphi_i\in\R,
\]
put
\begin{equation}
 c_{i,k}
 =
 \e^{k\vartheta a_i}
 +
 \alpha\e^{-k\vartheta a_i},
 \qquad k\ge1.
 \label{eq:one-prime-cik}
\end{equation}
Let \(Z\) be a standard complex Gaussian random variable, so that
\[
 \E Z=\E Z^2=0,
 \qquad
 \E\abs Z^2=1.
\]
Let \(U=\e^{i\Theta}\) be Haar distributed on \(\T\) and  independent of \(Z\).
Define the two random vectors \(H,G\in\R^M\) by
\begin{align}
 H_i
 &=
 \eta_i
 \left\{
 2\alpha\log(1-x)
 +
 2\sum_{k=1}^{\infty}
 \frac{x^{k/2}}{k}
 c_{i,k}\cos k(\Theta+\varphi_i)
 \right\},
 \label{eq:one-prime-circle-increment}
 \\
 G_i
 &=
 \eta_i
 \left\{
 -2\alpha x
 +
 2\sqrt{x}\,c_{i,1}
 \Ree(\e^{i\varphi_i}Z)
 \right\}.
 \label{eq:one-prime-gaussian-increment}
\end{align}

The vector \(H\) records the exact logarithmic contribution of one Euler prime across all sampling sites, while \(G\) replaces it by a covariance-matched Gaussian first-order model, thereby isolating the approximation that underlies the shared-prime replacement argument.

\begin{proposition}[Shared-prime relative replacement]
\label{prop:one-prime-replacement}
For the functional \(\Phi_\varepsilon\) in
\eqref{eq:positive-sum-functional},
\begin{equation}
 \abs{
 \E\Phi_\varepsilon(u+H)
 -
 \E\Phi_\varepsilon(u+G)
 }
 \le
 C_px^2\Phi_\varepsilon(u).
 \label{eq:one-prime-additive-replacement}
\end{equation}
Moreover,
\begin{equation}
 \frac{1}{1+C_px^2}
 \E\Phi_\varepsilon(u+G)
 \le
 \E\Phi_\varepsilon(u+H)
 \le
 (1+C_px^2)
 \E\Phi_\varepsilon(u+G).
 \label{eq:one-prime-relative-replacement}
\end{equation}
The constant \(C_p\) depends only on \(p\); in particular, it is independent of \(M\) and of all table parameters.
\end{proposition}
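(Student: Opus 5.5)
The plan is a Lindeberg-type fourth-order Taylor comparison, carried out in a way that never sees \(M\). The point is that both \(H\) and \(G\) are driven by a single two-dimensional random vector that is shared by all coordinates \(i\). Everything is then estimated through Lemma~\ref{lem:positive-sum-derivatives}, which controls \(D^k\Phi_\varepsilon\) by \(\Phi_\varepsilon\) times products of \(\ell^\infty\)-norms.

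\emph{Step 1: write both vectors as one shared linear image.} Set \(\xi^H=(\cos\Theta,\,-\sin\Theta)\) and \(\xi^G=(\Ree Z,\,\Imm Z)\). Define vectors \(a^{(1)},a^{(2)}\in\R^M\) by
\[
a^{(1)}_i=2\eta_i\sqrt x\,c_{i,1}\cos\varphi_i,\qquad a^{(2)}_i=2\eta_i\sqrt x\,c_{i,1}\sin\varphi_i .
\]
Then \(\cos(\Theta+\varphi_i)=\cos\varphi_i\cos\Theta-\sin\varphi_i\sin\Theta\) and \(\Ree(\e^{i\varphi_i}Z)=\cos\varphi_i\Ree Z-\sin\varphi_i\Imm Z\). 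Hence
\[
H=\mu^H+\xi^H_1a^{(1)}+\xi^H_2a^{(2)}+R,\qquad G=\mu^G+\xi^G_1a^{(1)}+\xi^G_2a^{(2)},
\]
where:
\begin{itemize}
\item \(\mu^H_i=2\alpha\eta_i\log(1-x)\) and \(\mu^G_i=-2\alpha\eta_i x\), so \(\norm{\mu^H-\mu^G}_\infty\le Cx^2\);
\item \(R\) collects the harmonics \(k\ge2\), and \(\norm R_\infty\le Cx\) uniformly in \(\Theta\), because \(c_{i,k}\le(1+\alpha)\e^{k/32}\) and \(\sqrt x\,\e^{1/32}\le 2^{-1/2}\e^{1/32}<1\);
\item \(\norm{a^{(\ell)}}_\infty\le C\sqrt x\), and \(\norm{\mu^H}_\infty,\norm{\mu^G}_\infty\le Cx\).
\end{itemize}
The two driving vectors \(\xi^H,\xi^G\) are centered and symmetric, so all their odd moments vanish. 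They also have the same covariance \(\tfrac12 I_2\). Consequently, every polynomial in \(\xi\) of degree \(\le3\) has the same expectation under \(\xi^H\) and under \(\xi^G\).

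\emph{Step 2: Taylor expansion to third order with fourth-order remainder.} Expand \(\Phi_\varepsilon(u+V)\) around \(u\) for \(V\in\{H,G\}\). The integral remainder is \(\tfrac16\int_0^1(1-\tau)^3D^4\Phi_\varepsilon(u+\tau V)[V^{\otimes4}]\dd\tau\). By \eqref{eq:positive-sum-derivative-bound} and \eqref{eq:positive-sum-shift-bound}, it is bounded by \(C\e^{s\norm V_\infty}\norm V_\infty^4\,\Phi_\varepsilon(u)\).
\begin{itemize}
\item For \(H\) we have \(\norm H_\infty\le C\sqrt x\) deterministically.
\item For \(G\) we have \(\norm G_\infty\le C(x+\sqrt x\abs Z)\), and \(\E\,\e^{Cs\abs Z}\abs Z^4<\infty\).
\end{itemize}
In both cases the remainder contributes at most \(Cx^2\Phi_\varepsilon(u)\).

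\emph{Step 3: compare the terms of order \(k=1,2,3\).} Substitute the decompositions into \(D^k\Phi_\varepsilon(u)[V,\dots,V]\) and expand by multilinearity.
\begin{itemize}
\item Terms built only from \(a^{(1)},a^{(2)}\) with polynomial coefficients in \(\xi\) have identical expectations, by Step 1.
\item Every other term contains either a mean vector or \(R\), each of \(\ell^\infty\)-size \(O(x)\).
\end{itemize}
Bound these remaining terms termwise by \eqref{eq:positive-sum-derivative-bound}. A term of order \(O(x^2)\) needs either two \(O(x)\) factors, or one \(O(x)\) factor together with two \(O(\sqrt x)\) factors. The only lower-order survivors are three:
\begin{itemize}
\item the first-order mean term \(D\Phi_\varepsilon(u)[\mu^H-\mu^G]\), which is \(O(x^2)\Phi_\varepsilon(u)\);
\item the first-order term \(\E\,D\Phi_\varepsilon(u)[R]\), which vanishes because the harmonics \(k\ge2\) have mean zero under Haar measure;
\item second-order cross terms \(D^2\Phi_\varepsilon(u)[\xi\cdot a,\,R]\) and \(D^2\Phi_\varepsilon(u)[\xi\cdot a,\,\mu]\). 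The first kind has expectation zero, since \(\E\cos(\Theta+\varphi)\cos k(\Theta+\varphi')=0\) for \(k\ge2\). The second kind has expectation zero because \(\xi\) is centered.
\end{itemize}
Note that \(\E[\xi\xi^{\mathsf T}\otimes R]\) in the third-order terms is already \(x\cdot x=x^2\). Altogether this gives \eqref{eq:one-prime-additive-replacement}, with constants depending only on \(p\) through Lemma~\ref{lem:positive-sum-derivatives}.

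\emph{Step 4: pass to the relative form.} By \eqref{eq:positive-sum-shift-bound},
\[
\E\Phi_\varepsilon(u+G)\ge\E\,\e^{-s\norm G_\infty}\,\Phi_\varepsilon(u)\ge c_p\Phi_\varepsilon(u),
\]
using the Gaussian bound \(\norm G_\infty\le C(1+\abs Z)\). The same lower bound holds for \(H\), since \(\norm H_\infty\le C\). Therefore \(Cx^2\Phi_\varepsilon(u)\le C'x^2\,\E\Phi_\varepsilon(u+G)\) and likewise with \(H\). Then \(\E\Phi_\varepsilon(u+H)\le(1+C'x^2)\E\Phi_\varepsilon(u+G)\), and symmetrically \(\E\Phi_\varepsilon(u+G)\le(1+C'x^2)\E\Phi_\varepsilon(u+H)\), which is \eqref{eq:one-prime-relative-replacement}.

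\emph{Main obstacle.} The delicate point is Step 3. One must organize the multilinear expansion so that every error is measured only through \(\ell^\infty\)-norms of a bounded number of vectors, and never through entrywise tensor differences over \(M^k\) indices, which would introduce dependence on \(M\). The shared two-dimensional representation of Step 1 is what makes this possible: it reduces the moment matching to finitely many scalar identities in \(\xi\), and leaves the careful bookkeeping of which cross terms have vanishing expectation.
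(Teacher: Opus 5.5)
Your proposal is correct and follows essentially the same route as the paper. It uses a third-order Taylor comparison at the fixed point \(u\) with a fourth-order integral remainder, measures every error through the dimension-free bounds of Lemma~\ref{lem:positive-sum-derivatives}, and splits \(H\) into drift, first harmonic and higher harmonics. It relies on the same cancellations (\(\E L_iR_j=0\), vanishing odd moments, matched covariance from the shared variable) and reaches the relative form through a uniform lower bound \(\E\Phi_\varepsilon(u+V)\ge c_p\Phi_\varepsilon(u)\).

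The only slip is cosmetic: to match \(\Ree(\e^{i\varphi_i}Z)=\cos\varphi_i\Ree Z-\sin\varphi_i\Imm Z\), you should take \(\xi^G=(\Ree Z,-\Imm Z)\). Because \(Z\) is proper complex Gaussian, this has the same law as your choice, so nothing downstream changes.
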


\begin{proof}
The proof proceeds in four steps.
We use \(\ell^\infty\)-bounds throughout to keep the constants independent of the number of sites.
\begin{enumerate}
\item[\textbf{Step 1.}]
Decompose the Euler increment into its deterministic term, first harmonic,
and higher harmonics.

\item[\textbf{Step 2.}]
Use the matching covariances and vanishing moments to compare the Taylor polynomials through degree three.

\item[\textbf{Step 3.}]
Bound the fourth-order remainders by the same quantity, thereby obtaining the additive replacement estimate.

\item[\textbf{Step 4.}]
Establish lower bounds for both expectations and deduce the relative replacement estimate.
\end{enumerate}

\medskip
\noindent
\textbf{Step~1. Decomposition and uniform bounds.}
For an entry \(i\) with \(\eta_i=1\), consider the normalized one-prime factor
\begin{equation}
 \mathfrak e_i(\Theta)
 =
 (1-x)^\alpha
 \left(
 1-\sqrt{x}\,\e^{\vartheta a_i}
       \e^{i(\Theta+\varphi_i)}
 \right)^{-1}
 \left(
 1-\sqrt{x}\,\e^{-\vartheta a_i}
       \e^{-i(\Theta+\varphi_i)}
 \right)^{-\alpha}.
 \label{eq:one-prime-local-factor}
\end{equation}
The radii in \eqref{eq:one-prime-local-factor} are strictly smaller than one, since
\[
 \sqrt{x}\,\e^{\vartheta a_i}
 \le
 2^{-1/2}\e^{1/32}<1.
\]
Using
\[
 \log\abs{1-\rho\e^{i\psi}}^{-2}
 =
 2\sum_{k=1}^{\infty}\frac{\rho^k}{k}\cos(k\psi),
 \qquad 0\le\rho<1,
\]
we obtain
\[
 \log\abs{\mathfrak e_i(\Theta)}^2
 =
 2\alpha\log(1-x)
 +
 2\sum_{k=1}^{\infty}
 \frac{x^{k/2}}{k}
 c_{i,k}\cos k(\Theta+\varphi_i)
 =H_i.
\]
The two terms in \(c_{i,k}\) come from the analytic and anti-analytic factors in \eqref{eq:one-prime-local-factor}, respectively.
The term
\(2\alpha\log(1-x)\) comes from the squared modulus of the normalization
\((1-x)^\alpha\).

Write
\[
 H=\mu+L+R,
 \qquad
 G=\widetilde\mu+\Gamma,
\]
where
\begin{align*}
 \mu_i
 &=
 2\alpha\eta_i\log(1-x),
 &
 \widetilde\mu_i
 &=
 -2\alpha\eta_i x,
 \\
 L_i
 &=
 2\eta_i\sqrt{x}\,c_{i,1}
 \cos(\Theta+\varphi_i),
 &
 \Gamma_i
 &=
 2\eta_i\sqrt{x}\,c_{i,1}
 \Ree(\e^{i\varphi_i}Z),
 \\
 R_i
 &=
 2\eta_i
 \sum_{k=2}^{\infty}
 \frac{x^{k/2}}{k}
 c_{i,k}\cos k(\Theta+\varphi_i).
\end{align*}
The components have the following roles:
\begin{center}
\begin{tabular}{c| c}
term & role\\
\hline
\(\mu\)
& deterministic term of \(H\)\\
\(\widetilde\mu\)
& deterministic term of \(G\)\\
\(L\)
& first circle harmonic\\
\(R\)
& sum of the higher circle harmonics\\
\(\Gamma\)
& centered Gaussian replacement of \(L\)
\end{tabular}
\end{center}
The estimates below separate the error in the deterministic terms from the contributions of the first and higher harmonics.
Uniformly in all table parameters,
\begin{equation}
 \norm\mu_\infty+\norm{\widetilde\mu}_\infty
 \le C_px,
 \qquad
 \norm{\mu-\widetilde\mu}_\infty
 \le C_px^2,
 \label{eq:one-prime-drift-bounds}
\end{equation}
and
\begin{equation}
 \norm L_\infty\le C_p\sqrt{x},
 \qquad
 \norm R_\infty\le C_px.
 \label{eq:one-prime-harmonic-bounds}
\end{equation}
Indeed,
\[
 \log(1-x)=-x+O(x^2),
\]
and the series defining \(R\) is bounded geometrically because
\[
 \sqrt{x}\,\e^{\vartheta a_i}
 \le2^{-1/2}\e^{1/32}<1.
\]
The Gaussian component also satisfies
\[
 \|\Gamma\|_\infty\le C_p\sqrt{x}\,|Z|.
\]
\medskip
\noindent
\textbf{Step~2. Comparison of the Taylor polynomials.}
Because the same \(Z\) is used at every site with \(\eta_i=1\), the covariance matching below holds for all pairs \(i,j\), including \(i\ne j\).
Circle orthogonality and the proper-Gaussian covariance identity give
\begin{equation}
 \E L_i=\E R_i=\E\Gamma_i=0,
 \qquad
 \E(L_iR_j)=0,
 \label{eq:one-prime-first-moments}
\end{equation}
\begin{equation}
 \E(L_iL_jL_k)
 =
 \E(\Gamma_i\Gamma_j\Gamma_k)
 =
 0,
 \label{eq:one-prime-third-moments}
\end{equation}
and
\begin{equation}
 \E(L_iL_j)
=
 2x\,\eta_i\eta_j
 c_{i,1}c_{j,1}
 \cos(\varphi_i-\varphi_j)
=
 \E(\Gamma_i\Gamma_j).
 \label{eq:one-prime-covariance-matching}
\end{equation}

For a random vector \(V\), Taylor's formula at the prime-independent point
\(u\in\mathbb{R}^M\) gives
\begin{equation}
 \E\Phi_\varepsilon(u+V)
 =
 \Phi_\varepsilon(u)
 +
 \sum_{m=1}^{3}
 \frac1{m!}
 \E\bigl[
 D^m\Phi_\varepsilon(u)
 [V,\ldots,V]
 \bigr]
+
 \E\mathcal R_4(V),
 \label{eq:one-prime-taylor-expansion}
\end{equation}
where
\begin{equation}
 \mathcal R_4(V)
 =
 \frac1{3!}
 \int_0^1
 (1-t)^3
 D^4\Phi_\varepsilon(u+tV)
 [V,V,V,V]\dd t.
 \label{eq:one-prime-fourth-remainder}
\end{equation}
We compare \eqref{eq:one-prime-taylor-expansion} for \(V=H\) and
\(V=G\).  
The comparison through degree three can be organized as follows.

Write \(D^m=D^m\Phi_\varepsilon(u)\).
These multilinear forms are deterministic, since the base point \(u\) is fixed.
Symmetry and multilinearity give
\begin{align*}
 \E D[H]
 &=D[\mu],
 \\
 \E D[G]
 &=D[\widetilde\mu],
 \\
 \E D^2[H,H]
 &=D^2[\mu,\mu]
   +\E D^2[L,L]
   +\E D^2[R,R],
 \\
 \E D^2[G,G]
 &=D^2[\widetilde\mu,\widetilde\mu]
   +\E D^2[\Gamma,\Gamma],
 \\
 \E D^3[H,H,H]
 &=D^3[\mu,\mu,\mu]
   +3\E D^3[\mu,L,L]
   +3\E D^3[\mu,R,R]
 \\
 &\quad
   +3\E D^3[L,L,R]
   +3\E D^3[L,R,R]
   +\E D^3[R,R,R],
 \\
 \E D^3[G,G,G]
 &=D^3[\widetilde\mu,\widetilde\mu,\widetilde\mu]
   +3\E D^3[\widetilde\mu,\Gamma,\Gamma].
\end{align*}
Here the omitted terms vanish by
\eqref{eq:one-prime-first-moments} and
\eqref{eq:one-prime-third-moments}.  
In particular, the frequencies of
\(L_iR_j\) cannot sum to zero, and neither can three frequencies chosen from \(\{\pm1\}\).
On the Gaussian side, centered cubic moments vanish.

At first order, the drift discrepancy gives
\[
 \left|\E D[H]-\E D[G]\right|
 =\left|D[\mu-\widetilde\mu]\right|
 \le C_p\Phi_\varepsilon(u)\|\mu-\widetilde\mu\|_\infty
 \le C_px^2\Phi_\varepsilon(u),
\]
by \eqref{eq:positive-sum-derivative-bound} and
\eqref{eq:one-prime-drift-bounds}.

The covariance identity
\eqref{eq:one-prime-covariance-matching} means
\[
 \E D^2[L,L]
 =
 \sum_{i,j}\partial_{ij}\Phi_\varepsilon(u)\E(L_iL_j)
 =
 \sum_{i,j}\partial_{ij}\Phi_\varepsilon(u)\E(\Gamma_i\Gamma_j)
 =
 \E D^2[\Gamma,\Gamma].
\]
Thus, at second order,
\begin{align*}
 &\E D^2\Phi_\varepsilon(u)[H,H]
 -\E D^2\Phi_\varepsilon(u)[G,G]
 \\
 &\quad=
 D^2\Phi_\varepsilon(u)[\mu,\mu]
 -D^2\Phi_\varepsilon(u)[\widetilde\mu,\widetilde\mu]
 +\E D^2\Phi_\varepsilon(u)[R,R].
\end{align*}
Consequently,
\begin{align*}
 \left|\E D^2\Phi_\varepsilon(u)[H,H]
 -\E D^2\Phi_\varepsilon(u)[G,G]\right|
 &\le
 C_p\Phi_\varepsilon(u)
 \left(
 \|\mu\|_\infty^2+
 \|\widetilde\mu\|_\infty^2+
 \E\|R\|_\infty^2
 \right)
 \\
 &\le C_px^2\Phi_\varepsilon(u).
\end{align*}

At third order, the nonzero terms are estimated using the bounds on
\(\mu,\widetilde\mu,L,R,\Gamma\):
\begin{align*}
 \left|\E D^3[\mu,L,L]\right|
 &\le
 C_p\Phi_\varepsilon(u)
 \|\mu\|_\infty\,\E\|L\|_\infty^2
 \le C_px^2\Phi_\varepsilon(u),
 \\
 \left|\E D^3[\widetilde\mu,\Gamma,\Gamma]\right|
 &\le
 C_p\Phi_\varepsilon(u)
 \|\widetilde\mu\|_\infty\,\E\|\Gamma\|_\infty^2
 \le C_px^2\Phi_\varepsilon(u),
 \\
 \left|\E D^3[L,L,R]\right|
 &\le
 C_p\Phi_\varepsilon(u)
 \E\!\left(\|L\|_\infty^2\|R\|_\infty\right)
 \le C_px^2\Phi_\varepsilon(u),
 \\
 \left|\E D^3[L,R,R]\right|
 &\le
 C_p\Phi_\varepsilon(u)
 \E\!\left(\|L\|_\infty\|R\|_\infty^2\right)
 \le C_px^{5/2}\Phi_\varepsilon(u).
\end{align*}
The remaining terms, involving \(\mu^3\), \(\widetilde\mu^3\),
\(\mu R^2\), or \(R^3\), are each bounded by
\(C_px^3\Phi_\varepsilon(u)\).  
Hence the entire cubic difference is
\(O_p(x^2)\Phi_\varepsilon(u)\).

Combining these estimates with
\eqref{eq:positive-sum-derivative-bound}, the total difference of the
Taylor polynomials of degrees at most three is bounded by
\[
 C_px^2\Phi_\varepsilon(u).
\]

\medskip
\noindent
\textbf{Step~3. Fourth-order remainders and the additive estimate.}
It remains to estimate the fourth-order remainders.
From
\eqref{eq:one-prime-drift-bounds} and
\eqref{eq:one-prime-harmonic-bounds},
\[
 \norm H_\infty\le C_p\sqrt{x}.
\]
Hence Lemma~\ref{lem:positive-sum-derivatives} and
\eqref{eq:positive-sum-shift-bound} give
\[
 \abs{\E\mathcal R_4(H)}
 \le
 C_px^2\Phi_\varepsilon(u).
\]
On the Gaussian side,
\[
 \norm G_\infty
 \le
 C_p\bigl(x+\sqrt{x}\abs Z\bigr).
\]
For \(0<x\le1/2\),
\[
 (x+\sqrt{x}|Z|)^4\le 8x^2(1+|Z|^4),
 \qquad
 \e^{C_p\sqrt{x}|Z|}\le\e^{C_p|Z|}.
\]
Therefore,
\begin{align*}
 \left|\E\mathcal R_4(G)\right|
\le C_p\Phi_\varepsilon(u)
\E\!\left[(x+\sqrt{x}|Z|)^4\e^{C_p\sqrt{x}|Z|}\right]
 \le C_px^2\Phi_\varepsilon(u)
 \E\!\left[(1+|Z|^4)\e^{C_p|Z|}\right]
 \le C_px^2\Phi_\varepsilon(u).
\end{align*}
The last expectation is finite by the Gaussian tail of \(|Z|\).
Since the same \(Z\) controls all coordinates, the constant is independent of \(M\).
This proves \eqref{eq:one-prime-additive-replacement}.

\medskip
\noindent
\textbf{Step~4. The relative estimate.}
For the relative estimate, observe first that \(H\) is uniformly bounded:
\[
 \norm H_\infty\le C_p.
\]
Thus, 
\[
 \E\Phi_\varepsilon(u+H)
 \ge
 \e^{-C_p}\Phi_\varepsilon(u).
\]
On the event \(\{\abs Z\le1\}\), which has fixed positive probability,
\(\norm G_\infty\le C_p\).  
Hence
\[
 \E\Phi_\varepsilon(u+G)
 \ge
 \Prob\{\abs Z\le1\}\e^{-C_p}\Phi_\varepsilon(u).
\]
Combining these lower bounds with
\eqref{eq:one-prime-additive-replacement}, and increasing \(C_p\) if
necessary, proves \eqref{eq:one-prime-relative-replacement}.
\end{proof}

\begin{corollary}[Sequential shared-prime replacement]
\label{cor:sequential-shared-prime-replacement}
Let \(\mathcal R\) be a finite set of primes.
For every \(r\in\mathcal R\), let \(H^{(r)}\) and \(G^{(r)}\) be the vectors in
\eqref{eq:one-prime-circle-increment} and
\eqref{eq:one-prime-gaussian-increment}, with \(x=r^{-1}\) and with
parameters allowed to depend on \(r\).
Assume that:

\begin{enumerate}[label=\textup{(\roman*)}]
\item the Haar variables used by \(H^{(r)}\) are independent across primes;
\item the proper complex Gaussians used by \(G^{(r)}\) are independent across
primes;
\item at each fixed prime, one Haar variable and one Gaussian variable are
shared by all \(M\) sites.
\end{enumerate}
Then,
\begin{align}
 \E
 \left(
 \varepsilon+
 \sum_{i=1}^{M}
 b_i
 \exp\left\{
 u_i+\sum_{r\in\mathcal R}H_i^{(r)}
 \right\}
 \right)^s
\asymp_p
 \E
 \left(
 \varepsilon+
 \sum_{i=1}^{M}
 b_i
 \exp\left\{
 u_i+\sum_{r\in\mathcal R}G_i^{(r)}
 \right\}
 \right)^s,
 \label{eq:sequential-shared-prime-replacement}
\end{align}
where the comparison constant is independent of \(M\), \(\mathcal R\), and all table parameters.
More precisely, the comparison factor is at most
\begin{equation}
 \prod_{r\in\mathcal R}(1+C_pr^{-2})
 \le
 \prod_r(1+C_pr^{-2})
 <\infty.
 \label{eq:sequential-prime-product}
\end{equation}
\end{corollary}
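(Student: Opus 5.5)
The plan is a hybrid (Lindeberg-type) argument: replace the primes of \(\mathcal R\) one at a time, applying Proposition~\ref{prop:one-prime-replacement} at each step conditionally on all the other randomness. Enumerate \(\mathcal R=\{r_1,\ldots,r_K\}\). For \(0\le k\le K\), let \(E_k\) denote the expectation in \eqref{eq:sequential-shared-prime-replacement} in which the increments at \(r_1,\ldots,r_k\) are Gaussian, \(G^{(r_\ell)}\), and those at \(r_{k+1},\ldots,r_K\) are circle increments, \(H^{(r_\ell)}\). Then \(E_0\) is the left-hand side and \(E_K\) is the right-hand side. It therefore suffices to prove
\[
 (1+C_pr_k^{-2})^{-1}E_{k-1}\le E_k\le(1+C_pr_k^{-2})E_{k-1}
\]
for every \(k\). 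Telescoping these bounds gives the comparison factor \eqref{eq:sequential-prime-product}, and this factor is finite because \(\sum_r r^{-2}<\infty\).

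To compare \(E_{k-1}\) and \(E_k\), condition on all random variables except those attached to \(r_k\). These are the Haar variables at \(r_\ell\) for \(\ell>k\) and the Gaussians at \(r_\ell\) for \(\ell<k\). By hypotheses (i) and (ii), together with independence between the Haar and Gaussian families (which is implicit in the joint model), they are independent of the pair \((U,Z)\) at \(r_k\). Freeze them and define
\[
 \widetilde u_i=u_i+\sum_{\ell<k}G_i^{(r_\ell)}+\sum_{\ell>k}H_i^{(r_\ell)} .
\]
This is a deterministic vector in \(\R^M\) once we condition. By hypothesis (iii), \(H^{(r_k)}\) and \(G^{(r_k)}\) are exactly the vectors \eqref{eq:one-prime-circle-increment} and \eqref{eq:one-prime-gaussian-increment} built from a single shared pair \((U,Z)\), with \(x=r_k^{-1}\le1/2\) and table parameters \(\eta_i,a_i,\varphi_i,\vartheta\) that may depend on \(r_k\). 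Applying \eqref{eq:one-prime-relative-replacement} with base point \(\widetilde u\) gives
\[
 \E\bigl[\Phi_\varepsilon(\widetilde u+H^{(r_k)})\,\big|\,\cdot\,\bigr]\le(1+C_pr_k^{-2})\,\E\bigl[\Phi_\varepsilon(\widetilde u+G^{(r_k)})\,\big|\,\cdot\,\bigr],
\]
together with the reverse inequality. Here \(C_p\) is independent of \(M\), of \(\widetilde u\), of the weights \(b_i\), of \(\varepsilon\), and of the table parameters.

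Integrating this conditional inequality over the frozen variables (Fubini) yields the one-step comparison. For Fubini to apply, both \(E_{k-1}\) and \(E_k\) must be finite. Each circle increment is bounded, \(\norm{H^{(r)}}_\infty\le C_p\). For the Gaussian increments, use \(\Phi_\varepsilon(u+h)\le \e^{s\norm h_\infty}\Phi_\varepsilon(u)\) from \eqref{eq:positive-sum-shift-bound}, together with \(\norm{G^{(r)}}_\infty\le C_p(r^{-1}+r^{-1/2}|Z_r|)\). This gives an integrable majorant, since \(\mathcal R\) is finite and \(\E \e^{C|Z|}<\infty\). The constants in this majorant may depend on \(\mathcal R\), but they are only needed for finiteness and do not enter the final comparison constant.

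The one subtle point, and the place I expect care to be needed, is verifying that Proposition~\ref{prop:one-prime-replacement} applies uniformly to the random base point \(\widetilde u\). The proposition is stated for a deterministic \(u\) with a constant independent of \(u\), so what is needed is exactly that, after conditioning, \((U,Z)\) at \(r_k\) is independent of \(\widetilde u\). This independence is precisely what hypotheses (i)--(iii) provide. Everything else is bookkeeping.
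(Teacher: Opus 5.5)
Your proposal is correct and follows the paper's proof essentially step for step. The paper also interpolates through hybrid expectations $A_j$ (Gaussian increments at the first $j$ primes, Haar increments at the rest), applies Proposition~\ref{prop:one-prime-replacement} conditionally on $\sigma(U_{r_\ell},Z_{r_\ell}:\ell\neq j)$ with the frozen base point, and telescopes the factors $1+C_pr_j^{-2}$. Two cosmetic differences remain: the paper explicitly passes to a product realization of the Haar and Gaussian blocks (legitimate because each side of \eqref{eq:sequential-shared-prime-replacement} depends only on one family's law), whereas you call this independence implicit; and your integrability check is unnecessary, since Tonelli applies directly to the nonnegative integrands.
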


This corollary shows that the Euler increments may be replaced prime by prime by covariance-matched Gaussian increments while preserving their joint dependence across sampling sites and incurring only a uniformly bounded cumulative error.

\begin{proof}
Write
\[
 \mathcal R=\{r_1,\ldots,r_K\}
\]
in an arbitrary order.
Passing to a product realization if necessary, we may assume that the prime blocks \((U_{r_j},Z_{r_j})\) are mutually independent, with \(U_{r_j}\) independent of \(Z_{r_j}\).

For \(0\le j\le K\), set
\[
 A_j
 \coloneqq
 \E\Phi_\varepsilon\left(
 u+
 \sum_{\ell\le j}G^{(r_\ell)}
 +
 \sum_{\ell>j}H^{(r_\ell)}
 \right).
\]
Thus,  \(A_0\) is the all-Haar expectation and \(A_K\) is the all-Gaussian expectation.
Fix \(1\le j\le K\), and let
\(
 \mathscr F_j
 \coloneqq
 \sigma\left(
 U_{r_\ell},Z_{r_\ell}:\ell\neq j
 \right).
\)
Define the \(\mathscr F_j\)-measurable random vector
\[
 Y_j
 \coloneqq
 u+
 \sum_{\ell<j}G^{(r_\ell)}
 +
 \sum_{\ell>j}H^{(r_\ell)}.
\]
Applying Proposition~\ref{prop:one-prime-replacement} conditionally, with
\(x=r_j^{-1}\) and with \(Y_j\) as the base point, gives
\[
 \begin{aligned}
 \frac{1}{1+C_pr_j^{-2}}
 \E\!\left[
 \Phi_\varepsilon(Y_j+G^{(r_j)})
 \,\middle|\,
 \mathscr F_j
 \right]
 &\le
 \E\!\left[
 \Phi_\varepsilon(Y_j+H^{(r_j)})
 \,\middle|\,
 \mathscr F_j
 \right]\\
 &\le
 (1+C_pr_j^{-2})
 \E\!\left[
 \Phi_\varepsilon(Y_j+G^{(r_j)})
 \,\middle|\,
 \mathscr F_j
 \right].
 \end{aligned}
\]
Taking expectations and using the tower property yields
\[
 \frac{1}{1+C_pr_j^{-2}}A_j
 \le
 A_{j-1}
 \le
 (1+C_pr_j^{-2})A_j.
\]
Iterating over \(j=1,\ldots,K\), we obtain
\[
 \frac{1}{
 \prod_{r\in\mathcal R}(1+C_pr^{-2})
 }
 A_K
 \le
 A_0
 \le
 \prod_{r\in\mathcal R}(1+C_pr^{-2})\,A_K.
\]
Finally,
\[
 \prod_{r\in\mathcal R}(1+C_pr^{-2})
 \le
 \prod_r(1+C_pr^{-2})
 \le
 \exp\left(C_p\sum_r r^{-2}\right)
 <\infty,
\]
since \(\sum_r r^{-2}<\infty\).  
This proves
\eqref{eq:sequential-shared-prime-replacement} and
\eqref{eq:sequential-prime-product}, with constants independent of
\(M\), \(\mathcal R\), and all table parameters.
\end{proof}

For a finite family of levels and sites \((h_i,t_i)_{i=1}^M\), with
\(0\le h_i\le H\) and \(t_i\in\R\), put
\[
 w_i=w_{h_i}=2^{h_i},
\]
where \(H\) is fixed in \eqref{eq:finite-grid-parameters}.
The sites may, in particular, be chosen from the grid in
\eqref{eq:finite-grid-parameters}.
At a fixed prime \(r\), the scales and one-prime parameters are related as follows:
\begin{center}
\begin{tabular}{c|c|l}
 symbol
 & value
 & role\\ \hline
 \(w_i\)&\(2^{h_i}\)
     & logarithmic prime cutoff at site \(i\)\\
 \(x\)&\(r^{-1}\)
     & size of the prime contribution\\
 \(\eta_i\)&\(\one_{\{r\le\e^{w_i}\}}\)
     & whether the prime occurs at site \(i\)\\
 \(a_i\)&\(\eta_i\log r/w_i\)
     & relative logarithmic position within the cutoff\\
 \(\varphi_i\)&\(t_i\log r\)
     & phase at the sampling site.
\end{tabular}
\end{center}
These choices give \(0<x\le1/2\) and \(0\le a_i\le1\).
When \(\eta_i=0\), \(a_i=0\) and the prefactor \(\eta_i\) makes the entire increment vanish.

For a prime \(r\le\e^{w_i}\), taking
\(\e^{i\Theta}=\zeta_r\) gives
\[
\begin{aligned}
 \mathfrak e_i(\Theta)
 =(1-r^{-1})^\alpha
 \left(
 1-r^{-1/2}\e^{\vartheta\log r/w_i}
   \e^{it_i\log r}\zeta_r
 \right)^{-1}
 \left(
 1-r^{-1/2}\e^{-\vartheta\log r/w_i}
   \e^{-it_i\log r}\overline{\zeta_r}
 \right)^{-\alpha}.
\end{aligned}
\]
Writing \(\mathfrak e_i^{(r)}\) for the local factor with the parameters corresponding to \(r\), and using
\[
 M_{\e^{w_i}}^{-\alpha}
 =
 \prod_{r\le\e^{w_i}}(1-r^{-1})^\alpha,
\]
we obtain the exact factorization of \(f_{h_i}^{\vartheta}(t_i)\), that is,
\[
 f_{h_i}^{\vartheta}(t_i)
 =
 \prod_{r\le\e^{w_i}}\mathfrak e_i^{(r)}(\Theta_r),
 \qquad
 \e^{i\Theta_r}=\zeta_r.
\]
Hence, for any finite prime set \(\mathcal R\) containing all primes up to \(\e^{\max_i w_i}\), 
\[ 
\sum_{r\in\mathcal R}H_i^{(r)} 
= \log\bigl|f_{h_i}^{\vartheta}(t_i)\bigr|^2. 
\]
Corollary~\ref{cor:sequential-shared-prime-replacement} therefore applies simultaneously to these products, even when the cutoffs
\(w_i\) differ.  
The same prime variable \(\zeta_r\) is used at every site \(i\) for which \(r\le\e^{w_i}\).

\subsection{Gaussian covariance comparison and deformation}
\label{subsec:gaussian-comparison-deformation}

We next record the covariance comparison used both in the deformation argument and in the later comparison with a branching random walk.

We shall use the following standard concave form of Kahane's convexity inequality; see \cite[Lemma~1]{Kahane1985} and
\cite[Theorem~2.1]{RhodesVargas2014}.

\begin{lemma}[Concave covariance comparison]
\label{lem:concave-gaussian-comparison}
Let \(X=(X_i)_{i=1}^{M}\) and \(Y=(Y_i)_{i=1}^{M}\) be centered real Gaussian vectors with covariance matrices \(K_X\) and \(K_Y\).
Suppose that
\begin{equation}
 K_X(i,j)\le K_Y(i,j),
 \qquad 1\le i,j\le M.
 \label{eq:entrywise-covariance-order}
\end{equation}
Then, for every \(\varepsilon\ge0\) and every finite family
\(\lambda_i\ge0\),
\begin{equation}\label{eq:concave-covariance-comparison}
\E
\left(
\varepsilon+
\sum_{i=1}^{M}
\lambda_i
\e^{Y_i-K_Y(i,i)/2}
\right)^s
\le
\E
\left(
\varepsilon+
\sum_{i=1}^{M}
\lambda_i
\e^{X_i-K_X(i,i)/2}
\right)^s.
\end{equation}
The result remains valid when either covariance matrix is singular.
\end{lemma}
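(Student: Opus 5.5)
We prove the lemma by Gaussian interpolation, in the standard Kahane manner. Set
\[
 F(x)=\Bigl(\varepsilon+\sum_{i=1}^M\lambda_i\e^{x_i}\Bigr)^s,
 \qquad x\in\R^M .
\]
Choose $X$ and $Y$ independent. For $\tau\in[0,1]$ define
\[
 Z_\tau=\sqrt{\tau}\,Y+\sqrt{1-\tau}\,X,
 \qquad
 K_\tau=\tau K_Y+(1-\tau)K_X,
\]
and put
\[
 \psi(\tau)=\E F\bigl(Z_\tau-\tfrac12\operatorname{diag}K_\tau\bigr).
\]
The claim \eqref{eq:concave-covariance-comparison} is exactly $\psi(1)\le\psi(0)$. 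It therefore suffices to show $\psi'\le0$.

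\textbf{Differentiation.} Differentiate in $\tau$ and apply Gaussian integration by parts, $\E[Z_i G(Z)]=\sum_j\Cov(Z_i,Z_j)\,\E\partial_jG(Z)$. This gives the usual identity
\[
 \psi'(\tau)=\frac12\sum_{i,j}\bigl(K_Y(i,j)-K_X(i,j)\bigr)\,
 \E\bigl[\partial_{ij}F-\delta_{ij}\partial_iF\bigr](\cdot).
\]
The $-\delta_{ij}\partial_iF$ term comes from the drift correction $-\tfrac12\operatorname{diag}K_\tau$.

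\textbf{Sign of the bracket.} Write $W=\varepsilon+\sum_i\lambda_i\e^{x_i}$ and $p_i=\lambda_i\e^{x_i}/W$. The Hessian formula in the proof of Lemma~\ref{lem:positive-sum-derivatives} gives
\[
 \partial_{ij}F=F\bigl(s\,\delta_{ij}p_i+s(s-1)p_ip_j\bigr),
 \qquad
 \partial_iF=sFp_i .
\]
Hence
\[
 \partial_{ij}F-\delta_{ij}\partial_iF=s(s-1)F\,p_ip_j\le 0
\]
for every pair $(i,j)$, including $i=j$, because $0<s<1$. Every coefficient $K_Y(i,j)-K_X(i,j)$ is nonnegative by \eqref{eq:entrywise-covariance-order}. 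So each summand of $\psi'$ is nonpositive, hence $\psi'\le0$ and $\psi(1)\le\psi(0)$.

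\textbf{Justification.} Two points remain, and they are the only real obstacle.

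First, differentiation under the expectation and the integration by parts must be legitimate. Since $F(x)\le(\varepsilon+\sum_i\lambda_i)^s\e^{s\max_i|x_i|}$, and all its derivatives up to order two are bounded by $CF$ by \eqref{eq:positive-sum-derivative-bound}, every integrand has Gaussian-integrable exponential growth, and dominated convergence applies.

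Second, the case $W=0$ is trivial. Singular covariances cause no trouble: nothing above inverts $K_X$ or $K_Y$, and integration by parts holds for degenerate Gaussians. Alternatively, replace $K$ by $K+\eta I$ in both matrices, which keeps the entrywise order, and let $\eta\downarrow0$ using continuity of both sides in the covariance, again by dominated convergence.

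The endpoint $\tau\to0$ or $\tau\to1$, where $\sqrt{\tau}$ is not differentiable, is handled by working with $\tau\in(0,1)$ and passing to the limit by continuity of $\psi$ on $[0,1]$.
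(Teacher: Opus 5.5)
Your argument is correct, and it is the standard Gaussian-interpolation proof of Kahane's convexity inequality, specialized to the concave profile $u\mapsto(\varepsilon+u)^s$. The paper does not reprove this lemma; it cites Kahane and Rhodes--Vargas for it. Your key identity $\partial_{ij}F-\delta_{ij}\partial_iF=s(s-1)F\,p_ip_j\le0$ is right, and your integrability, endpoint and degenerate-covariance remarks suffice. One small clarification: the integration by parts you actually use is $\E[Y_i\,G(Z_\tau)]=\sum_j\Cov(Y_i,Z_{\tau,j})\,\E\,\partial_jG(Z_\tau)$ for the jointly Gaussian pair $(Y,Z_\tau)$, not only the coordinate version you displayed.
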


It will be useful to state explicitly how a deterministic term is included when a common Gaussian buffer is added.
Given a covariance matrix \(K\) on
\(\{1,\ldots,M\}\), extend it to the index set \(\{0,\ldots,M\}\) by
\begin{equation}\label{eq:K-extend-circ}
 K^\circ(0,j)=K^\circ(j,0)=0,
 \qquad
 K^\circ(i,j)=K(i,j),
 \quad i,j\ge1,
\end{equation}
and set
\[
 \lambda_0=\varepsilon.
\]
The zeroth Gaussian coordinate is identically zero, and hence
\[
 \varepsilon+
 \sum_{i=1}^{M}
 \lambda_i\e^{X_i-K(i,i)/2}
 =
 \sum_{i=0}^{M}
 \lambda_i\e^{X_i-K^\circ(i,i)/2}.
\]
For a covariance matrix \(\mathcal K\) on \(\{0,\ldots,M\}\), write
\[
 \mathfrak F(\mathcal K;\lambda)
 \coloneqq
 \E\left(
 \sum_{i=0}^{M}\lambda_i\e^{X_i-\mathcal K(i,i)/2}
 \right)^s,
\]
where \(X\) is a centered real Gaussian vector with covariance \(\mathcal K\).
If \(C\ge0\) and an independent common Gaussian \(G\sim\mathcal N(0,C)\) is added to every coordinate, including the zeroth, then the entire normalized sum is multiplied by \(\e^{G-C/2}\).
Indeed,
\[
 \sum_{i=0}^{M}\lambda_i
 \e^{X_i+G-(K^\circ(i,i)+C)/2}
 =
 \e^{G-C/2}
 \sum_{i=0}^{M}\lambda_i\e^{X_i-K^\circ(i,i)/2}.
\]
Raising to \(s\) and taking expectations introduces the factor
\[
 \E\e^{sG-sC/2}
 =\exp\!\left(\frac{s^2C}{2}-\frac{sC}{2}\right)
 =\e^{-s(1-s)C/2},
\]
by independence of \(G\) and \(X\).
Consequently,
\begin{equation}
 \mathfrak F(K^\circ+C\mathbf1\mathbf1^{\mathsf T};\lambda)
 =
 \e^{-s(1-s)C/2}\mathfrak F(K^\circ;\lambda).
 \label{eq:common-gaussian-buffer}
\end{equation}
This zeroth-coordinate formulation ensures that the deterministic term
\(\varepsilon\) is transformed by the same common factor as all other summands.

Let \(w_i\ge\log2\), \(1\le i\le M\), and let \(t_i\in\R\).
For independent standard proper complex Gaussians
\((Z_r)_r\), define
\begin{equation}
 c_\vartheta(u)
 =
 \e^{\vartheta u}+\alpha\e^{-\vartheta u},
 \qquad 0\le u\le1,
 \label{eq:gaussian-deformation-c}
\end{equation}
and
\begin{equation}
 Y_i^\vartheta
 =
 2\sum_{r\le\e^{w_i}}
 r^{-1/2}
 c_\vartheta\!\left(\frac{\log r}{w_i}\right)
 \Ree(\e^{it_i\log r}Z_r).
 \label{eq:gaussian-deformation-field}
\end{equation}
The same Gaussian \(Z_r\) is used in every entry \(i\) for which
\(r\le\e^{w_i}\).

\begin{proposition}[Gaussian deformation]
\label{prop:gaussian-deformation}
Let \(b_i\ge0\), let \(d_i\in\R\) be independent of \(\vartheta\), and let
\(\varepsilon\ge0\).  Put
\begin{equation}
 \mathfrak G_\vartheta
 \coloneqq\E\Phi_\varepsilon(d+Y^{\vartheta})
 =
 \E
 \left(
 \varepsilon+
 \sum_{i=1}^{M}
 b_i\e^{d_i+Y_i^\vartheta}
 \right)^s.
 \label{eq:gaussian-deformation-functional}
\end{equation}
Then, for \(0\le\vartheta\le1/32\),
\begin{equation}
 \e^{-C_p\vartheta}\mathfrak G_0
 \le
 \mathfrak G_\vartheta
 \le
 \e^{C_p\vartheta}\mathfrak G_0.
 \label{eq:gaussian-deformation-comparison}
\end{equation}
The constant is independent of \(M\), all sites \(t_i\), and all entry-dependent cutoffs \(\e^{w_i}\).
\end{proposition}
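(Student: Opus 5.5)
The plan is to treat \(\vartheta\mapsto\mathfrak G_\vartheta\) as a Gaussian interpolation and to show a differential inequality \(\abs{\mathfrak G_\vartheta'}\le C_p\,\mathfrak G_\vartheta\) on \([0,1/32]\). Grönwall's lemma then gives \eqref{eq:gaussian-deformation-comparison} directly.

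\textbf{Setup.} For each \(\vartheta\), the vector \(Y^\vartheta\) is a centered real Gaussian vector, built from the finitely many shared Gaussians \(Z_r\) with \(r\le \e^{\max_i w_i}\). Its covariance is
\[
 K_\vartheta(i,j)
 =
 2\sum_{r\le \e^{w_i\wedge w_j}}
 r^{-1}
 c_\vartheta\!\left(\tfrac{\log r}{w_i}\right)
 c_\vartheta\!\left(\tfrac{\log r}{w_j}\right)
 \cos\bigl((t_i-t_j)\log r\bigr),
\]
using \(\E[\Ree(aZ)\Ree(bZ)]=\tfrac12\Ree(a\bar b)\) for a standard proper complex Gaussian. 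This is a smooth (indeed entire) function of \(\vartheta\).

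\textbf{Step 1: differentiate in \(\vartheta\).} The standard Gaussian interpolation identity states that, for a centered Gaussian vector with smoothly varying covariance \(K_\vartheta\) and a \(C^2\) function \(F\) of moderate growth,
\[
 \frac{\mathrm d}{\mathrm d\vartheta}\E F(Y^\vartheta)
 =
 \frac12\sum_{i,j}\partial_\vartheta K_\vartheta(i,j)\,
 \E\,\partial_{ij}F(Y^\vartheta).
\]
I would apply it with \(F=\Phi_\varepsilon(d+\cdot)\). Its hypotheses hold here because \(\Phi_\varepsilon(d+u)\) and its derivatives grow at most like \(\e^{C\norm u_\infty}\), so all the expectations are finite and differentiation under the integral is justified in this finite-dimensional setting.

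\textbf{Step 2: bound the Hessian term.} By the second-derivative formula in the proof of Lemma~\ref{lem:positive-sum-derivatives},
\[
 \partial_{ij}\Phi_\varepsilon
 =
 \Phi_\varepsilon\bigl(s\,p_i\delta_{ij}+s(s-1)p_ip_j\bigr),
 \qquad
 \sum_i p_i\le1,\quad p_i\ge0 .
\]
Hence
\[
 \Bigl|\sum_{i,j}\partial_\vartheta K_\vartheta(i,j)\,\partial_{ij}\Phi_\varepsilon\Bigr|
 \le
 2\,\Phi_\varepsilon\,\max_{i,j}\abs{\partial_\vartheta K_\vartheta(i,j)} .
\]
It therefore suffices to prove \(\abs{\partial_\vartheta K_\vartheta(i,j)}\le C_p\), uniformly in \(i,j\), the sites \(t_i\), and the cutoffs \(w_i\).

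\textbf{Step 3: the covariance derivative (main obstacle).} This is the only genuinely arithmetic step, and it is where the cutoff-relative parametrization \(u=\log r/w_i\) is essential. We have
\[
 \partial_\vartheta c_\vartheta(u)=u\bigl(\e^{\vartheta u}-\alpha\e^{-\vartheta u}\bigr),
\]
so \(\abs{\partial_\vartheta c_\vartheta(u)}\le 2u\) and \(\abs{c_\vartheta(u)}\le 2\) for \(0\le u\le 1\) and \(\vartheta\le 1/32\). By the product rule, each term of \(\partial_\vartheta K_\vartheta(i,j)\) is bounded by
\[
 C\,r^{-1}\left(\frac{\log r}{w_i}+\frac{\log r}{w_j}\right),
 \qquad r\le \e^{w_i\wedge w_j}.
\]
Mertens' estimate \(\sum_{r\le X}\tfrac{\log r}{r}=\log X+O(1)\) gives
\[
 \abs{\partial_\vartheta K_\vartheta(i,j)}
 \le
 C\,\frac{(w_i\wedge w_j)+O(1)}{w_i}
 +C\,\frac{(w_i\wedge w_j)+O(1)}{w_j}
 \le C .
\]
The \(O(1)/w\) contributions are harmless because \(w_i\ge\log 2\). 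The oscillating cosines are simply bounded by one, so the bound is uniform in the sites. This shows the \(\vartheta\)-tilt moves the log-correlated field only by a bounded amount in covariance, even though the variances themselves grow like \(\log w\).

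\textbf{Step 4: conclude.} Combining Steps 1--3 gives \(\abs{\mathfrak G_\vartheta'}\le C_p\,\mathfrak G_\vartheta\). The functional satisfies \(\mathfrak G_\vartheta>0\) unless everything vanishes, in which case the statement is trivial. Integrating \((\log\mathfrak G_\vartheta)'\) over \([0,\vartheta]\) then yields \eqref{eq:gaussian-deformation-comparison}. The constant depends only on \(s\), \(\alpha\), and the Mertens constant, so it is independent of \(M\), the sites, and the cutoffs.

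As a fallback, one could avoid differentiation and instead use Lemma~\ref{lem:concave-gaussian-comparison}. The idea would be to add a common Gaussian buffer via \eqref{eq:common-gaussian-buffer} to dominate \(K_\vartheta-K_0\) entrywise, with the bounded drift corrections absorbed through \eqref{eq:positive-sum-shift-bound}. The derivative route is cleaner, so I would try it first.
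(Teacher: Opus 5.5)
Your proof is correct, but it takes a different route from the paper's; your ``fallback'' is essentially what the paper does.

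\textbf{The paper's route.} It has three steps.
\begin{enumerate}
\item It proves the entrywise bound $\abs{K_\vartheta(i,j)-K_0(i,j)}\le C_p\vartheta$. This is the same arithmetic as your Step~3, using $\abs{\partial_\vartheta c_\vartheta(u)}\le C_pu$ and $\sum_{r\le X}\log r/r\le C\log X$.
\item It adds an independent common Gaussian of variance $C_0=C_p\vartheta$, so that $K_\vartheta^\circ+C_0\mathbf 1\mathbf 1^{\mathsf T}\ge K_0^\circ$ and $K_0^\circ+C_0\mathbf 1\mathbf 1^{\mathsf T}\ge K_\vartheta^\circ$ entrywise. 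It then applies Lemma~\ref{lem:concave-gaussian-comparison} in both directions, paying the factor $\e^{-s(1-s)C_0/2}$ from \eqref{eq:common-gaussian-buffer}.
\item Since $\mathfrak G_\vartheta$ is not Wick-normalized, it compares the weights $\lambda_i^\vartheta=b_i\e^{d_i+K_\vartheta(i,i)/2}$ with $\lambda_i^0$; these differ by factors $\e^{\pm C_p\vartheta}$.
\end{enumerate}

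\textbf{Your route.} Gaussian interpolation collapses these three steps into one.
\begin{itemize}
\item The diagonal term $s\,p_i\,\partial_\vartheta K_\vartheta(i,i)$ in your Hessian contraction absorbs the variance (Wick) correction automatically.
\item The cross term $s(s-1)p_ip_j$ is bounded without dependence on $M$ because $\sum_ip_i\le1$.
\item No buffer or weight bookkeeping is needed, and you get the explicit constant $\sup\abs{\partial_\vartheta K_\vartheta}$. In effect you reprove the relevant case of Kahane's inequality quantitatively, without needing the sign information Kahane's proof extracts.
\end{itemize}
The interpolation identity is fully justified here. $Y^\vartheta$ is a linear image of the fixed finite family $(\Ree Z_r,\Imm Z_r)$, with coefficients smooth in $\vartheta$, so Stein's lemma gives the formula even when $K_\vartheta$ is singular. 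Differentiation under the expectation is dominated via \eqref{eq:positive-sum-shift-bound}.

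\textbf{What the paper's route buys.} It needs only an entrywise bound on $K_\vartheta-K_0$, with no differentiability in $\vartheta$. It also reuses Lemma~\ref{lem:concave-gaussian-comparison}, which the paper needs anyway for the tree comparison in Section~\ref{sec:critical-gaussian}.

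A minor point: the correct bound is $\abs{c_\vartheta(u)}\le 2\cosh(1/32)$ rather than $2$, which does not affect your constants.
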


This proposition shows that the Gaussian moment functional is stable under the spectral deformation, changing by at most a bounded multiplicative factor depending exponentially on the deformation parameter.

\begin{proof}
Let
\[
 K_\vartheta(i,j)
 =
 \E(Y_i^\vartheta Y_j^\vartheta)
\]
be the covariance matrix of the real Gaussian vector
\(Y^\vartheta\).  
We first bound each entry of \(K_\vartheta-K_0\) by
\(C_p\vartheta\), uniformly in the cutoffs.  Adding a common Gaussian
of variance \(C_p\vartheta\) then gives covariance comparisons in both directions.
Finally, we account for the change in deterministic weights introduced by Wick normalization.

Since
\[
 \E\left[
 \Ree(\e^{iu}Z_r)\Ree(\e^{iv}Z_r)
 \right]
 =
 \frac12\cos(u-v),
\]
we have
\begin{align} \label{eq:gaussian-deformation-covariance}
 K_\vartheta(i,j)
 =
 2\sum_{r\le\min(\e^{w_i},\e^{w_j})}
 \frac1r
 c_\vartheta\!\left(\frac{\log r}{w_i}\right)
 c_\vartheta\!\left(\frac{\log r}{w_j}\right)
 \cos\bigl((t_i-t_j)\log r\bigr).
\end{align}

For \(0\le u\le1\) and \(0\le\vartheta\le1/32\),
\[
 \abs{\partial_\vartheta c_\vartheta(u)}
 =
 u\abs{\e^{\vartheta u}-\alpha\e^{-\vartheta u}}
 \le C_pu.
\]
Consequently,
\begin{align}
 &\abs{
 c_\vartheta(u)c_\vartheta(v)
 -
 c_0(u)c_0(v)
 }
 \le
 C_p\vartheta(u+v),
 \qquad 0\le u,v\le1.
 \label{eq:gaussian-deformation-local-covariance}
\end{align}
Using the standard prime estimate
\[
 \sum_{r\le X}\frac{\log r}{r}
 \le C\log X,
 \qquad X\ge2,
\]
see \cite[Chapter~2, Theorem~2.7(b)]{MontgomeryVaughan06}, we have
\[
 \frac1{w_i}
 \sum_{r\le\min(\e^{w_i},\e^{w_j})}\frac{\log r}{r}
 \le C\frac{\min\{w_i,w_j\}}{w_i}
 \le C,
\]
and the same estimate holds with \(i\) and \(j\) interchanged.
Thus,  the two cutoff-dependent sums are bounded uniformly.
It follows that
\begin{equation}\label{eq:gaussian-deformation-covariance-difference}
 \abs{K_\vartheta(i,j)-K_0(i,j)}
 \le
 C_p\vartheta
 \sum_{r\le\min(\e^{w_i},\e^{w_j})}
 \frac1r
 \left(
 \frac{\log r}{w_i}
 +
 \frac{\log r}{w_j}
 \right)
 \le
 C_p\vartheta.
\end{equation}

Extend \(K_\vartheta\) and \(K_0\) by a zeroth deterministic coordinate as in \eqref{eq:K-extend-circ}, and denote the extended matrices by \(K_\vartheta^\circ\) and \(K_0^\circ\).
Choose
\[
 C_0=C_p\vartheta
\]
with the constant large enough that, entrywise,
\begin{equation}
 K_\vartheta^\circ
 +
 C_0\mathbf1\mathbf1^{\mathsf T}
 \ge
 K_0^\circ,
 \qquad
 K_0^\circ
 +
 C_0\mathbf1\mathbf1^{\mathsf T}
 \ge
 K_\vartheta^\circ.
 \label{eq:gaussian-deformation-two-orders}
\end{equation}
For any fixed nonnegative weight vector
\(\lambda=(\lambda_0,\ldots,\lambda_M)\),
Lemma~\ref{lem:concave-gaussian-comparison},
\eqref{eq:common-gaussian-buffer}, and the first inequality in
\eqref{eq:gaussian-deformation-two-orders} give
\[
\mathfrak F(K_\vartheta^\circ+C_0\mathbf1\mathbf1^{\mathsf T};\lambda)=
 \e^{-s(1-s)C_0/2}
 \mathfrak F(K_\vartheta^\circ;\lambda)
 \le
 \mathfrak F(K_0^\circ;\lambda).
\]
The second inequality in
\eqref{eq:gaussian-deformation-two-orders} gives the reverse comparison, i.e., 
\[
\mathfrak F(K_0^\circ+C_0\mathbf1\mathbf1^{\mathsf T};\lambda)=
 \e^{-s(1-s)C_0/2}
 \mathfrak F(K_0^\circ;\lambda)
 \le
 \mathfrak F(K_\vartheta^\circ;\lambda).
\]
Thus,
\begin{equation}\label{eq:gaussian-deformation-fixed-weights}
 \e^{-C_p\vartheta}
 \mathfrak F(K_0^\circ;\lambda)
 \le
 \mathfrak F(K_\vartheta^\circ;\lambda)
 \le
 \e^{C_p\vartheta}
 \mathfrak F(K_0^\circ;\lambda).
\end{equation}

It remains to account for the non-Wick-normalized weights in
\eqref{eq:gaussian-deformation-functional}.  Write
\[
 b_i\e^{d_i+Y_i^\vartheta}
 =
 \lambda_i^\vartheta
 \e^{Y_i^\vartheta-K_\vartheta(i,i)/2},
 \qquad
 \text{where }\lambda_i^\vartheta
 =
 b_i\e^{d_i+K_\vartheta(i,i)/2},
\]
and set
\[
 \lambda_0^\vartheta=\varepsilon.
\]
Therefore,
\[
\mathfrak G_\vartheta
=
\mathfrak F(K_\vartheta^\circ;\lambda^\vartheta),
\qquad
\mathfrak G_0
=
\mathfrak F(K_0^\circ;\lambda^0).
\]
If \(b_i=0\), then both \(\lambda_i^\vartheta\) and
\(\lambda_i^0\) vanish. If \(b_i>0\), then
\[
 \frac{\lambda_i^\vartheta}{\lambda_i^0}
 =
 \exp\left\{
 \frac{K_\vartheta(i,i)-K_0(i,i)}{2}
 \right\},
\]
and hence \eqref{eq:gaussian-deformation-covariance-difference} gives
\begin{equation}\label{eq:gaussian-deformation-weight-comparison}
 \e^{-C_p\vartheta}\lambda_i^0
 \le
 \lambda_i^\vartheta
 \le
 \e^{C_p\vartheta}\lambda_i^0,
 \qquad 1\le i\le M.
\end{equation}
Thus,
\[
 \e^{-C_p\vartheta}
 \left(
 \varepsilon+
 \sum_{i=1}^{M}
 \lambda_i^0
 \e^{Y_i^\vartheta-K_\vartheta(i,i)/2}
 \right)
 \le
 \varepsilon+
 \sum_{i=1}^{M}
 \lambda_i^\vartheta
 \e^{Y_i^\vartheta-K_\vartheta(i,i)/2}
 \le
 \e^{C_p\vartheta}
 \left(
 \varepsilon+
 \sum_{i=1}^{M}
 \lambda_i^0
 \e^{Y_i^\vartheta-K_\vartheta(i,i)/2}
 \right).
\]
Consequently,
\[
 \e^{-sC_p\vartheta}
 \mathfrak F(K_\vartheta^\circ;\lambda^0)
 \le
 \mathfrak F(K_\vartheta^\circ;\lambda^\vartheta)
 \le
 \e^{sC_p\vartheta}
 \mathfrak F(K_\vartheta^\circ;\lambda^0).
\]
Combining
\eqref{eq:gaussian-deformation-fixed-weights}
with the preceding functional comparison and the identities for
\(\mathfrak G_\vartheta\) and \(\mathfrak G_0\), we obtain
\[
 \mathfrak G_\vartheta
 =
 \mathfrak F(K_\vartheta^\circ;\lambda^\vartheta)
 \le
 \e^{sC_p\vartheta}
 \mathfrak F(K_\vartheta^\circ;\lambda^0)
 \le
 \e^{C'_p\vartheta}
 \mathfrak F(K_0^\circ;\lambda^0)
 =
 \e^{C'_p\vartheta}\mathfrak G_0.
\]
The reverse inequality follows in the same way from the lower bound in
\eqref{eq:gaussian-deformation-fixed-weights}
and the preceding functional comparison.
This completes the proof of
\eqref{eq:gaussian-deformation-comparison}.
\end{proof}

\section{The critical tree and the localized Gaussian field}
\label{sec:critical-gaussian}

The main purpose of this section is to prove the probabilistic estimate that drives the fractional-moment estimates for the Euler product argument.
We first establish a critical fractional moment bound for a binary branching random walk.
We then compare a localized Gaussian prime field with this tree model.
The comparison is performed at the level of positive Riemann sums and uses the concave covariance principle.

The proof has two probabilistic layers.
Theorem~\ref{thm:critical-tree} first supplies a depth-dependent fractional-moment estimate on an exact binary tree.
Proposition~\ref{prop:localized-gaussian-occupation} then transfers a localized Gaussian prime field to that tree, using the concave covariance comparison twice.
Constants in the tree layer may depend on \(\beta\) and deteriorate as \(\beta\downarrow1\); they are independent of the depth.
In the field layer, \(\beta\) is fixed by \(p\), and all finite scale, cutoff, interval, and quadrature parameters must remain uniform.

Throughout this section,
\[
 1<\beta<2,
 \qquad
 s=\frac1\beta,
 \qquad
 \alpha=\beta-1,
\]
which are the parameters fixed in
\eqref{eq:parameter-definitions}.

\subsection{The critical binary tree}
\label{subsec:critical-tree-functional}

Let \(\mathscr T\) be the rooted binary tree.
For a vertex
\(v\in\mathscr T\), write \(\abs v\) for its generation, and let \(v_k\)
be its ancestor at generation \(k\), \(0\le k\le\abs v\).
For an edge \(e\) and a vertex \(v\), write \(e\preceq v\) if \(e\) lies on the unique path from the root to \(v\).
Attach to every edge \(e\) an independent real standard Gaussian \(\gamma_e\), and set
\begin{equation}
 \sigma=\sqrt{2\log2},
 \qquad
 T_v=\sigma\sum_{e\preceq v}\gamma_e,
 \qquad
 X_v=T_v-2\abs v\log2.
 \label{eq:critical-tree-field}
\end{equation}
Since \(\sigma^2=2\log2\), equivalently,
\[
 X_v
 =
 \sigma\sum_{e\preceq v}\gamma_e-\sigma^2\abs v.
\]

Let \(\mathscr F_n\) be the sigma-field generated by the Gaussian variables on the edges up to generation \(n\).
For \(n\ge0\), define
\begin{equation}
 W_n=\sum_{\abs v=n}\e^{X_v},
 \qquad
 Y_n(\beta)
 =
 \left(
 \sum_{\abs v=n}\e^{\beta X_v}
 \right)^{1/\beta}.
 \label{eq:critical-tree-functionals}
\end{equation}
If \(v'\) is a child of \(v\) and \(e=(v,v')\) is the connecting edge, then
\[
 \E\!\left(\e^{X_{v'}}\mid\mathscr F_{\abs v}\right)
 =
 \e^{X_v}\E\e^{\sigma\gamma_e-\sigma^2}
 =
 \frac12\e^{X_v}.
\]
Thus, \((W_n)_{n\ge0}\) is a nonnegative martingale with \(W_0=1\), and hence \(\E W_n=1\) for every \(n\ge0\).
The power \(\beta>1\) emphasizes the largest weights while leaving the critical normalization of the underlying branching random walk unchanged.

We recall the following standard many-to-one identity for branching random walks; see, for example, \cite[Theorem 1.1]{Shi15}.

\begin{lemma}[Many-to-one identity]
\label{lem:critical-tree-many-to-one}
Let \(n\ge0\), and let
\(F\colon\R^n\to[0,\infty]\) be measurable.  Then, 
\begin{equation}
 \E
 \sum_{\abs v=n}
 \e^{X_v}
 F(X_{v_1},\ldots,X_{v_n})
 =
 \E F(\sigma B_1,\ldots,\sigma B_n),
 \label{eq:critical-tree-many-to-one}
\end{equation}
where \(B\) is a standard real Brownian motion.
\end{lemma}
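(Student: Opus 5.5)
We prove the many-to-one identity \eqref{eq:critical-tree-many-to-one} by a direct computation with the explicit Gaussian structure of the binary tree. It is cleaner to prove the equivalent statement with the sum running over the \(2^n\) vertices of generation \(n\), one vertex at a time. Fix a vertex \(v\) with \(\abs v=n\), and let \(e_1,\ldots,e_n\) be the edges on the path from the root to \(v\). Write \(\gamma_k=\gamma_{e_k}\). Then
\[
 X_{v_k}=\sigma\sum_{\ell\le k}\gamma_\ell-\sigma^2k .
\]
So the ancestral trajectory \((X_{v_1},\ldots,X_{v_n})\) is a Gaussian random walk with i.i.d.\ \(\mathcal N(0,\sigma^2)\) increments and drift \(-\sigma^2\) per step. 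Its law is the same for every \(v\) of generation \(n\). Hence
\[
 \E\sum_{\abs v=n}\e^{X_v}F(X_{v_1},\ldots,X_{v_n})
 =2^n\,\E\!\left[\e^{S_n}F(S_1,\ldots,S_n)\right],
\]
where \(S_k=\sigma\sum_{\ell\le k}\gamma_\ell-\sigma^2k\) with \(\gamma_\ell\) i.i.d.\ standard Gaussians.

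The key step is a Cameron--Martin (exponential tilting) change of measure. We write
\[
 2^n\e^{S_n}=\exp\Bigl(\sigma\sum_{\ell\le n}\gamma_\ell-\tfrac{\sigma^2}{2}n\Bigr)
 \cdot\exp\Bigl(n\log2-\tfrac{\sigma^2}{2}n\Bigr).
\]
The second factor equals one precisely because \(\sigma^2=2\log2\); this is where the criticality normalization enters. The first factor is the Radon--Nikodym density of the measure under which the \(\gamma_\ell\) are i.i.d.\ \(\mathcal N(\sigma,1)\). Under this tilted measure,
\[
 S_k=\sigma\sum_{\ell\le k}(\gamma_\ell-\sigma)+\sigma^2k-\sigma^2k
 =\sigma\sum_{\ell\le k}\tilde\gamma_\ell ,
\]
where the \(\tilde\gamma_\ell\) are i.i.d.\ standard Gaussians. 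The drift therefore cancels exactly. Finally, \(\bigl(\sum_{\ell\le k}\tilde\gamma_\ell\bigr)_{k\le n}\) has the same joint law as \((B_1,\ldots,B_n)\) for a standard Brownian motion, since both are Gaussian with independent unit-variance increments. This gives \(\E F(\sigma B_1,\ldots,\sigma B_n)\).

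Since \(F\ge0\) is measurable and may take the value \(+\infty\), every step is an identity between integrals of nonnegative functions. The tilting identity \(\E[\e^{\sigma\sum\gamma_\ell-n\sigma^2/2}G(\gamma)]=\E G(\gamma+\sigma)\) holds for all nonnegative measurable \(G\) by monotone convergence, or simply by direct comparison of the Gaussian densities. No integrability issue arises.

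There is no genuine obstacle. The only point to check carefully is the bookkeeping of the constants: the factor \(2^n\) from counting vertices must be absorbed by the exponential normalization \(\e^{-n\sigma^2/2}\), and the drift \(-\sigma^2\) must be removed by the mean shift \(\sigma\). Both follow from \(\sigma^2=2\log2\). This is consistent with the martingale computation \(\E W_n=1\) above, which is the special case \(F\equiv1\).
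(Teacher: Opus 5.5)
Your proof is correct. The paper does not prove this lemma: it cites the general many-to-one formula for branching random walks (Theorem~1.1 of Shi's monograph). That formula holds for arbitrary reproduction point processes once $\E\sum_{\abs v=1}\e^{X_v}=1$, and identifies the spine as the random walk with one-step law $\E\sum_{\abs v=1}\e^{X_v}\one_{\{X_v\in\cdot\}}$, which here is $\mathcal N(0,\sigma^2)$.

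You instead verify the special case directly. Linearity over the $2^n$ vertices reduces everything to a single Gaussian walk with drift $-\sigma^2$. One Cameron--Martin shift by $\sigma$ then both absorbs the factor $2^n$ and cancels the drift.

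The citation buys brevity and a generality the paper does not need. Your route is self-contained and makes visible that both cancellations are exactly the criticality relation $\sigma^2=2\log2$. This is the same computation behind $\E(\e^{X_{v'}}\mid\mathscr F_{\abs v})=\frac12\e^{X_v}$. It also handles $[0,\infty]$-valued $F$ by Tonelli and a density comparison, without appeal to the general theory.
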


\begin{lemma}[Tree maximum and moment bounds]
\label{lem:critical-tree-elementary-bounds}
For every \(n\ge0\) and \(h\ge0\),
\begin{equation}
 \Prob\left\{
 \max_{\substack{v\in\mathscr T\\ \abs v\le n}}X_v>h
 \right\}
 \le\e^{-h}.
 \label{eq:critical-tree-maximum}
\end{equation}
Moreover, for every
\(
 0\le\eta\le\beta-1,
\)
one has
\begin{equation}
 \E Y_n(\beta)^{1+\eta}
 \le
 \exp\left(\frac{\sigma^2\eta^2n}{2}\right).
 \label{eq:critical-tree-supermoment}
\end{equation}
\end{lemma}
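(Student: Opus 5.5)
The plan is to obtain both bounds from the martingale \(W_n\) and from convexity, without any fine branching-random-walk theory.

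\emph{The maximum bound \eqref{eq:critical-tree-maximum}.} The key observation is that \(e^{X_v}\) dominates the indicator of the event \(\{X_v>h\}\). Introduce the stopping-type index
\[
 \tau=\min\{k\le n:\ \exists\, \abs v=k,\ X_v>h\}.
\]
For the first-passage vertices, meaning those \(v\) with \(X_v>h\) but \(X_{v_k}\le h\) for all \(k<\abs v\), we have \(e^{X_v}>e^h\). Then
\[
 \Prob\{\max X_v>h\}\le \E\sum_{v\ \text{first passage}} e^{X_v-h}.
\]
We bound the sum of \(e^{X_v}\) over first-passage vertices by its expectation. By the many-to-one identity, Lemma~\ref{lem:critical-tree-many-to-one}, applied level by level with \(F=\one\{\text{first passage at }k\}\), this expectation equals \(\Prob\{\sigma B\) first exceeds \(h\) at some time \(k\le n\}\le 1\).

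There is a caveat: Lemma~\ref{lem:critical-tree-many-to-one} as stated has \(\E e^{X_v}\) summing to one per level only with the \(2^{-k}\) normalization. The martingale computation gives \(\E\sum_{\abs v=k}e^{X_v}=1\). Summing over \(k\), the first-passage events are disjoint along each path, so the total is at most \(1\). Hence
\[
 \Prob\{\max X_v>h\}\le e^{-h}.
\]
Alternatively, one can use optional stopping for the stopped additive martingale \(W\) restricted to the frontier, which is the cleaner route.

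\emph{The moment bound \eqref{eq:critical-tree-supermoment}.} Set \(\gamma=1+\eta\in[1,\beta]\). Since \(\ell^\beta\subset\ell^\gamma\) with norm at most one, we have \(Y_n(\beta)\le Y_n(\gamma)\), and therefore
\[
 Y_n(\beta)^{\gamma}\le \sum_{\abs v=n}e^{\gamma X_v}.
\]
The expectation is then explicit:
\[
 \E\sum_{\abs v=n}e^{\gamma X_v}=2^n\E e^{\gamma T_v-2\gamma n\log 2}=2^n e^{\gamma^2\sigma^2 n/2-\gamma\sigma^2 n}.
\]
Using \(\log 2=\sigma^2/2\), the exponent becomes
\[
 \sigma^2 n\bigl(\tfrac12+\tfrac{\gamma^2}{2}-\gamma\bigr)=\frac{\sigma^2 n(\gamma-1)^2}{2}=\frac{\sigma^2\eta^2 n}{2},
\]
which is exactly the claimed bound.

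The moment bound is a routine computation. The only delicate point is the first-passage bookkeeping in the maximum bound, which I expect to handle via disjointness of first-passage events along rays together with \(\E W_k=1\).
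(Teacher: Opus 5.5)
Your moment bound is the paper's argument: $Y_n(\beta)\le Y_n(1+\eta)$ is the same subadditivity of $x\mapsto x^{(1+\eta)/\beta}$ that the paper invokes, and the Gaussian computation that follows is identical.

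For the maximum bound you take a genuinely different route, and it is correct. The paper applies Doob's maximal inequality to the mean-one martingale $W_k$. It then observes that $X_v>h$ with $\abs v\le n$ forces $W_{\abs v}\ge\e^{X_v}>\e^h$. That takes three lines and uses nothing beyond the martingale property.

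You instead dominate the indicator by $\sum\e^{X_v-h}$ over the stopping line of first-passage vertices. You then evaluate the expectation level by level with Lemma~\ref{lem:critical-tree-many-to-one}, taking $F(x_1,\dots,x_k)=\one\{x_i\le h\ (i<k),\ x_k>h\}$. Disjointness of the first-passage times of the single walk $\sigma B$ gives total mass $\Prob\{\max_{1\le k\le n}\sigma B_k>h\}\le1$.

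Your route costs the many-to-one lemma. In exchange it actually proves the sharper bound $\e^{-h}\,\Prob\{\max_{k\le n}\sigma B_k>h\}$, which gains a Gaussian factor when $h\gtrsim\sqrt n$. This is the same barrier mechanism the paper uses later for the central contribution in Theorem~\ref{thm:critical-tree}; the paper does not need it here.

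Three clean-ups are needed.
\begin{enumerate}
\item Your ``caveat'' is spurious. The identity as stated already has the right normalization: $2^k=\e^{\sigma^2k/2}$ absorbs the tilt factor, and $F\equiv1$ returns $\E W_k=1$.
\item Your fallback of summing $\E W_k=1$ over $k$ would give $n$, not $1$. To make that version work you must also use two facts. First, first-passage vertices form an antichain. Second, for each such $v$, $\E$ of the sum of $\e^{X_u}$ over descendants $u$ of $v$ at generation $n$, conditioned on $\mathscr F_{\abs v}$, equals $\e^{X_v}$. Together these bound the stopping-line sum by $\E W_n=1$.
\item The index $\tau$ you introduce is never used and can be dropped.
\end{enumerate}
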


\begin{proof}
Since \((W_k)_{k\ge0}\) is a nonnegative mean-one martingale, Doob's maximal inequality gives
\[
 \Prob\left\{
 \max_{0\le k\le n}W_k>\e^h
 \right\}
 \le \e^{-h}.
\]
If \(X_v>h\) for some \(\abs v\le n\), then
\[
W_{\abs v}\ge\e^{X_v}>\e^h,
\]
which proves
\eqref{eq:critical-tree-maximum}.

Let \(0\le\eta\le\beta-1\).
Since
\(0<(1+\eta)/\beta\le1\), subadditivity gives
\[
 Y_n(\beta)^{1+\eta}
 =
 \left(
 \sum_{\abs v=n}\e^{\beta X_v}
 \right)^{(1+\eta)/\beta}
 \le
 \sum_{\abs v=n}\e^{(1+\eta)X_v}.
\]
For every vertex of generation \(n\),
\(X_v\sim\mathcal N(-\sigma^2n,\sigma^2n)\).  Thus, 
\begin{equation*}
 \E Y_n(\beta)^{1+\eta}
\le
 2^n
 \exp\left(
 -(1+\eta)\sigma^2n
 +\frac{(1+\eta)^2\sigma^2n}{2}
 \right)
 =
 \exp\left(\frac{\sigma^2\eta^2n}{2}\right),
\end{equation*}
where \(2^n=\exp(\sigma^2n/2)\).
This proves \eqref{eq:critical-tree-supermoment}.
\end{proof}

\subsection{The critical tree estimate}
\label{subsec:critical-tree-estimate}

\begin{theorem}[Critical tree estimate]
\label{thm:critical-tree}
For every fixed \(1<\beta<2\), there is a constant \(C_\beta<\infty\) such that
\begin{equation}
 \E Y_n(\beta)
 \le
 C_\beta
 \frac{\log^2(n+2)}{\sqrt{n+1}},
 \qquad n\ge0.
 \label{eq:critical-tree-estimate}
\end{equation}
\end{theorem}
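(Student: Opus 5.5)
The plan is to control \(Y_n(\beta)\) by the path behaviour of the critical walk \(X_v=\sigma\sum_{e\preceq v}\gamma_e-\sigma^2\abs v\) under a barrier. The estimate on the barrier event comes from the many-to-one identity (Lemma~\ref{lem:critical-tree-many-to-one}) together with ballot-type bounds for Brownian motion. The excursions above the barrier are handled by Lemma~\ref{lem:critical-tree-elementary-bounds} and the branching property. The heuristic is that \(W_n\approx n^{-1/2}\) and \(\max_{\abs v=n}X_v\approx-\tfrac32\log n\), so \(Y_n(\beta)\) should be even smaller than \(n^{-1/2}\). The \(\log^2\) factor is the price of a barrier at height \(h\asymp\log(n+2)\).

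\textbf{Step 1 (barrier event, terminal grouping).} Fix \(h\ge0\) and let \(B_v=\{X_{v_k}\le h,\ 0\le k\le\abs v\}\). For \(\abs v=n\) with \(B_v\), group by terminal position, \(G_k=\{v: X_v\in(-k-1,-k]\}\) for \(k\ge-h-1\). Subadditivity of \(x\mapsto x^{1/\beta}\) gives
\[
 \Bigl(\sum_{\abs v=n}\e^{\beta X_v}\one_{B_v}\Bigr)^{1/\beta}
 \le\sum_{k}\Bigl(\sum_{v\in G_k}\e^{\beta X_v}\one_{B_v}\Bigr)^{1/\beta}.
\]
Next apply Jensen to each term and use \(\e^{\beta X_v}\le\e^{-(\beta-1)k}\e^{X_v}\) on \(G_k\). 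Lemma~\ref{lem:critical-tree-many-to-one} then gives
\[
 \E\Bigl(\sum_{v\in G_k}\e^{\beta X_v}\one_{B_v}\Bigr)^{1/\beta}
 \le\Bigl(\e^{-(\beta-1)k}\,\Prob\{\sigma B_j\le h\ (j\le n),\ \sigma B_n\in(-k-1,-k]\}\Bigr)^{1/\beta}.
\]
The discrete ballot estimate bounds the probability by \(C(1+h)(1+h+k)n^{-3/2}\). Summing over \(k\ge-h-1\) gives
\[
 \E\bigl[Y_n(\beta)\one_{\{\max_{\abs v\le n}X_v\le h\}}\bigr]
 \le C_\beta(1+h)^{2}\e^{(1-1/\beta)h}\,n^{-3/(2\beta)}.
\]
Because \(\beta<2<3\), we have \(3/(2\beta)>1/2\). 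Hence the choice \(h=c_\beta\log(n+2)\) with \(c_\beta\) small makes this term \(O(\log^2(n+2)/\sqrt{n+1})\), with room to spare.

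\textbf{Step 2 (the excursion event; main obstacle).} The difficulty is \(E:=\E[Y_n(\beta)\one_{\{\max X>h\}}]\). By \eqref{eq:critical-tree-maximum} the event has probability at most \(\e^{-h}\), but \(Y_n\) is not bounded on it. Crude Hölder bounds fail. Using \(Y_n\le(\max\e^{X_v})^{1-1/\beta}W_n^{1/\beta}\) only yields \(n^{-1/(2\beta)}\). Using the supermoment bound \eqref{eq:critical-tree-supermoment} forces \(\eta\lesssim n^{-1/2}\), which is too weak.

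I will therefore use the branching property at the stopping line \(\mathcal L_h\) of first vertices \(u\) with \(X_u>h\). The contribution of the descendants of \(u\) equals \(\e^{X_u}\widetilde Y^{(u)}_{n-\abs u}(\beta)\), where \(\widetilde Y^{(u)}\) is an independent copy of \(Y\). Subadditivity gives
\[
 E\le\E\sum_{u\in\mathcal L_h,\abs u\le n}\e^{X_u}\,\E Y_{n-\abs u}(\beta).
\]
By many-to-one, the weight \(\E\sum_{u\in\mathcal L_h}\e^{X_u}\one_{\abs u=m}\) equals the probability that \(\sigma B\) first exceeds \(h\) at time \(m\). This is summable in \(m\), with tail about \(h\,m^{-3/2}\).

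This produces a renewal-type inequality for \(a_n:=\sup_{m\le n}\sqrt{m+1}\,\E Y_m(\beta)/\log^2(m+2)\). Overshoots of \(\sigma B\) above \(h\) are \(O(1)\), so the factor \(\e^{X_u}\approx\e^{h}\) is bounded. The remaining concern is that this prefactor is compensated only through the hitting law. This is the step I expect to be delicate. The first thing I will try is to take the barrier as a curve \(h_k=h+\tfrac32\log(1+\min(k,n-k))\) rather than a constant, so that the ballot estimate in Step~1 still gives \(n^{-3/(2\beta)}\) up to logarithms. Near the end \(k\approx n\), a path that crosses the curve has a remaining depth short enough that the induction hypothesis applies with a gain. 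For crossings at small \(k\), the factor \(\e^{-(\beta-1)\cdot}\) together with \(\Prob\le\e^{-h_k}\) should close the recursion.

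\textbf{Step 3 (closing).} Combining Steps~1 and~2 gives a bound of the form \(a_n\le C_\beta+\tfrac12a_n\) for \(n\ge n_0(\beta)\). Small \(n\) are handled by \(\E Y_n\le\E W_n=1\). The two logarithms in \(\log^2(n+2)\) come from the factor \((1+h)(1+h+k)\) in the ballot estimate with \(h\asymp\log n\). The constant deteriorates as \(\beta\downarrow1\) through the sum over \(k\) of \(\e^{-(\beta-1)k/\beta}\), consistent with the remarks preceding the theorem.
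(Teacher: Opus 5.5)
Your Step~1 is a correct bound for the contribution of vertices whose ancestral paths stay below $h=c_\beta\log(n+2)$. Step~2, however, is a genuine gap, and it is where all the difficulty lies: with a barrier of logarithmic height, the excursion term carries the entire main term. Your heuristic describes typical values ($Y_n(\beta)\approx\e^{\max_{|v|=n}X_v}\approx n^{-3/2}$), but $\E Y_n(\beta)$ is driven by rare events. Count the generation-$n$ vertices with $X_v\in[y,y+1]$ whose paths stay below $y+1$, using a first and a truncated second moment. This gives $\Prob\{\max_{|v|=n}X_v\ge y\}\gtrsim\e^{-y}(1+y)n^{-3/2}\e^{-y^2/(2\sigma^2 n)}$ for $0\le y\lesssim\sqrt n$. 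Hence $\E Y_n(\beta)\ge\E\max_{|v|=n}\e^{X_v}\ge\int_0^\infty\e^{y}\Prob\{\max_{|v|=n}X_v\ge y\}\dd y\gtrsim n^{-1/2}$. The mass sits at heights $y\asymp\sqrt n$, and such paths cross every barrier of height $O(\log n)$, curved or not.

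Your stopping-line inequality cannot recover this mass with a gain. By many-to-one, the weights $\E\sum_{u\in\mathcal L_h,\,|u|=m}\e^{X_u}=\Prob\{\tau_h=m\}$ add up to $\Prob\{\tau_h\le n\}=1-O(h/\sqrt n)$. So $b_n\le A_n+\sum_{m\le n}\Prob\{\tau_h=m\}\,b_{n-m}$, with $b_n=\E Y_n(\beta)$ and $A_n$ your Step~1 bound, is a critical renewal inequality. Inserting $b_m\le a_n\log^2(m+2)/\sqrt{m+1}$ produces a coefficient $1+O(h/\sqrt n)$ in front of $a_n$, not $\tfrac12$, so $a_n\le C_\beta+\tfrac12 a_n$ does not follow.

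A correct iteration uses the renewal measure of $\tau_h$, whose density is of order $1/(h\sqrt k)$. It yields roughly $b_n\lesssim h^{-1}\bigl(\sqrt n\,A_n+n^{-1/2}\sum_{m\le n}A_m\bigr)$, which is of order $n^{-1/2}$ (up to logarithms) only if $A_n\lesssim n^{-1}$ up to logarithms. Your Jensen step only gives $A_n\approx n^{-3/(2\beta)}$, because it replaces a ballot probability $\pi\approx n^{-3/2}$ by $\pi^{1/\beta}$. This is too weak once $\beta>3/2$.

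The missing idea is to tie the barrier to the size of $Y_n(\beta)$ rather than to $n$. The paper uses the layer-cake bound $\E Y_n(\beta)\le\e^{j_0}+\sum_{j_0\le j<j_1}\e^{j+1}\Prob\{Y_n(\beta)\ge\e^j\}+\E[Y_n(\beta);Y_n(\beta)\ge\e^{j_1}]$, with barrier $h_j=j+5\log(n+2)$ in bin $j$. By \eqref{eq:critical-tree-maximum}, the crossing event costs $\e^{-j}(n+2)^{-5}$, which exactly cancels the weight $\e^{j+1}$, so the tree above the barrier never needs to be analyzed. Below the barrier:
\begin{itemize}
\item low terminal values are handled by Markov's inequality and $\E W_n=1$;
\item the central part is handled by Markov's inequality after $W_{\mathrm{cen},j}^{1/\beta}\le\sum\e^{X_v}$, which is linear in the ballot probability rather than its $1/\beta$-th power;
\item this is followed by many-to-one, a Brownian-bridge passage to a continuous barrier, and the killed Gaussian density.
\end{itemize}
The bins $0\le j\lesssim\sqrt{n\log n}$ then contribute at most $C_\beta\sum_j n^{-3/2}(j+R)R^2\e^{-j^2/(4\sigma^2 n)}\lesssim R^2 n^{-1/2}$, where $R=\log(n+2)$. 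This is exactly the height-$\sqrt n$ mass that your fixed barrier pushes into the excursion term. Finally, the supermoment bound \eqref{eq:critical-tree-supermoment} that you set aside is what handles the far tail $Y_n(\beta)\ge\e^{j_1}$ with $j_1\asymp\sqrt{nR}$. It is used only there, with $\eta_n\asymp\sqrt{R/n}$, and gives $(n+2)^{-10}$.
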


This theorem implies that a critical branching system can be extremely spiky with exponentially many leaves and rare branches, but its total effective contribution still gets smaller with depth.

\begin{remark}[The endpoint \(\beta=1\)]
\label{rem:critical-tree-beta-one}
The restriction \(\beta>1\) in Theorem~\ref{thm:critical-tree} is intrinsic to the estimate.
Indeed, at \(\beta=1\),
\[
 Y_n(1)
 =
 \sum_{\abs v=n}\e^{X_v}
 =
 W_n,
 \qquad
 \E Y_n(1)=1,
\]
whereas
\[
 \frac{\log^2(n+2)}{\sqrt{n+1}}
 \longrightarrow0,
 \qquad n\to\infty.
\]
Thus,  the estimate \eqref{eq:critical-tree-estimate} fails at
\(\beta=1\).

Moreover, its constants cannot remain uniformly bounded as
\(\beta\downarrow1\).  For each fixed \(n\),
\[
 Y_n(\beta)\longrightarrow Y_n(1)=W_n,
 \qquad \beta\downarrow1,
\]
and \(Y_n(\beta)\le W_n\).
Hence, by dominated convergence,
\[
 \E Y_n(\beta)\longrightarrow1.
\]
A bound in \eqref{eq:critical-tree-estimate} with \(C_\beta\) uniformly bounded as \(\beta\downarrow1\) would therefore imply
\[
 1
 \lesssim
 \frac{\log^2(n+2)}{\sqrt{n+1}}
\]
for every \(n\), a contradiction.

Under the parameter relation
\(\beta=1+\alpha\), the limit \(\beta\downarrow1\) corresponds to
\(p\downarrow2\).
Thus,  the breakdown of the critical-tree estimate at \(\beta=1\) corresponds exactly to the endpoint \(p=2\), showing that this tree estimate  is intrinsically a \(p>2\) argument.
\end{remark}

\begin{proof}[Proof of Theorem~\ref{thm:critical-tree}]
The proof has seven steps.
\begin{enumerate}
\item[\textbf{Step 1.}]
Handle the finitely many small depths and choose the tail parameters.

\item[\textbf{Step 2.}]
Decompose the range of \(Y_n(\beta)\) into exponential value bins.

\item[\textbf{Step 3.}]
In each bin, separate ancestral-barrier crossing, low terminal values, and central terminal values.

\item[\textbf{Step 4.}]
Use the many-to-one identity, i.e., Lemma~\ref{lem:critical-tree-many-to-one}, to estimate the central contribution.

\item[\textbf{Step 5.}]
Use Brownian bridges to pass from the discrete barrier to a continuous one.

\item[\textbf{Step 6.}]
Apply the killed Brownian density and sum the bins.

\item[\textbf{Step 7.}]
Bound the far tail by Lemma~\ref{lem:critical-tree-elementary-bounds} and complete the proof.
\end{enumerate}

\medskip
\noindent\textbf{Step 1. Small depths and tail parameters.}
The assertion is immediate for \(n=0\), since \(Y_0(\beta)=1\).
Assume henceforth that \(n\ge1\), and put
\begin{equation}
 R=\log(n+2),
 \qquad
 \eta_n=\frac{\sqrt{20nR}}{\sigma n}.
 \label{eq:critical-tree-tail-parameters}
\end{equation}
Since
\[
 \eta_n
 =
 \frac{\sqrt{20R}}{\sigma\sqrt n}
 \longrightarrow0,
\]
there is a finite integer \(n_0(\beta)\) such that
\begin{equation}
 \eta_n\le\beta-1,
 \qquad n\ge n_0(\beta).
 \label{eq:critical-tree-eta-range}
\end{equation}

For \(1\le n<n_0(\beta)\), the \(\ell^\beta\)-to-\(\ell^1\) inequality gives
\[
 Y_n(\beta)
 \le
 \sum_{\abs v=n}\e^{X_v}
 =
 W_n.
\]
Thus, 
\[
 \E Y_n(\beta)\le1.
\]
Since there are only finitely many such \(n\), these depths are absorbed into the constant \(C_\beta\).
We may therefore assume
\[
 n\ge n_0(\beta).
\]

\medskip
\noindent\textbf{Step 2. Bin thresholds and barriers.}
Define
\begin{equation}
 j_0=\lfloor-2R\rfloor,
 \qquad
 j_1=\lceil \sigma\sqrt{20nR}\rceil.
 \label{eq:critical-tree-bin-parameters}
\end{equation}
For every integer \(j_0\le j<j_1\), put
\begin{equation}
 h_j=j+5R,
 \qquad
 v_j=j-4R/(\beta-1),
 \qquad
 j_+=\max\{j,0\}.
 \label{eq:critical-tree-barriers}
\end{equation}
Since \(j\ge j_0\), one has \(h_j>0\).
The parameters have the following roles:
\begin{center}
\begin{tabular}{c| c}
parameter & role\\
\hline
\(R\)
& size of the logarithmic offsets\\
\(\eta_n\)
& extra moment exponent for the far tail\\
\(j_0,j_1\)
& lower and upper limits of the bin indices\\
\(h_j\)
& upper barrier for ancestral values\\
\(v_j\)
& cutoff between low and central terminal values
\end{tabular}
\end{center}
\medskip
\noindent\textbf{Step 3. Barrier crossings and terminal values.}
Let \(\mathcal B_j\) be the event that an ancestral value exceeds \(h_j\):
\[
 \mathcal B_j
 =
 \left\{
 \max_{\substack{v\in\mathscr T\\\abs v\le n}}
 X_v>h_j
 \right\}.
\]
By \eqref{eq:critical-tree-maximum} of Lemma~\ref{lem:critical-tree-elementary-bounds},
\begin{equation}
 \Prob(\mathcal B_j)
 \le
 \e^{-h_j}
 =
 \e^{-j}(n+2)^{-5}.
 \label{eq:critical-tree-barrier-probability}
\end{equation}

At generation \(n\), define the low and central partition functions
\begin{align}
 W_{\mathrm{low},j}
 =
 \sum_{\substack{\abs v=n\\X_v\le v_j}}
 \e^{\beta X_v},
 \qquad W_{\mathrm{cen},j}
 =
 \sum_{\substack{\abs v=n\\
                   v_j<X_v\le h_j\\
                   X_{v_k}\le h_j,\ 1\le k\le n}}
 \e^{\beta X_v}.
 \label{eq:critical-tree-central-part}
\end{align}
Thus,
\[
 \sum_{|v|=n}\e^{\beta X_v}
 =
 W_{\mathrm{low},j}+W_{\mathrm{cen},j}
 \qquad\text{on }\mathcal B_j^c.
\]
Hence, 
\begin{equation*}
\{Y_n(\beta)\ge\e^j\}
\subseteq
\mathcal B_j\bigcup\left\{W_{\mathrm{low},j}\ge\frac12\e^{\beta j}\right\}
\bigcup\left\{W_{\mathrm{cen},j}\ge\frac12\e^{\beta j}\right\}.
\end{equation*}

For the low part,
\begin{equation}\label{eq:critical-tree-low-first-moment}
 \E W_{\mathrm{low},j}
 \le
 \e^{(\beta-1)v_j}
 \E\sum_{\abs v=n}\e^{X_v}
 = \e^{(\beta-1)v_j},
\end{equation}
since \(\E W_n=1\).
The choice of \(v_j\) gives
\[
 -\beta j+(\beta-1)v_j=-j-4R.
\]
Thus,  the offset \(4R/(\beta-1)\) produces the factor \((n+2)^{-4}\).
Markov's inequality gives
\begin{equation}\label{eq:critical-tree-low-tail}
 \Prob\left\{
 W_{\mathrm{low},j}\ge\frac12\e^{\beta j}
 \right\}
 \le
 2\e^{-j}(n+2)^{-4}.
\end{equation}

\medskip
\noindent\textbf{Step 4. The central contribution.}
For the central part, since \(1<\beta<2\),
\[
 W_{\mathrm{cen},j}^{1/\beta}
 \le
 \sum_{\substack{\abs v=n\\
                   v_j<X_v\le h_j\\
                   X_{v_k}\le h_j,\ 1\le k\le n}}
 \e^{X_v}.
\]
On the event
\(W_{\mathrm{cen},j}\ge\frac12\e^{\beta j}\), this yields
\[
 \sum_{\substack{|v|=n\\
                  v_j<X_v\le h_j\\
                  X_{v_k}\le h_j}}
 \e^{X_v}
 \ge
 2^{-1/\beta}\e^j.
\]
Consequently, by Markov's inequality and Lemma~\ref{lem:critical-tree-many-to-one}
\begin{align}
 \Prob\left\{
 W_{\mathrm{cen},j}\ge\frac12\e^{\beta j}
 \right\}
 \le
 2^{1/\beta}\e^{-j}
 \E
 \sum_{\abs v=n}
 \e^{X_v}
 \one_{\{
 X_{v_k}\le h_j,\ 1\le k\le n;\
 X_v\in[v_j,h_j]
 \}}
\le
 2\e^{-j}\pi_{n,j},
 \label{eq:critical-tree-central-tail}
\end{align}
where 
\begin{equation}
 \pi_{n,j}
 =
 \Prob\left\{
 \sigma B_k\le h_j,\ 1\le k\le n,\
 \sigma B_n\in[v_j,h_j]
 \right\}.
 \label{eq:critical-tree-ballot-probability}
\end{equation}

Combining
\eqref{eq:critical-tree-barrier-probability},
\eqref{eq:critical-tree-low-tail}, and
\eqref{eq:critical-tree-central-tail}, we obtain
\begin{equation}
 \e^{j+1}
 \Prob\{Y_n(\beta)\ge\e^j\}
 \le
 C\left\{
 (n+2)^{-4}+\pi_{n,j}
 \right\}.
 \label{eq:critical-tree-bin-tail}
\end{equation}

\medskip
\noindent\textbf{Step 5. From the discrete barrier to a continuous barrier.}
We now estimate \(\pi_{n,j}\).
Conditionally on the integer skeleton
\((B_0,\ldots,B_n)\), the processes
\[
 b_k(t)
 =
 B_{k+t}-B_k-t(B_{k+1}-B_k),
 \qquad 0\le t\le1,
 \quad 0\le k<n,
\]
are independent standard Brownian bridges, and are independent of the skeleton.
For a standard Brownian bridge \(b\), reflection of Brownian paths at their first hit of \(u_0>0\) gives
\[
 \Prob\left\{
 \sup_{0\le t\le1}b(t)\ge u_0
 \right\}
 =
 \frac{\mathfrak g_1(2u_0)}{\mathfrak g_1(0)}
 =
 \e^{-2u_0^2},
\]
where
\[
 \mathfrak g_T(u)
 =
 \frac1{\sqrt{2\pi T}}\e^{-u^2/(2T)}
\]
is the centered Gaussian density of variance \(T\).
Applying the same identity to \(-b\) and taking a union yields
\begin{equation}
 \Prob\left\{
 \sup_{0\le t\le1}\abs{b(t)}>u_0
 \right\}
 \le
 2\e^{-2u_0^2}.
 \label{eq:brownian-bridge-tail}
\end{equation}
Thus,
\[
 \Prob\left\{\sup_{0\le t\le1}|\sigma b(t)|>\sigma\sqrt{10R}\right\}
 \le2\e^{-20R}.
\]
Taking a union bound over the \(n\) bridge pieces gives
\begin{equation}\label{eq:critical-tree-bridge-failure}
 \Prob\left\{
 \max_{0\le k<n}\sup_{0\le t\le1}
 \abs{\sigma b_k(t)}>\sigma\sqrt{10R}
 \right\}
 \le
 2(n+2)^{-19}.
\end{equation}
On the complementary event, the discrete barrier in
\eqref{eq:critical-tree-ballot-probability} implies the continuous barrier
\[
 \sigma B_t\le H_j,
 \qquad 0\le t\le n,
\]
where \(H_j=h_j+\sigma\sqrt{10R}\).
Indeed, on every interval \([k,k+1]\) the linear interpolation of the two integer skeleton values is at most \(h_j\), because both endpoints are.
The bridge deviation is at most \(\sigma\sqrt{10R}\), hence the complete path is at most
\(h_j+\sigma\sqrt{10R}=H_j\).  The terminal restriction remains
\(\sigma B_n\in[v_j,h_j]\), strictly below \(H_j\).

\medskip
\noindent\textbf{Step 6. The killed density and bin summation.}
For a Brownian motion of variance rate \(\sigma^2\), reflection at the first hit of \(H>0\) gives
\begin{equation}
 \Prob\left\{
 \sup_{0\le t\le n}\sigma B_t<H,\
 \sigma B_n\in\dd u
 \right\}
 =
 \left\{
 \mathfrak g_{\sigma^2n}(u)
 -
 \mathfrak g_{\sigma^2n}(2H-u)
 \right\}\dd u,
 \qquad u<H.
 \label{eq:killed-brownian-density}
\end{equation}
The second term is the reflected endpoint density: a path that hits \(H\) and ends at \(u<H\) is reflected after its first hit and ends at
\(2H-u>H\).  Subtracting these paths from the unrestricted endpoint
density gives \eqref{eq:killed-brownian-density}.
Since \(H>0\) and \(u<H\), the exponent is nonnegative and
\(1-\e^{-x}\le x\) applies with the correct sign.
Since
\[
 \frac{\mathfrak g_{\sigma^2n}(2H-u)}
      {\mathfrak g_{\sigma^2n}(u)}
 =
 \exp\left(
 -\frac{2H(H-u)}{\sigma^2n}
 \right),
\]
the inequality \(1-\e^{-x}\le x\) yields
\begin{equation}
 \mathfrak g_{\sigma^2n}(u)
 -
 \mathfrak g_{\sigma^2n}(2H-u)
 \le
 \mathfrak g_{\sigma^2n}(u)
 \frac{2H(H-u)}{\sigma^2n}.
 \label{eq:killed-density-bound}
\end{equation}

Since \(R\ge\log3\), we have \(\sigma\sqrt{10R}\le CR\).
For \(u\in[v_j,h_j]\), the definitions in
\eqref{eq:critical-tree-barriers} give
\[
 H_j-u\le H_j-v_j
 =\left(5+\frac4{\beta-1}\right)R+\sigma\sqrt{10R},
 \qquad
 h_j-v_j=\left(5+\frac4{\beta-1}\right)R.
\]
Together with the definition of \(H_j\), these give
\begin{equation}
 H_j\le j_++C_\beta R,
 \qquad
 H_j-u\le C_\beta R,
 \qquad
 h_j-v_j\le C_\beta R.
 \label{eq:critical-tree-barrier-geometries}
\end{equation}
Moreover,
\begin{equation}
 \mathfrak g_{\sigma^2n}(u)
 \le
 C_\beta n^{-1/2}
 \exp\left(
 -\frac{j_+^2}{4\sigma^2n}
 \right).
 \label{eq:critical-tree-density-bound}
\end{equation}
Indeed, for \(j\ge0\) write \(u=j+e\), where
\(\abs e\le C_\beta R\), and use
\[
 (j+e)^2\ge\frac12j^2-e^2.
\]
The quantity \(R^2/n\) is bounded for \(n\ge1\).
For \(j<0\),
\eqref{eq:critical-tree-density-bound} is the trivial Gaussian density
bound.
Integrating \eqref{eq:killed-brownian-density} over the terminal interval, and using
\eqref{eq:killed-density-bound},
\eqref{eq:critical-tree-barrier-geometries}, and
\eqref{eq:critical-tree-density-bound}, gives
\begin{align*}
\Prob\left\{
 \sup_{0\le t\le n}\sigma B_t\le H_j,
 \quad \sigma B_n\in[v_j,h_j]
 \right\}
 &=
 \int_{v_j}^{h_j}
 \left\{\mathfrak g_{\sigma^2n}(u)
       -\mathfrak g_{\sigma^2n}(2H_j-u)\right\}\dd u
 \\
 &\le
 C_\beta n^{-1/2}
 \exp\!\left(-\frac{j_+^2}{4\sigma^2n}\right)
 \frac{(j_++R)R}{n}\,(h_j-v_j)
 \\
 &\le
 C_\beta n^{-3/2}(j_++R)R^2
 \exp\!\left(-\frac{j_+^2}{4\sigma^2n}\right).
\end{align*}
The factors come from the Gaussian density, the barrier term, and the length of the terminal interval.
Combining the preceding good-bridge implication with a union bound gives
\begin{equation*}
 \pi_{n,j}
 \le
 \Prob\left\{
 \max_{0\le k<n}\sup_{0\le t\le1}|\sigma b_k(t)|
 >\sigma\sqrt{10R}
 \right\}
+
 \Prob\left\{
 \sup_{0\le t\le n}\sigma B_t\le H_j,
 \quad \sigma B_n\in[v_j,h_j]
 \right\}.
\end{equation*}
Together with \eqref{eq:critical-tree-bridge-failure}, this proves
\begin{equation}\label{eq:critical-tree-ballot-bound}
 \pi_{n,j}
 \le
C_\beta
 n^{-3/2}(j_++R)R^2
 \exp\left(
 -\frac{j_+^2}{4\sigma^2n}
 \right)
 + C(n+2)^{-19} .
\end{equation}

We next sum the bins.
The elementary tail decomposition
\begin{equation}\label{eq:critical-tree-tail-decomposition}
 \E Y_n(\beta)
 \le
 \e^{j_0}
 +
 \sum_{j=j_0}^{j_1-1}
 \e^{j+1}\Prob\{Y_n(\beta)\ge\e^j\}
 +
 \E\left[
 Y_n(\beta);
 Y_n(\beta)\ge\e^{j_1}
 \right]
\end{equation}
follows by partitioning the range of \(Y_n(\beta)\) into exponential bins.

After multiplication by \(\e^{j+1}\), the barrier and low-terminal bounds contribute \(C(n+2)^{-4}\) per bin, and the bridge error contributes
\(C(n+2)^{-19}\).  
Summing these errors over
\[
 j_1-j_0=O(\sqrt{nR}+R)
\]
bins gives a total of at most
\[
 C(\sqrt{nR}+R)\bigl((n+2)^{-4}+(n+2)^{-19}\bigr)
 \le C\frac{R^2}{\sqrt n}.
\]
Thus, by \eqref{eq:critical-tree-bin-tail} and
\eqref{eq:critical-tree-ballot-bound},
\begin{align}
 \sum_{j=j_0}^{j_1-1}
 \e^{j+1}\Prob\{Y_n(\beta)\ge\e^j\}
 \le
 C_\beta
 n^{-3/2}R^2
 \sum_{j=j_0}^{j_1-1}
 (j_++R)
 \exp\left(
 -\frac{j_+^2}{4\sigma^2n}
 \right)
 +
 C_\beta\frac{R^2}{\sqrt n}.
 \label{eq:critical-tree-middle-sum-raw}
\end{align}

For the nonnegative bins, comparison with Gaussian integrals gives
\begin{equation}
 \sum_{j\ge0}
 (j+R)
 \exp\left(
 -\frac{j^2}{4\sigma^2n}
 \right)
 \le
 C(n+R\sqrt n)
 \le
 Cn,
 \label{eq:critical-tree-positive-bin-sum}
\end{equation}
because \(R\le C\sqrt n\).
There are \(O(R)\) negative bins, and their contribution to the first term on the right of
\eqref{eq:critical-tree-middle-sum-raw} is at most
\[
 C_\beta n^{-3/2}R^4
 \le
 C_\beta\frac{R^2}{\sqrt n}.
\]
Consequently,
\begin{equation}
 \sum_{j=j_0}^{j_1-1}
 \e^{j+1}\Prob\{Y_n(\beta)\ge\e^j\}
 \le
 C_\beta\frac{R^2}{\sqrt n}.
 \label{eq:critical-tree-middle-sum}
\end{equation}

\medskip
\noindent\textbf{Step 7. The far tail.}
The chosen \(\eta_n\) minimizes
\[
 -\eta\sigma\sqrt{20nR}+\frac{\sigma^2\eta^2n}{2}.
\]
Since \(j_1\ge\sigma\sqrt{20nR}\) and \(\eta_n\le\beta-1\), Lemma~\ref{lem:critical-tree-elementary-bounds} gives
\begin{align}
 \E\left[
 Y_n(\beta);
 Y_n(\beta)\ge\e^{j_1}
 \right]
 &\le
 \e^{-\eta_nj_1}
 \E Y_n(\beta)^{1+\eta_n}
 \notag\\
 &\le
 \exp\left(
 -\eta_n\sigma\sqrt{20nR}+\frac{\sigma^2\eta_n^2n}{2}
 \right)
 =
 (n+2)^{-10}.
 \label{eq:critical-tree-far-tail}
\end{align}
Finally,
\[
 \e^{j_0}
 \le
 \e^{-2R}
 =
 (n+2)^{-2}.
\]
Combining
\eqref{eq:critical-tree-tail-decomposition},
\eqref{eq:critical-tree-middle-sum}, and
\eqref{eq:critical-tree-far-tail}, we obtain 
\eqref{eq:critical-tree-estimate}.
\end{proof}

\subsection{Localized Gaussian moment}
\label{subsec:localized-gaussian-occupation}

For each prime \(r\), let \(g_r\) be an independent standard proper complex Gaussian:
\[
 \E g_r=\E g_r^2=0,
 \qquad
 \E\abs{g_r}^2=1.
\]
For \(w\ge1\), define
\begin{equation}
 Z_w(t)
 =
 \sqrt2
 \sum_{r\le\e^w}
 r^{-1/2}\Ree(g_r\e^{it\log r}),
 \qquad
 V_w
 =
 \sum_{r\le\e^w}\frac1r.
 \label{eq:gaussian-prime-field}
\end{equation}

The field \(Z_w\) is centered and real Gaussian.
Put \(\tau=t-t'\) and
\[
 A(v)\coloneqq\sum_{r\le\e^v}\frac1r=\log v+c_0+E(v),
 \qquad v\ge1.
\]
A quantitative form of Mertens' theorem for reciprocal primes,
\cite[Section~6.2.1, Exercise~3(b), p.~182]{MontgomeryVaughan06}, gives
\(E(v)=O(\e^{-c\sqrt v})\), after enlarging the constant on the bounded
initial range.
In particular, \(E\) is bounded and integrable.
Stieltjes integration by parts gives
\begin{align*}
 \Cov(Z_w(t),Z_w(t'))
 &=\frac12\cos(\tau\log2)
   +\int_{(1,w]}\cos(\tau v)\dd A(v)
 \\
 &=\int_1^w\frac{\cos(\tau v)}v\dd v
   +\frac12\cos(\tau\log2)
   +[\cos(\tau v)E(v)]_1^w
 \\
 &\quad+\tau\int_1^w\sin(\tau v)E(v)\dd v.
\end{align*}
For \(|\tau|\le2\), the error terms have total absolute value at most
\[
 \frac12+2\|E\|_{L^\infty([1,\infty))}
 +2\|E\|_{L^1([1,\infty))},
\]
independently of \(w\).
Thus, uniformly for \(w\ge1\) and \(\abs{t-t'}\le2\),
\begin{equation}
 \Cov\bigl(Z_w(t),Z_w(t')\bigr)
 =
 \int_1^w
 \frac{\cos((t-t')v)}v\,\dd v
 +
 O(1),
 \label{eq:gaussian-prime-covariance-asymptotic}
\end{equation}
where the error is absolute.
In particular,
\begin{equation}
 \Var(Z_w(t))
 =
 V_w
 =
 \log w+O(1),
 \qquad w\ge1,
 \label{eq:gaussian-prime-variance}
\end{equation}
uniformly in \(t\).

Let \(w\) and \(\ell\) be dyadic scales satisfying
\[
 1\le w\le N,
 \qquad
 0<\ell\le1,
 \qquad
 w\ell\ge1,
\]
and set
\[
 n=\log_2(w\ell)\in\mathbb N_0.
\]
If \(J\subset\R\) is an interval of length \(\ell\), define
\begin{equation}
 \mathfrak G_{N,w,J}
 =
 \frac Nw
 M_{\e^N}^{-\delta}
 \frac{(N\ell)^{-\alpha}}{\ell}
 \int_J
 M_{\e^w}^{-2\alpha}
 \e^{\sqrt2\beta Z_w(t)}
 \dd t.
 \label{eq:localized-gaussian-functional}
\end{equation}

This functional measures the normalized exponential mass of the localized Gaussian prime field on the interval \(J\), with the prefactors chosen to place the contribution at the critical multiscale normalization.

\begin{proposition}[Localized Gaussian moment]
\label{prop:localized-gaussian-occupation}
For every \(p>2\), there exists a constant \(C_p<\infty\), depending only on \(p\), such that
\begin{equation}
 \E\mathfrak G_{N,w,J}^{s}
 \le
 C_p \frac{\log^2(n+2)}{\sqrt{n+1}}.
 \label{eq:localized-gaussian-occupation}
\end{equation}
The same conclusion, with a fixed additional factor, holds if \(J\) is replaced by a union of at most two intervals of length at most \(\ell\), with the same value of \(\ell\) retained in
\eqref{eq:localized-gaussian-functional}.
\end{proposition}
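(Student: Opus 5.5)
The plan is to reduce $\E\mathfrak G_{N,w,J}^s$ to $\E Y_n(\beta)$ from Theorem~\ref{thm:critical-tree}. The reduction has three stages: an exact renormalization of the prefactors, removal of a coarse (almost constant) part of the field, and a two-sided Kahane comparison (Lemma~\ref{lem:concave-gaussian-comparison}) between the remaining fine field and the binary branching random walk of depth $n=\log_2(w\ell)$.

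\emph{Step 1: normalization.} Write $X_w(t)=\sqrt2 Z_w(t)-V_w$. Then $X_w(t)$ is Gaussian with variance $2V_w$ and $\E\e^{X_w(t)}=1$, and
\[
\e^{\sqrt2\beta Z_w(t)}=\e^{\beta V_w}\e^{\beta X_w(t)} .
\]
Mertens gives $M_{\e^N}\asymp N$, $M_{\e^w}\asymp w$, and \eqref{eq:gaussian-prime-variance} gives $\e^{\beta V_w}\asymp w^\beta$. By \eqref{eq:parameter-definitions}, the power of $N$ in the prefactor is $1-\delta-\alpha=0$, the power of $w$ is $-1-2\alpha+\beta=-\alpha$, and the power of $\ell$ is $-\alpha-1$. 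Hence
\[
\mathfrak G_{N,w,J}\asymp_p (w\ell)^{-\alpha}\,\frac1\ell\int_J\e^{\beta X_w(t)}\dd t .
\]
Since $\alpha s=1-s$, the prefactor raised to the power $s$ becomes $2^{-n(1-s)}$. This exponent should cancel exactly against the normalization $\e^{X_v}=\e^{T_v}2^{-2|v|}$ of the tree field, and I will check that bookkeeping at the end.

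\emph{Step 2: coarse/fine split.} Split $Z_w=Z^{\mathrm c}+Z^{\mathrm f}$ according to whether $r\le\e^{1/\ell}$ or $\e^{1/\ell}<r\le\e^w$; the two parts are independent.
\begin{itemize}
\item By \eqref{eq:gaussian-prime-covariance-asymptotic}, for $t,t'\in J$ the coarse covariance equals $\log(1/\ell)+O(1)$. Using Lemma~\ref{lem:concave-gaussian-comparison} together with a common buffer as in \eqref{eq:common-gaussian-buffer}, in both directions, I replace $Z^{\mathrm c}$ on $J$ by a single Gaussian $G$ of the same variance at bounded multiplicative cost.
\item After this replacement the coarse contribution factors out as $\e^{\beta(\sqrt2G-\Var(\sqrt2 G)/2)}$. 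Raising to the power $s$ and using $\beta s=1$, its expectation is exactly $1$.
\end{itemize}
What remains is the fine field, whose covariance is
\[
2\int_{1/\ell}^{w}\frac{\cos((t-t')v)}{v}\dd v+O(1)=2\log^+\!\Bigl(\frac{\min(w,|t-t'|^{-1})}{1/\ell}\Bigr)+O(1).
\]

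\emph{Step 3: tree comparison (main obstacle).} Partition $J$ dyadically into $2^n$ cells of length $1/w$ and identify the cells with the leaves $v$, $|v|=n$, of $\mathscr T$. If two cells have their last common dyadic ancestor at generation $k$, their fine covariance is $2k\log2+O(1)=\sigma^2k+O(1)$, which matches the tree covariance of $T_v$ up to $O(1)$. I will treat the $s$-moment of the Riemann sum (a fine quadrature of the integral, then a limit) as a functional of the type in Lemma~\ref{lem:concave-gaussian-comparison}. Each quadrature point inside cell $v$ is modelled by $T_v$ plus an independent within-cell Gaussian of bounded variance. Because the covariance discrepancy is uniformly bounded in both directions, adding a common buffer of variance $O(1)$ gives comparability up to constants, with the deterministic Wick-weight changes absorbed as in the proof of Proposition~\ref{prop:gaussian-deformation}. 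The within-cell integral is then bounded by its expectation, up to a constant, through the same Kahane argument with a constant field. This yields
\[
\E\mathfrak G^s\lesssim_p 2^{-n(1-s)}\,\E\Bigl(2^{-n}\sum_{|v|=n}\e^{\beta(T_v-\sigma^2 n/2)}\Bigr)^{s},
\]
and rewriting via $X_v=T_v-\sigma^2 n$ and $2^n=\e^{\sigma^2n/2}$, this should collapse to $\E\bigl(\sum_{|v|=n}\e^{\beta X_v}\bigr)^{1/\beta}=\E Y_n(\beta)$. Theorem~\ref{thm:critical-tree} then gives \eqref{eq:localized-gaussian-occupation}.

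The delicate points, and the part I expect to be the main obstacle, are:
\begin{itemize}
\item all $O(1)$ covariance errors must be uniform in $N$, $w$, $\ell$ and the position of $J$;
\item $J$ is not aligned with a dyadic grid, so I will cover $J$ by at most two aligned dyadic intervals of length $\ell$ and use $(a+b)^s\le a^s+b^s$;
\item cells at cyclic distance close to $\ell$ but in different halves have covariance only bounded above by the tree covariance; this is fine because just one direction of comparison is needed for an upper bound.
\end{itemize}
The same subadditivity argument handles the stated extension to a union of two intervals, at the cost of a fixed factor.
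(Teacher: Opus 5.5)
Your overall route is the paper's. You use the same renormalization to $2^{-\alpha n}\ell^{-1}\int_J\e^{\sqrt2\beta Z_w-\beta V_w}\dd t$, the same split at prime frequency $1/\ell$, and the same removal of the coarse field by a buffered application of Lemma~\ref{lem:concave-gaussian-comparison} against one common Gaussian with $s\beta=1$. The Kahane comparison of the fine field with the depth-$n$ tree is also the paper's, and your closing bookkeeping $2^{-\alpha ns}\bigl(2^{-n}\sum_{|v|=n}\e^{\beta(T_v-\sigma^2n/2)}\bigr)^s=Y_n(\beta)$ is correct.

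The flaw is the covariance claim that Step~3 rests on. It is not true that two cells whose last common ancestor is at generation $k$ have fine covariance $\sigma^2k+O(1)$, so the discrepancy with the tree is \emph{not} bounded in both directions. The two cells adjacent to the midpoint of $J$ have the root as last common ancestor, so their tree covariance is $0$. Yet they lie within distance $2/w$ of each other, so their fine covariance is $\sigma^2n+O(1)$. Your last bullet then misidentifies both the offending pairs and the direction. Cells at distance $\asymp\ell$ in different halves do match the tree up to $O(1)$ (nothing is cyclic here). The fine covariance is never merely bounded \emph{above} by the tree covariance, and if that were the only available inequality, Lemma~\ref{lem:concave-gaussian-comparison} would bound the fine functional from \emph{below}. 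That is useless for \eqref{eq:localized-gaussian-occupation}.

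The inequality that actually holds, and that the paper uses, goes the other way. After rescaling $J$ to $[0,1]$, points in cells with a common prefix of length $k$ satisfy $|x-x'|\le2^{-k}$, hence
\[
 \Cov\bigl(\sqrt2Z^{\mathrm f}(x),\sqrt2Z^{\mathrm f}(x')\bigr)
 =2\log\min\{2^n,|x-x'|^{-1}\}+O(1)
 \ge\sigma^2k-C.
\]
The argument then runs as follows:
\begin{enumerate}
\item Add an independent buffer $B_{\mathrm f}\sim\mathcal N(0,C)$ to the fine field, so that its covariance dominates the tree covariance entrywise.
\item The lemma then bounds the buffered fine functional \emph{above} by the tree functional.
\item By $s\beta=1$, the buffer is removed at cost $\e^{(\beta-1)C/2}$, and $V_{\mathrm f}=n\log2+O(1)$ handles the change of Wick normalization.
\end{enumerate}

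Once this is in place, your within-cell Gaussians are unnecessary. Assign every quadrature point in a cell to the same leaf variable $T_{v(i)}$; the lemma allows singular covariances, and same-cell pairs satisfy the bound with $k=n$. The leaf weights then tend to $2^{-n}$ as the mesh shrinks. Likewise, covering $J$ by aligned dyadic intervals is not needed: $Z_w$ is stationary, so the dyadic cells may be taken relative to the left endpoint of $J$.
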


This proposition shows that the normalized Gaussian mass on a spatial window decays with the effective scale depth \(n\), providing a multiscale gain whose \(1/(1-s)\)-power is summable, as required in the moment argument of Section~\ref{sec:euler-occupation}.

\begin{proof}
The proof has six steps.
\begin{enumerate}
\item[\textbf{Step 1.}]
Reduce all Euler and Mertens factors to the single prefactor \(2^{-\alpha n}\) and Wick-normalize the field.

\item[\textbf{Step 2.}]
Split at logarithmic prime frequency \(1/\ell\) into a nearly constant coarse field and a log-correlated fine field.

\item[\textbf{Step 3.}]
Compare the coarse field with a common Gaussian and use \(s\beta=1\).

\item[\textbf{Step 4.}]
Compare the fine field with the depth-\(n\) binary tree.

\item[\textbf{Step 5.}]
Pass to the integrals and identify the limiting leaf weights.

\item[\textbf{Step 6.}]
Check the tree normalization and apply the critical tree estimate, i.e., Theorem~\ref{thm:critical-tree}.
\end{enumerate}

The parameters used below have the following roles:
\begin{center}
\begin{tabular}{c| c| c}
parameter & role & dependence\\
\hline
\(N\)
& global logarithmic cutoff
& given\\
\(w\)
& logarithmic cutoff of the local field
& given, \(w\le N\)\\
\(\ell\)
& length of the interval
& given\\
\(\ell^{-1}\)
& cutoff separating the coarse and fine fields
& \(\ell\)\\
\(n=\log_2(w\ell)\)
& depth of the comparison tree
& \(w,\ell\)
\end{tabular}
\end{center}
After rescaling to \([0,1]\), the fine frequencies range from \(1\) to \(2^n=w\ell\), and the comparison tree has \(2^n\) leaves.

\medskip
\noindent\textbf{Step 1. Deterministic normalization.}
We first simplify the deterministic normalization.  Recall that
\[
 \delta=1-\alpha,
 \qquad
 \beta=1+\alpha,
 \qquad
 \delta-\beta=-2\alpha.
\]
The deterministic prefactor in
\eqref{eq:localized-gaussian-functional} is
\[
 \frac Nw
 M_{\e^N}^{-\delta}
 (N\ell)^{-\alpha}\ell^{-1}
 M_{\e^w}^{-2\alpha},
\]
whereas the prefactor in
\[
 2^{-\alpha n}
 \frac1\ell
 \int_J
 \e^{\sqrt2\beta Z_w(t)-\beta V_w}
 \dd t
\]
is
\(
 2^{-\alpha n}\ell^{-1}\e^{-\beta V_w}.
\)
Their ratio is therefore
\begin{align}
 \frac Nw
 (N\ell)^{-\alpha}(w\ell)^\alpha
 M_{\e^N}^{-\delta}M_{\e^w}^{-2\alpha}
 \e^{\beta V_w}
 &=
 \left(\frac Nw\right)^{1-\alpha}
 M_{\e^N}^{-\delta}M_{\e^w}^{-2\alpha}
 \e^{\beta V_w}
 \\
 &=
 \left(\frac{N}{M_{\e^N}}\right)^\delta
 \left(\frac{M_{\e^w}}{w}\right)^\delta
 \exp\{\beta V_w-\beta\log M_{\e^w}\}.
 \label{eq:localized-gaussian-normalization-ratio}
\end{align}
By Mertens' product theorem,
\[
 M_{\e^v}\asymp v,
 \qquad v\ge1;
\]
see \cite[Chapter~2, Theorem~2.7(e)]{MontgomeryVaughan06}.
Moreover,
\[
 \log M_{\e^w}-V_w
 =
 \sum_{r\le\e^w}
 \left(
 -\log(1-r^{-1})-\frac1r
 \right)
 =
 O(1),
\]
since
\[
 -\log(1-r^{-1})-\frac1r=O(r^{-2})
\]
and \(\sum_r r^{-2}<\infty\).  Hence
\[
 \frac{N}{M_{\e^N}}\asymp1,
 \qquad
 \frac{M_{\e^w}}{w}\asymp1,
 \qquad
 V_w-\log M_{\e^w}=O(1),
\]
uniformly in \(1\le w\le N\). 
Consequently,
\begin{equation}
 \mathfrak G_{N,w,J}
 \asymp_p
 2^{-\alpha n}
 \frac1\ell
 \int_J
 \e^{\sqrt2\beta Z_w(t)-\beta V_w}
 \dd t.
 \label{eq:localized-gaussian-normalized-form}
\end{equation}

\medskip
\noindent\textbf{Step 2. Rescaling and the coarse--fine decomposition.}
Let \(t_J\) be the left endpoint of \(J\), and write
\[
 t=t_J+\ell x,\qquad 0\le x\le1.
\]
The factor \(\ell^{-1}\) in
\eqref{eq:localized-gaussian-normalized-form} cancels with
\(\dd t=\ell\,\dd x\).  Since \(w\ell\ge1\), we may split the
field at the logarithmic frequency \(1/\ell\).
Set
\begin{align}
 Z^{\mathrm c}(x)
=
 Z_{1/\ell}(t_J+\ell x),
\qquad
 Z^{\mathrm f}(x)
 =
 Z_w(t_J+\ell x)-Z_{1/\ell}(t_J+\ell x).
 \label{eq:fine-prime-field}
\end{align}
Equivalently, \(Z^{\mathrm c}\) contains the primes
\(r\le\e^{1/\ell}\), while \(Z^{\mathrm f}\) contains
\(\e^{1/\ell}<r\le\e^w\).  These two fields are independent.  Let
\[
 V_{\mathrm c}
 =
 \Var(Z^{\mathrm c}(x))
 =
 V_{1/\ell},
 \qquad
 V_{\mathrm f}
 =
 \Var(Z^{\mathrm f}(x))
 =
 V_w-V_{1/\ell}.
\]

\medskip
\noindent\textbf{Step 3. Removing the coarse field.}
By \eqref{eq:gaussian-prime-covariance-asymptotic}, uniformly for
\(x,x'\in[0,1]\),
\begin{equation*}
 V_{\mathrm c}
 -
 \Cov\bigl(Z^{\mathrm c}(x),Z^{\mathrm c}(x')\bigr)
=
 \int_\ell^1
 \frac{1-\cos((x-x')z)}{z}\,\dd z
 +O(1).
\end{equation*}
The left-hand side is nonnegative by its exact prime-sum representation.
Moreover,
\[
 \int_\ell^1\frac{1-\cos((x-x')z)}z\,\dd z
 \le\frac{|x-x'|^2}{2}\int_\ell^1z\,\dd z
 \le\frac14.
\]
Choose an absolute constant \(C_0\) large enough that
\begin{equation}
 0\le V_{\mathrm c}
 -\Cov\bigl(Z^{\mathrm c}(x),Z^{\mathrm c}(x')\bigr)
 \le\frac{C_0}{2},
 \qquad x,x'\in[0,1].
 \label{eq:coarse-covariance-flatness}
\end{equation}

Let \(\mathcal P\) be a tagged partition
\[
 0=a_0<a_1<\cdots<a_{N_{\mathcal P}}=1,
 \qquad x_i\in[a_{i-1},a_i].
\]
Set
\[
 \lambda_i=a_i-a_{i-1},\qquad
 |\mathcal P|=\max_i\lambda_i.
\]
Thus,  \(\lambda_i>0\) and \(\sum_i\lambda_i=1\).
We will let \(|\mathcal P|\) tend to zero with the field parameters fixed.
Let \(\E_{\mathrm c}\) and \(\E_{\mathrm f}\) denote expectation with respect to the coarse and fine Gaussian prime families, respectively.
Condition on the fine field and set
\begin{equation}
 \rho_i
 =
 2^{-\alpha n}\lambda_i
 \e^{\sqrt2\beta Z^{\mathrm f}(x_i)-\beta V_{\mathrm f}}.
 \label{eq:coarse-conditional-weights}
\end{equation}
Under this conditioning the \(\rho_i\)'s are fixed nonnegative weights.
Let
\[
 G_{\mathrm c}\sim\mathcal N(0,V_{\mathrm c}),
 \qquad
 B_{\mathrm c}\sim\mathcal N(0,C_0),
\]
be independent of each other and of both prime fields, and use the same \(G_{\mathrm c}\) at every site.
Here \(\E_{\mathrm c}\) also integrates over these auxiliary Gaussian variables.
By \eqref{eq:coarse-covariance-flatness},
\begin{align*}
 \Cov\bigl(\sqrt2Z^{\mathrm c}(x_i)+B_{\mathrm c},
             \sqrt2Z^{\mathrm c}(x_j)+B_{\mathrm c}\bigr)
 &=2\Cov\bigl(Z^{\mathrm c}(x_i),Z^{\mathrm c}(x_j)\bigr)+C_0\\
 &\ge2V_{\mathrm c}\\
 &=\Cov(\sqrt2G_{\mathrm c},\sqrt2G_{\mathrm c}).
\end{align*}
Multiplication by \(\beta\) preserves this covariance order.
Since \(0<s<1\), Lemma~\ref{lem:concave-gaussian-comparison} gives
\begin{equation}
\E_{\mathrm c}
 \left(
 \sum_i
 \rho_i
 \exp\left\{
 \beta(\sqrt2Z^{\mathrm c}(x_i)+B_{\mathrm c})
 -
 \frac{\beta^2}{2}(2V_{\mathrm c}+C_0)
 \right\}
 \right)^s
\le
 \E_{\mathrm c}
 \left(
 \sum_i
 \rho_i
 \e^{\sqrt2\beta G_{\mathrm c}-\beta^2V_{\mathrm c}}
 \right)^s.
 \label{eq:coarse-wick-comparison}
\end{equation}
Multiplying both sides by
\(\e^{s(\beta^2-\beta)V_{\mathrm c}}\) gives
\begin{equation}
\E_{\mathrm c}
 \left[
 \e^{\beta B_{\mathrm c}-\beta^2C_0/2}
 \sum_i
 \rho_i
 \e^{\sqrt2\beta Z^{\mathrm c}(x_i)-\beta V_{\mathrm c}}
 \right]^s
\le
 \E_{\mathrm c}
 \left(
 \sum_i
 \rho_i
 \e^{\sqrt2\beta G_{\mathrm c}-\beta V_{\mathrm c}}
 \right)^s.
 \label{eq:coarse-nonwick-comparison}
\end{equation}
Since \(s\beta=1\) and \(B_{\mathrm c}\) is independent of the coarse field, the factor on the left has expectation
\[
 \E_{\mathrm c}\e^{s\beta B_{\mathrm c}-s\beta^2C_0/2}
 =\exp\!\left(\frac{(s\beta)^2-s\beta^2}{2}C_0\right)
 =\e^{-(\beta-1)C_0/2}.
\]
On the right, the weights are fixed under the conditioning, and the same \(G_{\mathrm c}\) occurs at every site.
Hence
\begin{equation*}
\E_{\mathrm c}
\left(\sum_i\rho_i\e^{\sqrt2\beta G_{\mathrm c}-\beta V_{\mathrm c}}\right)^s
=\left(\sum_i\rho_i\right)^s
\E_{\mathrm c}\e^{\sqrt2G_{\mathrm c}-V_{\mathrm c}}
=\left(\sum_i\rho_i\right)^s.
\end{equation*}
Thus,  the common coarse factor contributes no dependence on
\(V_{\mathrm c}\), and removing \(B_{\mathrm c}\) costs only a constant
depending on \(\beta\).
We conclude that
\begin{equation}
 \E_{\mathrm c}
 \left(
 \sum_i
 \rho_i
 \e^{\sqrt2\beta Z^{\mathrm c}(x_i)-\beta V_{\mathrm c}}
 \right)^s
 \le
 C_\beta
 \left(\sum_i\rho_i\right)^s.
 \label{eq:coarse-field-comparison}
\end{equation}
Substituting \eqref{eq:coarse-conditional-weights} and then taking
\(\E_{\mathrm f}\), we obtain
\begin{equation}
\E
 \left[
 2^{-\alpha n}
 \sum_i\lambda_i
 \e^{
 \sqrt2\beta(Z^{\mathrm c}(x_i)+Z^{\mathrm f}(x_i))
 -
 \beta(V_{\mathrm c}+V_{\mathrm f})
 }
 \right]^s
\le
 C_\beta
 \E
 \left[
 2^{-\alpha n}
 \sum_i\lambda_i
 \e^{\sqrt2\beta Z^{\mathrm f}(x_i)-\beta V_{\mathrm f}}
 \right]^s.
 \label{eq:coarse-field-comparison-riemann}
\end{equation}

Keep \(N,w,\ell,J\) fixed and let \(|\mathcal P|\to0\).
The two fields are finite trigonometric sums, so the Riemann sums converge almost surely to the corresponding integrals.
Their \(s\)-th powers are bounded, uniformly in \(\mathcal P\), by an integrable random variable of the form
\[
 \exp\!\left\{
 C_\beta\sum_{r\le\e^w}r^{-1/2}|g_r|+C_\beta V_w
 \right\}.
\]
Integrability follows from the exponential moments of the finitely many Gaussian moduli.
Dominated convergence gives
\begin{align}
 \E
 \left[
 2^{-\alpha n}
 \int_0^1
 \e^{
 \sqrt2\beta(Z^{\mathrm c}(x)+Z^{\mathrm f}(x))
 -
 \beta(V_{\mathrm c}+V_{\mathrm f})
 }
 \dd x
 \right]^s
\le
 C_\beta
 \E
 \left[
 2^{-\alpha n}
 \int_0^1
 \e^{\sqrt2\beta Z^{\mathrm f}(x)-\beta V_{\mathrm f}}
 \dd x
 \right]^s.
 \label{eq:coarse-field-removed}
\end{align}

\medskip
\noindent\textbf{Step 4. Comparison with the binary tree.}
Subtracting \eqref{eq:gaussian-prime-covariance-asymptotic} at the cutoffs \(w\) and \(1/\ell\), and setting \(z=\ell v\), gives
\[
 \Cov(Z^{\mathrm f}(x),Z^{\mathrm f}(x'))
 =\int_1^{2^n}\frac{\cos((x-x')z)}z\,\dd z+O(1),
\]
uniformly for \(x,x'\in[0,1]\).
For \(|x-x'|>0\),
\[
 \left|\int_1^{\min\{2^n,|x-x'|^{-1}\}}
       \frac{\cos(|x-x'| z)-1}{z}\,\dd z\right|
 \le\frac{|x-x'|^2}{2}
     \int_1^{\min\{2^n,|x-x'|^{-1}\}}z\,\dd z
 \le\frac14.
\]
If \(|x-x'|^{-1}<2^n\), integration by parts gives
\[
 \int_{|x-x'|^{-1}}^{2^n}\frac{\cos(|x-x'| z)}z\,\dd z
 =\left.\frac{\sin(|x-x'| z)}{|x-x'| z}\right|_{|x-x'|^{-1}}^{2^n}
  +\int_{|x-x'|^{-1}}^{2^n}\frac{\sin(|x-x'| z)}{|x-x'| z^2}\,\dd z.
\]
The boundary term has absolute value at most \(2\), and the remaining integral has absolute value at most
\(|x-x'|^{-1}\int_{|x-x'|^{-1}}^\infty z^{-2}\,\dd z=1\).
For \(|x-x'|=0\), the original integral is exactly \(n\log2\).
Consequently, with the convention \(|x-x'|^{-1}=+\infty\) at
\(|x-x'|=0\),
\begin{equation}
 \Cov(Z^{\mathrm f}(x),Z^{\mathrm f}(x'))
 =\log\min\{2^n,|x-x'|^{-1}\}+O(1).
 \label{eq:fine-prime-covariance}
\end{equation}

Partition \([0,1]\) into the \(2^n\) half-open dyadic intervals of length
\(2^{-n}\), assigning the endpoint \(1\) to the last interval.  If the
dyadic cells containing \(x\) and \(x'\) have a common binary prefix of length \(k\), then
\[
 |x-x'|\le2^{-k}.
\]
Since \(0\le k\le n\), this implies
\[
 \min\{2^n,|x-x'|^{-1}\}\ge2^k.
\]
Thus,  \eqref{eq:fine-prime-covariance} gives
\begin{equation}
 \Cov(\sqrt2Z^{\mathrm f}(x),\sqrt2Z^{\mathrm f}(x'))
 \ge
 2k\log2-C.
 \label{eq:fine-tree-covariance-lower-bound}
\end{equation}
Taking \(x=x'\) in the same covariance formula gives
\begin{equation}
 V_{\mathrm f}
 =
 n\log2+O(1).
 \label{eq:fine-variance-normalization}
\end{equation}
Return to the tagged partition \(\mathcal P\).
For every tag \(x_i\), let \(v(i)\) be the leaf of the depth-\(n\) binary tree corresponding to the dyadic cell containing \(x_i\).
If \(v(i)\) and \(v(j)\) have a common prefix of length \(k\), then
\[
 \Cov(T_{v(i)},T_{v(j)})
 =
 2k\log2.
\]
Thus, by \eqref{eq:fine-tree-covariance-lower-bound} and
\eqref{eq:fine-variance-normalization},
there is an absolute constant
\(C_1\) such that, for an independent
\(B_{\mathrm f}\sim\mathcal N(0,C_1)\),
\begin{equation}
 \Cov\bigl(
 \sqrt2Z^{\mathrm f}(x_i)+B_{\mathrm f},
 \sqrt2Z^{\mathrm f}(x_j)+B_{\mathrm f}
 \bigr)
 \ge
 \Cov(T_{v(i)},T_{v(j)})
 \label{eq:fine-to-tree-covariance-order}
\end{equation}
for every pair \(i,j\).
Different tags may correspond to the same leaf.
The resulting Gaussian vector may have singular covariance, which is allowed in Lemma~\ref{lem:concave-gaussian-comparison}.
Applying that lemma to
\[
 \bigl(\beta(\sqrt2Z^{\mathrm f}(x_i)+B_{\mathrm f})\bigr)_i
 \quad\text{and}\quad
 \bigl(\beta T_{v(i)}\bigr)_i
\]
gives
\begin{equation}
\E
 \left[
 2^{-\alpha n}
 \sum_i\lambda_i
 \e^{
 \beta(\sqrt2Z^{\mathrm f}(x_i)+B_{\mathrm f})
 -
 \beta^2V_{\mathrm f}
 -
 \beta^2C_1/2
 }
 \right]^s
\le
 \E
 \left[
 2^{-\alpha n}
 \sum_i\lambda_i
 \e^{\beta T_{v(i)}-\beta^2n\log2}
 \right]^s.
 \label{eq:fine-to-tree-wick-comparison}
\end{equation}
Multiplying both sides by
\(\e^{s(\beta^2-\beta)V_{\mathrm f}}\) and using
\[
 V_{\mathrm f}=n\log2+O(1),
\]
we obtain
\begin{equation}
\E
 \left[
 2^{-\alpha n}
 \sum_i\lambda_i
 \e^{
 \sqrt2\beta Z^{\mathrm f}(x_i)
 +
 \beta B_{\mathrm f}
 -
 \beta V_{\mathrm f}
 -
 \beta^2C_1/2
 }
 \right]^s
\le
 C_\beta
 \E
 \left[
 2^{-\alpha n}
 \sum_i\lambda_i
 \e^{\beta T_{v(i)}-\beta n\log2}
 \right]^s.
 \label{eq:fine-to-tree-buffered-comparison}
\end{equation}
The extra deterministic factor on the tree side is
\[
 \exp\!\left\{s(\beta^2-\beta)(V_{\mathrm f}-n\log2)\right\},
\]
which is bounded above and below by constants depending only on
\(\beta\).  Since \(B_{\mathrm f}\) is independent of the fine field
and \(s\beta=1\),
\begin{equation}
 \E\e^{s\beta B_{\mathrm f}-s\beta^2C_1/2}
 =\e^{-(\beta-1)C_1/2}.
 \label{eq:fine-buffer-cost}
\end{equation}
Dividing out this expectation gives
\begin{equation}
\E
 \left[
 2^{-\alpha n}
 \sum_i\lambda_i
 \e^{\sqrt2\beta Z^{\mathrm f}(x_i)-\beta V_{\mathrm f}}
 \right]^s
\le
 C_\beta
 \E
 \left[
 2^{-\alpha n}
 \sum_i\lambda_i
 \e^{\beta T_{v(i)}-\beta n\log2}
 \right]^s.
 \label{eq:fine-field-to-tree-riemann}
\end{equation}

\medskip
\noindent\textbf{Step 5. Riemann limits and leaf weights.}
Keep the field parameters fixed and let \(|\mathcal P|\to0\).
The fine-field side converges by the continuity and domination used in Step~3.

For each leaf \(v\), let \(I_v\) be its dyadic cell and set
\[
 a_v(\mathcal P)=\sum_{i:x_i\in I_v}\lambda_i.
\]
Only partition intervals meeting an endpoint of \(I_v\) can contribute to the error, so
\[
 |a_v(\mathcal P)-2^{-n}|\le2|\mathcal P|.
\]
Thus,  the sum on the tree side converges almost surely to
\[
 \sum_{|v|=n}2^{-(1+\alpha)n}\e^{\beta T_v-\beta n\log2}.
\]
Since the weights sum to one, its \(s\)-th power is bounded by
\[
 \left[2^{-\alpha n}
       \max_{|v|=n}\e^{\beta T_v-\beta n\log2}\right]^s.
\]
This random variable is integrable for the fixed finite tree.
Dominated convergence on both sides therefore gives
\begin{align}
 \E
 \left[
 2^{-\alpha n}
 \int_0^1
 \e^{\sqrt2\beta Z^{\mathrm f}(x)-\beta V_{\mathrm f}}
 \dd x
 \right]^s
\le
 C_\beta
 \E
 \left[
 \sum_{\abs v=n}
 2^{-(1+\alpha)n}
 \e^{\beta T_v-\beta n\log2}
 \right]^{1/\beta}.
 \label{eq:fine-field-to-tree-limit}
\end{align}

\medskip
\noindent\textbf{Step 6. Tree normalization and the final estimate.}
The factors \(2^{-\alpha n}\) and \(2^{-n}\) come from the original normalization and the leaf weights, respectively.
Since
\(1+\alpha=\beta\), the normalization in
\eqref{eq:critical-tree-functionals} gives
\[
 \sum_{|v|=n}2^{-(1+\alpha)n}\e^{\beta T_v-\beta n\log2}
 =\sum_{|v|=n}\e^{\beta(T_v-2n\log2)}
 =\sum_{|v|=n}\e^{\beta X_v}.
\]

Combining
\eqref{eq:localized-gaussian-normalized-form},
\eqref{eq:coarse-field-removed},
\eqref{eq:fine-field-to-tree-limit},
and Theorem~\ref{thm:critical-tree}, we obtain
\[
 \E\mathfrak G_{N,w,J}^{s}
 \le
 C_p \frac{\log^2(n+2)}{\sqrt{n+1}}.
\]

All comparison constants depend only on \(\beta\), hence only on
\(p\), and are uniform in the admissible \(N,w,\ell,J\).
The Riemann limits above keep these parameters fixed and vary only the auxiliary partition; the dominating random variables need not be uniform in the field parameters.
This proves \eqref{eq:localized-gaussian-occupation} when \(J\) has length exactly \(\ell\).

If an interval has length at most \(\ell\), place it inside an interval of length \(\ell\) and use positivity, keeping the original value of
\(\ell\) in the normalization.  For a union of two such intervals, if
the corresponding normalized integrals are \(A\) and \(B\), then
\[
 (A+B)^s\le A^s+B^s,
 \qquad 0<s<1.
\]
Thus,  the same estimate holds with at most a fixed additional factor.
\end{proof}

\section{Euler moments and common-shift allocation}
\label{sec:euler-occupation}

In this section we transfer the localized Gaussian estimate from Proposition~\ref{prop:localized-gaussian-occupation} back to the jointly coupled Euler products.
We then combine the resulting local estimate with a deterministic tent allocation.

Two features are essential.
All levels \(f_h^\vartheta\) remain on the single product probability space fixed in Section~\ref{sec:preliminaries}, and use the same prime coordinates.
Moreover, inside each tent we average over one common physical shift, simultaneously at every level and every site.
Neither the level decomposition nor the shift averaging introduces independent copies of the Euler product.

\medskip
\noindent\textbf{Section-level dependency and relevant identities.}
The argument in this section has the fixed dependency chain
\[
 \begin{gathered}
 \text{shared-prime replacement}
 \longrightarrow
 \text{localized Gaussian moment}
 \longrightarrow
 \text{localized Euler moments}
 \\
 \Downarrow
 \\
 \text{tent allocation}
 \longrightarrow
 \text{Common-shift moment bound}.
 \end{gathered}
\]
All product families $f_h^\vartheta(t)$, for every $h,\vartheta,t$, are functions of the same prime array $(\zeta_r)_{r\le y}$.
The only additional randomness introduced below is either the common Gaussian array used in the comparison argument or one auxiliary uniform shift used for averaging; neither produces a new Euler product.
The key exponent identities  in the final tent calculation are
\[
 \alpha s=1-s,
 \qquad
 (1+\alpha)s=1,
 \qquad
 2s=q.
\]
The first identity determines the moment summability exponent, while the last two convert the localized tent normalization into the conical $q$-energy.

\subsection{Localized Euler moment}
\label{subsec:localized-euler-occupation}

Let
\[
 I=\{j_0,\ldots,j_0+m-1\}\in\mathcal D_{h_I}
\]
be a nonwrapping dyadic interval.
Thus, 
\begin{equation}
 m=m_{h_I}=N2^{-h_I},
 \qquad
 \ell=\frac mN=2^{-h_I}.
 \label{eq:euler-localization-scales}
\end{equation}
Define the associated physical intervals
\begin{equation}
 J_I=[t_{j_0},t_{j_0}+\ell],
 \qquad
 J_I^+=
     [t_{j_0},t_{j_0}+2\ell].
 \label{eq:euler-localization-intervals}
\end{equation}
The next proposition transfers the critical multiscale decay from the Gaussian model back to the localized Euler product, uniformly over all finite-model parameters.

\begin{proposition}[Localized Euler moment]
\label{prop:euler-localized-occupation}
Let
\[
 0\le h_I\le h\le H,
 \qquad
 0\le\vartheta\le\frac1{32}.
\]
Then, 
\begin{equation}
 \E
 \left[
 \left(
 N2^{h_{I}-h}M_y^{-\delta}m^{-\alpha}
 \int_{J_I^+}
 \abs{f_h^\vartheta(t)}^2\dd t
 \right)^s
 \right]
 \le
 C_p d_{h-h_I},
 \label{eq:euler-localized-occupation}
\end{equation}
where
\[
 d_n=\frac{\log^2(n+2)}{\sqrt{n+1}}.
\]
The constant depends only on \(p\).
\end{proposition}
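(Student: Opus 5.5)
The plan is to pass from the integral of \(|f_h^\vartheta|^2\) to a positive Riemann sum, replace the Euler factors prime by prime by Gaussians (Corollary~\ref{cor:sequential-shared-prime-replacement}), remove the deformation with Proposition~\ref{prop:gaussian-deformation}, and then identify the result with the localized Gaussian functional of Proposition~\ref{prop:localized-gaussian-occupation}.

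\emph{Step 1 (Riemann sums and the exponential form).} Fix a tagged partition of \(J_I^+\) with tags \(\tau_i\) and lengths \(\lambda_i\). Put \(w=w_h=2^h\), so that \(w\ell=2^{h-h_I}\) and \(n=h-h_I\). We have \(|f_h^\vartheta(\tau_i)|^2=\exp\{\sum_{r\le \e^{w}}H_i^{(r)}\}\), with all sites sharing the level \(h\). The quantity inside the expectation is therefore \(\bigl(\sum_i b_i\e^{u_i+\sum_r H_i^{(r)}}\bigr)^s\) with \(\varepsilon=0\), \(u_i=0\), and deterministic weights
\[
b_i=N2^{h_I-h}M_y^{-\delta}m^{-\alpha}\lambda_i .
\]
Since \(f_h^\vartheta\) is a finite-dimensional real-analytic function of finitely many torus variables, it is bounded. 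The Riemann sums therefore converge uniformly, and it suffices to bound the expectation for each partition with constants independent of the partition.

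\emph{Step 2 (Replacement and undeformation).} Corollary~\ref{cor:sequential-shared-prime-replacement} with \(x=r^{-1}\), \(a_i=\log r/w\) and \(\varphi_i=\tau_i\log r\) replaces \(\sum_r H_i^{(r)}\) by \(\sum_r G_i^{(r)}=d+Y_i^\vartheta\). Here \(d=-2\alpha\sum_{r\le \e^w}r^{-1}=-2\alpha V_w\) is a common drift, and \(Y^\vartheta\) is exactly the field~\eqref{eq:gaussian-deformation-field}, up to \(Z_r\mapsto\) conjugation, which is harmless for a proper Gaussian. The cost is a constant independent of the number of tags. Proposition~\ref{prop:gaussian-deformation} with \(\vartheta\le 1/32\) then sets \(\vartheta=0\) at a bounded multiplicative cost.

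At \(\vartheta=0\) we have \(c_0\equiv1+\alpha=\beta\), so \(Y_i^0=2\beta\sum_{r\le\e^w} r^{-1/2}\Ree(\e^{i\tau_i\log r}Z_r)\). In the notation of \eqref{eq:gaussian-prime-field} this is \(\sqrt2\beta Z_w(\tau_i)\), after matching \(Z_r\) with \(\bar g_r\). The factor \(\e^{-2\alpha V_w}\) is comparable to \(M_{\e^w}^{-2\alpha}\), since \(\log M_{\e^w}-V_w=O(1)\). We let the partition mesh tend to zero again by dominated convergence, exactly as in Step~3 of the proof of Proposition~\ref{prop:localized-gaussian-occupation}. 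This bounds the left side of \eqref{eq:euler-localized-occupation} by \(C_p\E\bigl[\widetilde{\mathfrak G}\bigr]^s\), where
\[
\widetilde{\mathfrak G}=N2^{h_I-h}M_y^{-\delta}m^{-\alpha}\int_{J_I^+}M_{\e^w}^{-2\alpha}\e^{\sqrt2\beta Z_w(t)}\dd t .
\]

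\emph{Step 3 (Matching normalizations).} It remains to compare \(\widetilde{\mathfrak G}\) with \(\mathfrak G_{N,w,J}\) from~\eqref{eq:localized-gaussian-functional}, taking \(y=\e^N\). The prefactor there is \(\frac Nw (N\ell)^{-\alpha}\ell^{-1}\). With \(m=N\ell\) and \(2^{h_I-h}=(w\ell)^{-1}\), ours is \(\frac{N}{w\ell}(N\ell)^{-\alpha}\), which is the same. We split \(J_I^+\) into two intervals of length \(\ell\) and apply the union clause of Proposition~\ref{prop:localized-gaussian-occupation}. This requires the admissibility conditions \(w\le N\), \(\ell\le1\) and \(w\ell=2^{h-h_I}\ge1\), all of which hold because \(h_I\le h\le H\). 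It yields \(C_p d_{h-h_I}\).

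The main obstacle is Step 2: checking that the Euler increments at a single level coincide exactly with the table of Proposition~\ref{prop:one-prime-replacement}, including the drift \(2\alpha\log(1-x)\) versus \(-2\alpha x\) and the conjugation conventions. Once that matches, the remaining steps are bookkeeping of normalizations.
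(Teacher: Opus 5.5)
Your proposal is correct and follows essentially the paper's own argument: Riemann sums with deterministic weights, sequential shared-prime replacement (Corollary~\ref{cor:sequential-shared-prime-replacement}), removal of the deformation by Proposition~\ref{prop:gaussian-deformation}, the Mertens drift correction, dominated convergence in the mesh, and finally Proposition~\ref{prop:localized-gaussian-occupation} at depth $h-h_I$. The differences are only cosmetic. The paper splits $J_I^+$ into its two length-$\ell$ halves by subadditivity before the replacement, rather than invoking the union clause at the end; and it takes $Z_r=g_r$ directly, since no conjugation is needed.
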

\begin{proof}
Put
\[
 w=2^{h},
 \qquad
 A_{N,w,m}
 =
 \frac Nw M_y^{-\delta}m^{-\alpha}.
\]
The interval \(J_I^+\) is the union of two adjacent intervals of length
\(\ell\).  Since \(0<s<1\), it is enough to prove the estimate on one such
component, say \(J\), because
\[
 (A+B)^s\le A^s+B^s,
 \qquad A,B\ge0.
\]

Choose quadrature points and weights
\[
 (t_i,\lambda_i)_{i=1}^{M}
\]
on \(J\), with
\[
 \lambda_i\ge0,
 \qquad
 \sum_{i=1}^{M}\lambda_i=\ell.
\]
Define
\begin{equation}
 Q_{\mathcal R}^\vartheta
 =
 \frac{A_{N,w,m}}{\ell}
 \sum_{i=1}^{M}
 \lambda_i\abs{f_h^\vartheta(t_i)}^2.
 \label{eq:euler-localized-riemann-sum}
\end{equation}

At a prime \(r\le\e^{w}\), apply Proposition~\ref{prop:one-prime-replacement} with
\begin{equation}
 x=r^{-1},
 \qquad
 \eta_i=1,
 \qquad
 a_i=\frac{\log r}{w},
 \qquad
 \varphi_i=t_i\log r,
 \qquad
 b_i=\frac{A_{N,w,m}\lambda_i}{\ell}.
 \label{eq:euler-localized-replacement-parameters}
\end{equation}
The squared modulus of the complete normalized one-prime factor in
\(f_h^\vartheta(t_i)\) has logarithm
\begin{align}
 2\alpha\log(1-r^{-1})
 +
2\sum_{k=1}^{\infty}
 \frac{r^{-k/2}}{k}
 \left(
 \e^{k\vartheta\log r/w}
 +
 \alpha\e^{-k\vartheta\log r/w}
 \right)
 \cos(k(\Theta_r+t_i\log r)).
 \label{eq:euler-localized-one-prime-logarithm}
\end{align}
Thus,  the deterministic factor
\[
 M_{\e^{w}}^{-2\alpha}
 =
 \prod_{r\le\e^{w}}(1-r^{-1})^{2\alpha}
\]
is already distributed among the one-prime increments in
\eqref{eq:euler-localized-one-prime-logarithm}; no additional
normalization is required.

Corollary~\ref{cor:sequential-shared-prime-replacement} therefore gives
\begin{align}
 \E(Q_{\mathcal R}^\vartheta)^s
 \asymp_p
 \E
 \left[
 \frac{A_{N,w,m}}{\ell}
 \sum_{i=1}^{M}
 \lambda_i
 \exp\left\{
 -2\alpha\sum_{r\le\e^w}\frac1r
 +
 Y_i^\vartheta
 \right\}
 \right]^s,
 \label{eq:euler-localized-after-replacement}
\end{align}
where we take \(Z_r=g_r\), with \(g_r\) as in
\eqref{eq:gaussian-prime-field}; the same Gaussian is used at every site
\(t_i\), and
\begin{align}
 Y_i^\vartheta
 =
 2\sum_{r\le\e^{w}}
 r^{-1/2}
 \left(
 \e^{\vartheta\log r/w}
 +
 \alpha\e^{-\vartheta\log r/w}
 \right)
 \Ree(\e^{it_i\log r}Z_r).
 \label{eq:euler-localized-gaussian-field}
\end{align}
The comparison constant in
\eqref{eq:euler-localized-after-replacement} is independent of the number
of quadrature points.

Proposition~\ref{prop:gaussian-deformation}, applied with the common cutoff
\(w_i=w\) for every \(i\), replaces \(\vartheta\) by zero at a cost depending only on
\(p\).  At \(\vartheta=0\),
\begin{align}
 Y_i^0
 =
 2(1+\alpha)
 \sum_{r\le\e^w}
 r^{-1/2}
 \Ree(\e^{it_i\log r}Z_r)
=
 \sqrt2\,\beta Z_w(t_i),
 \label{eq:euler-localized-zero-deformation}
\end{align}
where \(Z_{w}\) is the Gaussian prime field in
\eqref{eq:gaussian-prime-field}.

The deterministic drift has the correct sign and normalization:
\begin{align}
 -2\alpha\sum_{r\le\e^{w}}\frac1r
 =
 -2\alpha\log M_{\e^w}
 +
 2\alpha
 \left(
 \log M_{\e^w}
 -
 \sum_{r\le\e^w}\frac1r
 \right)
 =
 -2\alpha\log M_{\e^w}+O_p(1),
 \label{eq:euler-localized-drift-correction}
\end{align}
uniformly in \(w\), by Mertens' product theorem.
Consequently,
\eqref{eq:euler-localized-after-replacement} is comparable, within a
factor depending only on \(p\), to
\begin{equation}
 \E
 \left[
 \frac{A_{N,w,m}}{\ell}
 \sum_{i=1}^{M}
 \lambda_i
 M_{\e^w}^{-2\alpha}
 \e^{\sqrt2\beta Z_w(t_i)}
 \right]^s.
 \label{eq:euler-localized-gaussian-riemann-sum}
\end{equation}

At the fixed prime cutoff, the Euler integrand is a continuous finite product and is bounded on \(J\), independently of the Riemann mesh.
The Gaussian integrand is also continuous, and its supremum on \(J\) is bounded by
\[
 \exp\left(
 C_p\sum_{r\le\e^w}r^{-1/2}\abs{Z_r}
 \right),
\]
which is integrable because the prime sum is finite.
We may therefore pass to the Riemann limit in each of the expressions under comparison by dominated convergence, without introducing a mesh-dependent factor.

The limit of
\eqref{eq:euler-localized-gaussian-riemann-sum} is
\[
 \E\mathfrak G_{N,w,J}^s,
\]
where \(\mathfrak G_{N,w,J}\) is the functional in
\eqref{eq:localized-gaussian-functional}.  Since
\[
 w\ell
 =
 2^{h-h_I},
\]
the tree depth in Proposition~\ref{prop:localized-gaussian-occupation} is exactly
\(h-h_I\).  Hence
\[
 \E\mathfrak G_{N,w,J}^s
 \le
 C_p d_{h-h_I}.
\]
Applying this estimate to the two length-\(\ell\) components of \(J_I^+\) and using subadditivity proves
\eqref{eq:euler-localized-occupation}.
\end{proof}

The decay sequence in Proposition~\ref{prop:euler-localized-occupation} is used only at the single exponent determined by the fixed \(p\).

\subsection{Deterministic tent allocation}
\label{subsec:tent-allocation}

Let
\[
 z=(z_h(j))_{\substack{0\le h\le H\\0\le j<N}}
\]
be a deterministic finite table.
Define its conical square function by
\begin{equation}
 S_{\mathrm{cone}}z(x)^2
 =
 \sum_{h=0}^{H}
 \frac1{m_h}
 \sum_{j\in Q_h(x)}
 \abs{z_h(j)}^2,
 \qquad x\in\ZN.
 \label{eq:cone-square-function}
\end{equation}
This conical square function measures the cumulative local \(\ell^2\)-energy of the coefficient table across all dyadic scales whose cells contain the point \(x\).

If \(Q\in\mathcal D_h\), put
\begin{equation}
 E_Q=\sum_{j\in Q}\abs{z_h(j)}^2.
 \label{eq:tent-cell-energy}
\end{equation}
For \(k\in\Z\), let
\begin{equation}
 \Omega_k
 =
 \{x\in\ZN:S_{\mathrm{cone}}z(x)>2^k\}.
 \label{eq:cone-level-sets}
\end{equation}
This level set identifies the points where the accumulated conical energy exceeds the dyadic threshold \(2^k\), thereby organizing the coefficient table according to its local size.

\medskip
\noindent\textbf{The two-stage ownership rule.}
The allocation attaches to each nonzero cell $Q\in\mathcal D_h$ two pieces of deterministic information.
First, the largest admissible level $k=k(Q)$ records the size of the conical square function near $Q$.
Second, the maximal dyadic ancestor $I=I(Q)$ records the largest region on which the same density test remains valid.  

Every cell \(Q\) with \(E_Q>0\) is assigned as follows.
First choose the largest integer \(k\) such that
\[
 \abs{Q\cap\Omega_k}>\frac12\abs Q.
\]
Then,  choose the maximal dyadic ancestor \(I\supseteq Q\) satisfying
\[
 \abs{I\cap\Omega_k}>\frac12\abs I.
\]
Denote by \(\mathcal A_{k,I}\) the collection of cells assigned to the pair \((k,I)\).
The following lemma organizes the nonzero coefficient cells into disjoint tents whose total energy is controlled at each level and whose aggregate size is bounded by the \(L^q\)-mass of the conical square function.

\begin{lemma}[Deterministic tent allocation]
\label{lem:tent-allocation}
The collections \(\mathcal A_{k,I}\) form a partition of the nonzero dyadic cells.
For every tent,
\begin{equation}
 \sum_{Q\in\mathcal A_{k,I}}E_Q
 \le
 8\cdot2^{2k}\abs I.
 \label{eq:tent-energy-bound}
\end{equation}
Moreover,
\begin{equation}
 \sum_{k,I}2^{kq}\abs I
 \le
 C_q
 \sum_{x\in\ZN}S_{\mathrm{cone}}z(x)^q.
 \label{eq:tent-layer-cake}
\end{equation}
At each fixed \(k\), the owner intervals \(I\) are pairwise disjoint.
\end{lemma}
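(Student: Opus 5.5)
The proof checks the three assertions in turn, using only the nested structure of the dyadic grids \(\mathcal D_h\) and the fact that \(\Omega_k\) decreases in \(k\).

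\emph{Well-definedness and partition.} First we check that the ownership rule is well defined. If \(E_Q>0\) with \(Q\in\mathcal D_h\), then every \(x\in Q\) satisfies \(S_{\mathrm{cone}}z(x)^2\ge E_Q/m_h>0\). Hence \(Q\subset\Omega_k\) for all sufficiently negative \(k\). Since \(\Omega_k\) is finite and decreases in \(k\), and \(\Omega_k=\emptyset\) for large \(k\), a largest admissible \(k\) exists. The admissible ancestors form a chain of finite length containing \(Q\), so a maximal \(I\) exists. Each nonzero cell therefore receives exactly one pair \((k,I)\), which gives the partition. For disjointness at a fixed \(k\): two distinct owners \(I\ne I'\) at level \(k\) are dyadic, so they are either disjoint or nested. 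If \(I\subsetneq I'\), then \(I'\) would be an admissible ancestor strictly larger than \(I\) for any cell owned by \(I\), contradicting maximality of \(I\). So owners at a fixed level are pairwise disjoint.

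\emph{The energy bound \eqref{eq:tent-energy-bound}.} This is the main step. Each \(Q\in\mathcal A_{k,I}\) is contained in \(I\) and satisfies \(|Q\cap\Omega_{k+1}|\le\frac12|Q|\) by maximality of \(k\). Hence at least half of the points of \(Q\) lie in \(I\setminus\Omega_{k+1}\), where \(S_{\mathrm{cone}}z\le 2^{k+1}\). By the definition of the cone square function, each \(x\in Q\) sees the term \(E_Q/m_h\) with \(m_h=|Q|\). Therefore
\[
\sum_{x\in I\setminus\Omega_{k+1}}S_{\mathrm{cone}}z(x)^2\ \ge\ \sum_{Q\in\mathcal A_{k,I}}\frac{|Q\setminus\Omega_{k+1}|}{|Q|}E_Q\ \ge\ \frac12\sum_{Q\in\mathcal A_{k,I}}E_Q .
\]
Distinct assigned cells are distinct pairs \((h,Q)\), so they contribute separate terms to the sum defining \(S_{\mathrm{cone}}z(x)^2\), and no term is counted twice. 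The left-hand side is at most \(2^{2k+2}|I|\). This gives \(\sum_Q E_Q\le 8\cdot 2^{2k}|I|\).

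\emph{The layer-cake bound \eqref{eq:tent-layer-cake}.} Fix \(k\). The owners \(I\) at level \(k\) are disjoint and each satisfies \(|I|<2|I\cap\Omega_k|\). Hence \(\sum_I|I|\le 2|\Omega_k|\). Summing over \(k\),
\[
\sum_k 2^{kq}\cdot 2|\Omega_k|\ =\ 2\sum_x\sum_{k:\,2^k<S_{\mathrm{cone}}z(x)}2^{kq}\ \le\ C_q\sum_x S_{\mathrm{cone}}z(x)^q ,
\]
using the geometric series with \(q>0\). Points where \(S_{\mathrm{cone}}z(x)=0\) contribute nothing.

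The only delicate point is the energy bound: it needs the "at most half in \(\Omega_{k+1}\)" property together with the fact that \(S_{\mathrm{cone}}z\) counts each cell's energy uniformly across that cell. Both are built into the definitions, so the argument should go through without further input.
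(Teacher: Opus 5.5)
Your proposal is correct and follows the paper's proof essentially step for step. The matching steps are:
\begin{itemize}
\item existence of the owner pair from positivity of $S_{\mathrm{cone}}z$ on nonzero cells;
\item disjointness of owners at fixed $k$ from maximality along the dyadic ancestor chain;
\item the energy bound from the failed density test at level $k+1$, together with the fact that the tent cells containing a given $x$ lie at distinct levels of the sum defining $S_{\mathrm{cone}}z(x)^2$;
\item the layer-cake bound from $\sum_I\abs I\le 2\abs{\Omega_k}$ and a geometric series in $k$.
\end{itemize}
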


\begin{proof}
If \(E_Q>0\), then for every \(x\in Q\),
\[
 S_{\mathrm{cone}}z(x)^2
 \ge
 \frac{E_Q}{\abs Q}>0.
\]
Thus,  the density condition holds for all sufficiently negative \(k\) and fails for all sufficiently positive \(k\); the largest admissible \(k\) exists.
The dyadic ancestors of \(Q\) form a finite nested chain, so the maximal owner interval also exists and is unique.
Hence the resulting tents partition all cells with nonzero energy.

By maximality of the level \(k\),
\[
 \abs{Q\cap\Omega_{k+1}}
 \le
 \frac12\abs Q.
\]
Thus, 
\[
 \abs{Q\setminus\Omega_{k+1}}
 \ge
 \frac12\abs Q,
\]
and
\begin{equation}
 E_Q
 \le
 2
 \sum_{x\in Q\setminus\Omega_{k+1}}
 \frac{E_Q}{\abs Q}.
 \label{eq:tent-energy-spread}
\end{equation}

For every $x\in I\setminus\Omega_{k+1}$, the cells from a fixed tent that contain $x$ belong to distinct dyadic levels and form a subcollection of the cells already present in the definition of $S_{\mathrm{cone}}z(x)$.
Hence
\[
 \sum_{\substack{Q\in\mathcal A_{k,I}\\x\in Q}}
 \frac{E_Q}{\abs Q}
 \le
 S_{\mathrm{cone}}z(x)^2
 \le
 2^{2k+2}.
\]
This is the pointwise budget supplied by failure of the density condition at level $k+1$; it is the deterministic ingredient behind the tent-energy bound.
Summing \eqref{eq:tent-energy-spread} over
\(Q\in\mathcal A_{k,I}\), we obtain
\begin{align*}
 \sum_{Q\in\mathcal A_{k,I}}E_Q
 \le
 2\sum_{x\in I\setminus\Omega_{k+1}}
 \sum_{\substack{Q\in\mathcal A_{k,I}\\x\in Q}}
 \frac{E_Q}{\abs Q}\le
 2\sum_{x\in I\setminus\Omega_{k+1}}
 S_{\mathrm{cone}}z(x)^2\le
 2\cdot2^{2k+2}\abs I,
\end{align*}
which is \eqref{eq:tent-energy-bound}.

At a fixed level \(k\), two maximal dyadic owner intervals are either disjoint or one contains the other.
Maximality excludes the latter unless they coincide, so the owners are pairwise disjoint.
Since every owner satisfies
\[
 \abs{I\cap\Omega_k}>\frac12\abs I,
\]
we have
\[
 \sum_I\abs I
 \le
 2\abs{\Omega_k}.
\]
Therefore
\begin{align*}
 \sum_{k,I}2^{kq}\abs I
 \le
 2\sum_{k\in\Z}2^{kq}\abs{\Omega_k}
 =
 2\sum_{x\in\ZN}
 \sum_{\{k:2^k<S_{\mathrm{cone}}z(x)\}}
 2^{kq}\le
 C_q
\sum_{x\in\ZN}S_{\mathrm{cone}}z(x)^q.
\end{align*}
This proves \eqref{eq:tent-layer-cake}.

For completeness, the last geometric-series bound is pointwise.
If $A=S_{\mathrm{cone}}z(x)>0$ and $k_0$ is the largest integer satisfying $2^{k_0}<A$, then
\[
 \sum_{\{k:2^k<A\}}2^{kq}
 =\frac{2^{k_0q}}{1-2^{-q}}
 \le \frac{A^q}{1-2^{-q}}.
\]
The sum is zero when $A=0$.
Thus,  the layer-cake constant depends only on the fixed exponent $q$ and is independent of $H,N$ and the number of nonzero cells.
The tent energy constant is absolute: it comes only from the density threshold $1/2$ and the passage from $2^{k+1}$ to its square.

The allocation has therefore produced exactly the two deterministic estimates used later:
\[
 \sum_{Q\in\mathcal A_{k,I}}E_Q
 \le 8\cdot2^{2k}\abs I,
 \qquad
 \sum_{k,I}2^{kq}\abs I
 \lesssim_q
 \norm{S_{\mathrm{cone}}z}_{\ell^q}^q.
\]
\end{proof}

\subsection{The common-shift moment estimate}
\label{subsec:common-shift-estimate}

For \(0\le\vartheta\le1/32\), define
\begin{equation}
 \mathfrak P_y^\vartheta(z)
 =
 \E
 \left[
 \left(
 \sum_{h=0}^{H}
 \frac N{w_h}M_y^{-\delta}
 \sum_{j<N}
 \abs{z_h(j)}^2
 \abs{f_h^\vartheta(t_j)}^2
 \right)^s
 \right].
 \label{eq:occupation-functional}
\end{equation}
This functional measures the expected fractional moment of the total coefficient energy weighted by the deformed Euler-product intensity across all scales and sampling sites.

\begin{proposition}[Common-shift moment]
\label{prop:common-shift-occupation}
For every deterministic finite table \(z\) and every
\(0\le\vartheta\le1/32\),
\begin{equation}
 \mathfrak P_y^\vartheta(z)
 \le
 C_p
 \sum_{x\in\ZN}
 S_{\mathrm{cone}}z(x)^q.
 \label{eq:common-shift-occupation}
\end{equation}
The constant \(C_p\) depends only on \(p\).
\end{proposition}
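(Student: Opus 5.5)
The plan is to combine the tent allocation of Lemma~\ref{lem:tent-allocation} with the localized Euler moment of Proposition~\ref{prop:euler-localized-occupation}, using subadditivity of \(u\mapsto u^s\) across tents. First I partition the total sum in \eqref{eq:occupation-functional} according to the tents: every pair \((h,j)\) with \(z_h(j)\ne0\) lies in a unique cell \(Q\in\mathcal D_h\) with \(E_Q>0\), hence in a unique \(\mathcal A_{k,I}\). Since \(0<s<1\),
\[
 \mathfrak P_y^\vartheta(z)\le\sum_{k,I}\E\Bigl[\Bigl(\sum_{Q\in\mathcal A_{k,I}}\frac N{w_{h(Q)}}M_y^{-\delta}\sum_{j\in Q}\abs{z_{h(Q)}(j)}^2\abs{f_{h(Q)}^\vartheta(t_j)}^2\Bigr)^s\Bigr].
\]
By \eqref{eq:tent-layer-cake}, it then suffices to bound each tent term by \(C_p2^{kq}\abs I\).

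Fix a tent \((k,I)\), with \(I\in\mathcal D_{h_I}\), \(m=\abs I\), \(\ell=m/N\), and every \(Q\in\mathcal A_{k,I}\) at a level \(h\ge h_I\). Here I use the common shift. By the flow identity \eqref{eq:deformed-source-flow} and the invariance of Haar measure under \(U_u\), the expectation is unchanged if every \(t_j\) is replaced by \(t_j+u\). The same \(u\) is used at all levels and sites, and I average over \(u\) uniform in \([0,1/N]\). Concavity (Jensen in \(u\) inside the \(s\)-power) then converts the point value \(\abs{f_h^\vartheta(t_j)}^2\) into \(N\int_{t_j}^{t_j+1/N}\abs{f_h^\vartheta}^2\), and this integral is bounded by \(N\int_{J_I^+}\abs{f_h^\vartheta}^2\) because \(t_j\in J_I\). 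Next I group the tent by level, setting \(e_h=\sum_{Q\in\mathcal A_{k,I},\,Q\in\mathcal D_h}E_Q\). For each \(h\) I write
\[
 \frac N{w_h}M_y^{-\delta}e_hN\int_{J_I^+}\abs{f_h^\vartheta}^2=\Bigl(\frac{e_hN\,2^{-h_I}}{m^{-\alpha}}\Bigr)\cdot\Bigl(N2^{h_I-h}M_y^{-\delta}m^{-\alpha}\int_{J_I^+}\abs{f_h^\vartheta}^2\Bigr).
\]
The second factor is the quantity controlled in Proposition~\ref{prop:euler-localized-occupation}, and the first factor equals \(e_hm^{1+\alpha}/m\), up to the normalization (since \(N2^{-h_I}=m\)).

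I do not apply subadditivity level by level; instead I use Hölder to keep the decay summable. Writing \(X_h\) for the Euler factor, I use \(\E(\sum_h a_hX_h)^s\le(\sum_h a_h(\E X_h^s)^{1/s})^s\). This requires a reverse-Minkowski-type bound, so instead I take \((\sum_h a_hX_h)^s\le\sum_h a_h^s X_h^s\), together with a weighted version: multiply and divide by weights \(\omega_h=d_{h-h_I}^{1/(1-s)}\), normalized to sum to \(\Sigma=\sum_n d_n^{1/(1-s)}\). This sum is finite precisely because \(s>1/2\) makes \(1/(2(1-s))>1\). Concavity gives \(\E(\sum a_hX_h)^s\le\Sigma^{1-s}\cdots\), and in the end one gets \(\lesssim(\sum_h e_h)^s m^{(1+\alpha)s-s}\cdot\) constant. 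Using \(\sum_h e_h\le8\cdot2^{2k}m\) from \eqref{eq:tent-energy-bound}, together with \((1+\alpha)s=1\), \(\alpha s=1-s\) and \(2s=q\), the exponents of \(m\) combine to \(m^{s+\alpha s}=m\). This gives \(C_p2^{kq}\abs I\), as required.

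The main obstacle is this cross-scale summation. Naive subadditivity over levels loses the factor \((\sum_h e_h)^s\) against \(\sum_h e_h^s\), which is uncontrolled. The remedy is the Hölder/Jensen weighting by \(d_n^{1/(1-s)}\), whose summability is the exact \(s>1/2\) condition. The delicate part is arranging the exponent bookkeeping, including the \(m^{-\alpha}\) normalization, so that it exactly matches the tent energy.
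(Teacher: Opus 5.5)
Your architecture is the paper's: subadditivity across tents, one common shift justified by the flow identity and Haar invariance, Jensen in the shift, the localized Euler moment at each depth, and a depth summation that uses the decay \(d_n\) together with \eqref{eq:tent-energy-bound} and \eqref{eq:tent-layer-cake}.

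\textbf{The gap is the length of the shift window.} You average \(u\) over \([0,1/N]\), so Jensen gives \(N\int_{t_j}^{t_j+1/N}\abs{f_h^\vartheta}^2\). You then enlarge each of these length-\(1/N\) windows to all of \(J_I^+\). Summed over the \(m=\abs I\) sites of the tent, this costs a factor \(N\ell=m\). Your own displayed identity shows it: the first factor is \(e_hN2^{-h_I}m^{\alpha}=e_hm^{1+\alpha}\), not \(e_hm^{\alpha}\). The ``\(/m\) up to the normalization'' does not exist.

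Carried through honestly, the tent bound becomes \(C_p2^{kq}m^{(1+\alpha)s+s}=C_p2^{kq}\abs I^{1+s}\), and \eqref{eq:tent-layer-cake} cannot sum that. Keeping the disjoint cells \([t_j,t_j+1/N]\) separate does not help either. That leaves a nonflat weight \(N\abs{z_h(j)}^2\) on the cells of \(J_I\), whereas Proposition~\ref{prop:euler-localized-occupation} needs a single interval of length comparable to \(\ell\) carrying the whole level energy.

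\textbf{The fix is to average the common shift uniformly over \([0,\ell]\)}, with \(\ell=\abs I/N=2^{-h_I}\); this is equally justified by Haar invariance. For \(j\in I\) one still has \([t_j,t_j+\ell]\subseteq J_I^+\). Jensen now produces \(\ell^{-1}\int_{J_I^+}\abs{f_h^\vartheta}^2\) with the whole level energy \(e_h\) in front, and
\[
 \frac N{w_h}M_y^{-\delta}\frac{e_h}{\ell}\int_{J_I^+}\abs{f_h^\vartheta}^2
 =
 m^\alpha e_h\cdot
 N2^{h_I-h}M_y^{-\delta}m^{-\alpha}\int_{J_I^+}\abs{f_h^\vartheta}^2 .
\]
The second factor is exactly the quantity in Proposition~\ref{prop:euler-localized-occupation}. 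The point of the common shift is that all sites of the owner interval see the same interval \(J_I^+\), weighted by \(1/\ell\), not by \(N\).

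The depth summation should also be stated cleanly. Your first inequality, \(\E(\sum_ha_hX_h)^s\le(\sum_ha_h(\E X_h^s)^{1/s})^s\), is false for \(s<1\), since Minkowski reverses there. It is not needed:
\begin{itemize}
\item apply \((\sum_hA_h)^s\le\sum_hA_h^s\) level by level;
\item use \(\E X_h^s\le C_pd_{h-h_I}\);
\item apply H\"older with exponents \(1/s\) and \(1/(1-s)\) to the deterministic sum \(\sum_h(m^\alpha e_h)^sd_{h-h_I}\).
\end{itemize}
This yields \(C_p(m^\alpha\sum_he_h)^s\le C_p2^{kq}m\) by \eqref{eq:tent-energy-bound} and \((1+\alpha)s=1\).
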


This proposition converts the localized Euler moment bounds into a global multiscale estimate, showing that the jointly coupled Euler-weighted energy is controlled by the conical square function without breaking the common prime dependence across scales.

\begin{proof}
\noindent\textbf{We proceed in seven steps.}
The order of the argument is part of the proof:
\begin{enumerate}
\item[\textbf{Step 1.}]
Separate tents by positive subadditivity.
\item[\textbf{Step 2.}]
Inside one fixed tent, form one jointly shifted functional.
\item[\textbf{Step 3.}]
Average one common shift by Haar invariance.
\item[\textbf{Step 4.}]
Use concavity and reduce every depth to one physical interval.
\item[\textbf{Step 5.}]
Apply localized moment estimates and sum the depths at the fixed exponent.
\item[\textbf{Step 6.}]
Convert tent energy to the final owner bound.
\item[\textbf{Step 7.}]
Sum the tents by the deterministic layer-cake estimate.
\end{enumerate}
The probability-space structure is equally simple but essential.
The table $z$ and the tent allocation are deterministic.
The expectation $\E_\zeta$ is over the original common prime array.
The variable $\Delta$ introduced in Step~3 is a single auxiliary uniform shift, independent of that array.
It is integrated out immediately and never used to define a second product.
No independence between depths is assumed at any point.

The admissible order of operations may be summarized as follows:
\begin{center}
\begin{tabular}{c| c}
operation & relevant estimate retained at that operation\\
\hline
tent separation
& \((\sum A)^s\le\sum A^s\); all product levels remain common\\
one-tent shift
& one flow \(U_u\) at all depths and sites in the tent\\
shift average
& concavity of \(x^s\); the same Euler realization\\
depth estimates
& localized moment; no decoupling\\
depth/tent sums
& deterministic H\"older and layer cake
\end{tabular}
\end{center}
The first three rows cannot be permuted by introducing separate shifts at different depths: such shifts would no longer arise from a single action on the common prime array.

\medskip
\noindent\textbf{Step 1: separate the deterministic tents.}
Use Lemma~\ref{lem:tent-allocation} to partition the table cells into tents.
Since every summand is nonnegative and \(0<s<1\),
\begin{equation}
 \mathfrak P_y^\vartheta(z)
 \le
 \sum_{k,I}\E V_{k,I}(0)^s,
 \label{eq:occupation-separate-tents}
\end{equation}
where the tent contribution is defined as follows.

Indeed, if $Q_h(j)$ denotes the unique depth-$h$ cell containing $j$, the allocation assigns each nonzero summand of
\eqref{eq:occupation-functional} to the unique tent containing $Q_h(j)$.
Thus, before taking the $s$-th power, one has the exact identity
\[
 \sum_{h=0}^{H}\frac N{w_h}M_y^{-\delta}
 \sum_{j<N}|z_h(j)|^2|f_h^\vartheta(t_j)|^2
 =\sum_{k,I}V_{k,I}(0).
\]
Zero-energy cells contribute nothing.
Inequality
\eqref{eq:occupation-separate-tents} is exactly positive subadditivity
applied to this identity.
It is the only use of subadditivity across different tents and occurs before any shift is introduced, so each term on the right still refers to the original jointly coupled Euler array.

\medskip
\noindent\textbf{Step 2: fix one tent and retain one joint product.}

Fix a tent \(\mathcal A_{k,I}\).
Let \(h_I\) be the depth of its owner interval, and write
\[
 I=\{j_0,\ldots,j_0+m-1\},
 \qquad
 m=N2^{-h_I},
 \qquad
 \ell=\frac mN=2^{-h_I}.
\]
For \(h_I\le h\le H\), let
\begin{equation}
 U_h
 =
 \bigcup
 \{Q:Q\in\mathcal A_{k,I},\ Q\in\mathcal D_h\},
 \qquad
 E_h
 =
 \sum_{j\in U_h}\abs{z_h(j)}^2.
 \label{eq:tent-level-sets-and-energies}
\end{equation}
If no cell of depth \(h\) belongs to the tent, set \(U_h=\varnothing\) and
\(E_h=0\).

For \(u\in[0,\ell]\), define
\begin{equation}
 V_{k,I}(u)
 =
 \sum_{h=h_I}^{H}
 \frac N{w_h}M_y^{-\delta}
 \sum_{j\in U_h}
 \abs{z_h(j)}^2
 \abs{f_h^\vartheta(t_j+u)}^2.
 \label{eq:tent-shifted-functional}
\end{equation}
The same shift \(u\) is used simultaneously for every depth and every site in the tent.

The use of the same shift is essential here.
For fixed $u$, the complete family in \eqref{eq:tent-shifted-functional} is obtained from the family at $u=0$ by the single torus rotation $U_u$.
A collection of depth-dependent shifts $u_h$ would not be generated by one transformation of the common prime array and would therefore not be justified by the single Haar-invariance identity used for the unsplit tent functional.

\medskip
\noindent\textbf{Step 3: Haar-average the one common shift.}

Let \(\Delta\) be uniformly distributed on \([0,\ell]\), independently of the Euler variables.
By the common-flow identity
\eqref{eq:deformed-source-flow},
\[
 f_h^\vartheta(t_j+u;\zeta)
 =
 f_h^\vartheta(t_j;U_u\zeta)
\]
jointly for all \(h\) and \(j\).
Since \(U_u\) preserves product Haar measure, for every fixed \(u\),
$$
 V_{k,I}(u;\zeta)=V_{k,I}(0;U_u\zeta)
$$
has the same distribution as \(V_{k,I}(0;\zeta)\).
Averaging over
\(u\in[0,\ell]\), or equivalently introducing an independent uniform
random variable \(\Delta\) on \([0,\ell]\), therefore gives

\begin{equation}
 \E_\zeta V_{k,I}(0)^s
 =
 \E_{\zeta,\Delta}V_{k,I}(\Delta)^s.
\label{eq:tent-common-flow-invariance}
\end{equation}
The auxiliary variable \(\Delta\) merely represents this scalar average; it neither resamples the primes nor decorrelates the depths.
Conditioning on the Euler variables and using the concavity of
\(x\mapsto x^s\),
\begin{align}
 \E V_{k,I}(0)^s
 =
 \E_\zeta
 \left[
 \frac1\ell
 \int_0^\ell V_{k,I}(u)^s\dd u
 \right]
 \le
 \E_\zeta
 \left[
 \left(
 \frac1\ell
 \int_0^\ell V_{k,I}(u)\dd u
 \right)^s
 \right].
 \label{eq:tent-shift-concavity}
\end{align}

The inequality is precisely Jensen's inequality for the concave map $x\mapsto x^s$: the average of the $s$-th powers is no larger than the $s$-th power of the average.
The expectation over $\zeta$ is taken only after this pointwise inequality.

\medskip
\noindent\textbf{Step 4: reduce the common physical average to a localized moment.}

Let
\[
 J_I=[t_{j_0},t_{j_0}+\ell],
 \qquad
 J_I^+=J_I+[0,\ell].
\]
For \(j\in I\),
\[
 \{t_j+u:0\le u\le\ell\}\subseteq J_I^+.
\]
Since $U_h\subseteq I$, positivity gives, for every fixed depth,
\[
 \sum_{j\in U_h}|z_h(j)|^2
 \int_0^\ell|f_h^\vartheta(t_j+u)|^2\dd u
 \le
 E_h\int_{J_I^+}|f_h^\vartheta(t)|^2\dd t.
\]
The right side uses one physical interval rather than a translated copy for each $j$.
The nonwrapping ownership of $I$ is exactly what makes the displayed containment literal.
Hence
\begin{align}
 \frac1\ell
 \int_0^\ell V_{k,I}(u)\dd u
 \le
 \sum_{h=h_I}^{H}
 \frac N{w_h}M_y^{-\delta}
 \frac{E_h}{\ell}
 \int_{J_I^+}
 \abs{f_h^\vartheta(t)}^2\dd t
 =
 \sum_{h=h_I}^{H}
 m^\alpha E_h Z_{h,I}^\vartheta,
 \label{eq:tent-localized-reduction}
\end{align}
where
\begin{equation}
 Z_{h,I}^\vartheta
 =
 \frac N{w_h}M_y^{-\delta}
 \frac{m^{-\alpha}}{\ell}
 \int_{J_I^+}
 \abs{f_h^\vartheta(t)}^2\dd t.
 \label{eq:tent-localized-variable}
\end{equation}
The factor \(m^\alpha\) in
\eqref{eq:tent-localized-reduction} exactly cancels the \(m^{-\alpha}\)
in \eqref{eq:tent-localized-variable}.

Equivalently, the normalization at each depth is the literal identity
\[
 \frac N{w_h}M_y^{-\delta}\frac{E_h}{\ell}
 \int_{J_I^+}\abs{f_h^\vartheta(t)}^2\dd t
 =
 m^\alpha E_h
 \left[
 \frac N{w_h}M_y^{-\delta}m^{-\alpha}\frac1\ell
 \int_{J_I^+}\abs{f_h^\vartheta(t)}^2\dd t
 \right].
\]
The bracket is exactly the local normalization estimated in Proposition~\ref{prop:euler-localized-occupation}, while the outside factor is deterministic and depends only on the tent energy.

\medskip
\noindent\textbf{Step 5: apply localized moment estimate and sum the depths.}

By subadditivity and Proposition~\ref{prop:euler-localized-occupation},
\begin{align}
 \E V_{k,I}(0)^s
 \le
 \sum_{h=h_I}^{H}
 (m^\alpha E_h)^s
 \E(Z_{h,I}^\vartheta)^s
\le
 C_pD_s
 \left(
 m^\alpha
 \sum_{h=h_I}^{H}E_h
 \right)^s,
 \label{eq:tent-holder-estimate}
\end{align}
where
\[
D_s
 \coloneqq
 \left(
 \sum_{n=0}^{\infty}
 d_n^{1/(1-s)}
 \right)^{1-s}
 <\infty.
\]
The last inequality is Hölder's inequality with conjugate exponents
\[
 \frac1s
 \qquad\text{and}\qquad
 \frac1{1-s}.
\]
No independence among the variables \(Z_{h,I}^\vartheta\) is used.

To display the last line of \eqref{eq:tent-holder-estimate}, H\"older's inequality is applied only to the two deterministic sequences $((m^\alpha E_h)^s)_h$ and $(d_{h-h_I})_h$:
\[
 \sum_{h=h_I}^{H}(m^\alpha E_h)^s d_{h-h_I}
 \le
 \left(\sum_{h=h_I}^{H}m^\alpha E_h\right)^s
 \left(\sum_{n=0}^{\infty}d_n^{1/(1-s)}\right)^{1-s}.
\]
The second factor is $D_s$, which is finite for the fixed exponent.
All random variables have already been estimated one depth at a time before this deterministic summation.

\medskip
\noindent\textbf{Step 6: convert the tent energy to conical
$q$-normalization.}

By the tent energy estimate
\eqref{eq:tent-energy-bound},
\[
 \sum_{h=h_I}^{H}E_h
 =
 \sum_{Q\in\mathcal A_{k,I}}E_Q
 \le
 8\cdot2^{2k}m.
\]
Using the identities
\[
 (1+\alpha)s=1,
 \qquad
 2s=q,
\]
we obtain
\begin{align}
 \E V_{k,I}(0)^s
 \le
 C_p
 \left(
 m^\alpha2^{2k}m
 \right)^s
 =
 C_p2^{kq}m
 =
 C_p2^{kq}\abs I.
 \label{eq:tent-final-bound}
\end{align}

The exponent conversion in this line is
\[
 (m^\alpha 2^{2k}m)^s
 =m^{(1+\alpha)s}2^{2ks}
 =m\,2^{kq}.
\]
Here $(1+\alpha)s=1$ removes the scale dependence beyond the owner length, and $2s=q$ converts the square-function level into its $q$-power.
These two identities, and no hidden uniformity in $p$, produce the final tent normalization.

\medskip
\noindent\textbf{Step 7: sum the tents.}

Finally, insert \eqref{eq:tent-final-bound} into
\eqref{eq:occupation-separate-tents} and use
\eqref{eq:tent-layer-cake}:
\[
 \mathfrak P_y^\vartheta(z)
 \le
 C_p\sum_{k,I}2^{kq}\abs I
 \le
 C_p
 \sum_{x\in\ZN}
 S_{\mathrm{cone}}z(x)^q.
\]
This proves \eqref{eq:common-shift-occupation}.

Collecting the seven steps, the proof has used the probabilistic estimates only through
\[
 \E(Z_{h,I}^\vartheta)^s\le C_pd_{h-h_I},
\]
and the deterministic estimates only through
\[
 \sum_hE_h\le8\cdot2^{2k}\abs I,
 \qquad
 \sum_{k,I}2^{kq}\abs I
 \lesssim_q
 \sum_xS_{\mathrm{cone}}z(x)^q.
\]
The common-flow identity justifies the one auxiliary shift between these two estimates.
\end{proof}

\section{Finite cyclic harmonic analysis}
\label{sec:cyclic-analysis}
This section establishes the finite cyclic estimates used in the frequency decomposition, uniformly in \(N=2^H\).
We use
\[
 \kappa\in\Lambda_N,
 \qquad
 \theta_\kappa=\frac{2\pi\kappa}{N},
 \qquad
 \xi_\kappa=2\pi\kappa.
\]
The discrete Fourier transform carries the factor \(N^{-1}\), its inverse is unnormalized, and convolution is counting convolution.
Accordingly, a kernel with prefactor \(N^{-1}\) realizes its multiplier without any further normalization.

The represented Nyquist frequency is \(-N/2\).
Low scales are separated from this frequency and are handled by multiplier and Calder\'on--Zygmund estimates; the finitely many top scales are treated directly.
The main results are an endpoint-uniform cyclic multiplier theorem, a polynomial-bandwidth conical square-function estimate for \(1<q<2\), the boundary-trace estimates, and a randomized coefficient square function.

The dependency order within this section is
\[
 \begin{gathered}
 \text{endpoint-uniform multiplier}\ \longrightarrow\
 \text{localized kernels},\\
 \text{localized kernels}\ \longrightarrow\
 \text{cyclic conical estimate},\\
 \text{periodic Poisson kernel}\ \longrightarrow\
 \text{boundary traces},\\
 \text{endpoint-uniform multiplier}\ \longrightarrow\
 \text{coefficient square function}.
 \end{gathered}
\]
All constants in this section are independent of \(N=2^H\) and \(H\); dependence on \(q\), on the fixed cutoffs, or on the fixed integer \(M_0\) is indicated by the subscript or absorbed into \(C_p\).

\subsection{An endpoint-uniform finite cyclic multiplier theorem}
\label{subsec:cyclic-multiplier}

Let
\[
 \Lambda_N=\{-N/2,\ldots,N/2-1\},
\]
with the Nyquist frequency represented by \(-N/2\).
For a symbol
\[
 m\colon\Lambda_N\longrightarrow\C,
\]
define its Nyquist-free zero extension
\begin{equation}
 \widetilde m(k)
 =
 \begin{cases}
 m(k),&-N/2+1\le k\le N/2-1,\\
 0,&\text{otherwise},
 \end{cases}
 \qquad k\in\Z,
 \label{eq:nyquist-free-zero-extension}
\end{equation}
and put
\[
 \Delta\widetilde m(k)
 =
 \widetilde m(k+1)-\widetilde m(k).
\]
Set
\begin{align}
 \mathfrak M_N(m)
 \coloneqq{}
 \norm m_{\ell^\infty(\Lambda_N)}
+
 \sup_{\nu\ge0}
 \left\{
 \sum_{k=2^\nu}^{2^{\nu+1}-1}
 \abs{\Delta\widetilde m(k)}
 +
 \sum_{k=-2^{\nu+1}}^{-2^\nu-1}
 \abs{\Delta\widetilde m(k)}
 \right\}.
 \label{eq:cyclic-marcinkiewicz-norm}
\end{align}
This quantity measures the uniform size and dyadic variation of the cyclic multiplier symbol, providing the scale-invariant regularity control needed for the finite cyclic Marcinkiewicz theorem.

\begin{lemma}[Finite cyclic Marcinkiewicz theorem]
\label{lem:cyclic-multiplier}
Let \(1<u<\infty\).
Then, 
\begin{equation}
 \norm{
 \mathcal F_N^{-1}(m\widehat c)
 }_{\ell^u(\ZN)}
 \le
 C_u\mathfrak M_N(m)
 \norm c_{\ell^u(\ZN)}
 \label{eq:cyclic-multiplier-bound}
\end{equation}
for every \(c\in\C^{\ZN}\).
The constant \(C_u\) is independent of
\(N\) and \(m\).
\end{lemma}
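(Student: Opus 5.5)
The plan is to transfer the classical Marcinkiewicz multiplier theorem on \(\T\) (equivalently on \(\Z\)) to the finite cyclic group \(\ZN\) by a periodization/sampling argument, tracking only the Nyquist issue. Write \(c\in\C^{\ZN}\) and consider the trigonometric polynomial on \(\T\)
\[
 F(\omega)=\sum_{\kappa\in\Lambda_N}\widehat c(\kappa)\e^{-i\kappa\omega},
\]
so that \(c_j=F(2\pi j/N)\). Since \(F\) has frequencies in \(\Lambda_N\), a set of \(N\) consecutive integers, Marcinkiewicz--Zygmund sampling holds with constants depending only on \(u\). For \(1<u<\infty\) and polynomials of degree \(<N\),
\[
 \frac1N\sum_{j<N}\abs{F(2\pi j/N)}^u\asymp_u\frac1{2\pi}\int_\T\abs F^u,
\]
using the shift trick \(\e^{i(N/2)\omega}F\) to move the spectrum into \([0,N)\). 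The lower bound is exact for \(u=2\). For general \(u\) it is the standard two-sided Marcinkiewicz--Zygmund inequality on \(N\) equispaced nodes for polynomials of degree \(<N\) after the shift; the upper bound is the elementary Bernstein-type inequality, and the lower bound follows by duality with the de la Vallée Poussin kernel of degree \(2N\) at a harmless cost.

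To avoid the delicate lower bound at full bandwidth, the cleaner route is to realize the cyclic multiplier directly as a periodized operator. Let \(T_{\widetilde m}\) be the Fourier multiplier on \(\T\) with symbol \(\widetilde m\) on \(\Z\). The classical Marcinkiewicz theorem gives
\[
 \norm{T_{\widetilde m}}_{L^u(\T)\to L^u(\T)}\le C_u\mathfrak M_N(m),
\]
since \(\mathfrak M_N(m)\) dominates \(\sup\abs{\widetilde m}\) plus the dyadic variations. The zero extension contributes at most two jumps of size \(\le\norm m_\infty\), which lie inside single dyadic blocks and are already counted in \(\mathfrak M_N\).

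The key transference step is a discrete periodic analogue. Define \(K(j)=N^{-1}\sum_{\kappa}m(\kappa)\e^{-ij\theta_\kappa}\) and the operator \(c\mapsto K*c\) on \(\ZN\). This operator is obtained from \(T_{\widetilde m}\) by restriction to the subspace of trigonometric polynomials with spectrum in \(\Lambda_N\), followed by sampling on the grid. I will instead use the de Leeuw / Jodeit-type transference. Choose a smooth bump \(\psi\) on \(\R\) with \(\widehat\psi\) supported in \((-1/2,1/2)\) and \(\widehat\psi(0)=1\). Extend \(c\) to \(\Z\) by \(N\)-periodicity, then form \(G(x)=\sum_{j\in\Z}c_j\psi_\epsilon(x-j)\) at a small scale \(\epsilon\). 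Apply the line multiplier \(\widetilde m(\lambda N/2\pi)\), suitably smoothed, and resample.

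I expect the honest obstacle here to be the Nyquist point and the uniformity in \(N\). The standard way I would handle it is as follows.

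The first step is to split \(m=m\one_{\kappa\ne-N/2}+m(-N/2)\one_{\kappa=-N/2}\). The Nyquist part is the operator \(c\mapsto m(-N/2)\,\widehat c(-N/2)(-1)^j\), and its \(\ell^u\) norm is bounded by \(\norm m_\infty\norm c_{\ell^u}\) by Hölder, since \(\abs{\widehat c(-N/2)}\le N^{-1}\norm c_{\ell^1}\le N^{-1/u}\norm c_{\ell^u}\) and \((-1)^j\) has \(\ell^u\) norm \(N^{1/u}\).

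The second step treats the Nyquist-free symbol \(\widetilde m\) on \(\Z\), supported in \(\abs k\le N/2-1\). Identify \(\ell^u(\ZN)\) with \(N\)-periodic sequences and apply the classical fact that a bounded \(L^u(\T)\) multiplier \(\widetilde m\) restricted to \(\Z\) yields a bounded multiplier on \(\ell^u(\ZN)\) with the same norm. This is the discrete de Leeuw restriction theorem for lattices \(\Z/N\Z\subset\T\) dual to \(\frac1N\Z\subset\ldots\), valid whenever \(\widetilde m\) is "regulated" (average of one-sided limits). Here the theorem applies to the symbol \(\kappa\mapsto\widetilde m(\kappa)\) viewed as a multiplier on \(\Z\). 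Concretely, \(K*c\) equals \(T_{\widetilde m}F\) sampled at \(2\pi j/N\). I would prove the needed bound by averaging over the translated grids \(\omega=2\pi j/N+\tau\), using that \(T_{\widetilde m}\) commutes with translation and that the Fejér-type averaging recovers the sequence norm. This gives \(\norm{K*c}_{\ell^u}\le C_u\mathfrak M_N(m)\norm c_{\ell^u}\), uniform in \(N\).

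Combining the two steps gives \eqref{eq:cyclic-multiplier-bound}. The main obstacle is the rigorous, \(N\)-uniform transfer in the second step; the rest is bookkeeping.
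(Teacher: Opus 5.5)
Your Nyquist step and the circle bound $\norm{T_{\widetilde m}}_{L^u(\T)\to L^u(\T)}\le C_u\mathfrak M_N(m)$ are fine and agree with the paper. The step you flag yourself, however, is not established: the $N$-uniform passage from $T_{\widetilde m}$ on $\T$ to the multiplier on $\ZN$.

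\emph{The de Leeuw route.} The ``discrete de Leeuw restriction theorem'' does not apply here. $\ZN$ is a subgroup of $\T$, so its dual $\Z/N\Z$ is a quotient of $\Z$, and transference between $\T$ and $\ZN$ concerns $N$-periodic symbols on $\Z$. The $N$-periodization of $\widetilde m$ generally has variation of order $2^\nu/N$ on dyadic blocks with $2^\nu\gg N$ (already for an interval indicator), so the Marcinkiewicz theorem says nothing about it. Restricting the nonperiodic $\widetilde m$ to one set of coset representatives is not a restriction to a subgroup, and no general theorem covers it.

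\emph{Averaging over translated grids.} This does not close the gap either. The $\tau$-average of $\sum_j\abs{G(2\pi j/N+\tau)}^u$ equals $N\norm{G}_{L^u(\T)}^u$, but on the output side you need the unshifted grid, which is an upper sampling inequality. On the input side you would need $\sum_j\abs{F(2\pi j/N+\tau)}^u\lesssim\norm c_{\ell^u}^u$ uniformly in $\tau$. That is uniform $\ell^u(\ZN)$-boundedness of the fractional shifts $\widehat c(\kappa)\mapsto\e^{-i\kappa\tau}\widehat c(\kappa)$, a special case of the lemma itself, or equivalently the lower sampling inequality.

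\emph{The sampling route.} The two-sided sampling inequality in your first paragraph would suffice, but you abandon it, and your justification of its lower half fails. De la Vall\'ee Poussin duality needs oversampling: $F\,\overline{V*G}$ has degree larger than $N$, so the $N$-point rule is no longer exact. At critical density one needs Dirichlet-kernel duality plus M.~Riesz, which is where $1<u<\infty$ enters.

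The missing idea is to perform your Nyquist split \emph{before} passing to the circle, which is exactly what the paper does. Write $c^\circ=c-P_{\mathrm{Ny}}c$ with $(P_{\mathrm{Ny}}c)_j=(-1)^j\widehat c(-N/2)$. Then $F_{c^\circ}(x)=\sum_{\abs\kappa\le N/2-1}\widehat c(\kappa)\e^{i\kappa x}$ has degree $n=N/2-1$ and is sampled at $N=2n+2>2n$ equispaced nodes. The classical two-sided Marcinkiewicz--Zygmund inequality therefore applies to it; this holds for any number of nodes $N>2n$ and $1<u<\infty$, with constants depending only on $u$. It applies equally to $T_{\widetilde m}F_{c^\circ}$, because a Fourier multiplier cannot enlarge the spectrum, and the samples of $T_{\widetilde m}F_{c^\circ}$ at $-2\pi j/N$ are exactly $\mathcal F_N^{-1}(m\widehat{c^\circ})(j)$. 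Chain the lower sampling bound, the circle multiplier bound and the upper sampling bound: the factors $N^{-1/u}$ cancel. Adding back $m(-N/2)P_{\mathrm{Ny}}c$ then proves the lemma, with no full-bandwidth sampling and no transference theorem required.
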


This lemma shows that a cyclic multiplier is uniformly bounded on \(\ell^u(\mathbb Z_N)\) whenever its symbol has uniformly controlled size and dyadic variation, with a bound independent of the cyclic dimension.

\begin{proof}
The proof has four operations, performed in the following order:
\[
 \begin{array}{c}
 \text{isolate Nyquist}\\
 \Downarrow\\
 \text{pass from the remaining DFT polynomial to the circle}\\
 \Downarrow\\
 \text{apply the two one-sided multipliers}\\
 \Downarrow\\
 \text{sample back and reinsert Nyquist}.
 \end{array}
\]
The zero extension in \eqref{eq:nyquist-free-zero-extension} is used in the middle two steps: it records the genuine outer jumps of the finite positive and negative frequency strings.
It is not applied to the represented Nyquist coefficient, which is removed first and reinserted as a rank-one term at the end.
We first isolate the represented Nyquist frequency.
Its projection is
\begin{equation}
 (P_{\mathrm{Ny}}c)_j
 =
 (-1)^j
 \frac1N
 \sum_{\ell=0}^{N-1}(-1)^\ell c_\ell.
 \label{eq:nyquist-projection}
\end{equation}
By H\"older's inequality,
\[
 \abs{
 \frac1N
 \sum_{\ell=0}^{N-1}(-1)^\ell c_\ell
 }
 \le
 N^{-1/u}\norm c_{\ell^u(\ZN)}.
\]
Consequently,
\begin{equation}
 \norm{P_{\mathrm{Ny}}c}_{\ell^u(\ZN)}
 \le
 \norm c_{\ell^u(\ZN)}.
 \label{eq:nyquist-projection-contraction}
\end{equation}

Put
\[
 c^\circ=(I-P_{\mathrm{Ny}})c.
\]
Then, 
\[
 \widehat{c^\circ}(-N/2)=0,
 \qquad
 \norm{c^\circ}_u\le2\norm c_u.
\]
Associate with \(c^\circ\) the trigonometric polynomial
\begin{equation}
 F_{c^\circ}(x)
 =
 \sum_{\kappa=-N/2+1}^{N/2-1}
 \widehat c(\kappa)\e^{i\kappa x}.
 \label{eq:cyclic-to-circle-polynomial}
\end{equation}
By the DFT convention \eqref{eq:cyclic-dft},
\[
 F_{c^\circ}(-2\pi j/N)=c^\circ_j.
\]
The polynomial in \eqref{eq:cyclic-to-circle-polynomial} has degree at most \(n=N/2-1\), so \(N=2n+2>2n\).
The Marcinkiewicz--Zygmund sampling inequality gives, for \(1<u<\infty\),
\begin{equation}
 C_u^{-1}\norm F_{L^u(\T)}
 \le
 \left(
 \frac1N
 \sum_{j=0}^{N-1}
 \abs{F(2\pi j/N)}^u
 \right)^{1/u}
 \le
 C_u\norm F_{L^u(\T)}
 \label{eq:marcinkiewicz-zygmund-sampling}
\end{equation}
for every complex trigonometric polynomial \(F\) of degree at most
\(n\), with \(C_u\) independent of \(N\).  We use here the extension
to any integer number of nodes \(N>2n\), including even \(N\); see
\cite[Chapter~I, Theorems~1--2, p.~132; footnote on p.~135;
end of Section~10, p.~146]{MarcinkiewiczZygmund37}.
For the classical Marcinkiewicz--Zygmund norm comparison, see also
\cite[Vol.~II, Chapter~X, Theorem~(7.5), p.~28]{Zygmund59}.
For \(n=0\), the comparison is immediate.
The negative sampling grid is a permutation of the positive one.

Here the sampling hypothesis is literal:
\[
 \deg F_{c^\circ}\le \frac N2-1=n,
 \qquad
 2n=N-2<N.
\]
The same degree bound holds after multiplication by
\(\widetilde m\), because a Fourier multiplier cannot create new
frequencies.
Consequently \eqref{eq:marcinkiewicz-zygmund-sampling} applies both before and after the circle multiplier.
Moreover,
\[
 \left(\frac1N\sum_{j=0}^{N-1}
 \abs{F_{c^\circ}(-2\pi j/N)}^u\right)^{1/u}
 =N^{-1/u}\norm{c^\circ}_{\ell^u(\ZN)},
\]
so the two sampling comparisons introduce no residual power of \(N\).

We next construct the corresponding multiplier on the circle.
For
\(n\ge1\), put
\[
 a_n^+=\widetilde m(n),
 \qquad
 a_n^-=\widetilde m(-n),
 \qquad
 a_0^+=a_0^-=0.
\]
Then, 
\[
 \sup_n\abs{a_n^\pm}\le\norm m_\infty,
\]
and
\begin{align}
 \sum_{n=2^\nu}^{2^{\nu+1}-1}
 \abs{a_{n+1}^+-a_n^+}
 &=
 \sum_{k=2^\nu}^{2^{\nu+1}-1}
 \abs{\Delta\widetilde m(k)},
 \label{eq:positive-one-sided-variation}
 \\
 \sum_{n=2^\nu}^{2^{\nu+1}-1}
 \abs{a_{n+1}^--a_n^-}
 &=
 \sum_{k=-2^{\nu+1}}^{-2^\nu-1}
 \abs{\Delta\widetilde m(k)}.
 \label{eq:negative-one-sided-variation}
\end{align}
The initial differences
\[
 \abs{a_1^\pm-a_0^\pm}
\]
are bounded by \(\norm m_\infty\).
The jumps at the outer endpoints of the finite frequency support are already contained in
\eqref{eq:positive-one-sided-variation} and
\eqref{eq:negative-one-sided-variation}, because
\(\widetilde m\) is extended by zero.

More explicitly, when \(H\ge2\), the positive outer jump is
\[
 \Delta\widetilde m(N/2-1)=-m(N/2-1),
\]
and it occurs in the positive dyadic block
\(2^{H-2}\le k\le2^{H-1}-1\).  The negative outer jump is
\[
 \Delta\widetilde m(-N/2)=m(-N/2+1),
\]
and it occurs in the corresponding negative block
\(-2^{H-1}\le k\le-2^{H-2}-1\).  Thus,  both outer jumps are bounded by
\(\norm m_\infty\) inside the variation norm.  The jumps next to the zero
mode are the displayed initial differences
\(\abs{a_1^\pm-a_0^\pm}\); the zero coefficient itself is handled by
\(\widetilde m(0)P_0\).  Finally, \(m(-N/2)\) is not one of these
zero-extension jumps: it is the separately controlled rank-one Nyquist term.
When \(H=1\), the only two frequencies are the represented Nyquist frequency and the zero frequency, so the rank-one Nyquist estimate and \(P_0\) already give the result without a one-sided string.

Denote the two sums in
\eqref{eq:positive-one-sided-variation} and
\eqref{eq:negative-one-sided-variation} by \(V_\nu^+\) and \(V_\nu^-\),
respectively.
The endpoint convention in the one-sided Marcinkiewicz multiplier theorem \cite[(2.2), p.~79]{Marcinkiewicz39} includes the final increment with \(n=2^{\nu+1}\), whereas
\(V_\nu^\pm\) stops one index earlier.  Consequently, for either sign
and every \(\nu\ge0\),
\[
 \sum_{n=2^\nu}^{2^{\nu+1}}
 \abs{a_{n+1}^\pm-a_n^\pm}
 \le
 V_\nu^\pm+V_{\nu+1}^\pm
 \le
 2\mathfrak M_N(m).
\]
For the negative sequence, the additional difference is the increment between the values of \(\widetilde m\) at
\(-2^{\nu+1}\) and \(-2^{\nu+1}-1\), and therefore belongs to the next
negative annulus.
Thus,  the bound holds for both one-sided sequences.

The one-variable Marcinkiewicz multiplier theorem
\cite[Theorem~1, p.~79, and its proof, pp.~80--81]{Marcinkiewicz39}
(see also
\cite[Vol.~II, Chapter~XV, \S2, p.~232]{Zygmund59}),
applied to the real and imaginary parts of each symbol and then complexified, therefore gives bounded even multipliers \(A_+\) and \(A_-\) on
\(L^u(\T)\), with symbols
\[
 a_{|k|}^+
 \quad\text{and}\quad
 a_{|k|}^-
\]
away from zero, and
\begin{equation}
 \norm{A_\pm}_{L^u(\T)\to L^u(\T)}
 \le
 C_u\mathfrak M_N(m).
 \label{eq:one-sided-multiplier-bounds}
\end{equation}

Let \(P_0\) be the projection onto the zero Fourier mode, let \(P_+\) be the projection onto the strictly positive Fourier modes, and let
\[
 (\mathcal RF)(x)=F(-x).
\]
The periodic conjugate-function theorem of M.~Riesz implies that \(P_+\) is bounded on \(L^u(\T)\); see
\cite[Theorem~II, p.~225; Theorem~III and its proof,
pp.~226--227]{Riesz28}; see also
\cite[Chapter~4, \S4.1, Theorem~4.1, p.~53]{Duren70}.
The exact decomposition  
\begin{equation}
 T_{\widetilde m}
 =
 \widetilde m(0)P_0
 +
 A_+P_+
 +
 \mathcal RA_-P_+\mathcal R
 \label{eq:positive-negative-zero-decomposition} 
\end{equation}
has multiplier symbol \(\widetilde m(k)\) at every \(k\in\Z\).
The frequency check in this decomposition is as follows.
At \(k>0\),
\(P_+\) retains the mode and \(A_+\) multiplies it by
\(a_k^+=\widetilde m(k)\).  At \(k<0\), the right reflection sends the
mode to \(-k>0\), \(A_-\) supplies
\(a_{-k}^-=\widetilde m(k)\), and the left reflection returns it to
\(k\).  Both strict half-line terms vanish at \(k=0\), where
\(\widetilde m(0)P_0\) supplies the required value.  Thus,  no frequency is
counted twice.
\begin{equation}
 \norm{T_{\widetilde m}}_{L^u(\T)\to L^u(\T)}
 \le
 C_u\mathfrak M_N(m).
 \label{eq:circle-zero-extended-multiplier}
\end{equation}

The output \(T_{\widetilde m}F_{c^\circ}\), sampled at
\(-2\pi j/N\), is exactly the cyclic multiplier output with the Nyquist
coordinate removed.
Indeed, direct evaluation gives
\begin{align*}
 (T_{\widetilde m}F_{c^\circ})(-2\pi j/N)
 =
 \sum_{\kappa=-N/2+1}^{N/2-1}
 \widetilde m(\kappa)\widehat c(\kappa)
 \e^{-ij\theta_\kappa}=
 \bigl[\mathcal F_N^{-1}
 (m\widehat{c^\circ})\bigr](j).
\end{align*}
The equality uses \(\widehat{c^\circ}(-N/2)=0\); hence the cyclic symbol may be written as \(m\) in the last line even though the circle symbol is the zero extension \(\widetilde m\).
Apply
\eqref{eq:marcinkiewicz-zygmund-sampling} to  the polynomial and its image under the multiplier.
The factors \(N^{-1/u}\) cancel, and
\[
 \norm{c^\circ}_u\le2\norm c_u.
\]
Finally,
\[
 \mathcal F_N^{-1}(m\widehat c)
 =
 \mathcal F_N^{-1}(m\widehat{c^\circ})
 +
 m(-N/2)P_{\mathrm{Ny}}c.
\]
The second term is controlled by
\eqref{eq:nyquist-projection-contraction}.  This proves
\eqref{eq:cyclic-multiplier-bound}.
In summary, the first term is bounded by
\[
 \text{circle multiplier bound}
 \times
 \text{two sampling equivalences},
\]
while the second is bounded by
\(\abs{m(-N/2)}\norm{P_{\mathrm{Ny}}}\le\norm m_\infty\).
Every constant is independent of the cyclic size.
\end{proof}

\begin{corollary}[Cyclic interval and half-line projections]
\label{cor:cyclic-interval-projections}
Every cyclic frequency interval projection, every positive or negative half-line projection, and every smooth shell multiplier whose zero extension has uniformly bounded dyadic variation is bounded on
\(\ell^u(\ZN)\), \(1<u<\infty\), with a bound independent of \(N\).
\end{corollary}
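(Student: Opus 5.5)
The plan is to reduce everything to Lemma~\ref{lem:cyclic-multiplier}: for each symbol class in the statement, bound \(\mathfrak M_N(m)\) by an absolute constant (or one depending only on the fixed cutoffs), uniformly in \(N\). Since \(\norm m_{\ell^\infty}\le1\) in the projection cases, only the dyadic-variation part of \eqref{eq:cyclic-marcinkiewicz-norm} needs attention.

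For a cyclic frequency interval projection, take \(m=\one_{[a,b]\cap\Lambda_N}\), where \([a,b]\) is an integer interval in \(\Lambda_N\). (A wrapping interval is the complement of a nonwrapping one, or a sum of at most two such, so it reduces to this case at the cost of a factor two.) The zero extension \(\widetilde m\) on \(\Z\) is then the indicator of \([a,b]\) with possibly the Nyquist point \(-N/2\) removed. Its difference sequence \(\Delta\widetilde m\) has at most four nonzero entries, each of absolute value one: two at the endpoints of the interval, plus possibly two created by deleting the Nyquist point. Every dyadic block sum in \eqref{eq:cyclic-marcinkiewicz-norm} is therefore at most \(4\), so \(\mathfrak M_N(m)\le5\). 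Half-line projections are the special case \(a=0\) or \(1\), \(b=N/2-1\), respectively \(a=-N/2\), \(b=-1\) or \(0\). The separate rank-one treatment of the Nyquist coefficient inside the lemma means no extra argument is needed there.

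For a smooth shell multiplier \(m(\kappa)=\phi(\abs{\xi_\kappa}/v)\), or a \(\chi\)-type cutoff, the hypothesis is stated directly as uniform dyadic variation of the zero extension. It remains to record why the standard examples satisfy it. On a dyadic block \(2^\nu\le k<2^{\nu+1}\), the mean value theorem gives
\[
 \sum_k\abs{\Delta\widetilde m(k)}\le 2^\nu\sup\abs{m'}\lesssim 2^\nu\cdot\frac{2\pi}{v}\norm{\phi'}_\infty\one_{\{2^\nu\lesssim v\}}.
\]
This is bounded because \(\phi\) is supported where \(2\pi k\asymp v\). The zero-extension jumps at \(\pm N/2\) add at most \(2\norm m_\infty\).

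I expect no real obstacle here. The only delicate point is bookkeeping: making sure the outer jumps created by the zero extension and the Nyquist deletion are counted inside some block and bounded by \(\norm m_\infty\). Lemma~\ref{lem:cyclic-multiplier} has already organized exactly this, so the constants come out independent of \(N\).
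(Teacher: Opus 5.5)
Your proposal is correct and follows the paper's proof. Like the paper, you bound \(\mathfrak M_N(m)\) uniformly in \(N\), using a bounded number of unit jumps for interval indicators and the mean-value theorem for smooth shells, and then apply Lemma~\ref{lem:cyclic-multiplier}. Your count of at most four jumps is a harmless overcount: deleting \(-N/2\) from an interval that contains it only moves the left endpoint.
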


\begin{proof}
The zero extension of an interval indicator has at most a fixed number of jumps on every positive and negative dyadic annulus.
The same conclusion holds for the smooth shell symbols used below by the mean-value theorem.
Apply Lemma~\ref{lem:cyclic-multiplier}.
\end{proof}

\subsection{Localized cyclic kernels}
\label{subsec:cyclic-kernels}

Fix a real-valued even function
\[
 \psi\in C_c^\infty((-2,2)),
 \qquad
 0\le\psi\le1,
 \qquad
 \psi=1\ \text{on }[-1,1].
\]
The following lemma supplies uniform spatial decay, smoothness, and \(\ell^2\)-control for the localized cyclic kernels, providing the quantitative kernel bounds needed in the conical square-function estimates.

\begin{lemma}[Cyclic kernel estimates]
\label{lem:cyclic-kernel-estimates}
Let
\[
 1\le b\le\frac N8,
 \qquad
 \sigma_b(\kappa)=\psi(\kappa/b),
 \qquad
 \kappa\in\Lambda_N,
\]
and let
\[
 K_b(j)
 =
 \frac1N
 \sum_{\kappa\in\Lambda_N}
 \sigma_b(\kappa)\e^{-ij\theta_\kappa}
\]
be the normalized counting kernel.
With
\[
 \Delta K_b(j)=K_b(j+1)-K_b(j),
\]
one has, for every fixed integer \(M\ge2\),
\begin{align}
 \abs{K_b(j)}
 &\le
 C_M\frac bN
 \left(
 1+\frac{b\rho_j}{N}
 \right)^{-M},
 \label{eq:cyclic-kernel-size}
 \\
 \abs{\Delta K_b(j)}
 &\le
 C_M\left(\frac bN\right)^2
 \left(
 1+\frac{b\rho_j}{N}
 \right)^{-M}.
 \label{eq:cyclic-kernel-difference}
\end{align}
Moreover,
\begin{align}
 \sum_{j\in\ZN}\abs{K_b(j)}^2
 &\le
 C\frac bN,
 \label{eq:cyclic-kernel-L2}
 \\
 \sum_{j\in\ZN}\abs{\Delta K_b(j)}^2
 &\le
 C\frac{b^3}{N^3}.
 \label{eq:cyclic-kernel-difference-L2}
\end{align}
All constants are independent of \(b\) and \(N\).
\end{lemma}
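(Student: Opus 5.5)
The plan is Poisson summation: we compare the cyclic kernel with the Fourier transform of the smooth bump \(\psi\) and read off all four estimates from Schwartz decay. Since \(b\le N/8\) and \(\supp\psi\subset(-2,2)\), the symbol \(\sigma_b(\kappa)=\psi(\kappa/b)\) vanishes for \(\abs\kappa\ge2b\), hence well inside \(\Lambda_N\) and away from the Nyquist frequency. Therefore
\[
 K_b(j)=\frac1N\sum_{\kappa\in\Z}\psi(\kappa/b)\e^{-2\pi i j\kappa/N}
\]
is a full lattice sum over \(\Z\). Poisson summation gives
\[
 K_b(j)=\frac bN\sum_{k\in\Z}\widehat\psi\bigl(b(j/N+k)\bigr),
\]
with \(\widehat\psi(\xi)=\int\psi(x)\e^{-2\pi i x\xi}\dd x\) (up to the sign convention, which is irrelevant since \(\psi\) is even). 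Because \(\widehat\psi\) is Schwartz, \(\abs{\widehat\psi(\xi)}\le C_M(1+\abs\xi)^{-M}\).

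For \eqref{eq:cyclic-kernel-size}, I choose the representative of \(j\) with \(\abs{j/N+k_0}=\rho_j/N\). The \(k=k_0\) term gives \(C_M(b/N)(1+b\rho_j/N)^{-M}\). For \(k\neq k_0\) we have \(\abs{j/N+k}\ge\abs{k-k_0}/2\), so the remaining terms sum to at most \(C_M(b/N)\,b^{-M}\). This is dominated by the main bound because \(b\rho_j/N\le b/2\), so \((1+b\rho_j/N)^{-M}\ge(1+b)^{-M}\gtrsim b^{-M}\).

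For \eqref{eq:cyclic-kernel-difference}, I write \(\Delta K_b(j)\) as the Poisson sum of \(\widehat\psi(b(j+1)/N+bk)-\widehat\psi(bj/N+bk)\). By the mean value theorem each such difference is at most \((b/N)\sup_{[\cdot,\cdot]}\abs{\widehat\psi'}\). The step \(b/N\le1/8\) shifts the argument by less than one, so the weight \((1+\abs\xi)^{-M}\) changes by only a constant factor. The same main-term/tail splitting then gives the extra factor \(b/N\).

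The \(\ell^2\) bounds come most cleanly from Parseval on \(\ZN\). With the conventions \eqref{eq:cyclic-dft}, \(\widehat{K_b}(\kappa)=N^{-1}\sigma_b(\kappa)\), so
\[
 \sum_j\abs{K_b(j)}^2=N^{-1}\sum_\kappa\abs{\sigma_b(\kappa)}^2\le N^{-1}(4b+1)\le Cb/N,
\]
since \(b\ge1\). Similarly, \(\Delta K_b\) has Fourier multiplier \(\sigma_b(\kappa)(\e^{-i\theta_\kappa}-1)\), whose modulus is at most \(\abs{\theta_\kappa}\le 4\pi b/N\) on the support. This gives
\[
 \sum_j\abs{\Delta K_b(j)}^2\le N^{-1}(4b+1)(4\pi b/N)^2\le Cb^3/N^3.
\]
Alternatively, one can sum the pointwise bounds with \(M=2\) over \(\rho_j\).

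The only delicate points are checking that no aliasing at the Nyquist frequency occurs, which the condition \(b\le N/8\) ensures, and controlling the wrap-around terms \(k\neq k_0\) uniformly. The latter holds because the Schwartz tail \(b^{-M}\) is always dominated by the main decay factor. All constants depend only on \(\psi\) and \(M\).
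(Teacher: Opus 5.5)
Your argument is correct, but it takes a different route from the paper.

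The paper stays on the lattice throughout. It extends \(\sigma_b\) by zero and proves \(\sum_\kappa|\Delta^M\sigma_b(\kappa)|\le C_Mb^{1-M}\). It then performs \(M\)-fold summation by parts against \((1-\omega_j)^M\), using \(|1-\omega_j|\asymp\rho_j/N\), and combines this with the trivial \(\ell^1\) bound. For \(\Delta K_b\) it repeats the argument for the symbol \((\e^{-i\theta_\kappa}-1)\sigma_b(\kappa)\) via a discrete Leibniz rule.

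You instead use \(\supp\sigma_b\subset(-2b,2b)\subset(-N/4,N/4)\) to rewrite the kernel as a full lattice sum over \(\Z\). Poisson summation then gives the exact periodization \(K_b(j)=\frac bN\sum_k\widehat\psi(b(j/N+k))\). The nearest alias gives the main decay. The other aliases contribute \(O(b^{1-M}/N)\), which is dominated because \(b\rho_j/N\le b/2\). The difference bound follows from the mean value theorem for \(\widehat\psi\), using \(b/N\le1/8\) so that the weight changes by a bounded factor. The \(\ell^2\) bounds come from Parseval, exactly as in the paper.

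Your route is shorter and gives all orders of decay at once; it is the analogue of the paper's own periodic Poisson kernel lemma. However, it relies on the symbol being the restriction of a single smooth compactly supported function on \(\R\) that \(\Lambda_N\) does not truncate. The paper's discrete Abel-summation argument needs no such structure. That is why the same machinery carries over to Lemma~\ref{lem:cauchy-coefficient-kernels}, where some symbols are cut off by zero-extension jumps at the positive endpoint of \(\Lambda_N\) and a clean Poisson-summation argument is not available.
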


\begin{proof}
Because \(2b\le N/4\), the support of \(\sigma_b\) remains a fixed distance from the Nyquist boundary.
We may therefore regard
\(\sigma_b\) as a finitely supported sequence on \(\Z\), extended by
zero, without creating an additional boundary jump.
Standard finite-difference bounds give
\begin{equation}
 \sum_{\kappa\in\Z}
 \abs{\Delta^M\sigma_b(\kappa)}
 \le
 C_Mb^{1-M}.
\label{eq:symbol-finite-difference-bound}
\end{equation}

To verify the scale in this bound, the repeated fundamental theorem of calculus gives
\[
 \Delta^M\sigma_b(\kappa)
 =
 b^{-M}
 \int_{[0,1]^M}
 \psi^{(M)}\!\left(
 \frac{\kappa+t_1+\cdots+t_M}{b}
 \right)
 \dd t_1\cdots\dd t_M.
\]
Only \(O(b)\) integers \(\kappa\) meet the enlarged support of the integrand.
Hence
\[
 \sum_{\kappa\in\Z}\abs{\Delta^M\sigma_b(\kappa)}
 \le
 C_M b\,b^{-M}=C_M b^{1-M},
\]
uniformly, including at the two zero-extension endpoints.
The original symbol is supported in \((-2b,2b)\subset(-N/4,N/4)\); the supports of its fixed-order differences are contained in a fixed enlargement of this interval, and the summation is performed on the global zero-extended sequence.
Thus,  this enlargement creates no unrecorded cyclic boundary term.

Let
\[
 \omega_j=\e^{-2\pi ij/N}.
\]
Repeated summation by parts gives, for
\(j\not\equiv0\pmod N\),
\[
 (1-\omega_j)^M K_b(j)
 =
 \frac{\omega_j^M}{N}
 \sum_{\kappa\in\Z}
 \Delta^M\sigma_b(\kappa)
 \omega_j^\kappa.
\]
For completeness, if \(c\) is finitely supported, then
\[
 (1-\omega)\sum_{\kappa}c(\kappa)\omega^\kappa
 =
 \omega\sum_{\kappa}\Delta c(\kappa)\omega^\kappa.
\]
Iterating gives the preceding identity. Since
\(\abs{\omega_j}=1\), taking absolute values gives
\[
 \abs{K_b(j)}
 \le
 \frac1N\abs{1-\omega_j}^{-M}
 \sum_\kappa\abs{\Delta^M\sigma_b(\kappa)}.
\]
This is the complete summation-by-parts estimate; all decay below comes from combining it with the finite-difference estimate and
\(\abs{1-\omega_j}\asymp\rho_j/N\).
Since
\[
 \abs{1-\omega_j}
 \asymp
 \frac{\rho_j}{N},
\]
\eqref{eq:symbol-finite-difference-bound} gives
\[
 \abs{K_b(j)}
 \le
 C_M
 N^{M-1}b^{1-M}\rho_j^{-M}
\]
when \(b\rho_j/N\ge1\).
The trivial estimate
\[
 \abs{K_b(j)}
 \le
 \frac1N\sum_\kappa\abs{\sigma_b(\kappa)}
 \le
 C\frac bN
\]
handles the complementary range.
Combining the two estimates proves
\eqref{eq:cyclic-kernel-size}.

The Fourier symbol of \(\Delta K_b\) is
\[
 (\e^{-i\theta_\kappa}-1)\sigma_b(\kappa).
\]
On the support of \(\sigma_b\),
\[
 \abs{\e^{-i\theta_\kappa}-1}
 \le
 C\frac bN.
\]
The discrete Leibniz rule and
\eqref{eq:symbol-finite-difference-bound} therefore give
\[
 \sum_{\kappa\in\Z}
 \abs{
 \Delta^M\bigl[
 (\e^{-i\theta_\kappa}-1)\sigma_b(\kappa)
 \bigr]
 }
 \le
 C_M\frac{b^{2-M}}N.
\]
Indeed, write
\(q_N(\kappa)=\e^{-2\pi i\kappa/N}-1\).  On the support of
\(\sigma_b\), \(\abs{q_N(\kappa)}\le Cb/N\), 
whereas, for \(k\ge1\),
\[
 \abs{\Delta^k q_N(\kappa)}
 \le C_kN^{-k}.
\]
The discrete Leibniz formula for
\(\Delta^M(q_N\sigma_b)\) therefore has a \(k=0\) term bounded in
\(\ell^1\) by
\[
 C\frac bN\,b^{1-M}=C\frac{b^{2-M}}N,
\]
and, for \(1\le k\le M\), terms bounded by
\[
 C_{M,k}N^{-k}b^{1-M+k}
 =
 C_{M,k}\frac{b^{2-M}}N
 \left(\frac bN\right)^{k-1}
 \le
 C_{M,k}\frac{b^{2-M}}N.
\]
This proves the stated phase-difference scale without a hidden dependence on \(b\) or \(N\).
The same summation-by-parts argument proves
\eqref{eq:cyclic-kernel-difference}.

Finally, Parseval's identity gives
\[
 \sum_j\abs{K_b(j)}^2
 =
 \frac1N\sum_{\kappa\in\Lambda_N}\abs{\sigma_b(\kappa)}^2
 \le
 C\frac bN.
\]
Similarly,
\begin{align*}
 \sum_j\abs{\Delta K_b(j)}^2
 =
 \frac1N
 \sum_{\kappa\in\Lambda_N}
 \abs{\e^{-i\theta_\kappa}-1}^2
 \abs{\sigma_b(\kappa)}^2\le
 C\frac1N\cdot b\left(\frac bN\right)^2
 =
 C\frac{b^3}{N^3}.
\end{align*}
This proves \eqref{eq:cyclic-kernel-L2} and
\eqref{eq:cyclic-kernel-difference-L2}.
\end{proof}

\subsection{The cyclic conical square function}
\label{subsec:cyclic-conical}

Recall the conical square function
\[
 S_{\mathrm{cone}}z(x)^2
 =
 \sum_{h=0}^{H}
 \frac1{m_h}
 \sum_{j\in Q_h(x)}
 \abs{z_h(j)}^2.
\]
The next theorem shows that, under a scale-compatible Fourier bandwidth condition, the conical square function is controlled uniformly by the ordinary vector-valued square function, with only a polynomial loss in the bandwidth parameter.

\begin{theorem}[Polynomial-bandwidth cyclic conical estimate]
\label{thm:cyclic-conical-square-function}
Assume \(N=2^H\) and represent the cyclic dual by
\[
 \Lambda_N=\{-N/2,\ldots,N/2-1\}.
\]
Let \(C_*\ge0\), and let
\[
 z=(z_h)_{0\le h\le H},
 \qquad
 z_h\in\C^{\ZN},
\]
satisfy
\begin{equation}
 \supp\widehat z_h
 \subseteq
 \{\kappa\in\Lambda_N:\abs\kappa\le C_*2^h\}.
 \label{eq:conical-bandwidth-assumption}
\end{equation}
Put \(m_h=N2^{-h}\), let \(Q_h(x)\) be the fixed dyadic cell of length
\(m_h\) containing \(x\), and recall
\[
 S_{\mathrm{cone}}z(x)^2
 =
 \sum_{h=0}^{H}
 \frac1{m_h}
 \sum_{j\in Q_h(x)}\abs{z_h(j)}^2.
\]
For \(1<q<2\), set
\begin{equation}
 \Gamma(q)
 =
 \max\left\{
 \frac1q,
 \frac32\left(\frac2q-1\right)
 \right\}.
 \label{eq:gamma-q}
\end{equation}
Then, 
\begin{equation}
 \norm{S_{\mathrm{cone}}z}_{\ell^q(\ZN)}
 \le
 C_q(1+C_*)^{\Gamma(q)}
 \norm{
 \left(
 \sum_{h=0}^{H}\abs{z_h}^2
 \right)^{1/2}
 }_{\ell^q(\ZN)}.
 \label{eq:cyclic-conical-estimate}
\end{equation}
The constant \(C_q\) depends only on \(q\).
\end{theorem}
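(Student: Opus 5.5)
The plan is to view \(z_h\) as a function on \(\ZN\) that is essentially constant on cells of length \(m_h/(1+C_*)\). The bandwidth hypothesis \eqref{eq:conical-bandwidth-assumption} then lets us replace the cell average of \(\abs{z_h}^2\) over \(Q_h(x)\) by a smoothed local average, which we control by vector-valued maximal or kernel estimates in \(\ell^q(\ell^2)\).

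\textbf{Reproducing formula and pointwise bound.} Set \(b_h=\min\{C_*2^h, N/8\}\), with \(b_h\ge1\). For the low scales, where \(2C_*2^h\le N/8\), I use the cutoff \(\sigma_{2b_h}\) of Lemma~\ref{lem:cyclic-kernel-estimates}. Since \(\sigma_{2b_h}\equiv1\) on the support of \(\widehat z_h\), we have \(z_h=K_{2b_h}*z_h\). For \(j\in Q_h(x)\), the kernel size bound \eqref{eq:cyclic-kernel-size} gives
\[
 \abs{z_h(j)}^2
 \le
 C\sum_{k}\frac{b_h}{N}\Bigl(1+\frac{b_h\rho_{j-k}}{N}\Bigr)^{-M}\abs{z_h(k)}^2 .
\]
This uses Cauchy--Schwarz against the \(\ell^1\)-normalized kernel. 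Averaging over \(j\in Q_h(x)\) and using \(\rho_{j-k}\ge\rho_{x-k}-m_h\), we get \(1+b_h\rho_{j-k}/N\gtrsim(1+C_*)^{-1}(1+2^h\rho_{x-k}/N)\). Hence
\[
 S_{\mathrm{cone}}z(x)^2
 \lesssim
 (1+C_*)^{M}\sum_h \frac{2^h}{N}\sum_k
 \Bigl(1+\frac{2^h\rho_{x-k}}{N}\Bigr)^{-M}\abs{z_h(k)}^2 .
\]
The finitely many top scales, where \(C_*2^h>N/16\), satisfy \(m_h\lesssim 1+C_*\), and I bound them trivially by \(\sum_h\abs{z_h}^2\) over a neighbourhood of size \(O(1+C_*)\).

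\textbf{Loss of polynomial type.} The crude bound above loses \((1+C_*)^{M/2}\), which is far worse than \(\Gamma(q)\). To reach the sharp exponent I would split the kernel in the preceding display into its core \(\rho_{x-k}\lesssim N2^{-h}(1+C_*)\) and its tail. The core is an average over a window of length \((1+C_*)m_h\), which is \((1+C_*)\) cells. The tail is summable with decay in the dilation parameter.

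\textbf{The key estimate.} I then need a dilation estimate for the discrete Lusin function with aperture \(\lambda=1+C_*\):
\[
 \norm{S^{\lambda}_{\mathrm{cone}}}_{\ell^q}\lesssim\lambda^{1/q}\norm{S_{\mathrm{cone}}}_{\ell^q},\qquad q<2 .
\]
I would prove this by a good-\(\lambda\) or tent-space argument, using a Whitney-type covering and the same density-set/layer-cake argument as in Lemma~\ref{lem:tent-allocation}. Separately, the vertical estimate \(\norm{S_{\mathrm{cone}}z}_{\ell^q}\lesssim\lambda^{\frac32(\frac2q-1)}\norm{(\sum\abs{z_h}^2)^{1/2}}_{\ell^q}\) for band-limited \(z_h\) would come from interpolating two endpoints. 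At \(q=2\) it is the trivial \(\ell^2\) identity. At \(q=1\), treated as an \(H^1\)/atomic-type endpoint, I would use the kernel-difference bound \eqref{eq:cyclic-kernel-difference} and the \(\ell^2\) bounds \eqref{eq:cyclic-kernel-L2}--\eqref{eq:cyclic-kernel-difference-L2}, which give a \(\lambda^{3/2}\) loss from the \(b^3/N^3\) scaling. The maximum of the two exponents gives \(\Gamma(q)\).

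\textbf{Main obstacle.} I expect the \(q<2\) vertical-to-conical comparison, namely the Calder\'on--Zygmund or atomic step with explicit \(\lambda^{3/2}\) tracking, to be the main difficulty. That bound is only true because of the bandwidth assumption and requires vector-valued Calder\'on--Zygmund theory on \(\ZN\) that is uniform in \(N\). The nonwrapping dyadic grid on the cyclic group also needs care near the wraparound. I would handle this by using the cyclic distance \(\rho\) throughout and noting that each \(Q_h(x)\) is contained in a cyclic ball of comparable radius.
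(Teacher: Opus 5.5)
The part of your plan that actually proves the theorem is the paper's argument. With \(A=1+C_*\), the paper splits the scales at \(m_h\ge16A\). On the low scales it inserts \(z_h=K_h*z_h\), with \(K_h=K_{A2^h}\), and treats
\[
 (Tg)_{h,j}(x)=m_h^{-1/2}\one_{\{j\in Q_h(x)\}}(K_h*g_h)(j)
\]
as a linear operator on \emph{arbitrary} \(g\in\ell^2(\ZN;\ell^2(\mathcal H_{\mathrm{lo}}))\). The low-scale part of \(S_{\mathrm{cone}}z(x)\) is then the \(\ell^2\)-norm of \(Tz(x)\). This is the object your interpolation needs, because for \(q<2\) the map \(z\mapsto S_{\mathrm{cone}}z\) is controlled only on band-limited inputs.

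The operator \(T\) has \(\ell^2\)-norm at most one, and its operator-valued kernel has size \(\lesssim A^{1/2}/\rho_N(x-y)\). Its H\"ormander constant is \(\lesssim A^{3/2}\), and the \(A^{3/2}\) comes exactly from \eqref{eq:cyclic-kernel-difference-L2} in the regime \(m_h\ge\rho_N(x-y)/2\), as you anticipated. Vector-valued Calder\'on--Zygmund theory on \((\ZN,\rho_N,\#)\) then gives weak type \((1,1)\) with constant \(CA^{3/2}\). Your atomic \(H^1\) endpoint would also work, but weak type avoids having to interpolate Hardy spaces uniformly in \(N\). Marcinkiewicz interpolation then yields the factor \(A^{\frac32(2/q-1)}\). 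The top scales are handled exactly as you propose: \(q/2\)-subadditivity over a cyclic ball of \(O(A)\) points gives \(A^{1/q}\).

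The middle of your plan, which you call the key estimate, does no work and as set up is circular. Your smoothed pointwise bound holds with constant \(2^M\) and needs no bandwidth hypothesis, since \(2^h\rho_{x-k}/N<1\) for \(k\in Q_h(x)\). It therefore only replaces \(S_{\mathrm{cone}}z\) by a larger functional. If you then truncate its kernel at \(\rho_{x-k}\lesssim(1+C_*)m_h\) and apply the aperture-change inequality, you get back \(\lambda^{1/q}\norm{S_{\mathrm{cone}}z}_{\ell^q}\), which is the quantity you started with.

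For the same reason, \(\Gamma(q)\) is a maximum only because the two losses fall on disjoint sets of scales and the two bounds are added, using \(A\ge1\). If a dilation loss \(\lambda^{1/q}\) were composed with the vertical loss \(\lambda^{\frac32(2/q-1)}\), the exponents would add instead. The bandwidth hypothesis enters only through the reproducing kernel in the Calder\'on--Zygmund step.
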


\begin{proof}
Put \(A=1+C_*\) and split
\[
 \mathcal H_{\mathrm{lo}}=\{h:m_h\ge16A\},
 \qquad
 \mathcal H_{\mathrm{top}}=\{h:m_h<16A\}.
\]
For \(h\in\mathcal H_{\mathrm{lo}}\), set
\[
 b_h=A2^h,
 \qquad
 K_h=K_{b_h},
\]
with \(K_b\) as in Lemma~\ref{lem:cyclic-kernel-estimates}.
Since
\(C_*2^h\le b_h\) and \(2b_h\le N/8\), the multiplier equals one on
\(\supp\widehat z_h\), hence \(K_h*z_h=z_h\).

Let
\[
 E=\ell^2(\mathcal H_{\mathrm{lo}}),
 \qquad
 F=\ell^2\bigl(\{(h,j):h\in\mathcal H_{\mathrm{lo}},\ j\in\ZN\}\bigr),
\]
and define
\[
 (Tg)_{h,j}(x)
 =
 m_h^{-1/2}\one_{\{j\in Q_h(x)\}}(K_h*g_h)(j).
\]
Counting the \(m_h\) values of \(x\) for which \(j\in Q_h(x)\), followed by Parseval, gives
\[
 \norm{Tg}_{\ell^2(\ZN;F)}
 \le
 \norm g_{\ell^2(\ZN;E)}.
\]
The low-scale contribution vanishes if \(\mathcal H_{\mathrm{lo}}\) is empty.
Otherwise, its counting kernel is
\[\bigl(K(x,y)e\bigr)_{h,j}
=
m_h^{-1/2}\one_{\{j\in Q_h(x)\}}K_h(j-y)e_h,
\qquad e\in E.\]
The output coordinates corresponding to distinct \(h\) are orthogonal.
Hence
\[
 \norm{K(x,y)}_{E\to F}^2
 =
 \sup_{h\in\mathcal H_{\mathrm{lo}}}
 \frac1{m_h}
 \sum_{j\in Q_h(x)}\abs{K_h(j-y)}^2.
\]
Put \(d=\rho_N(x-y)>0\).
When \(m_h\ge d/2\), Lemma~\ref{lem:cyclic-kernel-estimates} gives
\[
 \frac1{m_h}
 \sum_{j\in Q_h(x)}\abs{K_h(j-y)}^2
 \le \frac{CA}{d^2}.
\]
When \(m_h<d/2\), every \(j\in Q_h(x)\) satisfies
\[
 \rho_N(j-y)\ge d-\rho_N(j-x)>d/2.
\]
The size bound in Lemma~\ref{lem:cyclic-kernel-estimates}, with \(M=2\), therefore gives
\[
 \abs{K_h(j-y)}
 \le
 C\frac A{m_h}
 \left(1+\frac{Ad}{m_h}\right)^{-2}
 \le \frac C d.
\]
The norm identity and \(A\ge1\) imply
\[
 \norm{K(x,y)}_{E\to F}
 \le C\frac{A^{1/2}}d.
\]

For the difference estimate, let
\[
 d'=\rho_N(y-y')\le d/4;
\]
the case \(d'=0\) is immediate.
The same orthogonality gives
\[
 \norm{K(x,y)-K(x,y')}_{E\to F}^2
 =
 \sup_{h\in\mathcal H_{\mathrm{lo}}}
 \frac1{m_h}
 \sum_{j\in Q_h(x)}
 \abs{K_h(j-y)-K_h(j-y')}^2.
\]
Choose a shortest nearest-neighbor path from \(y\) to \(y'\) in
\(\ZN\), and write \(y_u\), \(0\le u\le d'\), for its vertices.
Telescoping along this path, followed by the \(\ell^2\) triangle inequality and Lemma~\ref{lem:cyclic-kernel-estimates}, gives
\[
 \sum_{j\in\ZN}
 \abs{K_h(j-y)-K_h(j-y')}^2
 \le
 d'^2\sum_{j\in\ZN}\abs{\Delta K_h(j)}^2
 \le C d'^2\frac{A^3}{m_h^3}.
\]
Thus, if \(m_h\ge d/2\),
\[
 \frac1{m_h}
 \sum_{j\in Q_h(x)}
 \abs{K_h(j-y)-K_h(j-y')}^2
 \le
 C d'^2\frac{A^3}{m_h^4}
 \le
 C\frac{A^3 d'^2}{d^4}.
\]
If \(m_h<d/2\), every point \(y_u\) on that path satisfies
\[
 \rho_N(j-y_u)
 \ge d/4,
 \qquad j\in Q_h(x).
\]
The pointwise difference bound in Lemma~\ref{lem:cyclic-kernel-estimates}, again with \(M=2\), therefore gives
\[
 \abs{K_h(j-y)-K_h(j-y')}
\le C\frac{d'}{d^2}.
\]
Combining the two regimes proves, uniformly in \(N\),
\[
 \norm{K(x,y)-K(x,y')}_{E\to F}
 \le
 CA^{3/2}
 \frac{\rho_N(y-y')}{\rho_N(x-y)^2}.
\]
For \(y,y_0\in\ZN\), put \(d_0=\rho_N(y-y_0)\).
Apply this estimate with \(y_0\) as the base point in the second variable.
If \(d_0>0\), it gives
\[
 \sum_{\rho_N(x-y_0)>4 d_0}
 \norm{K(x,y)-K(x,y_0)}_{E\to F}
 \le
 CA^{3/2} d_0 \sum_{n>4 d_0}n^{-2}
 \le
 CA^{3/2},
\]
since each positive cyclic distance occurs at most twice.
For \(d_0=0\), the difference vanishes.
Thus,  \(A^{-3/2}K\) satisfies the integral H\"ormander condition
\((C_1)\) of \cite[Definition~3.1]{BordinFernandez92}, uniformly in \(N\).
Moreover, \(A\ge1\) and the preceding \(\ell^2\) estimate give
\[
 \norm{A^{-3/2}T}_{\ell^2(\ZN;E)\to\ell^2(\ZN;F)}
 \le 1.
\]
Since \((\ZN,\rho_N,\#)\) has uniformly bounded doubling constants,
\cite[Theorem~3.1(i) and its proof, pp.~156--158]{BordinFernandez92},
applied to \(A^{-3/2}T\) with \(\gamma=1\) and \(r_0=s_0=2\), yields
\[
 \norm T_{\ell^1(\ZN;E)\to\ell^{1,\infty}(\ZN;F)}
 \le
 CA^{3/2}.
\]
The proof gives a constant independent of the dimensions of \(E\) and
\(F\), hence independent of \(N\) and \(H\).
Interpolation with the preceding \(\ell^2\) bound therefore gives
\[
 \norm{S_{\mathrm{cone}}^{\mathrm{lo}}z}_{\ell^q}
 \le
 C_qA^{\frac32(2/q-1)}
 \norm{
 \left(\sum_{h\in\mathcal H_{\mathrm{lo}}}\abs{z_h}^2\right)^{1/2}
 }_{\ell^q}.
\]

For the top scales, put
\[
 G(j)=\sum_{h\in\mathcal H_{\mathrm{top}}}\abs{z_h(j)}^2.
\]
Since \(m_h<16A\),
\[
 S_{\mathrm{cone}}^{\mathrm{top}}z(x)^2
 \le
 \sum_{\rho_N(j-x)<16A}G(j).
\]
The cyclic ball contains at most \(CA\) points, and \(q/2<1\); hence subadditivity gives
\[
 \norm{S_{\mathrm{cone}}^{\mathrm{top}}z}_{\ell^q}
 \le
 CA^{1/q}\norm{G^{1/2}}_{\ell^q}.
\]
Combining the low-scale and top-scale bounds and using \(A\ge1\) gives \eqref{eq:cyclic-conical-estimate} with
\(\Gamma(q)\) as in \eqref{eq:gamma-q}.
\end{proof}

\subsection{Boundary traces and the periodic Poisson kernel}
\label{subsec:boundary-traces}

Define the finite dyadic shell set
\begin{equation}
 \mathcal V_N
 =
 \left\{
 v\in2^\Z:
 \phi(\xi_\kappa/v)\ne0
 \text{ for some }1\le\kappa<N/2
 \right\}.
 \label{eq:finite-shell-set}
\end{equation}
Since \(0<\xi_\kappa<\pi N\) and
\(\supp\phi\subset(1/2,2)\), every \(v\in\mathcal V_N\) satisfies
\[
 v<2\pi N.
\]
Because \(v\) and \(N\) are dyadic,
\begin{equation}
 v\le4N,
 \qquad
 \frac Nv\ge\frac14.
 \label{eq:actual-trace-scale-range}
\end{equation}

Fix once and for all an integer \(M_0>2\).
For \(m>0\) and
\(d\in\C^{\ZN}\), define
\begin{align}
 B_m(d)
 &=
 m^{-1/p}
 \sum_{j\in\ZN}
 (1+\rho_j/m)^{-1}\abs{d_j},
 \label{eq:B-boundary-trace}
 \\
 D_m(d)
 &=
 m^{-1/p}
 \sum_{j\in\ZN}
 (1+\rho_j/m)^{-M_0}\abs{d_j}.
 \label{eq:D-boundary-trace}
\end{align}
These two quantities measure the localized \(\ell^1\)-mass of \(d\) near the cyclic origin at spatial scale \(m\), with \(B_m\) giving a long-range boundary weight and \(D_m\) a more rapidly localized version.

The cyclic Poisson kernel is
\begin{equation}
 \mathsf P_\rho^N(j)
 =
 \frac1N
 \sum_{\kappa\in\Lambda_N}
 \e^{-\rho\abs{\theta_\kappa}}
 \e^{-ij\theta_\kappa},
 \qquad
 \rho\ge0.
 \label{eq:cyclic-poisson-kernel}
\end{equation}
Thus, 
\[
 \mathsf P_\rho^N*d
 =
 \mathcal F_N^{-1}
 \left(
 \e^{-\rho\abs{\theta_\kappa}}\widehat d(\kappa)
 \right).
\]
Define
\begin{equation}
 \widetilde B_m(d)
 =
 m^{-1/p}
 \int_0^\infty
 \e^{-\rho/(2m)}
 \abs{(\mathsf P_\rho^N*d)(0)}
 \dd\rho.
 \label{eq:poisson-boundary-trace}
\end{equation}
This quantity measures the boundary contribution of \(d\) at scale \(m\) after Poisson smoothing, with exponential averaging over the Poisson depth.

We first record the exact periodization and the pointwise estimate for
\(\mathsf P_\rho^N\).
The following lemma identifies the cyclic Poisson kernel as a positive periodization of the classical Poisson profile and provides the uniform pointwise decay needed for the boundary-trace estimates.

\begin{lemma}[Periodic Poisson kernel]
\label{lem:periodic-poisson-kernel}
For \(\rho>0\),
\begin{equation}
 \mathsf P_\rho^N(j)
 =
 \sum_{\ell\in\Z}
 \frac{
 \rho\bigl(
 1-(-1)^{j+\ell N}\e^{-\pi\rho}
 \bigr)
 }{
 \pi\bigl(
 \rho^2+(j+\ell N)^2
 \bigr)
 }.
 \label{eq:poisson-periodization}
\end{equation}
At \(\rho=0\), the formula is understood by continuous extension.
Moreover,
\begin{equation}
 \mathsf P_\rho^N(j)\ge0,
 \qquad
 \sum_{j\in\ZN}\mathsf P_\rho^N(j)=1,
 \label{eq:poisson-positivity-mass}
\end{equation}
and
\begin{equation}
 \mathsf P_\rho^N(j)
 \le
 C
 \left\{
 \frac{1+\rho}{
 (1+\rho)^2+\rho_j^2
 }
 +
 \frac1N
 \right\}.
 \label{eq:poisson-pointwise-bound}
\end{equation}
\end{lemma}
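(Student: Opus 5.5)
The plan is to prove the periodization formula \eqref{eq:poisson-periodization} by Poisson summation, and then to read positivity, mass one and the pointwise bound \eqref{eq:poisson-pointwise-bound} off that formula.

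\textbf{Step 1: periodization.} Write
\[
 \mathsf P_\rho^N(j)=\frac1N\sum_{\kappa\in\Lambda_N}g(\kappa),
 \qquad
 g(\kappa)=\e^{-2\pi\rho\abs\kappa/N}\e^{-2\pi ij\kappa/N}.
\]
The range \(\Lambda_N=\{-N/2,\ldots,N/2-1\}\) is a full period, but \(g\) is not \(N\)-periodic in \(\kappa\), so I will not sum over \(\Z\) directly. Instead I split into the sums over \(0\le\kappa\le N/2-1\) and \(-N/2\le\kappa\le-1\). Each is a finite geometric series in \(\e^{-2\pi(\rho\pm ij)/N}\). Their sum is a rational expression in \(\e^{-2\pi(\rho\pm ij)/N}\) and \(\e^{-\pi\rho}(-1)^j\); the Nyquist term \(\kappa=-N/2\) is exactly what produces the factor \((-1)^j\e^{-\pi\rho}\).

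On the other side, I compute the periodization of the continuous profile. The function \(\rho(1-(-1)^x\e^{-\pi\rho})/(\pi(\rho^2+x^2))\), read at integer \(x=j+\ell N\), is the Fourier transform over \([-\pi,\pi]\) of \(\e^{-\rho\abs\xi}\), because
\[
 \frac1{2\pi}\int_{-\pi}^{\pi}\e^{-\rho\abs\xi}\e^{-ix\xi}\dd\xi
 =\frac{\rho\bigl(1-(-1)^x\e^{-\pi\rho}\bigr)}{\pi(\rho^2+x^2)}
 \quad\text{for integer }x.
\]
Summing this over \(x\equiv j\pmod N\) and applying Poisson summation on \(\Z\) with modulus \(N\) turns the periodization into the Riemann sum \(\frac1N\sum_\kappa \e^{-\rho\abs{2\pi\kappa/N}}\e^{-ij2\pi\kappa/N}\) over \(N\) equally spaced points of \([-\pi,\pi)\). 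That is exactly \(\mathsf P_\rho^N(j)\), and it identifies the Nyquist point \(\xi=-\pi\) correctly. Absolute convergence in \(\ell\) is clear from the \(x^{-2}\) decay, and \(\rho=0\) follows by continuity.

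I expect the main obstacle to be this step: getting the boundary value at \(\xi=\pm\pi\) right, so that the endpoint term is counted once. To settle it I will check \eqref{eq:poisson-periodization} against the explicit geometric-sum evaluation from the first paragraph rather than trust the Poisson summation bookkeeping.

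\textbf{Step 2: positivity and mass.} Each summand in \eqref{eq:poisson-periodization} is nonnegative, since \(1\pm\e^{-\pi\rho}\ge0\) for \(\rho\ge0\). The mass identity follows by summing the definition over \(j\): orthogonality of characters kills every \(\kappa\ne0\) and leaves the \(\kappa=0\) term, which equals \(1\).

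\textbf{Step 3: the pointwise bound.} Take \(j\) to be the representative with \(\abs j=\rho_j\le N/2\). For \(\ell\ne0\) we have \(\abs{j+\ell N}\ge\abs\ell N/2\). I split into two regimes.

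When \(\rho\le N\), the tail \(\ell\ne0\) is at most \(C\rho/N^2\cdot\sum_\ell\ell^{-2}\le C/N\). The \(\ell=0\) term is at most \(\rho(1+\e^{-\pi\rho})/(\pi(\rho^2+\rho_j^2))\). For \(\rho\ge1\) this is comparable to \((1+\rho)/((1+\rho)^2+\rho_j^2)\). For \(\rho<1\) I use the factor \(1-(-1)^j\e^{-\pi\rho}\): it is at most \(\pi\rho\) when \(j\) is even and at most \(2\) in general. The case \(j=0\) gives at most \(C\rho\cdot\rho/\rho^2\le C\). For \(j\ne0\) the term is at most \(C\rho/\rho_j^2\le C/(1+\rho_j^2)\).

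When \(\rho\ge N\), I bound the whole sum by comparison with the integral \(\int\rho/(\rho^2+x^2)\dd x/N\), which gives at most \(C/N\).

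Together these give \eqref{eq:poisson-pointwise-bound}.
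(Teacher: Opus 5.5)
Your proposal is correct and follows essentially the paper's route. You compute the Fourier coefficients of $\e^{-\rho\abs\theta}$ on $[-\pi,\pi]$ and alias them modulo $N$ to obtain \eqref{eq:poisson-periodization}; you read off positivity termwise and the mass from orthogonality; and you bound the nearest alias separately from the rest. The one difference is minor: the paper handles the remaining aliases for all $\rho$ at once via the uniform boundedness of $\sum_{k\ge1}x/(x^2+k^2)$ on $(0,\infty)$, while you split at $\rho=N$, which is equally valid. The Nyquist bookkeeping you flag is not a real obstacle. The $2\pi$-periodic extension of $\e^{-\rho\abs\theta}$ is continuous at $\pm\pi$ and has an absolutely convergent Fourier series, so it equals the function at $\theta_{-N/2}=-\pi$, and the geometric-sum cross-check is unnecessary.
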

\begin{proof}
Let
\[
 F_\rho(\theta)=\e^{-\rho\abs\theta},
 \qquad -\pi\le\theta\le\pi,
\]
extended \(2\pi\)-periodically.
A direct integration gives
\begin{equation}
 \widehat F_\rho(n)
 =
 \frac1{2\pi}
 \int_{-\pi}^{\pi}
 \e^{-\rho\abs\theta}\e^{-in\theta}\dd\theta
 =
 \frac{
 \rho\bigl(1-(-1)^n\e^{-\pi\rho}\bigr)
 }{
 \pi(\rho^2+n^2)
 }.
 \label{eq:continuous-poisson-coefficients}
\end{equation}
For \(\rho>0\), the coefficients in
\eqref{eq:continuous-poisson-coefficients} are
\(O_\rho((1+n^2)^{-1})\), so the Fourier series of \(F_\rho\)
converges absolutely and uniformly.
Substituting this series into
\eqref{eq:cyclic-poisson-kernel}, we obtain
\begin{align*}
 \mathsf P_\rho^N(j)
 &=
 \frac1N
 \sum_{\kappa\in\Lambda_N}
 F_\rho(\theta_\kappa)\e^{-ij\theta_\kappa}
 \\
 &=
 \sum_{n\in\Z}\widehat F_\rho(n)
 \left(
 \frac1N
 \sum_{\kappa\in\Lambda_N}
 \e^{i(n-j)\theta_\kappa}
 \right)
 \\
 &=
 \sum_{\ell\in\Z}\widehat F_\rho(j+\ell N).
\end{align*}
Absolute convergence justifies the interchange of sums, and the inner average equals \(1\) when \(n\equiv j\pmod N\) and \(0\) otherwise.
This proves \eqref{eq:poisson-periodization}.

Every summand in \eqref{eq:poisson-periodization} is nonnegative.
Moreover, by the defining finite Fourier series,
\[
 \sum_{j\in\ZN}\mathsf P_\rho^N(j)
 =F_\rho(0)=1,
\]
so \eqref{eq:poisson-positivity-mass} follows.

Choose \(\ell_0\in\Z\) so that
\[
 \abs{j+\ell_0N}=\rho_j.
\]
From \eqref{eq:continuous-poisson-coefficients}, separating the cases
\(0<\rho\le1\) and \(\rho>1\), one obtains
\[
 \widehat F_\rho(j+\ell_0N)
 \le
 C\frac{1+\rho}{(1+\rho)^2+\rho_j^2}.
\]
For the remaining aliases, the numerator in
\eqref{eq:continuous-poisson-coefficients} is at most \(2\rho\), while
\[
 \abs{j+\ell N}
 \ge c(1+\abs{\ell-\ell_0})N,
 \qquad \ell\ne\ell_0.
\]
Hence
\begin{align*}
 \sum_{\ell\ne\ell_0}\widehat F_\rho(j+\ell N)
 \le
 C\sum_{k=1}^{\infty}
 \frac{\rho}{\rho^2+k^2N^2}=
 \frac CN
 \sum_{k=1}^{\infty}
 \frac{\rho/N}{(\rho/N)^2+k^2}
 \le \frac CN,
\end{align*}
since the last scalar sum is uniformly bounded on \((0,\infty)\).
Combining the nearest-alias term with the remaining aliases proves
\eqref{eq:poisson-pointwise-bound}.  Finally, continuity at \(\rho=0\)
gives
\(\mathsf P_0^N(j)=\one_{\{j=0\}}\), completing the proof.
\end{proof}

The following proposition collects the boundary localization estimates into a single uniform bound, showing that all relevant boundary-trace quantities are summable across scales and controlled by the ambient \(\ell^q\)-norm of \(d\).

\begin{proposition}[Boundary-trace estimates]
\label{prop:boundary-traces}
For every \(d\in\C^{\ZN}\),
\begin{equation}
 \sum_{v\in\mathcal V_N}
 B_{N/v}(d)^q
 +
 \sum_{h=0}^{H}
 D_{N/w_h}(d)^q
 +
 \sum_{h=0}^{H}
 \widetilde B_{N/w_h}(d)^q
 \le
 C_p\norm d_{\ell^q(\ZN)}^q.
 \label{eq:boundary-trace-package}
\end{equation}
The estimate includes the possible top scales
\[
 N/v=\frac12
 \qquad\text{and}\qquad
 N/v=\frac14.
\]
\end{proposition}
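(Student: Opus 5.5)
The plan is to reduce all three sums to a single discrete Hardy-type inequality on dyadic shells around the cyclic origin, with weights that decay geometrically in the scale ratio. For $\ell\in 2^{\Z}$, $\ell\ge1$, put
\[
 a_\ell=\Bigl(\sum_{\ell/2<\rho_j\le\ell}\abs{d_j}^q\Bigr)^{1/q},
 \qquad
 a_{\mathrm{orig}}=\abs{d_0}.
\]
The orders $\rho_j$ of the points $j\in\ZN$ are then partitioned by the origin together with these shells. Each shell contains at most $2\ell$ points. By H\"older with $q'=p$,
\[
 \sum_{\ell/2<\rho_j\le\ell}\abs{d_j}\le (2\ell)^{1/p}a_\ell .
\]
Moreover $\sum_\ell a_\ell^q+a_{\mathrm{orig}}^q=\norm d_q^q$.

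\textbf{Step 1: a general weighted bound.} Let $W_m(j)\ge0$ satisfy $W_m(j)\le\omega(\rho_j/m)$ for a nonincreasing profile $\omega$. I would like to estimate
\[
 m^{-1/p}\sum_jW_m(j)\abs{d_j}
 \le
 m^{-1/p}\omega(0)\,a_{\mathrm{orig}}
 +C\sum_{\ell}\omega\!\Bigl(\frac{\ell}{2m}\Bigr)\Bigl(\frac{\ell}{m}\Bigr)^{1/p}a_\ell .
\]
For dyadic $m\ge1/4$, the first term yields $\sum_m m^{-q/p}a_{\mathrm{orig}}^q\le C\,a_{\mathrm{orig}}^q$. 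The second term is a convolution on the dyadic group, in the variable $\log_2(\ell/m)$, with kernel
\[
 \kappa(x)=\omega(x/2)\,x^{1/p}.
\]
By Schur's test (discrete Young on $\Z$), its $\ell^q$ norm in $m$ is at most $\sum_{x\in2^{\Z}}\kappa(x)$ times $(\sum_\ell a_\ell^q)^{1/q}$. So it suffices to check that
\[
 \sum_{x\in2^\Z}\omega(x/2)x^{1/p}<\infty .
\]
For $x\le1$ this holds because of the factor $x^{1/p}$, and for $x\ge1$ it holds provided $\omega(x)=O(x^{-\gamma})$ with $\gamma>1/p$. The top scales $N/v=1/2,1/4$ appear here simply as two more values of $m$, so they are included automatically.

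\textbf{Step 2: $B$ and $D$.} For $B_m$ the profile is $\omega(x)=(1+x)^{-1}$, which gives $\gamma=1>1/p$. For $D_m$ the profile is $(1+x)^{-M_0}\le(1+x)^{-1}$. The index sets $\{N/v: v\in\mathcal V_N\}$ and $\{N/w_h\}$ each consist of distinct dyadic $m\ge1/4$, so Step 1 bounds both sums by $C_p\norm d_q^q$.

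\textbf{Step 3: the Poisson trace, which I expect to be the main work.} By the positivity in Lemma~\ref{lem:periodic-poisson-kernel}, $\abs{(\mathsf P_\rho^N*d)(0)}\le\sum_j\mathsf P_\rho^N(j)\abs{d_j}$, and Tonelli gives
\[
 \widetilde B_m(d)\le m^{-1/p}\sum_j W_m(j)\abs{d_j},
 \qquad
 W_m(j)=\int_0^\infty \e^{-\rho/(2m)}\mathsf P^N_\rho(j)\dd\rho .
\]
Insert \eqref{eq:poisson-pointwise-bound}. The alias term $C/N$ contributes $2Cm/N$ to $W_m(j)$, hence at most $Cm^{1/q}N^{-1/q}\norm d_q$ to $\widetilde B_m(d)$ by H\"older. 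Summed over $m=N/w_h\le N$, this is geometric and bounded by $C\norm d_q^q$. For the main term, split the $\rho$-integral at $\rho\asymp\rho_j$ and at $\rho\asymp m$. This should give
\[
 W_m(j)\lesssim
 \begin{cases}
 1+\log\bigl(\tfrac{m}{1+\rho_j}\bigr), & \rho_j\le m,\\[2pt]
 m^2/\rho_j^2, & \rho_j>m,
 \end{cases}
\]
where the second bound comes from bounding the integrand by $(1+\rho)\rho_j^{-2}$. The corresponding profile is therefore
\[
 \omega(x)\lesssim 1+\log_+(1/x)\ \ (x\le1),
 \qquad
 \omega(x)\lesssim x^{-2}\ \ (x>1).
\]
This profile is not bounded at $0$, so Step 1 has to be rerun with it. The origin then carries weight $\lesssim1+\log(2m)$, and $\sum_m m^{-q/p}\log^q(2+m)<\infty$. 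For the shells, the kernel becomes
\[
 x^{1/p}\bigl(1+\log_+(1/x)\bigr)\ \text{ for } x\le1,
 \qquad
 x^{1/p-2}\ \text{ for } x>1,
\]
which is still summable over $x\in2^{\Z}$. So Schur's test closes the $\widetilde B$ sum.

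The only delicate points are the logarithmic growth of the Poisson weight near the origin and the uniformity in $N$ of the alias term. Both are handled by the geometric decay in the scale ratio, and all constants depend only on $p$.
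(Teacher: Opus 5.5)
Your proposal is correct and follows essentially the same route as the paper: dyadic distance rings combined with H\"older, a convolution on the dyadic scale index with an $\ell^1$ kernel closed by Young's inequality, and, for $\widetilde B_m$, positivity of the periodic Poisson kernel, separation of the $1/N$ alias term (summed geometrically through $(m/N)^{1/q}$), and the logarithmic versus $m^2/\rho_j^2$ two-regime bound for the nearest-alias integral. The differences are only cosmetic: you isolate the origin and use one monotone-profile bound valid for every dyadic $m>0$, which covers the top scales $m=1/2,1/4$ directly instead of comparing them with the $m=1$ weights, and you bound the far regime by $(1+\rho)\rho_j^{-2}$ over the whole $\rho$-range rather than splitting the integral at $\rho=\rho_j$.
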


\begin{proof}
The first two traces are discrete scale convolutions.
The Poisson trace requires a separate kernel calculation.
The proof is organized as follows:
\begin{enumerate}[label=\textbf{Step \arabic*.},leftmargin=*]
\item decompose the cyclic group into distance rings and prove a common
ring-convolution estimate for \(B_m\) and \(D_m\);
\item reduce \(\widetilde B_m\) by positivity to the integrated Poisson
kernel;
\item separate the nearest-alias kernel from the constant \(N^{-1}\) mean
term;
\item integrate the nearest-alias bound in the two cases
\(\rho_j\le m\) and \(\rho_j>m\);
\item convert the two cases into inner- and outer-ring convolution kernels;
\item sum the mean term over the actual scales \(m=N2^{-h}\).
\end{enumerate}
All scale convolutions below are scalar convolutions on the dyadic scale index; no cyclic multiplier theorem is used in this proposition.

\medskip
\noindent\textbf{Step~1: distance-ring convolution for \(B_m\) and \(D_m\).}
For \(k\ge0\), define the cyclic rings
\[
 \mathcal R_k
 =
 \{j\in\ZN:2^k\le1+\rho_j<2^{k+1}\},
\]
and put
\[
 a_k
 =
 \left(
 \sum_{j\in\mathcal R_k}\abs{d_j}^q
 \right)^{1/q}.
\]
Extend \(a_k\) by zero outside the finite cyclic range.
Then
\begin{equation}
 \sum_{k\ge0}a_k^q
 =
 \norm d_q^q.
 \label{eq:ring-mass-identity}
\end{equation}
Since
\[
 \#\mathcal R_k\le C2^k,
\]
H\"older's inequality gives
\[
 \sum_{j\in\mathcal R_k}\abs{d_j}
 \le
 C2^{k/p}a_k.
\]

Let \(m=2^\lambda\ge1\), and let \(\gamma\ge1\).
Splitting the rings at
\(k=\lambda\) gives
\begin{align}
 m^{-1/p}
 \sum_j
 (1+\rho_j/m)^{-\gamma}\abs{d_j}
\le
 C
 \left\{
 \sum_{k\le\lambda}
 2^{-(\lambda-k)/p}a_k
 +
 \sum_{k>\lambda}
 2^{-(k-\lambda)(\gamma-1/p)}a_k
 \right\}.
 \label{eq:boundary-ring-convolution}
\end{align}
For clarity, the two terms in this convolution come from distinct ring ranges.
If \(k\le\lambda\), then the weight is at most one and
\[
 m^{-1/p}
 \sum_{j\in\mathcal R_k}\abs{d_j}
 \le
 C2^{-\lambda/p}2^{k/p}a_k
 =C2^{-(\lambda-k)/p}a_k.
\]
If \(k>\lambda\), then
\((1+\rho_j/m)^{-\gamma}\le C2^{-\gamma(k-\lambda)}\), and hence
\begin{align*}
 m^{-1/p}
 \sum_{j\in\mathcal R_k}
 (1+\rho_j/m)^{-\gamma}\abs{d_j}
 &\le
 C2^{-\lambda/p}2^{-\gamma(k-\lambda)}2^{k/p}a_k\\
 &=C2^{-(k-\lambda)(\gamma-1/p)}a_k.
\end{align*}
Thus \eqref{eq:boundary-ring-convolution} is a literal convolution of the ring masses \((a_k)\) with two one-sided geometric kernels.
For \(B_m\), one has
\[
 \gamma-\frac1p
 =
 1-\frac1p
 =
 \frac1q>0.
\]
For \(D_m\),
\[
 \gamma-\frac1p
 =
 M_0-\frac1p>0.
\]
Thus both one-sided coefficient sequences in
\eqref{eq:boundary-ring-convolution} belong to \(\ell^1(\Z)\).
Scalar Young's inequality on the scale index gives
\begin{equation}
 \sum_{\substack{m\in2^\Z\\m\ge1}}B_m(d)^q
 +
 \sum_{\substack{m\in2^\Z\\m\ge1}}D_m(d)^q
 \le
 C_p\norm d_q^q.
 \label{eq:B-D-all-scales}
\end{equation}
For \(B_m\), the outer kernel is
\(2^{-\nu/q}\), because \(1-1/p=1/q\); for \(D_m\), it is
\(2^{-\nu(M_0-1/p)}\).  The common inner kernel is
\(2^{-\nu/p}\).  All three lie in \(\ell^1(\Nzero)\), so the operator
norms in Young's inequality depend only on \(p\) and the fixed \(M_0\), not on the finite scale range or on \(N\).
By \eqref{eq:actual-trace-scale-range}, the only actual scales with
\(N/v<1\) are \(1/2\) and \(1/4\).  Their defining weights are uniformly
bounded by a fixed multiple of the \(m=1\) weight.
This proves the first two sums in \eqref{eq:boundary-trace-package}.

Explicitly, for \(m\in\{1/2,1/4\}\), the \(B_m\)-weight is
\[
 m^{-1/p}(1+\rho_j/m)^{-1}
 =\frac{m^{1/q}}{m+\rho_j},
\]
and the \(D_m\)-weight is
\[
 m^{M_0-1/p}(m+\rho_j)^{-M_0}.
\]
Relative to the corresponding \(m=1\) weights, the two ratios are
\[
 m^{1/q}\frac{1+\rho_j}{m+\rho_j}
 \quad\hbox{and}\quad
 m^{M_0-1/p}
 \left(\frac{1+\rho_j}{m+\rho_j}\right)^{M_0}.
\]
For \(\rho_j=0\) they equal \(m^{-1/p}\).
For \(\rho_j\ge1\), use
\((1+\rho_j)/(m+\rho_j)\le2\) together with
\(m\in\{1/2,1/4\}\).  Both ratios are therefore bounded by a constant
depending only on \(p\) and the fixed \(M_0\).
Hence these two exceptional scales cost only a fixed constant and require no additional scale summation.

\medskip
\noindent\textbf{Step~2: positivity reduction for the Poisson trace.}
We turn to the Poisson trace.
By positivity,
\begin{align}
 \widetilde B_m(d)
 &\le
 \sum_{j\in\ZN}
 \mathcal K_m^{\mathrm{tr}}(j)\abs{d_j},
 \label{eq:poisson-trace-kernel-reduction}
\end{align}
where
\begin{equation}
 \mathcal K_m^{\mathrm{tr}}(j)
 =
 m^{-1/p}
 \int_0^\infty
 \e^{-\rho/(2m)}
 \mathsf P_\rho^N(j)
 \dd\rho.
 \label{eq:integrated-poisson-kernel}
\end{equation}
Assume \(1\le m\le N\), which is exactly the range
\[
 m=N/w_h=N2^{-h},
 \qquad 0\le h\le H.
\]
\medskip
\noindent\textbf{Step~3: separation of the nearest-alias and mean terms.}
Integrating the \(N^{-1}\)-term in
\eqref{eq:poisson-pointwise-bound} gives
\begin{equation}
 m^{-1/p}
 \int_0^\infty
 \e^{-\rho/(2m)}\frac{\dd\rho}{N}
 =\frac{2m^{1/q}}N.
 \label{eq:poisson-mean-kernel}
\end{equation}

\medskip
\noindent\textbf{Step~4: integration of the nearest-alias term.}
For the nonconstant part, elementary splitting at the scales \(m\) and
\(\rho_j\) yields
\begin{equation}
 \mathcal K_m^{\mathrm{tr}}(j)
 \le
 C
 \begin{cases}
 m^{-1/p}
 \left[
 1+\log\left(
 1+\dfrac{m}{1+\rho_j}
 \right)
 \right]
 +\dfrac{m^{1/q}}N,
 &\rho_j\le m,\\[4mm]
 \dfrac{m^{2-1/p}}{\rho_j^2}
 +\dfrac{m^{1/q}}N,
 &\rho_j>m.
 \end{cases}
 \label{eq:integrated-poisson-kernel-bound}
\end{equation}

We now give the splitting in full.
Denote the nearest-alias integral, before multiplication by
\(m^{-1/p}\), by
\[
 I_m(\rho_j)
 =
 \int_0^\infty
 \e^{-\rho/(2m)}
 \frac{1+\rho}{(1+\rho)^2+\rho_j^2}
 \dd\rho.
\]
We assume \(m\ge1\), as holds for the Poisson trace scales
\(m=N2^{-h}\).

\smallskip
\noindent
\textbf{Case 1: \(\rho_j\le m\).}
On \(0\le\rho\le\min\{1+\rho_j,m\}\), one has
\[
 1+\rho\le 2+\rho_j\le2(1+\rho_j),
 \qquad
 (1+\rho)^2+\rho_j^2
 \ge1+\rho_j^2
 \ge\frac{(1+\rho_j)^2}{2}.
\]
Consequently this part of \(I_m(\rho_j)\) is at most an absolute constant.
If \(1+\rho_j<m\), then on \(1+\rho_j\le\rho\le m\),
\[
 \frac{1+\rho}{(1+\rho)^2+\rho_j^2}
 \le\frac1{1+\rho},
\]
and hence
\begin{align*}
 \int_{1+\rho_j}^m
 \frac{1+\rho}{(1+\rho)^2+\rho_j^2}\dd\rho
 &\le
 \log\frac{1+m}{2+\rho_j}\\
 &\le
 \log\left(1+\frac m{1+\rho_j}\right).
\end{align*}
Finally, for \(\rho\ge m\), the same pointwise bound and the substitution
\(u=\rho/m\) give
\[
 \int_m^\infty
 \e^{-\rho/(2m)}
 \frac{1+\rho}{(1+\rho)^2+\rho_j^2}\dd\rho
 \le
 C\int_1^\infty\frac{\e^{-u/2}}u\dd u
 \le C.
\]
Thus
\[
 I_m(\rho_j)
 \le
 C\left[
 1+\log\left(1+\frac m{1+\rho_j}\right)
 \right],
 \qquad \rho_j\le m.
\]

\smallskip
\noindent
\textbf{Case 2: \(\rho_j>m\).}
On \(0\le\rho\le \rho_j\), use the lower bound \(\rho_j^2\) for the denominator:
\[
 \int_0^{\rho_j}
 \e^{-\rho/(2m)}
 \frac{1+\rho}{(1+\rho)^2+\rho_j^2}\dd\rho
 \le
 \frac1{\rho_j^2}
 \int_0^\infty\e^{-\rho/(2m)}(1+\rho)\dd\rho
 \le C\frac{m^2}{\rho_j^2}.
\]
On \(\rho\ge \rho_j\), use
\((1+\rho)/((1+\rho)^2+\rho_j^2)\le(1+\rho)^{-1}\le \rho_j^{-1}\),
\[
 \int_{\rho_j}^\infty
 \e^{-\rho/(2m)}
 \frac{1+\rho}{(1+\rho)^2+\rho_j^2}\dd\rho
 \le
 \frac{2m}{\rho_j}\e^{-\rho_j/(2m)}.
\]
Writing \(x=\rho_j/m>1\), the elementary bound
\(x\e^{-x/2}\le C\) shows that
\[
 \frac{2m}{\rho_j}\e^{-\rho_j/(2m)}
 \le C\frac{m^2}{\rho_j^2}.
\]
Therefore
\[
 I_m(\rho_j)\le C\frac{m^2}{\rho_j^2},
 \qquad \rho_j>m.
\]
Multiplication by \(m^{-1/p}\), followed by addition of the already separated mean contribution \eqref{eq:poisson-mean-kernel}, proves both cases of \eqref{eq:integrated-poisson-kernel-bound}.
The constants are absolute and independent of \(m,N,j\).
Indeed, when \(\rho_j\le m\), integration of
\[
 \frac{1+\rho}{(1+\rho)^2+\rho_j^2}
\]
up to the effective scale \(m\) gives the logarithm in
\eqref{eq:integrated-poisson-kernel-bound}; the exponentially damped tail
costs only an absolute constant.
When \(\rho_j>m\), the denominator is bounded below by \(c\rho_j^2\) throughout the effective integration range, and
\[
 \int_0^\infty
 \e^{-\rho/(2m)}(1+\rho)\dd\rho
 \le
 Cm^2.
\]
The portion \(\rho>\rho_j\) is smaller by exponential decay.

\medskip
\noindent\textbf{Step~5: inner- and outer-ring convolutions.}
Write \(m=2^\lambda\).

On an inner ring
\[
 k=\lambda-\nu,
 \qquad \nu\ge0,
\]
we use the logarithmic majorant from
\eqref{eq:integrated-poisson-kernel-bound}.
Indeed, on \(\mathcal R_k\),
\[
 1+\rho_j\ge2^k,
 \qquad
 \log\left(1+\frac m{1+\rho_j}\right)
 \le C(1+\nu).
\]
Therefore H\"older's inequality on the ring gives
\begin{equation}
\begin{aligned}
 \sum_{j\in\mathcal R_k}
 m^{-1/p}
 \left[1+\log\left(1+\frac m{1+\rho_j}\right)\right]
 \abs{d_j} \le
 C2^{-\lambda/p}(1+\nu)2^{k/p}a_k
 =
 C(1+\nu)2^{-\nu/p}a_{\lambda-\nu}.
\end{aligned}
\label{eq:poisson-inner-scale-kernel}
\end{equation}
Here and below, rings with negative indices are understood to be empty.
For the borderline ring \(k=\lambda\), split once more according to
\(\rho_j\le m\) and \(\rho_j>m\). The first part is covered by
\eqref{eq:poisson-inner-scale-kernel}, with \(\nu=0\).
On the second part,
\[
 \frac{m^{2-1/p}}{\rho_j^2}\le m^{-1/p},
\]
so H\"older's inequality on \(\mathcal R_\lambda\) again gives
\(Ca_\lambda\). Thus the entire borderline ring may be assigned to the
inner family at the cost of an absolute constant.

On an outer ring
\[
 k=\lambda+\nu,
 \qquad \nu\ge1,
\]
we use the second branch of
\eqref{eq:integrated-poisson-kernel-bound}.
Since \(2^k\le1+\rho_j\), one has
\[
 \rho_j\ge2^k-1=2^\nu m-1.
\]
This is strictly larger than \(m\) unless \(m=1\), \(\nu=1\), and
\(\rho_j=1\); that single boundary value is controlled by the inner
estimate. At that point the inner bound \(O(m^{-1/p})\) equals the size of
\(m^{2-1/p}/\rho_j^2\), so it may be included in the following ringwise
majorant. Also, \(\rho_j\ge2^{k-1}\). Hence
\begin{equation}
\begin{aligned}
 \sum_{j\in\mathcal R_k}
 \frac{m^{2-1/p}}{\rho_j^2}\abs{d_j}
 \le
 C2^{\lambda(2-1/p)}2^{-2k}2^{k/p}a_k =
 C2^{-\nu(2-1/p)}a_{\lambda+\nu}.
\end{aligned}
\label{eq:poisson-outer-scale-kernel}
\end{equation}
Since
\[
 \sum_{\nu\ge0}(1+\nu)2^{-\nu/p}<\infty,
 \qquad
 \sum_{\nu\ge1}2^{-\nu(2-1/p)}<\infty,
\]
the two ring estimates amount to convolutions with fixed
\(\ell^1\)-kernels on the scale index.  Young's inequality is therefore uniform in the finite
range \(0\le\lambda\le H\).
More explicitly, extend \(a_k\) by zero to \(k\in\Z\), and define
\[
 u_\nu=(1+\nu)2^{-\nu/p}\one_{\{\nu\ge0\}},
 \qquad
 v_\nu=2^{-\nu(2-1/p)}\one_{\{\nu\ge1\}}.
\]
Let \(T_\lambda\) denote the contribution obtained by summing the
nonconstant majorant in
\eqref{eq:integrated-poisson-kernel-bound}
against \((\abs{d_j})_{j\in\ZN}\). Then the two ring estimates give
\[
 T_\lambda
 \le C\left[
 \sum_{\nu\ge0}u_\nu a_{\lambda-\nu}
 +
 \sum_{\nu\ge1}v_\nu a_{\lambda+\nu}
 \right].
\]
The second sum is convolution with the reflected kernel
\((v_{-\nu})_{\nu\in\Z}\).  Therefore
\[
 \left(\sum_{\lambda\in\Z}T_\lambda^q\right)^{1/q}
 \le
 C\bigl(\norm u_{\ell^1}+\norm v_{\ell^1}\bigr)
 \left(\sum_{k\ge0}a_k^q\right)^{1/q}
 \le C_p\norm d_q.
\]
The actual Poisson scales satisfy
\(\lambda=H-h\), \(0\le h\le H\).  Reversing this finite order does not
affect the norm, and restricting the all-scale estimate to these scales cannot increase it.
Both coefficient sequences in
\eqref{eq:poisson-inner-scale-kernel} and
\eqref{eq:poisson-outer-scale-kernel} belong to \(\ell^1(\Nzero)\).
Young's inequality therefore controls the nonconstant part of
\[
 \sum_{h=0}^{H}
 \widetilde B_{N2^{-h}}(d)^q
\]
by \(C_p\norm d_q^q\).

\medskip
\noindent\textbf{Step~6: summation of the mean term.}
Finally, the mean term in \eqref{eq:poisson-mean-kernel} satisfies
\begin{align}
 \frac{m^{1/q}}N\norm d_1
 &\le
 \left(\frac mN\right)^{1/q}\norm d_q.
\end{align}
For \(m=N2^{-h}\),
\[
 \sum_{h=0}^{H}\frac mN
 =
 \sum_{h=0}^{H}2^{-h}
 \le2.
\]
Taking the \(q\)-th power of the preceding bound and summing finishes the proof of \eqref{eq:boundary-trace-package}.
The normalization of this last step is
\[
 \norm d_1\le N^{1/p}\norm d_q,
 \qquad
 \left[
 \frac{m^{1/q}}N N^{1/p}
 \right]^q
 =
 \frac mN.
\]
Thus the \(q\)-th power of the mean contribution at scale
\(m=N2^{-h}\) is at most
\(2^{-h}\norm d_q^q\).  Its sum is bounded by
\(2\norm d_q^q\), independently of the number of scales.
The three traces have now been separated by summation structure:
\[
 \begin{array}{c|c}
 \text{trace}&\text{summation structure}\\ \hline
 B_{N/v}&\text{inner/outer algebraic ring kernels over coefficient shells},\\
 D_{N/w_h}&\text{rapid algebraic ring kernels over prime scales},\\
 \widetilde B_{N/w_h}&
 \text{periodic Poisson inner/outer kernels plus the mean term}.
 \end{array}
\]
The first trace is used for the shifted outside sectors of Proposition~\ref{prop:nonresonant-columns}; the second is used for the positive-low sector; and the third is used for the nonpositive singular sector.
All three estimates are uniform in the full finite model.
\end{proof}

\subsection{A coefficient square function}
\label{subsec:coefficient-square-functions}

Use the fixed cutoff \(\phi\in C_c^\infty((1/2,2))\) and Fourier convention of Subsection~\ref{subsec:cyclic-model}.
For the finite dyadic shell set \(\mathcal V_N\) of Subsection~\ref{subsec:boundary-traces}, define, for \(d\in\C^{\ZN}\),
\begin{equation}
 \widehat d_v(\kappa)
 =
 \one_{\{\kappa>0\}}\phi(2\pi\kappa/v)\widehat d(\kappa),
 \qquad \kappa\in\Lambda_N,\quad v\in\mathcal V_N.
 \label{eq:coefficient-shell}
\end{equation}
All \(\ell^q(\ZN)\)-norms below use counting measure.

\begin{proposition}[Coefficient square function]
\label{prop:coefficient-square-function}
For every fixed \(1<q<2\) and every \(d\in\C^{\ZN}\),
\begin{equation}
 \norm{
  \left(\sum_{v\in\mathcal V_N}\abs{d_v}^2\right)^{1/2}
 }_{\ell^q(\ZN)}
 \le C_q\norm d_{\ell^q(\ZN)}.
 \label{eq:coefficient-square-function}
\end{equation}
Here \(\phi\) is fixed, and its dependence is absorbed into \(C_q\).
The constant is independent of \(N\).
The estimate remains uniform for the positive shell adjacent to Nyquist.
\end{proposition}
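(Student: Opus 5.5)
The plan is a Littlewood--Paley square function on \(\ZN\) obtained by randomization from the endpoint-uniform multiplier theorem, Lemma~\ref{lem:cyclic-multiplier}. Fix independent Rademacher signs \((\epsilon_v)_{v\in\mathcal V_N}\) and consider the random symbol
\[
 m_\epsilon(\kappa)=\one_{\{\kappa>0\}}\sum_{v\in\mathcal V_N}\epsilon_v\,\phi(2\pi\kappa/v),
 \qquad \kappa\in\Lambda_N .
\]
By Khintchine's inequality applied pointwise in \(j\) and then summed,
\[
 \norm{\Bigl(\sum_v\abs{d_v}^2\Bigr)^{1/2}}_{\ell^q}^q
 \asymp_q
 \E_\epsilon\norm{\mathcal F_N^{-1}(m_\epsilon\widehat d)}_{\ell^q}^q .
\]
So it suffices to show \(\mathfrak M_N(m_\epsilon)\le C_\phi\) uniformly in \(\epsilon\) and \(N\). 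Lemma~\ref{lem:cyclic-multiplier} with \(u=q\) then gives \eqref{eq:coefficient-square-function}.

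\textbf{Bounding \(\mathfrak M_N(m_\epsilon)\).} For each \(\kappa\), at most two dyadic \(v\) satisfy \(\phi(2\pi\kappa/v)\ne0\), because \(\supp\phi\subset(1/2,2)\). Hence \(\norm{m_\epsilon}_\infty\le2\). The symbol vanishes for \(\kappa\le0\), so the negative dyadic blocks contribute at most the single jump into \(\kappa=1\), which is bounded by \(2\). On a positive block \(2^\nu\le k<2^{\nu+1}\), only \(O(1)\) shells \(v\asymp2^\nu\) are active. Each function \(\kappa\mapsto\phi(2\pi\kappa/v)\) has total variation at most \(\norm{\phi'}_{L^1}\), and the sign \(\epsilon_v\) does not affect this. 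The block variation of the Nyquist-free zero extension \(\widetilde m_\epsilon\) is therefore \(O(1)\) at every interior point.

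\textbf{Main obstacle: the outer edge.} The delicate part is the endpoint at \(\kappa=N/2-1\), where the zero extension creates a jump \(-m_\epsilon(N/2-1)\). Shells with \(v\) close to \(2\pi N\) may be truncated there rather than vanishing smoothly; this is the shell adjacent to Nyquist. This jump has size at most \(\norm{m_\epsilon}_\infty\le2\), and the definition \eqref{eq:cyclic-marcinkiewicz-norm} absorbs it into the variation norm. The Nyquist frequency \(-N/2\) itself lies outside \(\kappa>0\), so \(m_\epsilon(-N/2)=0\) and no rank-one term appears. It remains to check that \(\mathcal V_N\) is finite with \(v\le4N\), as in \eqref{eq:actual-trace-scale-range}. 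Then all bounds depend only on \(\phi\) and \(q\), which proves the proposition uniformly in \(N\), including the positive shell adjacent to Nyquist.
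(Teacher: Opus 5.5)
Your proposal is correct and follows essentially the same route as the paper. Both arguments randomize the shell symbol with Rademacher signs, bound \(\mathfrak M_N(m_\epsilon)\) uniformly, apply Lemma~\ref{lem:cyclic-multiplier} with \(u=q\), and finish with the lower Khintchine inequality. The uniform bound uses the same ingredients: at most two shells overlap at each frequency, \(O(1)\) shells are active on each dyadic block with variation controlled by \(\norm{\phi'}_{L^1}\), and there is a single zero-extension jump at \(N/2-1\). One small correction: the jump into \(\kappa=1\) is \(\Delta\widetilde m_\epsilon(0)\), which lies in no dyadic block in the definition of \(\mathfrak M_N\) (it is handled inside the proof of the lemma), so the negative blocks actually contribute zero rather than at most \(2\).
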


\begin{proof}
For \(N=2\), the set \(\mathcal V_N\) is empty and the claim is immediate.
Assume \(N\ge4\), and put \(K=N/2-1\).

Let \((\varepsilon_v)_{v\in\mathcal V_N}\) be independent Rademacher variables.
For a fixed choice of signs, set
\[
 s_\varepsilon(x)
 =\sum_{v\in\mathcal V_N}\varepsilon_v\phi(2\pi x/v),
 \qquad x\in\R,
\]
and
\begin{equation}
 m_\varepsilon(\kappa)
 =\one_{\{\kappa>0\}}s_\varepsilon(\kappa),
 \qquad \kappa\in\Lambda_N.
 \label{eq:random-shell-symbol}
\end{equation}
Its Nyquist-free zero extension from
\eqref{eq:nyquist-free-zero-extension} is exactly
\[
 \widetilde m_\varepsilon(k)
 =\one_{\{1\le k\le K\}}s_\varepsilon(k),
 \qquad k\in\Z.
\]
For \(x>0\),
\[
 \phi(2\pi x/v)\ne0
 \quad\Longrightarrow\quad
 \pi x<v<4\pi x.
\]
This interval contains at most two dyadic values of \(v\); hence
\[
 \sup_{x\in\R}\abs{s_\varepsilon(x)}
 \le2\norm\phi_\infty,
 \qquad
 \norm{m_\varepsilon}_\infty\le2\norm\phi_\infty.
\]

For \(A_\nu=2^\nu\), \(\nu\ge0\), put
\[
 \mathcal W_{A_\nu}=\mathcal V_N\cap(\pi A_\nu,8\pi A_\nu),
 \qquad \#\mathcal W_{A_\nu}\le3.
\]
If \(v\notin\mathcal W_{A_\nu}\), then
\(\phi(2\pi x/v)=0\) for every \(x\in[A_\nu,2 A_\nu]\), including the right endpoint.  
Therefore
\begin{align}
\sum_{k=A_\nu}^{2A_\nu-1}
 \abs{s_\varepsilon(k+1)-s_\varepsilon(k)}
 &\le
 \sum_{v\in\mathcal W_{A_\nu}}
 \sum_{k=A_\nu}^{2 A_\nu-1}
 \int_k^{k+1}\frac{2\pi}{v}
 \abs{\phi'(2\pi x/v)}\dd x
 \notag\\
 &=
 \sum_{v\in\mathcal W_{A_\nu}}
 \int_{2\pi A_\nu/v}^{4\pi A_\nu/v}
 \abs{\phi'(t)}\dd t
 \notag\\
 &\le
 3\norm{\phi'}_{L^1(\R)}.
 \label{eq:shell-symbol-variation}
\end{align}
Truncation at \(K\) contributes only the jump to zero at \(K+1\):
\begin{align*}
 \sum_{k=A_\nu}^{2A_\nu-1}
 \abs{\widetilde m_\varepsilon(k+1)
       -\widetilde m_\varepsilon(k)}
 &\le
 \sum_{k=A_\nu}^{2A_\nu-1}
 \abs{s_\varepsilon(k+1)-s_\varepsilon(k)}
 +\one_{\{A_\nu\le K<2A_\nu\}}\abs{s_\varepsilon(K)}\\
 &\le3\norm{\phi'}_{L^1(\R)}+2\norm\phi_\infty.
\end{align*}
All negative dyadic variations vanish, and
\(\abs{\widetilde m_\varepsilon(1)-\widetilde m_\varepsilon(0)}
\le2\norm\phi_\infty\).  Thus,  the norm \(\mathfrak M_N\) used in
Lemma~\ref{lem:cyclic-multiplier} satisfies
\[
 \mathfrak M_N(m_\varepsilon)
 \le B_\phi,
 \qquad
 B_\phi\coloneqq4\norm\phi_\infty+3\norm{\phi'}_{L^1(\R)}.
\]
By \eqref{eq:coefficient-shell} and Fourier inversion,
\[
 \mathcal F_N^{-1}(m_\varepsilon\widehat d)
 =\sum_{v\in\mathcal V_N}\varepsilon_vd_v.
\]
Lemma~\ref{lem:cyclic-multiplier}, with \(u=q\), now gives
\begin{equation}
 \norm{\sum_{v\in\mathcal V_N}\varepsilon_vd_v}_{\ell^q(\ZN)}
 \le C_q B_\phi\norm d_{\ell^q(\ZN)}
 \label{eq:random-shell-multiplier}
\end{equation}
for every choice of signs.  In particular,
\(m_\varepsilon(-N/2)=0\).

We use the lower Khintchine estimate for finite complex families; see
\cite[Chapter~2]{KahaneRandomSeries85}.  Applying it with
\(b_v=d_v(j)\) and summing over \(j\in\ZN\), we obtain
\begin{align*}
 \norm{
  \left(\sum_{v\in\mathcal V_N}\abs{d_v}^2\right)^{1/2}
 }_{\ell^q(\ZN)}^q
 &=\sum_{j\in\ZN}
       \left(\sum_{v\in\mathcal V_N}\abs{d_v(j)}^2\right)^{q/2}\\
 &\le3^{(2-q)/2}\E_\varepsilon
       \norm{\sum_{v\in\mathcal V_N}\varepsilon_vd_v}
             _{\ell^q(\ZN)}^q\\
 &\le3^{(2-q)/2}(C_qB_\phi)^q\norm d_{\ell^q(\ZN)}^q.
\end{align*}
Taking \(q\)-th roots and absorbing the fixed \(B_\phi\) proves
\eqref{eq:coefficient-square-function}.
\end{proof}

\section{Frame estimates and spectral tails}
\label{sec:frame-and-tails}

This section develops the vector estimates used in the frequency decomposition.
We separate four ingredients: uniform componentwise spectral projections, sampling at the actual prime frequencies, recovery of the components depending on primes below \(r\), and deformation of the common Euler product.
These ingredients yield both the resonant frame estimates and the global high-frequency bound.

All estimates are first carried out with the full endpoint product on the one common Euler probability space.
Spectral projections, deformations, and prime sampling are applied before conditional expectation.
Only afterwards are the components depending on primes below \(r\) recovered.
In particular, no component receives an independent Euler  product.

The convention used throughout this section may be summarized as follows:
\begin{center}
\begin{tabular}{
  c
  |
  >{\centering\arraybackslash}m{0.45\textwidth}
}
object & meaning in this section\\
\hline
\(N=2^H,\quad y=\e^N,\quad t_j=t_0+j/N\)
& common cyclic and physical grid\\

\(w_h=2^h,\quad \mathcal I_h=\{r:w_h/2<\log r\le w_h\}\)
& prime scale and prime band\\

\(f_{\e^{w_h}}=f_h^0\)
& full endpoint product at scale \(h\)\\

\(f_{r^-}=\E_{r^-}f_{\e^{w_h}}\)
& Euler product over primes below \(r\)\\

\(D_{\mathrm{full}},\quad D_{r^-}\)
& generators for all primes in the model and for primes below 
 \(r\), respectively
\end{tabular}
\end{center}
The exponent identities used locally are
\[
 \frac q2=s,
 \qquad
 \frac2p=\delta,
 \qquad
 \frac qp=a.
\]
Thus,  \(M_y^{-2/p}=M_y^{-\delta}\), while
\(M_y^{-q/p}=M_y^{-a}\) after taking a \(q\)-th power in the high-frequency
argument.
These are two different appearances of the same normalization.

For \(0\le h\le H\), put
\begin{equation}
 \mathcal I_h
 =
 \left\{
 r\ \text{prime}:
 \frac{w_h}{2}<\log r\le w_h
 \right\},
 \qquad
 K_0=\frac{\pi N}{2}.
\end{equation}
The sets \(\mathcal I_h\) are pairwise disjoint and cover all primes
\(r\le y=\e^N\).

\subsection{Uniform componentwise spectral projections}
\label{subsec:row-interval-projections}

We first establish a dimension-free vector estimate for component-dependent spectral intervals by combining Steinhaus randomization with the real-line Hilbert transform theorem and transference.

\begin{proposition}[Uniform componentwise spectral projections]
\label{prop:row-interval-projections}
Let \(\mathcal R\) be a finite index set, and let
\[
 -\infty\le A_r\le B_r\le\infty,
 \qquad r\in\mathcal R,
\]
be deterministic endpoints.
Then, 
\begin{equation}
 \norm{
 \left(
 \one_{[A_r,B_r]}(D_{\mathrm{full}})F_r
 \right)_{r\in\mathcal R}
 }_{L^q(\ell^2(\mathcal R))}
 \le
 C_q
 \norm{
 (F_r)_{r\in\mathcal R}
 }_{L^q(\ell^2(\mathcal R))}.
 \label{eq:row-interval-projection}
\end{equation}
The same conclusion holds with either endpoint open or closed.
The constant is independent of the number of components and of all interval endpoints.
\end{proposition}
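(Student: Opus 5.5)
The plan is to reduce the interval projection to two half-line projections and then treat each half-line projection as a vector-valued Hilbert transform along the prime flow. First, since \(\one_{[A_r,B_r]}=\one_{[A_r,\infty)}-\one_{(B_r,\infty)}\) (with the obvious modifications for open endpoints or infinite endpoints), it suffices to prove the estimate for the families \(\one_{[A_r,\infty)}(D_{\mathrm{full}})F_r\) and \(\one_{(A_r,\infty)}(D_{\mathrm{full}})F_r\). Next, I would remove the endpoints by modulation. On characters, the operator \(\zeta^\nu\mapsto \e^{-iA_r u}U_u\zeta^\nu\) has eigenvalue \(\e^{iu(D_{\mathrm{full}}-A_r)}\). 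So the half-line projection at \(A_r\) is the spectral projection \(\one_{[0,\infty)}\) of the shifted generator \(D_{\mathrm{full}}-A_r\). This generator still generates a group of measure-preserving isometries up to the unimodular scalar \(\e^{-iA_ru}\), and that scalar does not affect \(\ell^2\)-norms pointwise in \(\zeta\).

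The core step is a transference argument of Coifman--Weiss/Berkson--Gillespie type. I work with trigonometric polynomials \(F_r\), which have finitely many spectral values, and fix \(T>0\) and \(\zeta\). Define the \(\ell^2(\mathcal R)\)-valued function
\[
 \Phi_\zeta(u)=\bigl(\e^{-iA_ru}(U_uF_r)(\zeta)\bigr)_{r}.
\]
The half-line projection corresponds to the Fourier multiplier \(\tfrac12(1+i\,\mathrm{sgn})\), that is, to \(\tfrac12(I+iH)\) with \(H\) the Hilbert transform in the \(u\)-variable. This identification is exact on finite sums of exponentials, with the convention \(\mathrm{sgn}(0)\) adjusted by adding or removing the zero-frequency part. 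The zero-frequency part is itself the componentwise ergodic average, which is contractive on \(L^q(\ell^2)\) by Jensen's inequality. The Hilbert transform is bounded on \(L^q(\R;\ell^2)\) because \(\ell^2\) is a Hilbert space, hence UMD, with a constant depending only on \(q\). By Steinhaus/Rademacher randomization, one may equivalently reduce to scalar Hilbert transforms and use Khintchine's inequality. I then apply the truncated Hilbert transform \(H_\epsilon^{R}\) to \(\Phi_\zeta\) restricted to \([-T-R,T+R]\), integrate over \(u\in[-T,T]\) and over \(\zeta\), and use the invariance of Haar measure under \(U_u\). This gives
\[
 \norm{(\text{truncated transferred operator})F}_{L^q(\ell^2)}^q
 \le C_q^q\,\frac{T+R}{T}\,\norm F_{L^q(\ell^2)}^q .
\]
Letting \(T\to\infty\), and then \(\epsilon\to0\) and \(R\to\infty\), the truncated transferred operator converges on trigonometric polynomials characterwise to the spectral half-line projection. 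Convergence on each character \(\e^{iu\lambda}\) is the standard limit of the truncated principal-value integral of \(\e^{iu\lambda}/u\) to \(i\pi\,\mathrm{sgn}(\lambda)\), and the family is uniformly bounded. Density then extends the estimate.

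The main obstacle I anticipate is ensuring that the component-dependent endpoints \(A_r\) cause no loss. Each component carries its own modulation \(\e^{-iA_ru}\), so the transferred function is not \(U_u\) applied to a single vector. The resolution is that the Hilbert transform acts in \(u\) componentwise and commutes with nothing further being required: the vector Hilbert-transform bound holds for arbitrary \(\ell^2\)-valued functions, and the modulations only change each coordinate's values by unimodular factors, which does not change \(\norm{\Phi_\zeta(u)}_{\ell^2}\). The measure-preserving step uses \(|\Phi_\zeta(u)|_{\ell^2}=|(F_r(U_u\zeta))_r|_{\ell^2}\), which is exactly where these factors disappear. Endpoint issues at the spectral value \(A_r\) itself, closed versus open, are handled by the separate projection onto \(\{D_{\mathrm{full}}=A_r\}\). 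This is also a componentwise ergodic average \(\lim \frac1{2T}\int_{-T}^T \e^{-iA_ru}U_u\,du\) and is contractive on \(L^q(\ell^2)\) by Minkowski's inequality. The resulting constants depend only on \(q\).
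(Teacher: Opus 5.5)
Your argument is correct. It uses the same basic ingredients as the paper: the real-line Hilbert transform, transference along the prime flow, ergodic averages for the equality frequency, and subtraction of two half-lines. What differs is how you handle the vector-valued structure and the component-dependent endpoints.

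\emph{The paper's route.} It scalarizes first. It adjoins one Steinhaus variable $\omega_r$ per component, so the family $(F_r)_r$ becomes the single function $\sum_r\omega_rF_r$ on $\Omega_y\times\T^{\mathcal R}$. It then builds the endpoints into one measure-preserving flow $\widetilde U_t^E$, which rotates $\zeta_\rho$ by $\e^{it\log\rho}$ and $\omega_r$ by $\e^{-itE_r}$. On the $r$-th first chaos the generator of this flow is $D_{\mathrm{full}}-E_r$. The scalar Coifman--Weiss theorem, applied to the real-line half-line multiplier, therefore gives all the componentwise projections at once. Khintchine's inequality, used in both directions, returns to $L^q(\ell^2)$.

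\emph{Your route.} You keep the common prime flow and transfer the $\ell^2$-valued Hilbert transform by hand. The endpoints enter only as the unimodular modulations $\e^{-iA_ru}$ in the orbit function $\Phi_\zeta$. Evaluated along the orbit, the $r$-th transferred operator equals $\e^{iA_rv}$ times the kernel convolved with the $r$-th coordinate of $\Phi_\zeta$. So the endpoints drop out of the pointwise $\ell^2$-norm, and Haar invariance finishes the estimate.

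\emph{What each buys.} The paper can cite scalar results verbatim, with no vector-valued harmonic analysis. The price is an auxiliary product space and a two-sided Khintchine step. Your route avoids the enlarged space and makes it transparent why component-dependent endpoints cost nothing. The price is that you must run the transference estimate yourself and invoke uniform boundedness of the truncated Hilbert transforms on $L^q(\R;\ell^2)$, via Marcinkiewicz--Zygmund.

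One small slip: the half-line multiplier is $\frac12(1+\operatorname{sgn})$, not $\frac12(1+i\operatorname{sgn})$. With $H$ having symbol $-i\operatorname{sgn}$, this is indeed $\frac12(I+iH)$, as you go on to write.
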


This proposition provides uniform vector-valued control of component-dependent spectral cutoffs, allowing the later frequency decomposition to localize each prime-indexed component to its own interval without losing dimension-free \(L^q(\ell^2)\) bounds.

\begin{proof}
The proof has five steps.
First, the square function is scalarized by one auxiliary Steinhaus variable per component.
Second, the component-dependent lower endpoints are incorporated into a single measure-preserving flow on the enlarged space.
Third, transference supplies the normalized half-line multiplier.
Fourth, Ces\`aro averaging recovers the equality frequency.
Finally, two half-lines are subtracted and the first-chaos scalarization is reversed.
The auxiliary coordinates encode the components; they do not alter or resample any prime coordinate.

\medskip
\noindent\textbf{Step 1: scalarize the square function.}
Adjoin independent Steinhaus variables
\[
 (\omega_r)_{r\in\mathcal R}
\]
to the prime probability space.
Applying the Khintchine inequality for Steinhaus sums \cite[Chapter~2]{KahaneRandomSeries85} pointwise in the prime variables, and then integrating by Tonelli's theorem, we obtain
\begin{equation}
 \norm{(G_r)_r}_{L^q(\ell^2)}
 \asymp_q
 \norm{
  \sum_{r\in\mathcal R}\omega_rG_r
 }_{L^q(\Omega_y\times\T^{\mathcal R})}.
 \label{eq:row-first-chaos-scalarization}
\end{equation}

\medskip
\noindent\textbf{Step 2: verify the flow hypotheses and its generator.}
Fix a finite endpoint family
\[
 E=(E_r)_{r\in\mathcal R}\in\R^{\mathcal R}.
\]
Define on the enlarged product space
\begin{align}
 (\widetilde U_t^E G)
 \bigl(
 (\zeta_\rho)_\rho,(\omega_r)_r
 \bigr)
 =
 G\bigl(
 (\e^{it\log\rho}\zeta_\rho)_\rho,
 (\e^{-itE_r}\omega_r)_r
 \bigr).
 \label{eq:row-endpoint-flow}
\end{align}
Each \(\widetilde U_t^E\) is a coordinatewise rotation, hence
\((\widetilde U_t^E)_{t\in\R}\) is a strongly continuous
measure-preserving group of isometries on every \(L^u\),
\(1\le u<\infty\).

If \(\widetilde D_E\) denotes its generator, then on the trigonometric-polynomial core,
\[
 \widetilde D_E(\omega_r\zeta^\nu)
 =
 \omega_r(D_{\mathrm{full}}-E_r)\zeta^\nu.
\]
Indeed, on a character
\(\chi_{\nu,\mu}(\zeta,\omega)=\zeta^\nu\omega^\mu\),
\[
 \widetilde U_t^E\chi_{\nu,\mu}
 =
 \exp\left(
 it\left[
 \sum_\rho\nu_\rho\log\rho
 -
 \sum_{r\in\mathcal R}\mu_rE_r
 \right]
 \right)\chi_{\nu,\mu}.
\]
Thus,  the generator has spectral value
\[
 \sum_\rho\nu_\rho\log\rho-E_r
\]
on the \(r\)-th first-chaos component
\(\omega_r\zeta^\nu\).
No generator-domain assertion is required for a general
\(F_r\in L^q\); the bounded spectral projections obtained below
extend from the polynomial core by density.

\medskip
\noindent\textbf{Step 3: transfer the normalized half-line multiplier.}
The real-line Hilbert transform theorem gives an \(L^q(\R)\)-bound for
\[
 m(\lambda)
 =
 \one_{(0,\infty)}(\lambda)
 +
 \frac12\one_{\{0\}}(\lambda);
\]
see \cite[Theorem~5.1.7, pp.~320--322]{Grafakos14}.
The value \(m(0)=1/2\) is the normalized representative in the sense of Coifman--Weiss.
Hence their multiplier transference theorem
\cite[definition on p.~15 and Theorem~3.15, p.~19]{CoifmanWeiss77}
gives a bounded spectral multiplier \(Q_E\) for
\(\widetilde D_E\), with
\[
 \norm{Q_E}_{L^q\to L^q}\le C_q,
\]
uniformly in the finite endpoint family \(E\).
On first-chaos trigonometric polynomials,
\[
 Q_E
 \left(
 \sum_{r\in\mathcal R}\omega_rF_r
 \right)
 =
 \sum_{r\in\mathcal R}
 \omega_r\,m(D_{\mathrm{full}}-E_r)F_r.
\]

\medskip
\noindent\textbf{Step 4: recover the equality frequency.}
For \(T>0\), define
\[
 \mathcal A_T^E
 =
 \frac1{2T}
 \int_{-T}^{T}\widetilde U_t^E\,\dd t.
\]
The operators \(\mathcal A_T^E\) are contractions on \(L^q\).
On a character \(\chi_{\nu,\mu}\), write
\[
 \lambda_{\nu,\mu}^E
 =
 \sum_\rho\nu_\rho\log\rho
 -
 \sum_{r\in\mathcal R}\mu_rE_r.
\]
Then, 
\[
 \mathcal A_T^E\chi_{\nu,\mu}
 =
 \begin{cases}
 \dfrac{\sin(T\lambda_{\nu,\mu}^E)}
       {T\lambda_{\nu,\mu}^E}\chi_{\nu,\mu},
 &\lambda_{\nu,\mu}^E\ne0,\\[3mm]
 \chi_{\nu,\mu},
 &\lambda_{\nu,\mu}^E=0.
 \end{cases}
\]
Hence \(\mathcal A_T^E\) converges on trigonometric polynomials to the zero-frequency projection \(P_0^E\).
Density and contractivity extend this convergence strongly to \(L^q\), with
\[
 \norm{P_0^E}_{L^q\to L^q}\le1.
\]
On the \(r\)-th first-chaos component,
\[
 P_0^E(\omega_rF_r)
 =
 \omega_r\one_{\{E_r\}}(D_{\mathrm{full}})F_r.
\]
Therefore
\[
 P_>^E
 =
 Q_E-\tfrac12P_0^E,
 \qquad
 P_{\ge}^E
 =
 Q_E+\tfrac12P_0^E
\]
are bounded uniformly on \(L^q\), and on the \(r\)-th first chaos they act respectively as
\[
 \omega_r\one_{(E_r,\infty)}(D_{\mathrm{full}})F_r,
 \qquad
 \omega_r\one_{[E_r,\infty)}(D_{\mathrm{full}})F_r.
\]

For clarity, the preceding construction is now applied twice.
Given an endpoint \(E_r\), replace the auxiliary rotation by
\(\omega_r\mapsto\e^{-itE_r}\omega_r\).  Then,  on the \(r\)-th first chaos,
\begin{center}
\begin{tabular}{c|c|c}
spectral condition for \(D_{\mathrm{full}}-E_r\)
& condition on component \(r\)
& projection\\
\hline
\(\lambda>0\)
& \(D_{\mathrm{full}}>E_r\)
& \(\one_{(E_r,\infty)}(D_{\mathrm{full}})\)\\
\(\lambda\ge0\)
& \(D_{\mathrm{full}}\ge E_r\)
& \(\one_{[E_r,\infty)}(D_{\mathrm{full}})\)
\end{tabular}
\end{center}
The strict and closed half-lines are obtained from \(Q_E\) by subtracting and adding, respectively, one half of \(P_0^E\).
For finite endpoints, taking \(E_r=A_r\) and \(E_r=B_r\) supplies the two half-lines used below.
Infinite endpoints correspond to the zero or identity projection.

It follows that both open and closed half-lines for the shifted generator are bounded uniformly.
On the \(r\)-th first-chaos component, the identity
\[
 \one_{[A_r,B_r]}(D_{\mathrm{full}})
 =
 \one_{[A_r,\infty)}(D_{\mathrm{full}})
 -
 \one_{(B_r,\infty)}(D_{\mathrm{full}})
\]
therefore gives a uniform scalar bound for
\[
 \sum_{r\in\mathcal R}
 \omega_r
 \one_{[A_r,B_r]}(D_{\mathrm{full}})F_r.
\]
\medskip
\noindent\textbf{Step 5: assemble the spectral interval.}
On the \(r\)-th first-chaos component, the first half-line is the spectral condition \(D_{\mathrm{full}}-A_r\ge0\), while the second removes
\(D_{\mathrm{full}}-B_r>0\).  Thus,  the scalar operator obtained on the
enlarged space is exactly the componentwise interval appearing in the statement, not an interval for an averaged endpoint.
Applying \eqref{eq:row-first-chaos-scalarization} to the original vector and its componentwise spectral projection proves \eqref{eq:row-interval-projection}.
For later use, the same half-line estimate also gives, for every compactly supported bounded-variation symbol \(\phi\), the functional calculus bound
\begin{equation}
 \norm{\phi(D_{\mathrm{full}})}_{L^q\to L^q}
 \le
 C_q\left(\abs{\phi(-\infty)}+\Var(\phi)\right).
 \label{eq:transferred-bv-functional-calculus}
\end{equation}
Indeed, for the right-continuous representative of \(\phi\), use the Stieltjes representation
\[
 \phi_{\mathrm{rc}}(\lambda)
 =
 \phi(-\infty)
 +
 \int_{\R}\one_{[u,\infty)}(\lambda)\,
 \dd\phi_{\mathrm{rc}}(u)
\]
and Minkowski's inequality.
The values at points where \(\phi\) differs from
\(\phi_{\mathrm{rc}}\) are handled by point projections, each being the difference of a closed and an open half-line projection; their coefficients have total absolute value at most \(2\Var(\phi)\).

Finally, the function
\[
 \lambda\longmapsto
 \e^{-t\lambda}\one_{[0,K]}(\lambda)
\]
has total variation at most \(2\), uniformly in \(t,K\ge0\).
Hence
\begin{equation}
 \sup_{t,K\ge0}
 \norm{
  \e^{-tD_{\mathrm{full}}}
  \one_{[0,K]}(D_{\mathrm{full}})
 }_{L^q\to L^q}
 \le C_q.
 \label{eq:transferred-damped-low-bound}
\end{equation}
\end{proof}

\subsection{Sampling at the actual prime frequencies}
\label{subsec:prime-sampling}Define the actual-prime sampling map
\begin{equation}
 S_hu
 =
 \left(
 r^{-1/2}
 \sum_{j<N}u_j\e^{it_j\log r}
 \right)_{r\in\mathcal I_h}.
 \label{eq:actual-prime-sampling-map}
\end{equation}
\begin{lemma}[Prime sampling]
\label{lem:prime-sampling}
For every \(u\in\C^{\ZN}\),
\begin{equation}
 \sum_{r\in\mathcal I_h}
 \frac1r
 \abs{
 \sum_{j<N}u_j\e^{it_j\log r}
 }^2
 \le
 C\frac N{2^{h}}
 \sum_{j<N}\abs{u_j}^2.
 \label{eq:prime-sampling-L2}
\end{equation}
Consequently,
\begin{equation}
 \norm{S_{h}}_{\ell^1(\ZN)\to\ell^2(\mathcal I_h)}
 \le C,
 \label{eq:prime-sampling-L1}
\end{equation}
\begin{equation}
 \norm{S_{h}}_{\ell^2(\ZN)\to\ell^2(\mathcal I_h)}
 \le
 C\left(\frac N{2^{h}}\right)^{1/2},
 \label{eq:prime-sampling-L2-operator}
\end{equation}
and
\begin{equation}
 \norm{S_{h}}_{\ell^q(\ZN)\to\ell^2(\mathcal I_h)}
 \le
 C_p\left(\frac N{2^{h}}\right)^{1/p}.
 \label{eq:prime-sampling-Lq}
\end{equation}
All constants are independent of \(h,H,N,t_0\), and the number of primes in the prime band.
\end{lemma}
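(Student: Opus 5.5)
The central estimate is \eqref{eq:prime-sampling-L2}; the three operator bounds then follow from it by elementary means. To prove \eqref{eq:prime-sampling-L2}, I will expand the square:
\[
 \sum_{r\in\mathcal I_h}\frac1r\Bigl|\sum_j u_j\e^{it_j\log r}\Bigr|^2
 =\sum_{j,k}u_j\overline{u_k}\,\mathcal K_h(t_j-t_k),
 \qquad
 \mathcal K_h(\tau)=\sum_{r\in\mathcal I_h}\frac{\e^{i\tau\log r}}r.
\]
Schur's test reduces everything to the row bound
\[
 \sup_j\sum_k|\mathcal K_h((j-k)/N)|\le CN2^{-h}.
\]
For the kernel I will compare $\mathcal K_h(\tau)$ with $\int_{w_h/2}^{w_h}\e^{i\tau v}\,\dd v/v$. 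This comparison uses Stieltjes integration by parts against $A(v)=\sum_{r\le \e^v}1/r=\log v+c_0+E(v)$, exactly as in the derivation of \eqref{eq:gaussian-prime-covariance-asymptotic}. The main term is at most $\min\{\log 2,\,C/(|\tau|w_h)\}$. The error terms are $|E(w_h)|+|E(w_h/2)|+|\tau|\int_{w_h/2}^{w_h}|E|$, and they are $O((1+|\tau|w_h)\e^{-c\sqrt{w_h}})$. Since $|\tau|\le 1$ on the grid, this error is also bounded by $C\min\{1,(|\tau|w_h)^{-2}\}$, uniformly in $h$ after absorbing small $h$ into constants.

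The row sum then follows. With $\tau=(j-k)/N$, the terms with $|j-k|\le N/w_h$ contribute $O(N/w_h)$, since each is $O(1)$. The remaining terms need care, because the bound $1/(|\tau|w_h)$ alone gives a logarithmic divergence in the sum. I will refine the main-term estimate by one further integration by parts, using $\int\e^{i\tau v}v^{-1}\,\dd v=[\e^{i\tau v}/(i\tau v)]+\int \e^{i\tau v}/(i\tau v^2)\,\dd v$. This only reproduces $1/(|\tau|w_h)$, so it does not remove the logarithm. The honest Schur bound is therefore $C(N/w_h)(1+\log_+ (w_h))$, and Schur's test alone will not suffice.

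I therefore expect the right route to be positivity. Choose a Fejér-type majorant $\Psi\ge\one_{[1/2,1]}$ with $\widehat\Psi$ supported in $[-1,1]$, and bound
\[
 \frac1r\,\one_{\mathcal I_h}(r)\le \frac{C}{w_h}\,\frac{\log r}{r}\,\Psi\Bigl(\frac{\log r}{w_h}\Bigr).
\]
Expanding the square with this majorant, the prime sum becomes $\sum_r \frac{\log r}{r}\Psi(\log r/w_h)\e^{i\tau\log r}$. Via the prime number theorem in the form $\sum_{r\le X}\log r/r=\log X+O(1)$ with an error $O(\e^{-c\sqrt{\log X}})$, this is approximated by $w_h\widehat\Psi$-type integrals $\int\Psi(v/w_h)\e^{i\tau v}\,\dd v=w_h\check\Psi(\tau w_h)$. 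Because $\Psi$ is smooth, $\check\Psi$ decays rapidly, which gives $|\mathcal K_h(\tau)|\lesssim (1+|\tau|w_h)^{-2}$ plus the PNT error. The row sum is then $\sum_k(1+|j-k|w_h/N)^{-2}\lesssim N/w_h$. This smoothing step, and keeping the PNT error summable uniformly in $N$ while $|\tau|\le1$, is the main obstacle. For the error I would use $|\tau|\le1$ and $w_h\,\e^{-c\sqrt{w_h}}\le C$, giving an error $O(1/w_h)$ per entry and hence $O(N/w_h)$ per row.

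The three operator bounds then follow. For \eqref{eq:prime-sampling-L1}, a single column satisfies $\sum_{r\in\mathcal I_h}1/r\le C$ by Mertens' theorem, and Minkowski's inequality gives the bound on $\ell^1$. The estimate \eqref{eq:prime-sampling-L2-operator} is \eqref{eq:prime-sampling-L2} restated. Riesz--Thorin interpolation between $\ell^1$ and $\ell^2$ with $1/q=(1-\theta)+\theta/2$ gives $\theta=2/p$, and hence the constant $(N2^{-h})^{\theta/2}=(N2^{-h})^{1/p}$, which is \eqref{eq:prime-sampling-Lq}.
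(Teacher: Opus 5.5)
Your deduction of the three operator bounds from \eqref{eq:prime-sampling-L2} matches the paper: Minkowski for $\ell^1\to\ell^2$, and Riesz--Thorin with $\theta=2/p$. Your diagnosis that Schur's test with the sharp band cutoff loses $\log w_h$ is also correct.

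\textbf{The gap is in the proposed cure.} After majorizing, the kernel is $\widetilde{\mathcal K}(\tau)=\sum_{r}\frac{\log r}{r}\Psi(\log r/w_h)\e^{i\tau\log r}$, summed over \emph{all} primes $r\ge2$. Its main term is therefore $\int_{\log 2}^{\infty}\Psi(v/w_h)\e^{i\tau v}\dd v$, not the full-line transform $w_h\check\Psi(\tau w_h)$. A $\Psi$ with $\widehat\Psi$ supported in $[-1,1]$ is entire, so it cannot vanish on a neighbourhood of the origin, and a Fej\'er kernel even has $\Psi(0)>0$. Two integrations by parts give
\[
 \int_{\log 2}^{\infty}\Psi(v/w_h)\e^{i\tau v}\dd v
 =-\frac{\Psi(\log 2/w_h)\,\e^{i\tau\log 2}}{i\tau}
 +O\Bigl(\frac{1}{w_h\tau^2}\Bigr).
\]
The remaining atom and remainder terms are $O(1)$. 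Hence, for suitable constants $K$ large and $c_0$ small, $|\widetilde{\mathcal K}(\tau)|\gtrsim 1/|\tau|$ on $K/w_h\le|\tau|\le c_0$: the small primes with $\log r\lesssim 1/|\tau|$ add up coherently. After your prefactor $C/w_h$ this is exactly the $1/(|\tau|w_h)$ decay you started from. The Schur row sum is again $\gtrsim (N/w_h)\log w_h$. The logarithm has only moved from the edge $w_h/2$ of the band to the edge $\log 2$ of the prime sum. Your treatment of the prime-number-theorem remainder is fine; the problem is the main term.

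\textbf{How to repair it.} What you need is not band-limitation of $\Psi$ but support away from the origin. Take $\Psi\in C_c^\infty((1/4,2))$ with $0\le\Psi\le1$ and $\Psi=1$ on $[1/2,1]$. Then:
\begin{itemize}
\item the main term is $O_M\bigl(w_h(1+|\tau|w_h)^{-M}\bigr)$;
\item the remainder $\int(\Psi(v/w_h)\e^{i\tau v})'E_1(v)\dd v$ lives on $v\asymp w_h$, and is $O\bigl((1+w_h)\sup_{v\ge w_h/4}|E_1(v)|\bigr)=O(1)$ by the de la Vall\'ee Poussin error term;
\item the row sum is therefore $O(N/w_h)$, and the argument closes.
\end{itemize}

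\textbf{Comparison with the paper.} The paper's route is different and more elementary. It writes the left side as $\sum_{r\in\mathcal I_h}r^{-1}\abs{V(\log r)}^2$ with $V(v)=\sum_ju_j\e^{ijv/N}$, whose frequencies lie in $[0,1)$. It bounds $\sup_{I_m}\abs V^2$ on unit cells $I_m$ by $2\int_{I_m}\abs V^2+\int_{I_m}\abs{V'}^2$. It then uses only Chebyshev's upper bound $\sum_{\log r\in I_m}1/r\le C/(m+1)$ and finishes with Parseval over the period $2\pi N$. Because positivity is applied to the prime measure rather than to an oscillatory kernel, no logarithm ever appears, and no prime number theorem with error term is needed.
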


\begin{proof}
The factor \(\e^{it_0\log r}\) is unimodular.
Put
\[
 V(v)=\sum_{j=0}^{N-1}u_j\e^{ijv/N}.
\]
This function has period \(2\pi N\), and
\[
 \sum_{r\in\mathcal I_h}
 \frac1r
 \abs{
 \sum_{j<N}u_j\e^{it_j\log r}
 }^2
 =
 \sum_{r\in\mathcal I_h}
 \frac1r\abs{V(\log r)}^2.
\]

Partition \((0,N]\) into unit intervals
\[
 I_m=(m,m+1],
 \qquad
 0\le m\le N-1.
\]
Chebyshev's estimate
\[
 \pi(x)\le C\frac{x}{\log x}
\]
implies
\begin{align}
 \sum_{\substack{r\ \mathrm{prime}\\ \log r\in I_m}}\frac1r
 &\le
 \e^{-m}\pi(\e^{m+1})
 \le
 \frac{C}{m+1};
 \label{eq:prime-cell-mass}
\end{align}
see \cite[Chapter~2, Corollary~2.6]{MontgomeryVaughan06}.
If \(I_m\) meets \((2^{h}/2,2^{h}]\), then
\[
 m+1>\frac{2^{h}}{2},
\]
and therefore the last expression in
\eqref{eq:prime-cell-mass} is \(O((2^{h})^{-1})\).

For every unit interval \(I_m\),
\begin{equation}
 \sup_{v\in I_m}\abs{V(v)}^2
 \le
 2\int_{I_m}\abs{V(v)}^2\dd v
 +
 \int_{I_m}\abs{V'(v)}^2\dd v.
 \label{eq:unit-interval-supremum}
\end{equation}
Indeed, for \(x,y\in I_m\),
\[
 \abs{V(x)}^2-\abs{V(y)}^2
 =
 2\Ree\int_y^xV'(v)\overline{V(v)}\dd v.
\]
Integrating in \(y\) and using
\[
 2\abs{V'V}\le\abs{V'}^2+\abs V^2
\]
gives \eqref{eq:unit-interval-supremum}.

Combining \eqref{eq:prime-cell-mass} and
\eqref{eq:unit-interval-supremum}, summing over the relevant unit cells,
and then extending to a full period gives
\begin{align}
 \sum_{r\in\mathcal I_h}\frac1r\abs{V(\log r)}^2
 &\le
 \frac{C}{2^{h}}
 \int_0^{2\pi N}
 \left(
 \abs{V(v)}^2+\abs{V'(v)}^2
 \right)\dd v.
 \label{eq:prime-sampling-before-parseval}
\end{align}
Parseval's identity gives
\[
 \int_0^{2\pi N}\abs{V(v)}^2\dd v
 =
 2\pi N\sum_{j<N}\abs{u_j}^2,
\]
and
\begin{align*}
 \int_0^{2\pi N}\abs{V'(v)}^2\dd v
 =
 2\pi N
 \sum_{j<N}
 \frac{j^2}{N^2}\abs{u_j}^2\le
 2\pi N\sum_{j<N}\abs{u_j}^2.
\end{align*}
This proves
\eqref{eq:prime-sampling-L2} and
\eqref{eq:prime-sampling-L2-operator}.

Also,
\begin{align*}
 \norm{S_{h}u}_{\ell^2}^2
 &\le
 \left(\sum_{j<N}\abs{u_j}\right)^2
 \sum_{r\in\mathcal I_h}\frac1r
 \le
 C\norm u_{\ell^1}^2,
\end{align*}
which proves \eqref{eq:prime-sampling-L1}.

Finally, interpolating
\eqref{eq:prime-sampling-L1} and
\eqref{eq:prime-sampling-L2-operator} gives
\[
 \norm{S_{h}}_{\ell^q\to\ell^2}
 \le
 C_p
 \left(\frac N{2^{h}}\right)^{\theta/2}
 =
 C_p
 \left(\frac N{2^{h}}\right)^{1/p},
\]
which is \eqref{eq:prime-sampling-Lq}.
\end{proof}

\subsection{Sampling estimates and conditional expectations}
\label{subsec:frozen-source-frame}

We shall use Stein's inequality for conditional expectations; see
\cite[Section~4.2.d, pp.~298--299]{HytoneEtAl16}.

\begin{proposition}
\label{prop:vector-conditional-expectations}
Let
\[
 \mathscr F_1\subseteq\cdots\subseteq\mathscr F_m
\]
be a finite increasing filtration, and let
\[
 \mathbb E_k=\E(\,\cdot\mid\mathscr F_k).
\]
Then, 
\begin{equation}
 \norm{
 (\mathbb E_kX_k)_{k=1}^{m}
 }_{L^q(\ell^2)}
 \le
 C_q
 \norm{
 (X_k)_{k=1}^{m}
 }_{L^q(\ell^2)}.
 \label{eq:vector-conditional-expectation}
\end{equation}
The variables \(X_k\) need not be adapted.
\end{proposition}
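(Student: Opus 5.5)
This is Stein's inequality, and I would prove it by duality from Doob's maximal inequality at the dual exponent \(q'=q/(q-1)\in(2,\infty)\). The paper cites it from Hyt\"onen et al.; here is the self-contained route. First treat \(1<q\le2\). Take \((X_k)\) with finite \(L^q(\ell^2)\)-norm. The dual of \(L^q(\ell^2)\) is \(L^{q'}(\ell^2)\), so pick \((Y_k)\) with \(\norm{(Y_k)}_{L^{q'}(\ell^2)}\le1\). Since \(\mathbb E_k\) is self-adjoint,
\[
 \sum_k\E\bigl[(\mathbb E_kX_k)\overline{Y_k}\bigr]
 =\sum_k\E\bigl[X_k\overline{\mathbb E_kY_k}\bigr]
 \le \norm{(X_k)}_{L^q(\ell^2)}\,\norm{(\mathbb E_kY_k)}_{L^{q'}(\ell^2)} .
\]
It therefore suffices to prove the inequality at the exponent \(q'\ge2\).

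For \(u=q'/2\ge1\) I would use duality again, now on squares. Pick \(g\ge0\) with \(\norm g_{L^{u'}}\le1\). By conditional Jensen, \(\abs{\mathbb E_kY_k}^2\le\mathbb E_k\abs{Y_k}^2\). Then
\[
 \E\Bigl[\sum_k\abs{\mathbb E_kY_k}^2g\Bigr]
 \le\sum_k\E\bigl[\abs{Y_k}^2\,\mathbb E_kg\bigr]
 \le\E\Bigl[\sum_k\abs{Y_k}^2\,\sup_k\mathbb E_kg\Bigr].
\]
Apply H\"older with exponents \(u\) and \(u'\). Doob's maximal inequality bounds \(\norm{\sup_k\mathbb E_kg}_{L^{u'}}\) by \(C\norm g_{L^{u'}}\), which requires \(u'>1\), i.e.\ \(u<\infty\). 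The endpoint \(u=1\) (that is, \(q'=2\)) needs no maximal function: it is immediate from Jensen and the fact that \(\mathbb E_k\) preserves expectation. Taking the supremum over \(g\) gives
\[
 \norm{(\mathbb E_kY_k)}_{L^{q'}(\ell^2)}^2\le C\,\norm{(Y_k)}_{L^{q'}(\ell^2)}^2 .
\]
Dualizing back yields \eqref{eq:vector-conditional-expectation} for \(q<2\), with \(q=2\) trivial. Adaptedness of the \(X_k\) is never used, which matches the remark in the statement.

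The only genuine point is getting the direction right. The maximal inequality must be applied to the test function \(g\) in \(L^{u'}\), not to the \(Y_k\), and this is what forces the exponent to be at least \(2\) before dualizing. The constant is \(C_q\asymp\) Doob's constant at \(u'=q'/(q'-2)\), which depends only on \(q\) and not on \(m\). For general \(q\in(1,\infty)\) the same two arguments cover both ranges, but the paper only needs \(1<q<2\).
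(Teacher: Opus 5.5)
Your argument is correct and complete. The first duality reduces matters to the exponent $q'\ge2$. At $u=q'/2$, the chain through conditional Jensen, self-adjointness of $\mathbb E_k$ and Doob's inequality at exponent $u'$ then gives $\norm{(\mathbb E_kY_k)}_{L^{q'}(\ell^2)}\le (q'/2)^{1/2}\norm{(Y_k)}_{L^{q'}(\ell^2)}$. This is uniform in $m$ and never uses adaptedness. One small correction: the constant is the square root of Doob's constant $u=q'/2$, not Doob's constant itself, so your ``$\asymp$'' remark is off in its dependence on $q$.

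The paper gives no proof of its own; it cites Stein's inequality from Hyt\"onen--van Neerven--Veraar--Weis. That reference obtains it by a different mechanism, valid for Rademacher sums in any UMD space:
\begin{itemize}
\item independent signs are interleaved with the filtration, $\mathscr G_{2k-1}=\mathscr F_k\otimes\sigma(\varepsilon_1,\ldots,\varepsilon_{k-1})$ and $\mathscr G_{2k}=\mathscr F_k\otimes\sigma(\varepsilon_1,\ldots,\varepsilon_k)$;
\item one checks that $\varepsilon_k\mathbb E_kX_k$ is exactly the $2k$-th martingale difference of $\sum_m\varepsilon_mX_m$ with respect to $(\mathscr G_j)$;
\item Burkholder's bound for $\{0,1\}$-martingale transforms is applied, and in the scalar case Khintchine's inequality converts the Rademacher sum into the $\ell^2$ square function.
\end{itemize}

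Your route is Stein's original one. It is elementary and yields an explicit constant. However, it uses positivity in an essential way: conditional Jensen, $\mathbb E_kg\ge0$, and the lattice duality of $L^{q'/2}$. It therefore does not extend beyond scalar or lattice-valued square functions. The cited route requires Burkholder's martingale-transform theorem but gives the Banach-valued statement. For the scalar, $\ell^2$-valued, $1<q<2$ use made in the paper, your proof is fully sufficient.
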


Let
\[
 g=(g_h)_{0\le h\le H}
\]
be any jointly defined random function family on the common prime probability space.
For a deterministic table
\[
 z=(z_h(j))_{\substack{0\le h\le H\\j<N}},
\]
define the prime-indexed array
\begin{equation}
 \bigl(\mathcal C_y^g(z)\bigr)_r
 =
 M_y^{-1/p}r^{-1/2}
 \sum_{j<N}
 z_h(j)\e^{it_j\log r}g_h(t_j),
 \qquad r\in\mathcal I_h,
 \label{eq:generic-frozen-frame-array}
\end{equation}
and put
\begin{equation}
 \mathfrak P_y^g(z)
 =
 \E
 \left[
 \left(
 \sum_{h=0}^{H}
 \frac N{w_h}M_y^{-\delta}
 \sum_{j<N}
 \abs{z_h(j)}^2\abs{g_h(t_j)}^2
 \right)^s
 \right].
 \label{eq:generic-source-functional}
\end{equation}
The array \(\mathcal C_y^g(z)\) collects the coefficient table against an auxiliary function into prime-indexed components, while \(\mathfrak P_y^g(z)\) records the corresponding weighted moment that controls its \(L^q(\ell^2)\)-norm.

The following lemma converts the weighted moment of an auxiliary function into a uniform prime-indexed \(L^q(\ell^2)\) bound, without requiring independence between the function levels.

\begin{lemma}[Sampling estimate]
\label{lem:frozen-source-frame}
For every jointly defined function family \(g\),
\begin{equation}
 \norm{
 \mathcal C_y^g(z)
 }_{L^q(\ell^2(\{r\le y\}))}
 \le
 C
 \mathfrak P_y^g(z)^{1/q}.
 \label{eq:frozen-source-frame}
\end{equation}
The proof is pathwise up to the final expectation and requires no independence among the levels \(g_h\).
\end{lemma}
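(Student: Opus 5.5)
We first fix the prime variables \(\zeta\) and argue pathwise. For each level \(h\), set \(u^{(h)}_j=z_h(j)g_h(t_j)\), which is a deterministic vector in \(\C^{\ZN}\) once \(\zeta\) is frozen. Then the \(\mathcal I_h\)-block of \(\mathcal C_y^g(z)\) is exactly \(M_y^{-1/p}S_hu^{(h)}\), where \(S_h\) is the actual-prime sampling map of \eqref{eq:actual-prime-sampling-map}. The bands \(\mathcal I_h\) are disjoint and cover \(\{r\le y\}\). Hence
\[
 \sum_{r\le y}\abs{(\mathcal C_y^g(z))_r}^2
 =
 M_y^{-2/p}\sum_{h=0}^{H}\norm{S_hu^{(h)}}_{\ell^2(\mathcal I_h)}^2 .
\]

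Next we apply the \(\ell^2\)-bound \eqref{eq:prime-sampling-L2} of Lemma~\ref{lem:prime-sampling} to each block. Its constant is independent of \(h\), \(N\), \(t_0\) and the vector \(u\), so it applies to the random vector \(u^{(h)}\) pointwise in \(\zeta\). This gives
\[
 \norm{S_hu^{(h)}}_{\ell^2}^2\le C\frac{N}{w_h}\sum_{j<N}\abs{z_h(j)}^2\abs{g_h(t_j)}^2 .
\]
With \(M_y^{-2/p}=M_y^{-\delta}\), this yields the pathwise inequality
\[
 \sum_{r\le y}\abs{(\mathcal C_y^g(z))_r}^2
 \le
 C\sum_{h=0}^{H}\frac N{w_h}M_y^{-\delta}\sum_{j<N}\abs{z_h(j)}^2\abs{g_h(t_j)}^2 .
\]
Both sides are defined for each fixed \(\zeta\), and no relation between different levels \(g_h\) enters.

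Finally, we raise both sides to the power \(q/2=s\) and take the expectation over the common prime array. The right-hand side becomes \(C^s\mathfrak P_y^g(z)\), and the left-hand side is \(\norm{\mathcal C_y^g(z)}_{L^q(\ell^2)}^q\). Taking \(q\)-th roots gives \eqref{eq:frozen-source-frame}.

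There is no real obstacle. The only point to check is that the \(L^2\)-sampling constant in Lemma~\ref{lem:prime-sampling} is uniform, so that it can be used pathwise with random coefficients. One should use the \(\ell^2\) bound rather than the \(\ell^q\) bound \eqref{eq:prime-sampling-Lq}, because the target functional \(\mathfrak P_y^g\) is quadratic inside the \(s\)-th power.
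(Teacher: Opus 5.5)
Your proposal is correct and follows the paper's proof essentially verbatim. Freeze the prime array and apply the $\ell^2$ prime-sampling bound \eqref{eq:prime-sampling-L2} to $u_j=z_h(j)g_h(t_j)$ on each of the disjoint bands $\mathcal I_h$; then raise to the power $q/2=s$, take expectation, and take the $q$-th root.
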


\begin{proof}
The argument is pathwise.
Its only two exponent conversions are
\[
 M_y^{-2/p}=M_y^{-\delta}
 \qquad\text{and}\qquad
 \left(\sum_r|\cdot|^2\right)^{q/2}
 =\left(\sum_r|\cdot|^2\right)^s,
\]
using \(2/p=\delta\) and \(q/2=s\).
Since the levels are not separated probabilistically at any point, arbitrary dependence among the \(g_h\)'s is preserved.
Fix a realization of the function family and apply
\eqref{eq:prime-sampling-L2} at every prime scale to
\[
 u_j=z_h(j)g_h(t_j).
\]
Since the prime bands are disjoint,
\begin{align*}
 \sum_{r\le y}
 \abs{\bigl(\mathcal C_y^g(z)\bigr)_r}^2
 &\le
 C
 \sum_{h=0}^{H}
 \frac N{w_h}M_y^{-2/p}
 \sum_{j<N}
 \abs{z_h(j)}^2\abs{g_h(t_j)}^2\\
 &=
 C
 \sum_{h=0}^{H}
 \frac N{w_h}M_y^{-\delta}
 \sum_{j<N}
 \abs{z_h(j)}^2\abs{g_h(t_j)}^2.
\end{align*}
Raise both sides to the power
\[
 \frac q2=s,
\]
take expectation, and then take the \(q\)-th root.
This proves
\eqref{eq:frozen-source-frame}.
\end{proof}

\subsection{Resonant frame estimates}
\label{subsec:resonant-row-arrays}

Recall the coefficient shells \(d_v\) from
\eqref{eq:coefficient-shell}.  If \(v\notin\mathcal V_N\), set \(d_v=0\).
For an integer \(\ell\ge-2\), define
\begin{equation}
 v_{h,\ell}=2^{h+\ell},
 \qquad
 z_h^{(\ell)}=d_{v_{h,\ell}}.
 \label{eq:offset-table}
\end{equation}
For \(r\in\mathcal I_h\), put
\begin{equation}
 J_{r}^{(\ell)}
 =
 [0,K_0]
 \cap
 \left[
 \frac{v_{h,\ell}}4-\log r,\,
 4v_{h,\ell}-\log r
 \right].
 \label{eq:resonant-row-interval}
\end{equation}
As usual, the corresponding projection is zero when this interval is empty.
Define
\begin{align}
 \bigl(\mathcal R_y^{(\ell)}d\bigr)_r
 ={}&
 M_y^{-1/p}r^{-1/2}
 P_{J_r^{(\ell)}}^{r^-}
 \sum_{j<N}
 z_h^{(\ell)}(j)
 \e^{it_j\log r}
 f_{r^-}(t_j),
 \qquad r\in\mathcal I_h.
 \label{eq:offset-resonant-row}
\end{align}
This definition isolates the contribution of a fixed shell offset \(\ell\) to the resonant components, coupling the corresponding coefficient shell with the Euler product over primes below \(r\) through the component-dependent resonant spectral projection.

We abbreviate
\[
 \mathfrak P_y(z)=\mathfrak P_y^0(z),
\]
where \(\mathfrak P_y^\vartheta\) is defined in
\eqref{eq:occupation-functional}.

\begin{proposition}[Frame estimate]
\label{prop:frame-estimate}
For every integer \(\ell\ge-2\),
\begin{equation}
 \norm{
 \mathcal R_y^{(\ell)}d
 }_{L^q(\ell^2)}
 \le
 C_p
 \mathfrak P_y(z^{(\ell)})^{1/q}.
 \label{eq:frame-estimate}
\end{equation}
All components use one common Euler array, and the constant depends only on \(p\).
\end{proposition}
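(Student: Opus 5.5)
The plan is to undo the conditional expectation and the componentwise spectral projection in the definition \eqref{eq:offset-resonant-row} of \(\mathcal R_y^{(\ell)}d\). That reduces the frame estimate to the sampling estimate, Lemma~\ref{lem:frozen-source-frame}, applied to the undeformed endpoint products \(g_h=f_h^0=f_{\e^{w_h}}\). All manipulations take place on the single prime array \((\zeta_\rho)_{\rho\le y}\), so no independence between components or scales is ever used.

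\textbf{Step 1: lift to the full product.} For \(r\in\mathcal I_h\) we have \(r\le\e^{w_h}\), so \eqref{eq:euler-source-conditioning} gives \(f_{r^-}(t_j)=\E_{r^-}f_{\e^{w_h}}(t_j)\). The phase \(\e^{it_j\log r}\) and the coefficients \(z_h^{(\ell)}(j)\) are deterministic. The conditional spectral commutation \eqref{eq:conditional-spectral-commutation} then yields
\[
 P^{r^-}_{J_r^{(\ell)}}\sum_{j<N}z_h^{(\ell)}(j)\e^{it_j\log r}f_{r^-}(t_j)
 =\E_{r^-}\Bigl[\one_{J_r^{(\ell)}}(D_{\mathrm{full}})\sum_{j<N}z_h^{(\ell)}(j)\e^{it_j\log r}f_{\e^{w_h}}(t_j)\Bigr].
\]
The commutation is legitimate because the Euler products have absolutely summable Fourier coefficients at a fixed cutoff. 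Consequently \((\mathcal R_y^{(\ell)}d)_r=\E_{r^-}\bigl[\one_{J_r^{(\ell)}}(D_{\mathrm{full}})(\mathcal C_y^{g}(z^{(\ell)}))_r\bigr]\), where \(g_h=f_{\e^{w_h}}\) and \(\mathcal C_y^g\) is the array in \eqref{eq:generic-frozen-frame-array}.

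\textbf{Step 2: remove the conditional expectations.} I would order the primes \(r\le y\) increasingly. The \(\sigma\)-fields \(\mathscr F_{r^-}\) then form a finite increasing filtration, and Stein's inequality, Proposition~\ref{prop:vector-conditional-expectations}, removes \(\E_{r^-}\) at cost \(C_q\). Adaptedness is not needed for this.

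\textbf{Step 3: remove the projections.} Proposition~\ref{prop:row-interval-projections} applies to the componentwise intervals \(J_r^{(\ell)}\). These have deterministic endpoints depending on \(r\), \(h\) and \(\ell\), with the convention that an empty interval gives the zero projection. The proposition removes the projections at a cost uniform in the number of components and in the endpoints.

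\textbf{Step 4: apply the sampling estimate.} Lemma~\ref{lem:frozen-source-frame} bounds \(\norm{\mathcal C_y^g(z^{(\ell)})}_{L^q(\ell^2)}\) by \(C\,\mathfrak P_y^g(z^{(\ell)})^{1/q}\). By \eqref{eq:undeformed-source}, \(\mathfrak P_y^g=\mathfrak P_y^0=\mathfrak P_y\).

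The only delicate point I foresee is the bookkeeping in Step 1. The commutation must be applied to the full product \(f_{\e^{w_h}}\), and not to \(f_y\), so that the Euler factor at scale \(h\) matches the functional \(\mathfrak P_y\). The identity \eqref{eq:conditional-spectral-commutation} also has to be checked for the product of the deterministic phase with \(f_{\e^{w_h}}\). Both are characterwise verifications, and the remaining steps are direct citations.
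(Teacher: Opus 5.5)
Your proposal is correct and follows essentially the same chain as the paper. The paper first obtains \((\mathcal R_y^{(\ell)}d)_r=\E_{r^-}X_r\) from \eqref{eq:conditional-spectral-commutation} and \eqref{eq:euler-source-conditioning}, then removes the conditional expectations by Proposition~\ref{prop:vector-conditional-expectations}, removes the componentwise projections by Proposition~\ref{prop:row-interval-projections}, and concludes with Lemma~\ref{lem:frozen-source-frame} for \(g_h=f_h^0=f_{\e^{w_h}}\), including your point that the lift must use \(f_{\e^{w_h}}\) rather than \(f_y\).
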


This proposition converts each fixed-offset resonant contribution into the corresponding Euler moment functional, thereby linking the resonant analysis to the common-shift moment estimate.

\begin{proof}
The proof is the following exact chain.
With \(X_r\) the projected full-product component defined below and
\[
 \widetilde X_r
 =
 M_y^{-1/p}r^{-1/2}
 \sum_{j<N}z_h^{(\ell)}(j)\e^{it_j\log r}
 f_{\e^{w_h}}(t_j),
\]
one has
\[
 \begin{aligned}
 \norm{\mathcal R_y^{(\ell)}d}_{L^q(\ell^2)}
 &=\norm{(\E_{r^-}X_r)_r}_{L^q(\ell^2)}\\
 &\le C_p\norm{(X_r)_r}_{L^q(\ell^2)}
 &&\text{by Proposition~\ref{prop:vector-conditional-expectations},}\\
 &\le C_p\norm{(\widetilde X_r)_r}_{L^q(\ell^2)}
 &&\text{by Proposition~\ref{prop:row-interval-projections},}\\
 &\le C_p\mathfrak P_y(z^{(\ell)})^{1/q}
 &&\text{by Lemma~\ref{lem:frozen-source-frame}.}
 \end{aligned}
\]
The first equality is the combination of
\eqref{eq:conditional-spectral-commutation} and
\eqref{eq:euler-source-conditioning}; it is the only conditional expectation step.
The interval projection and prime sampling remain on the full common product.

The commutation identity was stated first on trigonometric polynomials.
It applies here by approximation: at every finite cutoff the Euler factors have absolutely convergent Fourier series, while the conditional expectations and the interval projections used here are bounded on \(L^q\).
Passing to the limit therefore preserves the characterwise identity.
Fix \(r\in\mathcal I_h\).
On the full prime space define
\begin{align}
 X_r
 ={}&
 M_y^{-1/p}r^{-1/2}
 P_{J_r^{(\ell)}}^{\mathrm{full}}
 \sum_{j<N}
 z_h^{(\ell)}(j)
 \e^{it_j\log r}
 f_{\e^{w_h}}(t_j).
 \label{eq:frame-full-source-row}
\end{align}
By
\eqref{eq:conditional-spectral-commutation},
\eqref{eq:euler-source-conditioning}, and the fact that all coefficient
factors are deterministic,
\begin{equation}
 \E_{r^-}X_r
 =
 \bigl(\mathcal R_y^{(\ell)}d\bigr)_r.
 \label{eq:frame-live-row-as-conditional}
\end{equation}
Enumerate the primes \(r\) for which \(X_r\not\equiv0\) increasingly,
\(r_1<\cdots<r_m\), and put
\[
 \mathscr F_k
 =
 \sigma(\zeta_\rho:\rho<r_k).
\]
Then,  \(\E_{r_k^-}\) is the conditional expectation onto
\(\mathscr F_k\), so these expectations form the increasing filtration
required by Proposition~\ref{prop:vector-conditional-expectations}.
The variables \(X_{r_k}\) need not be \(\mathscr F_k\)-measurable; this is precisely why the nonadapted formulation of that proposition was recorded.
Proposition~\ref{prop:vector-conditional-expectations} gives
\[
 \norm{\mathcal R_y^{(\ell)}d}_{L^q(\ell^2)}
 \le
 C_p\norm{(X_r)_r}_{L^q(\ell^2)}.
\]
Next apply Proposition~\ref{prop:row-interval-projections} to remove all full-product spectral interval projections:
\begin{align*}
 \norm{(X_r)_r}_{L^q(\ell^2)}
 &\le
 C_p
 \norm{
 \left(
 M_y^{-1/p}r^{-1/2}
 \sum_{j<N}
 z_h^{(\ell)}(j)
 \e^{it_j\log r}
 f_{\e^{w_h}}(t_j)
 \right)_r
 }_{L^q(\ell^2)}.
\end{align*}
Since
\[
 f_{\e^{w_h}}=f_h^0,
\]
Lemma~\ref{lem:frozen-source-frame} gives
\[
 \norm{(X_r)_r}_{L^q(\ell^2)}
 \le
 C_p\mathfrak P_y(z^{(\ell)})^{1/q}.
\]
This proves \eqref{eq:frame-estimate}.
Every constant in this chain is uniform in the number of components and their interval endpoints, the prime scale, the offset, \(H,N,t_0\), and the prime cutoff.
Dependence on the fixed exponent \(p\) is the only retained dependence.
\end{proof}

The next proposition strengthens the fixed-offset frame bound for large offsets by using the spectral deformation to gain superexponential decay in \(\ell\), making the large-offset resonant contributions summable.

\begin{proposition}[Deformed frame]
\label{prop:deformed-frame}
Let \(\vartheta_0=1/32\).
For every \(d\in\C^{\ZN}\) and every integer \(\ell\ge4\),
\begin{equation}
 \norm{
 \mathcal R_y^{(\ell)}d
 }_{L^q(\ell^2)}
 \le
 C_p
 \e^{-\vartheta_02^\ell/8}
 \mathfrak P_y^{\vartheta_0}(z^{(\ell)})^{1/q},
 \label{eq:deformed-frame}
\end{equation}
where \(C_p\) depends only on \(p\).
\end{proposition}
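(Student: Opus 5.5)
The plan is to run the same chain as in Proposition~\ref{prop:frame-estimate}: conditional expectation, then componentwise spectral projection, then Lemma~\ref{lem:frozen-source-frame}. The one new ingredient is to factor the undeformed Euler product through the deformed one before the sampling step. This trades the interval projection for an exponentially small damping multiplier.

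\textbf{Step 1 (the deformation is a spectral multiplier).} I will verify characterwise that
\[
 f_h^\vartheta(t)=\e^{\vartheta D_{\mathrm{full}}/w_h}f_h^0(t).
\]
Expand each factor of \eqref{eq:deformed-euler-source} by \eqref{eq:binomial-branch}. A term $\zeta_r^k$ from the analytic factor picks up $\e^{k\vartheta\log r/w_h}$, and a term $\overline{\zeta_r}^{k'}$ from the anti-analytic factor picks up $\e^{-k'\vartheta\log r/w_h}$. The net exponent is therefore $\vartheta(k-k')\log r/w_h$, which is $\vartheta/w_h$ times the $D_{\mathrm{full}}$-eigenvalue of the resulting character. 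Absolute convergence at the fixed cutoff justifies the rearrangement. The phases $\e^{it_j\log r}$ and the deterministic sum over $j$ commute with functions of $D_{\mathrm{full}}$. Hence, with $X_r$ as in \eqref{eq:frame-full-source-row},
\[
 X_r=\e^{-\vartheta_0D_{\mathrm{full}}/w_h}\one_{J_r^{(\ell)}}(D_{\mathrm{full}})\,\widetilde X_r^{\vartheta_0},
\]
where $\widetilde X_r^{\vartheta_0}$ is $\widetilde X_r$ with $f_{\e^{w_h}}$ replaced by $f_h^{\vartheta_0}$. As in the proof of Proposition~\ref{prop:frame-estimate}, \eqref{eq:conditional-spectral-commutation} and \eqref{eq:euler-source-conditioning} still give $\E_{r^-}X_r=(\mathcal R_y^{(\ell)}d)_r$. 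Proposition~\ref{prop:vector-conditional-expectations} then reduces the claim to bounding $\norm{(X_r)_r}_{L^q(\ell^2)}$.

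\textbf{Step 2 (the gain).} For $r\in\mathcal I_h$ we have $\log r\le w_h=2^h$. So on $J_r^{(\ell)}$,
\[
 \lambda\ge a_r\coloneqq\max\{0,\,2^{h+\ell}/4-\log r\}\ge 2^{h+\ell}/4-2^h\ge 2^{h+\ell}/8,
\]
where the last inequality uses $\ell\ge3$. Consequently $\e^{-\vartheta_0a_r/w_h}\le\e^{-\vartheta_02^\ell/8}$. I factor this out and write the remaining symbol as $\phi_r(\lambda)=\e^{-c(\lambda-a_r)}\one_{[a_r,b_r]}(\lambda)$ with $c=\vartheta_0/w_h$.

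\textbf{Step 3 (main obstacle: componentwise damping).} Proposition~\ref{prop:row-interval-projections} handles only componentwise indicator symbols, and \eqref{eq:transferred-bv-functional-calculus} is scalar. I therefore need a representation of $\phi_r$ whose averaging measure does not depend on $r$. I will use the identity
\[
 \phi_r(\lambda)=\e^{-c(b_r-a_r)}\one_{[a_r,b_r]}(\lambda)+\int_0^\infty \e^{-\sigma}\one_{[a_r,\min\{b_r,a_r+\sigma/c\})}(\lambda)\,\dd\sigma .
\]
Here the weight $\e^{-\sigma}\dd\sigma$ is universal, and for each fixed $\sigma$ the intervals have componentwise endpoints. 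Minkowski's inequality in $L^q(\ell^2)$, applied to the $\sigma$-integral, together with Proposition~\ref{prop:row-interval-projections} for each $\sigma$ and for the first term, gives
\[
 \norm{(X_r)_r}_{L^q(\ell^2)}\le C_q\,\e^{-\vartheta_02^\ell/8}\,\norm{(\widetilde X_r^{\vartheta_0})_r}_{L^q(\ell^2)} .
\]
The constant is uniform in the number of components and in the endpoints.

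\textbf{Step 4 (sampling).} Finally, I apply Lemma~\ref{lem:frozen-source-frame} with the jointly defined family $g_h=f_h^{\vartheta_0}$. Its moment functional $\mathfrak P_y^g(z^{(\ell)})$ is exactly $\mathfrak P_y^{\vartheta_0}(z^{(\ell)})$ from \eqref{eq:occupation-functional}, which yields \eqref{eq:deformed-frame}.
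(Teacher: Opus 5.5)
Your proposal follows the paper's proof essentially step for step. The shared ingredients are:
\begin{itemize}
\item the spectral identity $\e^{-\vartheta_0D_{\mathrm{full}}/w}f_h^{\vartheta_0}(t)=f_{\e^w}(t)$;
\item the gap $\lambda\ge v/4-\log r\ge v/8$;
\item writing the damped indicator as an average of componentwise interval projections against an $r$-independent weight, controlled by Minkowski and Proposition~\ref{prop:row-interval-projections};
\item conditional expectation, followed by Lemma~\ref{lem:frozen-source-frame} with $g_h=f_h^{\vartheta_0}$.
\end{itemize}
The only difference is that the paper subtracts an average of shrinking intervals $[A+uw,B]$ from $\one_{[A,B]}$, while you average intervals growing from the left endpoint. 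This is an inessential variant.

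However, the displayed identity in Step~3 is false as written. For $a_r\le\lambda<b_r$ the integral alone already equals $\e^{-c(\lambda-a_r)}$, so your right-hand side equals $\phi_r(\lambda)+\e^{-c(b_r-a_r)}$ there. There are two easy fixes:
\begin{itemize}
\item replace $\one_{[a_r,b_r]}$ in the first term by the point projection $\one_{\{b_r\}}=\one_{[b_r,b_r]}$, a degenerate closed interval still covered by Proposition~\ref{prop:row-interval-projections}; or
\item drop the first term and close the right endpoint:
\[
 \phi_r(\lambda)=\int_0^\infty \e^{-\sigma}\one_{[a_r,\min\{b_r,a_r+\sigma/c\}]}(\lambda)\,\dd\sigma .
\]
\end{itemize}
With either correction, the rest of your argument goes through unchanged, with constant at most $2C_q$ in Step~3.
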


\begin{proof}
The large offset creates a positive spectral gap.
The proof first quantifies that gap, then writes the interval projection as a damped family of interval projections, uses the deformation to recover the undeformed full product, and finally applies the same conditional-expectation argument as in Proposition~\ref{prop:frame-estimate}.
Symbolically,
\[
 \ell\ge4
 \Longrightarrow
 \lambda_{r,\ell}^-\ge v/8
 \Longrightarrow
 \e^{-\vartheta_0\lambda_{r,\ell}^-/w}
 \le \e^{-\vartheta_02^\ell/8}.
\]
Fix \(\ell\ge4\), \(r\in\mathcal I_h\), and write
\[
 v=v_{h,\ell}=2^{h+\ell},
 \qquad
 w=w_h.
\]
Set
\[
 \lambda_{r,\ell}^-=\frac v4-\log r,
 \qquad
 \lambda_{r,\ell}^+=\min\{K_0,4v-\log r\}.
\]
Since \(\log r\le w\) and \(v=2^\ell w\),
\begin{equation}
 \lambda_{r,\ell}^-
 \ge
 \frac v4-w
 =
 v\left(\frac14-2^{-\ell}\right)
 \ge
 \frac v8.
 \label{eq:deformed-frame-lower-gap}
\end{equation}
Thus, whenever \(J_r^{(\ell)}\) is nonempty,
\[
 J_r^{(\ell)}
 =
 [\lambda_{r,\ell}^-,\lambda_{r,\ell}^+].
\]

The interval geometry is therefore
\[
 0\le \frac v8\le\lambda_{r,\ell}^-
 \le\lambda_{r,\ell}^+\le K_0,
 \qquad
 J_r^{(\ell)}
 =
 [\lambda_{r,\ell}^-,\lambda_{r,\ell}^+].
\]
Thus,  all frequencies below \(\lambda_{r,\ell}^-\) lie outside the resonant interval.
The inequality \(v/4-\log r\ge v/8\) is the only point where
\(\ell\ge4\) is used in the gap estimate: \(\log r\le w=2^{-\ell}v\).

For real numbers \(A\le B\), \(w>0\), and \(\vartheta>0\), the following identity holds pointwise in the spectral variable:
\begin{align}
 \e^{-\vartheta D/w}\one_{[A,B]}(D)
 =
 \e^{-\vartheta A/w}
 \left[
 \one_{[A,B]}(D)
 -
 \int_0^\infty
 \vartheta\e^{-\vartheta u}
 \one_{[A+uw,B]}(D)\dd u
 \right].
 \label{eq:deformed-interval-identity}
\end{align}
Indeed, for \(D=\lambda\in[A,B]\), the bracket equals
\[
 1-
 \int_0^{(\lambda-A)/w}
 \vartheta\e^{-\vartheta u}\dd u
 =
 \e^{-\vartheta(\lambda-A)/w}.
\]

The deformed product satisfies the exact spectral identity
\begin{equation}
 \e^{-\vartheta_0D_{\mathrm{full}}/w}
 f_h^{\vartheta_0}(t)
 =
 f_{\e^w}(t).
 \label{eq:deformation-restores-source}
\end{equation}
To see this, expand the product in prime characters.
The deformation multiplies a character of prime frequency \(\lambda\) by
\(\e^{\vartheta_0\lambda/w}\), and
\(\e^{-\vartheta_0D_{\mathrm{full}}/w}\) removes exactly that factor.

More explicitly, on a prime character
\(\zeta^\nu\) with
\(\lambda=\sum_\rho\nu_\rho\log\rho\), the two consecutive multipliers are
\[
 \e^{\vartheta_0\lambda/w}
 \quad\text{and}\quad
 \e^{-\vartheta_0\lambda/w}.
\]
Their product is one.
This termwise calculation is made on the full prime space and is simultaneous for every translated site \(t_j\).

If
\[
 F_r^\vartheta
 =
 \sum_{j<N}z_h^{(\ell)}(j)\e^{it_j\log r}
 f_h^\vartheta(t_j),
\]
then the two identities combine before conditioning to give the exact componentwise formula
\[
 \begin{aligned}
 P_{[A,B]}^{\mathrm{full}}F_r^0
 &=
 \e^{-\vartheta_0D_{\mathrm{full}}/w}
 P_{[A,B]}^{\mathrm{full}}F_r^{\vartheta_0}\\
 &=
 \e^{-\vartheta_0A/w}
 \left[
 P_{[A,B]}^{\mathrm{full}}
 -\int_0^\infty
 \vartheta_0\e^{-\vartheta_0u}
 P_{[A+uw,B]}^{\mathrm{full}}\dd u
 \right]F_r^{\vartheta_0}.
 \end{aligned}
\]
Here \(A=\lambda_{r,\ell}^-\) and
\(B=\lambda_{r,\ell}^+\).  Every operator in this display acts on the full
common prime space; conditional expectation is applied only after this identity has been estimated.

Apply \eqref{eq:deformed-interval-identity} with
\[
 A=\lambda_{r,\ell}^-,
 \qquad
 B=\lambda_{r,\ell}^+,
 \qquad
 \vartheta=\vartheta_0,
\]
to the full deformed product before conditioning.
By Proposition~\ref{prop:row-interval-projections} and Minkowski's inequality, the bracket in \eqref{eq:deformed-interval-identity} has vector norm bounded by an absolute multiple of the undeformed interval-projection norm.
The total variation of its scalar coefficient measure is at most two.
Using \eqref{eq:deformed-frame-lower-gap}, the resulting factor is
\[
 \e^{-\vartheta_0\lambda_{r,\ell}^-/w}
 \le
 \e^{-\vartheta_0v/(8w)}
 =
 \e^{-\vartheta_02^\ell/8}.
\]

The bracket in \eqref{eq:deformed-interval-identity} is an interval projection with coefficient \(1\), plus an average of interval projections against the positive measure
\(\vartheta_0\e^{-\vartheta_0u}\dd u\).  Its total coefficient variation is
\[
 1+\int_0^\infty\vartheta_0\e^{-\vartheta_0u}\dd u=2.
\]
Thus,  no dependence on \(A,B,w\), \(r\), or the number of components is hidden in the use of Minkowski and Proposition~\ref{prop:row-interval-projections}.
The entire offset-dependent gain is the displayed prefactor.

After this full-product operation, apply
\eqref{eq:conditional-spectral-commutation},
\eqref{eq:euler-source-conditioning}, and
Proposition~\ref{prop:vector-conditional-expectations} to recover the original components.
Lemma~\ref{lem:frozen-source-frame}, now with
\[
 g_h=f_h^{\vartheta_0},
\]
then yields
\[
 \norm{
 \mathcal R_y^{(\ell)}d
 }_{L^q(\ell^2)}
 \le
 C_p
 \e^{-\vartheta_02^\ell/8}
 \mathfrak P_y^{\vartheta_0}(z^{(\ell)})^{1/q}.
\]
This is \eqref{eq:deformed-frame}.
The order is therefore
\[
 \begin{gathered}
 \text{deform and project the full common product}
 \longrightarrow
 \text{extract the gap factor},\\
 \text{then condition}
 \longrightarrow
 \text{apply the sampling estimate}.
 \end{gathered}
\]
Conditioning is not used inside the interval identity, and no component is assigned a separately deformed Euler array.
\end{proof}

\subsection{Euler-product deformation}
\label{subsec:source-deformation}

The following estimate supplies the global high-frequency decay.
The deformation parameter used here is larger than the one in the resonant estimate, but all one-prime radii remain uniformly below one.

\begin{proposition}[Euler-product deformation]
\label{prop:source-deformation}
Let
\[
 B\ge2,
 \qquad
 \log B\le w,
 \qquad
 0<\vartheta\le\frac1{16}.
\]
Then, 
\begin{equation}
 \E
 \abs{
 \e^{\vartheta D_{\mathrm{full}}/w}f_B(t)
 }^q
 \le
 C_pM_B^a.
 \label{eq:full-power-source-deformation}
\end{equation}
Moreover, for every \(K\ge0\),
\begin{equation}
 \E
 \abs{
 P_{(K,\infty)}^{\mathrm{full}}f_B(t)
 }^q
 \le
 C_p
 \e^{-q\vartheta K/w}
 M_B^a.
 \label{eq:source-spectral-tail}
\end{equation}
The constant \(C_p\) depends only on \(p\).
\end{proposition}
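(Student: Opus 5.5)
The plan is to compute the deformed product \(\e^{\vartheta D_{\mathrm{full}}/w}f_B(t)\) explicitly as a product of one-prime factors and then repeat the \(L^q\)-norm calculation behind \eqref{eq:euler-source-normalization}. On a prime character \(\zeta^\nu\) with frequency \(\lambda=\sum_\rho\nu_\rho\log\rho\), the operator \(\e^{\vartheta D_{\mathrm{full}}/w}\) multiplies by \(\prod_\rho \e^{\vartheta\nu_\rho\log\rho/w}\). It therefore acts coordinatewise, replacing \(\zeta_r\) by \(\e^{\vartheta\log r/w}\zeta_r\) inside the analytic factor and \(\overline{\zeta_r}\) by \(\e^{-\vartheta\log r/w}\overline{\zeta_r}\) inside the anti-analytic factor, since negative powers of \(\zeta_r\) carry negative frequency. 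This is exactly the computation behind \eqref{eq:deformation-restores-source}. Since \(r\le B\le \e^w\), the analytic radius is \(r^{-1/2}\e^{\vartheta\log r/w}\le 2^{-1/2}\e^{1/16}<1\), so every series converges absolutely and we obtain
\[
 \e^{\vartheta D_{\mathrm{full}}/w}f_B(t)=M_B^{-\alpha}\prod_{r\le B}(1-\rho_r^+\e^{it\log r}\zeta_r)^{-1}(1-\rho_r^-\e^{-it\log r}\overline{\zeta_r})^{-\alpha},
\]
with \(\rho_r^\pm=r^{-1/2}\e^{\pm\vartheta\log r/w}\).

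Next I take \(q\)-th powers and use independence across primes. The integrand factors as \(M_B^{-\alpha q}\prod_r|1-\rho_r^+\zeta|^{-q}|1-\rho_r^-\zeta|^{-\alpha q}\) with \(\zeta\) Haar distributed. Here \(q+\alpha q=2\), but the two radii differ, so I will not look for an exact identity. Instead I expand the logarithm of each one-prime factor as in Step~1 of Proposition~\ref{prop:one-prime-replacement}:
\[
 \log\bigl(|1-\rho^+\zeta|^{-q}|1-\rho^-\zeta|^{-\alpha q}\bigr)=q\sum_{k\ge1}\frac{(\rho^+)^k+\alpha(\rho^-)^k}{k}\cos(k\psi).
\]
The \(k=1\) coefficient is \(q r^{-1/2}(\e^{\vartheta u}+\alpha\e^{-\vartheta u})\) with \(u=\log r/w\in[0,1]\), and the higher terms are \(O(r^{-1})\) uniformly.

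A second-order expansion of \(\E\exp\) over the circle then gives, for each prime,
\[
 \E\bigl[\text{one-prime factor}\bigr]=1+\frac{q^2}{4r}\bigl(\e^{\vartheta u}+\alpha\e^{-\vartheta u}\bigr)^2+O(r^{-3/2}).
\]
At \(\vartheta=0\) the leading term is \(q^2\beta^2/(4r)=1/r\), using \(q\beta=2\). The deformation changes it by \(O(\vartheta u/r)=O(\log r/(wr))\).

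Taking the product over \(r\le B\), the per-prime mean differs from \((1-r^{-1})^{-1}\) by a factor \(\exp(O(\log r/(wr))+O(r^{-3/2}))\), which is the step needing care. By Mertens' estimate \(\sum_{r\le B}\log r/r\le C\log B\le Cw\), exactly as in Proposition~\ref{prop:gaussian-deformation}, and \(\sum r^{-3/2}<\infty\), so the accumulated error is bounded by \(\e^{C_p}\). Hence
\[
 \E\bigl|\e^{\vartheta D_{\mathrm{full}}/w}f_B(t)\bigr|^q\le C_p M_B^{-\alpha q}M_B=C_pM_B^{a},
\]
because \(1-\alpha q=q-1=a\). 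This proves \eqref{eq:full-power-source-deformation}, and the bound is uniform in \(t\) since \(t\) only shifts the phase \(\psi\).

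For the tail bound \eqref{eq:source-spectral-tail}, I use the symbol identity
\[
 \one_{(K,\infty)}(\lambda)=\e^{-\vartheta K/w}\,\bigl[\e^{-\vartheta(\lambda-K)/w}\one_{(K,\infty)}(\lambda)\bigr]\,\e^{\vartheta\lambda/w}.
\]
The bracketed function of \(\lambda\) has total variation at most \(2\), uniformly in \(K,\vartheta,w\). By \eqref{eq:transferred-bv-functional-calculus} it is bounded on \(L^q\) with constant \(C_q\), and since \(f_B\) has only nonnegative frequencies with absolutely summable coefficients, the functional calculus applies. Therefore \(\|P_{(K,\infty)}^{\mathrm{full}}f_B(t)\|_q\le C_q\e^{-\vartheta K/w}\|\e^{\vartheta D_{\mathrm{full}}/w}f_B(t)\|_q\), and raising to the \(q\)-th power with the first part gives the claim.

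I expect the main obstacle to be the uniform one-prime expansion. The error term has to be genuinely \(O(r^{-3/2})\), or at least summable, for the small primes as well. This works because the radii are bounded away from one: all exponential moments of the one-prime logarithm are uniformly bounded, and the first-harmonic contribution has mean zero, so the Taylor remainder is controlled by its third moment.
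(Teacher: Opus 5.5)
Your proof is correct. The tail estimate \eqref{eq:source-spectral-tail} is obtained essentially as in the paper: the paper writes $\e^{-\tau\lambda}\one_{\{\lambda>K\}}$ as $\e^{-\tau K}\one_{\{\lambda>K\}}$ minus a Laplace average of strict half-line projections, which is the Stieltjes representation of your bracketed symbol.

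The genuine difference is in the one-prime moment behind \eqref{eq:full-power-source-deformation}.

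\emph{The paper's route.} It computes this moment exactly. Since $\abs{1-\sqrt x\,\e^{-u}\bar z}=\abs{1-\sqrt x\,\e^{-u}z}$ on $\T$, the $q$-th power of the deformed factor equals $\abs{G}^2$ with $G=(1-\sqrt x\,\e^{u}z)^{-q/2}(1-\sqrt x\,\e^{-u}z)^{-\alpha q/2}$ analytic. Parseval then gives $\mathcal M_x(u)=(1-x)^{\alpha q}\sum_n A_n(u)^2x^n$. Chu--Vandermonde with $q(1+\alpha)/2=1$ gives $A_n(0)=1$, hence $\mathcal M_x(0)=(1-x)^{-a}$ exactly. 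Positivity of the coefficients gives $\abs{\frac{\dd}{\dd v}\log\mathcal M_x(v)}\le Cx$, so $\mathcal M_x(u)\le(1-x)^{-a}\e^{Cxu}$.

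\emph{Your route.} You expand the logarithm of the factor in circle harmonics and Taylor-expand $\E\e^{L}$ to second order. You read off the main term $1/r$ from $q\beta=2$ and carry an additive $O(r^{-3/2})$ error, justified by the uniform bound $\abs{L}\le Cr^{-1/2}$.

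\emph{Comparison.} The paper's identity is exact at $\vartheta=0$ and yields a purely multiplicative per-prime loss, so only Mertens' bound $\sum_{r\le B}\log r/r\le C\log B\le Cw$ enters. Your argument is softer: it reuses the uniform-radius Taylor technology of Proposition~\ref{prop:one-prime-replacement} and needs no hypergeometric identity. The only cost is the extra convergent factor $\prod_r(1+Cr^{-3/2})$, which is harmless here.

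Two small inaccuracies in your justification of the tail step, neither affecting validity:
\begin{enumerate}
\item $f_B$ does not have only nonnegative frequencies; the anti-analytic factors $(1-r^{-1/2}\e^{-it\log r}\overline{\zeta_r})^{-\alpha}$ produce negative $D_{\mathrm{full}}$-frequencies. This does not matter, because your symbol identity holds for every real $\lambda$. What is actually needed is that the deformed product lies in the Wiener algebra, which holds since all radii are $<1$. The identity can then be checked on Fourier truncations and passed to the limit in $L^q$.
\item \eqref{eq:transferred-bv-functional-calculus} is stated for compactly supported symbols, and your bracket $\e^{-\vartheta(\lambda-K)/w}\one_{(K,\infty)}(\lambda)$ is not compactly supported. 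The same Stieltjes--Minkowski argument with strict half-line projections nevertheless applies verbatim, and this is what the paper writes out.
\end{enumerate}
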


\begin{proof}
We use the parameter definitions and identities in
\eqref{eq:parameter-definitions}--\eqref{eq:parameter-identities}, the
definitions \eqref{eq:full-prime-generator},
\eqref{eq:mertens-and-sb}, and \eqref{eq:euler-source}, the binomial
expansion \eqref{eq:binomial-branch}, and the strict half-line case of
Proposition~\ref{prop:row-interval-projections}.
The remaining ingredients are Parseval's identity, the
Chu--Vandermonde convolution, and Minkowski's inequality.

We first prove \eqref{eq:full-power-source-deformation}.
Fix a prime
\(r\le B\), and put
\[
 x=r^{-1},
 \qquad
 u=\vartheta\frac{\log r}{w}.
\]
Then,  \(x\le1/2\) and \(0\le u\le\vartheta\le1/16\).
By Haar invariance we may absorb the phase \(\e^{it\log r}\) into the coordinate \(z\in\T\).
Using \eqref{eq:full-prime-generator} and \eqref{eq:euler-source}, the corresponding deformed one-prime factor is
\[
 (1-x)^\alpha
 (1-\sqrt{x}\e^uz)^{-1}
 (1-\sqrt{x}\e^{-u}\bar z)^{-\alpha}.
\]
By \eqref{eq:binomial-branch} and Parseval, its \(q\)-th moment equals
\[
 \mathcal M_x(u)
 \coloneqq
 (1-x)^{\alpha q}\sum_{n=0}^{\infty}A_n(u)^2x^n,
\]
where
\[
 A_n(u)
 =
 \sum_{k=0}^{n}
 \frac{(q/2)_k}{k!}
 \frac{(\alpha q/2)_{n-k}}{(n-k)!}
 \e^{(2k-n)u}.
\]
Indeed, the series on the right is the squared \(H^2(\T)\)-coefficient norm of
\[
 (1-\sqrt{x}\e^uz)^{-q/2}
 (1-\sqrt{x}\e^{-u}z)^{-\alpha q/2}.
\]

Since \(q(1+\alpha)/2=1\), the Chu--Vandermonde identity gives
\[
 A_n(0)
 =
 \frac{\bigl(q(1+\alpha)/2\bigr)_n}{n!}
 =1.
\]
Moreover, positivity of the coefficients and \(\abs{2k-n}\le n\) imply, for \(0\le v\le u\),
\[
 0<A_n(v)\le\e^{n\vartheta},
 \qquad
 \abs{A_n'(v)}\le n\e^{n\vartheta}.
\]
As the \(n=0\) term in the coefficient sum is one,
\begin{align*}
 \abs{\frac{\dd}{\dd v}\log\mathcal M_x(v)}
 &\le
 2\sum_{n=1}^{\infty}n\bigl(x\e^{2\vartheta}\bigr)^n
 \le Cx,
\end{align*}
where the last bound is uniform because
\(x\e^{2\vartheta}\le \frac12\e^{1/8}<1\).  Integration from \(0\) to
\(u\) therefore yields
\[
 \mathcal M_x(u)\le \mathcal M_x(0)\e^{Cxu}.
\]
Using again \(q(1+\alpha)=2\), equivalently
\(\alpha q-1=-a\), we have
\[
 \mathcal M_x(0)
 =
 (1-x)^{\alpha q}\sum_{n=0}^{\infty}x^n
 =
 (1-x)^{-a}.
\]
Consequently, product Haar independence gives
\begin{align*}
 \E\abs{\e^{\vartheta D_{\mathrm{full}}/w}f_B(t)}^q
 =
 \prod_{r\le B}
 \mathcal M_{1/r}\!\left(\vartheta\frac{\log r}{w}\right)\le
 M_B^a
 \exp\!\left(
  \frac{C\vartheta}{w}
  \sum_{r\le B}\frac{\log r}{r}
 \right)
 \le C_pM_B^a,
\end{align*}
where the last step uses the prime estimate
\[
 \sum_{r\le B}\frac{\log r}{r}\le C\log B
\]
from
\cite[Chapter~2, Theorem~2.7(b)]{MontgomeryVaughan06},
together with the assumption \(\log B\le w\).
This proves \eqref{eq:full-power-source-deformation}.

We next prove \eqref{eq:source-spectral-tail}.
For \(\tau>0\) and
\(\lambda\in\R\),
\[
 \e^{-\tau\lambda}\one_{\{\lambda>K\}}
 =
 \e^{-\tau K}\one_{\{\lambda>K\}}
 -
 \int_K^\infty
 \tau\e^{-\tau v}\one_{\{\lambda>v\}}\dd v.
\]
Minkowski's inequality and the strict half-line estimate from Proposition~\ref{prop:row-interval-projections} thus imply
\[
 \norm{
  \e^{-\tau D_{\mathrm{full}}}
  P_{(K,\infty)}^{\mathrm{full}}
 }_{L^q\to L^q}
 \le
 C_q\left(
  \e^{-\tau K}
  +\int_K^\infty\tau\e^{-\tau v}\dd v
 \right)
 \le C_q\e^{-\tau K}.
\]
Since all the operators involved are spectral multipliers of
\(D_{\mathrm{full}}\),
\[
 P_{(K,\infty)}^{\mathrm{full}}f_B(t)
 =
 \e^{-\vartheta D_{\mathrm{full}}/w}
 P_{(K,\infty)}^{\mathrm{full}}
 \e^{\vartheta D_{\mathrm{full}}/w}f_B(t).
\]
(The identity holds first coefficientwise for finite Fourier truncations and then extends by \(L^q\)-continuity.)  Taking \(\tau=\vartheta/w\) and applying
\eqref{eq:full-power-source-deformation}, we obtain
\[
 \norm{P_{(K,\infty)}^{\mathrm{full}}f_B(t)}_{L^q}
 \le
 C_p\e^{-\vartheta K/w}M_B^{a/q}.
\]
Raising this inequality to the \(q\)-th power proves
\eqref{eq:source-spectral-tail}.
\end{proof}

\subsection{The global spectral tail}
\label{subsec:high-source-tail}

For \(c\in\C^{\ZN}\), define the high-frequency component array by
\begin{align}
 \bigl(\mathcal H_yc\bigr)_r
 =
 M_y^{-1/p}r^{-1/2}
 P_{(K_0,\infty)}^{r^-}
 \sum_{j<N}
 c_j\e^{it_j\log r}
 f_{r^-}(t_j),
 \qquad
 r\in\mathcal I_h.
\label{eq:global-high-source-array}
\end{align}
For each prime \(r\), the corresponding component of this array isolates the high-frequency spectral tail of the Euler product over primes below \(r\), separating it from the low-frequency part that enters the resonant analysis.

\begin{proposition}[Global high-spectral tail]
\label{prop:high-source-tail}
For every \(c\in\C^{\ZN}\),
\begin{equation}
 \E
 \norm{\mathcal H_yc}_{\ell^2(\{r\le y\})}^{q}
 \le
 C_p\norm c_{\ell^q(\ZN)}^q.
 \label{eq:global-high-source-tail}
\end{equation}
\end{proposition}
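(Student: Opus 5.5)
The plan is to follow the same template as Proposition~\ref{prop:frame-estimate}: lift each component to the full common product, remove the conditional expectation and the spectral projection, and then use the spectral-tail decay of Proposition~\ref{prop:source-deformation} in place of the moment functional.

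For \(r\in\mathcal I_h\), set
\[
 X_r=M_y^{-1/p}r^{-1/2}P^{\mathrm{full}}_{(K_0,\infty)}\sum_{j<N}c_j\e^{it_j\log r}f_{\e^{w_h}}(t_j).
\]
By \eqref{eq:conditional-spectral-commutation} and \eqref{eq:euler-source-conditioning}, \(\E_{r^-}X_r=(\mathcal H_yc)_r\). Proposition~\ref{prop:vector-conditional-expectations}, applied along the increasing prime filtration, reduces the claim to bounding \(\norm{(X_r)_r}_{L^q(\ell^2)}\). We keep the projection here, because the entire gain comes from it.

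Fix \(h\). Since the map \(t\mapsto P_{(K_0,\infty)}^{\mathrm{full}}f_{\e^{w_h}}(t)\) commutes with the flow, we have pathwise
\[
 \sum_{r\in\mathcal I_h}|X_r|^2\le M_y^{-2/p}\,\Bigl(\sum_j|c_j|\,|P_{(K_0,\infty)}^{\mathrm{full}}f_{\e^{w_h}}(t_j)|\Bigr)^2\sum_{r\in\mathcal I_h}r^{-1}.
\]
This is the \(\ell^1\to\ell^2\) bound \eqref{eq:prime-sampling-L1}, and the last factor is \(O(1)\). Using \(q/2<1\) to separate the scales, and then subadditivity of \(x\mapsto x^{q}\) applied after Hölder across the sites, we get
\[
 \E\Bigl(\sum_r|X_r|^2\Bigr)^{q/2}\le \sum_h C\,M_y^{-a}\,\E\Bigl(\sum_j|c_j|\,|P f_{\e^{w_h}}(t_j)|\Bigr)^q .
\]
The inner term is a sum of \(N\) terms. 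We bound it by Minkowski in \(L^q\) and then Hölder: \((\sum_j|c_j|\,\|Pf(t_j)\|_q)^q\le N^{q/p}\norm c_q^q\sup_j\|Pf(t_j)\|_q^q\).

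Now apply \eqref{eq:source-spectral-tail} with \(B=\e^{w_h}\), \(w=w_h\), \(\vartheta=1/16\), \(K=K_0=\pi N/2\). This gives
\[
 \|Pf(t)\|_q^q\le C_p\e^{-q\pi N/(32w_h)}M_{\e^{w_h}}^a .
\]
Since \(M_{\e^{w_h}}\le M_y\asymp N\) and \(N^{q/p}=N^a\), the contribution of scale \(h\) is at most \(C_p\,N^a\e^{-cN/w_h}\norm c_q^q\). Writing \(N/w_h=2^{H-h}=:2^k\), this equals \(C_p\,2^{ka}\,(w_h\text{-ratio})\ldots\). Concretely, the factor \(N^a\) must be compared with \(M_{\e^{w_h}}^a M_y^{-a}\asymp(w_h/N)^a\), so the scale-\(h\) bound is really \(C_p(w_h/N)^a N^a\e^{-c2^k}\).

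\emph{Main obstacle.} The leftover \(w_h^a\) does not decay, so the crude Hölder step loses a factor \(w_h^{a}\) that must be recovered. To fix this, I would replace the \(\ell^1\)-sampling step by the \(\ell^q\to\ell^2\) bound \eqref{eq:prime-sampling-Lq}. That bound costs \((N/w_h)^{1/p}\) rather than \(N^{1/p}\). After taking \(q\)-th powers it gives \((N/w_h)^{a}=2^{ka}\). To keep the estimate pathwise, I would apply it with \(u_j=c_jPf(t_j)\) and then use \(\E\sum_j|c_j|^q|Pf(t_j)|^q\) via \(q\le 2\) (the \(\ell^q\)-norm of \(u\) raised to the \(q\)). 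The scale-\(h\) bound then becomes
\[
 C_p\,2^{ka}\,(w_h/N)^a\,\e^{-c2^k}\,\E\|u\|_q^q\le C_p\,\e^{-c2^k}\norm c_q^q,
\]
where I have used \(2^{ka}(w_h/N)^a=1\). This is summable over \(k\ge0\) uniformly in \(H\), which proves \eqref{eq:global-high-source-tail}.
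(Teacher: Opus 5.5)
Your final argument is the paper's proof. After you correctly discard the $\ell^1\to\ell^2$ attempt (it loses a factor $w_h^{a}$ at the top scales), you lift to the full product $f_{\e^{w_h}}$, apply the $\ell^q\to\ell^2$ prime-sampling bound \eqref{eq:prime-sampling-Lq} pathwise with $u_j=c_j\bigl(P^{\mathrm{full}}_{(K_0,\infty)}f_{\e^{w_h}}\bigr)(t_j)$, use Tonelli and \eqref{eq:source-spectral-tail} with $\vartheta=1/16$, $K=K_0$, let Mertens cancel $(N/w_h)^a(M_{\e^{w_h}}/M_y)^a$, and sum $\e^{-c2^{H-h}}$ over the bands using $q/2<1$. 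The only cosmetic difference is that you apply Stein's inequality once along the full prime filtration before splitting into bands, whereas the paper applies it band by band; both are valid.
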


This proposition shows that the  high-frequency tail of the Euler product over primes below \(r\) is uniformly controlled, with exponential decay away from the prime scale allowing its contributions to be summed without affecting the main resonant estimate.

\begin{proof}
The proof has four stages:
\[
 \begin{gathered}
 \text{actual-prime sampling}
 \longrightarrow
 \text{full-product spectral-tail deformation},\\
 \text{then conditional expectation}
 \longrightarrow
 \text{prime-band summation}.
 \end{gathered}
\]
At one prime scale, the sampling loss is exactly compensated by the ratio of Mertens products.
The remaining exponential decay is then summable over the dyadic prime scales.
All operations through the scale estimate are performed on the auxiliary full product.
Fix a prime scale \(w=w_h\), and write
\[
 B_w=\e^w.
\]
Before conditioning, define the auxiliary array
\begin{align}
 \bigl(\mathcal H_{y,w}^{\mathrm{fr}}c\bigr)_r
 ={}&
 M_y^{-1/p}r^{-1/2}
 \sum_{j<N}
 c_j\e^{it_j\log r}
 \left(
 P_{(K_0,\infty)}^{\mathrm{full}}
 f_{B_w}
 \right)(t_j),
 \qquad r\in\mathcal I_h.
 \label{eq:frozen-high-source-array}
\end{align}
Apply the actual-prime sampling estimate
\eqref{eq:prime-sampling-Lq} pathwise to
\[
 u_j
 =
 c_j
 \left(
 P_{(K_0,\infty)}^{\mathrm{full}}
 f_{B_w}
 \right)(t_j).
\]
We obtain
\begin{align}
 \norm{
 \mathcal H_{y,w}^{\mathrm{fr}}c
 }_{\ell^2(\mathcal I_h)}
 &\le
 C_p
 M_y^{-1/p}
 \left(\frac N w\right)^{1/p}
 \left[
 \sum_{j<N}
 \abs{c_j}^q
 \abs{
 P_{(K_0,\infty)}^{\mathrm{full}}
 f_{B_w}(t_j)
 }^q
 \right]^{1/q}.
 \label{eq:frozen-high-source-sampling}
\end{align}

Raising this pathwise inequality to the \(q\)-th power gives
\begin{align*}
 \norm{\mathcal H_{y,w}^{\mathrm{fr}}c}_{\ell^2}^q
 &\le
 C_pM_y^{-q/p}
 \left(\frac N w\right)^{q/p}
 \sum_{j<N}|c_j|^q
 \left|
 P_{(K_0,\infty)}^{\mathrm{full}}f_{B_w}(t_j)
 \right|^q.
\end{align*}
Now take expectation.
Tonelli's theorem yields
\begin{align*}
 \E\norm{\mathcal H_{y,w}^{\mathrm{fr}}c}_{\ell^2}^q
 &\le
 C_pM_y^{-q/p}
 \left(\frac N w\right)^{q/p}
 \sum_{j<N}|c_j|^q
 \E\left|
 P_{(K_0,\infty)}^{\mathrm{full}}f_{B_w}(t_j)
 \right|^q.
\end{align*}
Set
\[
 \vartheta_*=\frac1{16}.
\]
With this choice, Proposition~\ref{prop:source-deformation} bounds each
expectation uniformly in the translated site \(t_j\).
This passage neither averages over \(j\) before the deformation estimate nor introduces any independence among the translated product values.

Taking the \(q\)-th power of
\eqref{eq:frozen-high-source-sampling}, taking expectation, and applying
\eqref{eq:source-spectral-tail} with
\[
 K=K_0=\frac{\pi N}{2},
\]
gives
\begin{align}
 \E
 \norm{
 \mathcal H_{y,w}^{\mathrm{fr}}c
 }_{\ell^2}^{q}
 &\le
 C_p
 M_y^{-q/p}
 \left(\frac N w\right)^{q/p}
 \e^{-q\vartheta_*K_0/w}
 M_{B_w}^{a}
 \norm c_q^q
 \notag\\
 &=
 C_p
 \e^{-q\vartheta_*K_0/w}
 \left(\frac N w\right)^a
 \left(\frac{M_{B_w}}{M_y}\right)^a
 \norm c_q^q.
 \label{eq:frozen-high-source-before-mertens}
\end{align}
Here we used
\[
 \frac qp=a.
\]

Before the exponential factor is used, the complete \(q\)-power normalization is summarized by
\begin{center}
\begin{tabular}{c| c}
operation & factor\\
\hline
\(\ell^q(\ZN)\to\ell^2(\mathcal I_h)\) sampling
& \((N/w)^{1/p}\)\\
sampling after the \(q\)-th power
& \((N/w)^{q/p}=(N/w)^a\)\\
global product normalization
& \(M_y^{-q/p}=M_y^{-a}\)\\
deformed Euler-product moment
& \(M_{B_w}^{a}\)\\
Mertens product ratio
& \((M_{B_w}/M_y)^a\lesssim(w/N)^a\)
\end{tabular}
\end{center}
Hence the product of all scale factors is
\[
 \left(\frac Nw\right)^a
 \left(\frac wN\right)^a
 =1.
\]
This cancellation, rather than the exponential decay, is what prevents a polynomial loss when the prime scales are summed.

Mertens' product theorem gives, uniformly for \(1\le w\le N\),
\begin{equation}
 \frac{M_{B_w}}{M_{\e^N}}
 \le
 C\frac w N ;
 \label{eq:mertens-birth-ratio}
\end{equation}
see \cite[Chapter~2, Theorem~2.7(e)]{MontgomeryVaughan06}.
Since \(y=\e^N\), the scale factors in
\eqref{eq:frozen-high-source-before-mertens} cancel:
\begin{equation}
 \left(\frac N w\right)^a
 \left(\frac{M_{B_w}}{M_y}\right)^a
 \le
 C_p.
 \label{eq:high-source-mertens-cancellation}
\end{equation}
Also,
\[
 q\vartheta_*K_0/w
 =
 \frac{q\pi N}{32w}.
\]
Therefore
\begin{align}
 \E
 \norm{
 \mathcal H_{y,w}^{\mathrm{fr}}c
 }_{\ell^2}^{q}
 \le
 C_p
 \e^{-q\pi N/(32w)}
 \norm c_q^q
 \le
 C_p
 \e^{-\pi N/(32w)}
 \norm c_q^q,
 \label{eq:frozen-high-source-decay}
\end{align}
where only \(q>1\) is used in the last inequality.

This is the only weakening of the sharper scale decay.
The subsequent prime-scale sum is carried out in the standing range \(1<q<2\).

We now recover the smaller-prime components.
By
\eqref{eq:conditional-spectral-commutation} and
\eqref{eq:euler-source-conditioning},
\begin{align*}
 \E_{r^-}
 \left[
P_{(K_0,\infty)}^{\mathrm{full}}
 \sum_{j<N}
 c_j\e^{it_j\log r}f_{B_w}(t_j)
 \right]
 =
 P_{(K_0,\infty)}^{r^-}
 \sum_{j<N}
 c_j\e^{it_j\log r}f_{r^-}(t_j).
\end{align*}
This identity is applied only after the full-product projection and the pathwise prime sampling estimate.
In full detail, the componentwise identity is
\[
 \begin{array}{ccc}
 \displaystyle
 P_{(K_0,\infty)}^{\mathrm{full}}
 \sum_jc_j\e^{it_j\log r}f_{B_w}(t_j)
 &\xrightarrow{\ \E_{r^-}\ }&
 \displaystyle
 P_{(K_0,\infty)}^{r^-}
 \sum_jc_j\e^{it_j\log r}f_{r^-}(t_j).
 \end{array}
\]
The equality is exactly the combination of
\eqref{eq:conditional-spectral-commutation} and
\eqref{eq:euler-source-conditioning}.  Proposition
\ref{prop:vector-conditional-expectations} is then used only to control this
final componentwise expectation in \(L^q(\ell^2)\).
Thus,  the original high-frequency component on \(\mathcal I_h\) is the conditional expectation of the corresponding auxiliary component.
Proposition \ref{prop:vector-conditional-expectations} and
\eqref{eq:frozen-high-source-decay} imply
\begin{equation}
 \E
 \norm{
 (\mathcal H_yc)_r
 }_{\ell^2(r\in\mathcal I_h)}^q
 \le
 C_p
 \e^{-\pi N/(32w)}
 \norm c_q^q.
 \label{eq:live-high-source-band}
\end{equation}

Finally, because the prime bands are disjoint and
\[
 \frac q2=s<1,
\]
one has pointwise
\[
 \norm{\mathcal H_yc}_{\ell^2}^{q}
 =
 \left(
 \sum_{h=0}^{H}
 \norm{
 (\mathcal H_yc)_r
 }_{\ell^2(r\in\mathcal I_h)}^2
 \right)^{q/2}
 \le
 \sum_{h=0}^{H}
 \norm{
 (\mathcal H_yc)_r
 }_{\ell^2(r\in\mathcal I_h)}^q.
\]
Taking expectation and using
\eqref{eq:live-high-source-band},
\begin{align*}
 \E\norm{\mathcal H_yc}_{\ell^2}^q
 \le
 C_p\norm c_q^q
 \sum_{h=0}^{H}
 \e^{-\pi N/(32\cdot2^h)}=
 C_p\norm c_q^q
 \sum_{k=0}^{H}
 \e^{-(\pi/32)2^k}\le
 C_p\norm c_q^q.
\end{align*}
The reindexing is \(k=H-h\), since
\[
 \frac{N}{2^h}=2^{H-h}.
\]
Thus, 
\[
 \sum_{h=0}^H\e^{-\pi N/(32\cdot2^h)}
 =
 \sum_{k=0}^H\e^{-(\pi/32)2^k}
 \le
 \sum_{k\ge0}\e^{-(\pi/32)2^k}<\infty.
\]
The final constant is uniform in \(H,N,t_0\), the prime support, the prime scale, and the projected vector.
This proves \eqref{eq:global-high-source-tail}.
\end{proof}

\section{Frequency decomposition and the projected vector}
\label{sec:corrected-column}

We now combine the probabilistic estimates from Sections~\ref{sec:euler-occupation} and
\ref{sec:frame-and-tails} with the finite cyclic estimates from
Section~\ref{sec:cyclic-analysis}.
The goal is to establish a uniform
\(L^q(\ell^2)\)-bound for the projected prime vector.

The proof begins with an exact componentwise frequency partition.
The nonresonant pieces are estimated by finite Cauchy expansions whose singular terms are represented only after an explicit signed spectral gap has been established.
The remaining resonant term is controlled by the common-shift moment estimate and the cyclic conical square function.

\medskip
\noindent\textbf{Guide to the frequency branch}

The operators and scales used in this section have the following local roles.
This table is only a navigation aid; it does not introduce new notation.
\begin{center}
\begin{tabular}{c| l}
object & role in this section\\
\hline
\(\mathcal T_y\)
& the projected vector to be bounded\\
\(\mathcal H_y\)
& the already controlled high-frequency term\\
\(\mathcal T_y^{\mathrm{lo}}\)
& the complete low-frequency term\\
\(\mathcal L_y^{\mathrm{lo}}\)
& its positive low-coefficient part\\
\(\mathcal O_y\)
& the positive-shell part outside the frequency resonance\\
\(\mathcal R_y\)
& the remaining resonant term\\
\hline
\(w=2^h\)
& the prime scale\\
\(v=2^{h+\ell}\)
& the positive coefficient-shell scale\\
\(D_{\mathrm{full}}\)
& the generator on the auxiliary full product\\
\(D_{r^-}\)
& the generator associated with primes below \(r\)\\
\(\xi_\kappa\)
& the physical cyclic coefficient frequency\\
\(\log r\)
& the current-prime frequency\\
\(\mathscr D_r=D_{\mathrm{full}}+(\log r)I\)
& the shifted full-product generator
\end{tabular}
\end{center}
The corresponding shifted generator after conditioning is
\(D_{r^-}+(\log r)I\).  Thus,  we first work with formulas involving \(D_{\mathrm{full}}\) or
\(\mathscr D_r\).  Only at the final step do we take conditional
expectation, obtaining the corresponding formulas with \(D_{r^-}\) or
\(D_{r^-}+(\log r)I\), respectively.

The deterministic part of the argument has the dependency order
\[
 \begin{aligned}
 \text{finite Cauchy identity}
 &\longrightarrow
 \begin{cases}
 \text{singular coefficient kernels},\\
 \text{regular Cauchy multiplier},
 \end{cases}\\
 &\longrightarrow
 \text{exact five-piece partition}\\
 &\longrightarrow
 \text{three nonresonant estimates}.
 \end{aligned}
\]
Only after these pieces have been removed do the sampling, moment, and conical estimates enter to control the resonant term.
In every singular sector below we first prove the sign of the spectral gap and only then use a Laplace representation of its reciprocal.

\boldparagraph{Proof structure for this branch.}
The fixed function is the low full product
\[
 F_w=P_{[0,K_0]}^{\mathrm{full}}f_{B_w}(t_0)
\]
on the common endpoint Euler probability space.
The deterministic coefficient estimate is split by \(\Pi_\pm\), \(\chi\), and \(\phi\),  all acting on the same finite cyclic group.
The only earlier analytic estimates used in the nonresonant proof are:
\begin{center}
\begin{tabular}{c| l}
componentwise spectral bounds
& Proposition~\ref{prop:row-interval-projections}\\
norm bound
& Lemma~\ref{lem:frozen-low-source-restoration}\\
cyclic multiplier control
& Lemma~\ref{lem:cyclic-multiplier}\\
boundary traces
& Proposition~\ref{prop:boundary-traces}
\end{tabular}
\end{center}
The calculation changes operator models twice.
Finite Cauchy summation passes from physical grid sites \(j\) to functions of the shifted full-product generator \(\mathscr D_r\) and the scalar coefficient frequency
\(\xi_\kappa\).  Final conditional expectation
passes from the full generator to the generator associated with primes below \(r\).
Neither change creates a new Euler product, and the second is postponed until every full-product multiplier has already been justified.

The goal is to establish the three bounds in
\eqref{eq:negative-column-bound}--\eqref{eq:positive-nonresonant-bound}.
Every constant used to obtain them must be independent of
\[
 N,\quad H,\quad t_0,\quad r,\quad
 \text{prime dimension},\quad w,\quad v,
 \quad\text{and coefficient support}.
\]
Dependence on \(p\), and hence on its fixed conjugate \(q\), is allowed.
The scale summations below explicitly verify the only two possible causes of nonuniformity: the number of shells above one prime scale and the number of prime scales.

\subsection{The projected coefficients and the resonant term}
\label{subsec:corrected-column-definition}

Recall the discrete Fourier convention
\eqref{eq:cyclic-dft}.  For \(c\in\C^{\ZN}\), define the complementary
cyclic Fourier projections \(\Pi_-\) and \(\Pi_+\) by
\[
 \widehat{\Pi_-c}(\kappa)
 =
 \one_{\{\xi_\kappa\le0\}}\widehat c(\kappa),
 \qquad
 \widehat{\Pi_+c}(\kappa)
 =
 \one_{\{\xi_\kappa>0\}}\widehat c(\kappa),
 \qquad
 \kappa\in\Lambda_N.
\]
Equivalently,
\[
 \Pi_-c
 =
 \mathcal F_N^{-1}
 \left(
 \one_{\{\xi_\kappa\le0\}}\widehat c(\kappa)
 \right),
 \qquad
 \Pi_+c
 =
 \mathcal F_N^{-1}
 \left(
 \one_{\{\xi_\kappa>0\}}\widehat c(\kappa)
 \right).
\]
Since
\[
 \Lambda_N=\{-N/2,\ldots,N/2-1\},
 \qquad
 \xi_\kappa=2\pi\kappa,
\]
the projection \(\Pi_-\) retains the modes
\[
 -\frac N2\le\kappa\le0,
\]
including the zero mode and the Nyquist mode represented by
\(\kappa=-N/2\).  The complementary projection \(\Pi_+\) retains the
strictly positive modes
\[
 1\le\kappa\le\frac N2-1.
\]
In particular,
\[
 \Pi_+=I-\Pi_-.
\]

The two multiplier symbols are indicators of cyclic frequency intervals.
Corollary~\ref{cor:cyclic-interval-projections} therefore gives
\begin{equation}
 \norm{\Pi_-}_{\ell^q(\ZN)\to\ell^q(\ZN)}
 +
 \norm{\Pi_+}_{\ell^q(\ZN)\to\ell^q(\ZN)}
 \le C_q,
 \label{eq:positive-negative-projection-bounds}
\end{equation}
where \(C_q\) is independent of \(N\).

For \(c\in\C^{\ZN}\), define the projected vector
\(\mathcal T_yc\), indexed by the primes \(r\le y\), by
\begin{equation}
 (\mathcal T_yc)_r
 =
 r^{-1/2}M_y^{-1/p}
 P_{[0,\infty)}^{r^-}
 \sum_{j<N}
 c_j\e^{it_j\log r}f_{r^-}(t_j),
 \qquad r\le y.
 \label{eq:corrected-column}
\end{equation}
This projected vector isolates the nonnegative spectral component of the first-harmonic Euler contribution with respect to the variables indexed by primes below \(r\), which is the principal vector whose uniform \(L^q(\ell^2)\) control drives the return to the local embedding estimate.

For \(d\in\C^{\ZN}\), let
\(\mathcal R_y^{(\ell)}d\), \(\ell\ge-2\), denote the offset resonant
arrays defined in \eqref{eq:offset-resonant-row}, and set
\begin{equation}
 \mathcal R_yd
 =
 \sum_{\ell\ge-2}\mathcal R_y^{(\ell)}d.
 \label{eq:resonant-column-sum}
\end{equation}
Since the coefficient shells \(d_v\) vanish for
\(v\notin\mathcal V_N\), only finitely many terms in
\eqref{eq:resonant-column-sum} are nonzero for each fixed \(N\).

\subsection{Finite Cauchy calculus}
\label{subsec:finite-cauchy-calculus}

We first isolate the exact finite geometric identity and the associated alias-free regular remainder.

The finite geometric sum has two separate functions.
Its numerator is a common operator, independent of the cyclic coefficient frequency, whereas its denominator contains the only possible resonance.
The following proof therefore checks, in order, the exact DFT normalization, the common-numerator identity, the alias-free spectral interval, and only then the singular--regular splitting of the denominator.

Recall that, for each prime \(r\le y\),
\begin{equation}
 \mathscr D_r=D_{\mathrm{full}}+(\log r)I.
 \label{eq:shifted-row-generator}
\end{equation}
On vectors with trigonometric-polynomial entries, define
\((\mathscr D G)_r=\mathscr D_rG_r\); its functional calculus acts componentwise,
\((m(\mathscr D)G)_r=m(\mathscr D_r)G_r\), whenever these expressions are defined.
On a vector \(F=(F_r)_r\), the operators
\[
 \e^{it\mathscr D}F
 =
 \left(
 \e^{it\log r}\e^{itD_{\mathrm{full}}}F_r
 \right)_r
\]
form an isometric group.

Locally, the product generator, scalar shift, and coefficient frequency should be kept distinct:
\[
 \begin{aligned}
 D_{\mathrm{full}}
 &\quad\hbox{acts on the Euler product},\\
 \log r
 &\quad\hbox{is a scalar  shift},\\
 \xi_\kappa=2\pi\kappa
 &\quad\hbox{comes from the coefficient DFT}.
 \end{aligned}
\]
The joint difference operator \(\mathscr D_r-\xi_\kappa I\) has spectral value
\(\lambda+\log r-\xi_\kappa\) on a character with full-product frequency
\(\lambda\); no new probability-space flow is introduced.

The following lemma rewrites the finite time-grid sum into a Cauchy-type frequency representation and separates its only possible resonance at zero from a uniformly smooth remainder, providing the basic analytic decomposition used to distinguish resonant and nonresonant contributions.

\begin{lemma}[Finite Cauchy decomposition]
\label{lem:finite-cauchy-decomposition}
Let \(F\) be a trigonometric polynomial whose \(D_{\mathrm{full}}\)-spectrum is contained in \([0,K_0]\), and let \(c\in\C^{\ZN}\).
Then,  

\begin{equation} 
\sum_{j=0}^{N-1}c_j\e^{ij\mathscr D_r/N}F = \sum_{\kappa\in\Lambda_N} \widehat c(\kappa) \mathfrak D_N \left( \frac{\mathscr D_r-\xi_\kappa I}{N} \right)F, 
\label{eq:finite-cauchy-dirichlet-kernel} 
\end{equation}
where 
\[ 
\mathfrak D_N(z)=\sum_{j=0}^{N-1}\e^{ijz}. 
\] 
On every joint spectral piece with \(D_{\mathrm{full}}\)-frequency
\(\lambda\) on which \(\lambda+\log r-\xi_\kappa\ne0\), 
\begin{equation} 
\mathfrak D_N \left( \frac{\mathscr D_r-\xi_\kappa I}{N} \right) = \left( I-\e^{i\mathscr D_r} \right) \left( I-\e^{i(\mathscr D_r-\xi_\kappa I)/N} \right)^{-1}. 
\label{eq:finite-cauchy-common-numerator} 
\end{equation} 
The numerator in 
\eqref{eq:finite-cauchy-common-numerator} is independent of \(\kappa\) and has operator norm at most two. Moreover, if \(\lambda\in[0,K_0]\) is a
\(D_{\mathrm{full}}\)-spectral value, then on the complete low-frequency range
\[
 0<\log r\le N,
 \qquad
 -\pi N\le\xi_\kappa<\pi N,
\]
one has 
\begin{equation} 
-\pi < \frac{\lambda+\log r-\xi_\kappa}{N} \le \frac{3\pi}{2}+1 < 2\pi. \label{eq:global-alias-free-interval} \end{equation} 
Thus,  zero is the only possible singularity of the denominator on the actual range.
Let 
\begin{equation} 
\mathfrak h(z) = (1-\e^{iz})^{-1}-\frac{i}{z}, \qquad \mathfrak h(0)=\frac12. 
\label{eq:regular-cauchy-function} 
\end{equation}
\noeqref{eq:regular-cauchy-function}
Then,  \(\mathfrak h\) is smooth on a neighborhood of the interval in \eqref{eq:global-alias-free-interval}, and, on every zero-free spectral piece, 
\begin{equation} \left( I-\e^{i(\mathscr D_r-\xi_\kappa I)/N} \right)^{-1} = iN(\mathscr D_r-\xi_\kappa I)^{-1} + \mathfrak h \left( \frac{\mathscr D_r-\xi_\kappa I}{N} \right). 
\label{eq:singular-regular-cauchy-split} 
\end{equation} 
\end{lemma}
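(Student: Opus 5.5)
Everything here reduces to a characterwise computation. Let \(F=\sum_\nu F_\nu\zeta^\nu\) be a finite sum of characters, with \(D_{\mathrm{full}}\)-frequencies \(\lambda_\nu=\sum_\rho\nu_\rho\log\rho\in[0,K_0]\). All operators in the statement are functions of \(\mathscr D_r\). Such operators act on \(\zeta^\nu\) by the scalar obtained by substituting \(\lambda_\nu+\log r\) for \(\mathscr D_r\). It therefore suffices to verify each identity for scalars \(\mu=\lambda+\log r\), with \(\lambda\in[0,K_0]\) fixed, and then sum over the finitely many characters.

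First, for \eqref{eq:finite-cauchy-dirichlet-kernel}, expand \(c_j=\sum_{\kappa}\widehat c(\kappa)\e^{-ij\theta_\kappa}\) using \eqref{eq:cyclic-dft}. Note that \(\theta_\kappa=\xi_\kappa/N\). Then
\[
\sum_j c_j\e^{ij\mu/N}=\sum_\kappa\widehat c(\kappa)\sum_{j<N}\e^{ij(\mu-\xi_\kappa)/N}=\sum_\kappa\widehat c(\kappa)\,\mathfrak D_N\!\left(\frac{\mu-\xi_\kappa}{N}\right).
\]
This is exactly the claimed identity after the interchange of the two finite sums.

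Next, for \eqref{eq:finite-cauchy-common-numerator}, sum the geometric series. When \(z=(\mu-\xi_\kappa)/N\notin2\pi\Z\), we get \(\mathfrak D_N(z)=(1-\e^{iNz})/(1-\e^{iz})\). Since \(\xi_\kappa=2\pi\kappa\), we have \(\e^{iNz}=\e^{i\mu}\e^{-2\pi i\kappa}=\e^{i\mu}\), so the numerator \(1-\e^{i\mu}\) does not depend on \(\kappa\). Its operator version is \(I-\e^{i\mathscr D_r}\), which has norm at most \(2\) on \(L^2\) and characterwise modulus at most \(2\).

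Then, for the alias-free range \eqref{eq:global-alias-free-interval}, use \(0\le\lambda\le K_0=\pi N/2\), \(0<\log r\le N\) and \(-\pi N\le\xi_\kappa<\pi N\). These give
\[
\lambda+\log r-\xi_\kappa>0+0-\pi N=-\pi N
\quad\text{and}\quad
\lambda+\log r-\xi_\kappa\le\frac{\pi N}{2}+N+\pi N .
\]
Dividing by \(N\) gives \(-\pi<z\le 3\pi/2+1<2\pi\). This interval meets \(2\pi\Z\) only at \(z=0\). Consequently the denominator \(1-\e^{iz}\) can vanish only when \(z=0\). This justifies the hypothesis ``zero-free'' and makes \eqref{eq:finite-cauchy-common-numerator} valid on every piece with \(\lambda+\log r-\xi_\kappa\ne0\).

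Finally, for \eqref{eq:singular-regular-cauchy-split}, I would show that \(\mathfrak h\) is smooth near \(z=0\). Expanding, \(1-\e^{iz}=-iz\bigl(1+iz/2+O(z^2)\bigr)\), hence
\[
(1-\e^{iz})^{-1}=\frac{i}{z}\bigl(1-iz/2+O(z^2)\bigr)=\frac{i}{z}+\frac12+O(z).
\]
So the singularity of \(\mathfrak h\) at \(0\) is removable, and \(\mathfrak h(0)=1/2\). The function \((1-\e^{iz})^{-1}\) is also analytic on a complex neighborhood of \([-\pi,3\pi/2+1]\setminus\{0\}\), because its other poles lie at \(\pm2\pi,\dots\), which are at positive distance from this interval. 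Therefore \(\mathfrak h\) is analytic on a neighborhood of the whole interval. Substituting \(z=(\mu-\xi_\kappa)/N\ne0\) gives
\[
(1-\e^{iz})^{-1}=iN(\mu-\xi_\kappa)^{-1}+\mathfrak h(z),
\]
which is the characterwise form of the split.

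The only point needing care is the bookkeeping of ranges: the bound \(\lambda\le K_0\) and the Nyquist representative \(\kappa=-N/2\) (the left endpoint \(-\pi\), which is excluded strictly because \(\log r>0\)) are exactly what keep \(z\) away from \(\pm2\pi\). I do not expect any genuine obstacle beyond this.
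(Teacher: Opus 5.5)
Your proposal is correct and is essentially the paper's own proof: characterwise inversion of the cyclic DFT, the geometric sum in which $\xi_\kappa\in2\pi\Z$ removes $\kappa$ from the numerator, the elementary range bounds, and the Laurent expansion $(1-\e^{iz})^{-1}=i/z+\tfrac12+O(z)$.

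Two small corrections. First, the numerator bound is used later on $L^q(\Omega_y)$, not $L^2$. It holds there because $\e^{i\mathscr D_r}=\e^{i\log r}U_1$ is an isometry of every $L^v$; a characterwise modulus bound alone only gives $L^2$. Second, your closing remark misplaces the Nyquist mode. The choice $\kappa=-N/2$ gives $\xi_\kappa=-\pi N$ and produces the \emph{upper} endpoint $3\pi/2+1$. Strictness at $-\pi$ comes instead from $\log r>0$, or from $\xi_\kappa\le\pi N-2\pi$.
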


The value at zero in 
\eqref{eq:regular-cauchy-function} 
is the actual removable value.
Indeed,
\[
 1-\e^{iz}
 =
 -iz+\frac{z^2}{2}+O(z^3),
\]
and hence
\[
 (1-\e^{iz})^{-1}
 =
 \frac{i}{z}+\frac12+O(z),
 \qquad \text{as }z\to0.
\]
Thus,  subtracting \(i/z\) removes the complete singular part; the remainder does not retain a principal-value or distributional contribution at the resonance.
This matters below because the regular term can be extended smoothly across zero even though the singular term is used only on zero-free pieces.

\begin{proof} By the inverse DFT, \[ c_j = \sum_{\kappa\in\Lambda_N} \widehat c(\kappa)\e^{-ij\xi_\kappa/N}. \] Substitution and finite summation give \[ \sum_jc_j\e^{ij\mathscr D_r/N}F = \sum_{\kappa\in\Lambda_N} \widehat c(\kappa) \sum_{j=0}^{N-1} \e^{ij(\mathscr D_r-\xi_\kappa I)/N}F, \] which is \eqref{eq:finite-cauchy-dirichlet-kernel}. 
 All multipliers are interpreted on the finite Fourier support of \(F\).
Let \(\lambda\) denote the \(D_{\mathrm{full}}\)-frequency of a Fourier character occurring in \(F\).
If
\(\lambda+\log r-\xi_\kappa\ne0\), the range estimate
\eqref{eq:global-alias-free-interval}, verified below, ensures that
\[
 1-\e^{i(\lambda+\log r-\xi_\kappa)/N}\ne0.
\]
The finite geometric identity therefore gives, on these characters,
\[
 \mathfrak D_N
 \left(
 \frac{\mathscr D_r-\xi_\kappa I}{N}
 \right)
 =
 \left(
 I-\e^{i(\mathscr D_r-\xi_\kappa I)}
 \right)
 \left(
 I-\e^{i(\mathscr D_r-\xi_\kappa I)/N}
 \right)^{-1}.
\]
Since \[ \e^{-i\xi_\kappa} = \e^{-i2\pi\kappa} = 1, \] the numerator is \[ I-\e^{i\mathscr D_r}. \] 
This is where the finite grid supplies the exact coefficient-frequency quantization:
\[
 \xi_\kappa=2\pi\kappa\in2\pi\Z.
\]
Thus,  the coefficient frequency disappears from the numerator without any estimate.
Since \(\e^{i\mathscr D_r}\) is a member of the shifted isometric group, the bound by two is uniform in \(N,r\), the prime dimension, and the polynomial.
The operator \(\e^{i\mathscr D_r}\) is an isometry on \(L^q(\Omega_y)\), and hence the numerator has norm at most two.
The lower endpoint in \eqref{eq:global-alias-free-interval} follows from 
\[
 \lambda\ge0,
 \qquad
 \log r>0,
 \qquad
 \xi_\kappa<\pi N.
\]
The upper endpoint follows from 
\[
 \lambda\le K_0=\frac{\pi N}{2},
 \qquad
 \log r\le\log y=N,
 \qquad
 \xi_\kappa\ge-\pi N.
\]
Thus,  the actual range avoids every nonzero multiple of \(2\pi\).
Notice also that the lower endpoint is strict because \(\log r>0\), while the upper endpoint is strictly below \(2\pi\).
Consequently zero is the only point at which the reciprocal denominator can fail to exist.
Each later frequency piece either proves a strict signed gap from zero or is assigned to the resonant term.
The expansion in
\eqref{eq:singular-regular-cauchy-split} follows from the removable
singularity of \(\mathfrak h\) at zero.
\end{proof}

\subsection{Kernel estimates and conditional expectations}
\label{subsec:cauchy-frozen-estimates}

The singular terms in the three nonresonant sectors use the following cyclic kernels.

For \(t\ge0\) and \(w>0\), define
\begin{equation}
 \sigma_{t,w}(\kappa)
 =
 \e^{t\xi_\kappa}\chi(\xi_\kappa/w),
 \qquad
 J_{t,w}=K_{\sigma_{t,w}}.
 \label{eq:positive-low-kernel}
\end{equation}
For \(w,v>0\), put
\begin{equation}
 a_{w,v}(\xi)
 =
 \phi(\xi/v)\bigl(1-\chi(\xi/w)\bigr),
 \label{eq:outside-shell-symbol}
\end{equation}
and define
\begin{align}
 J_{t,w,v}^{+}(j)
 &=
 \frac1N
 \mathcal F_N^{-1}
 \left(
 a_{w,v}(\xi_\kappa)\e^{t\xi_\kappa}
 \right)(j),
 \label{eq:outside-plus-kernel}
 \\
J_{t,w,v}^{-}(j)
 &=
 \frac1N
 \mathcal F_N^{-1}
 \left(
 a_{w,v}(\xi_\kappa)\e^{-t\xi_\kappa}
 \right)(j).
 \label{eq:outside-minus-kernel}
\end{align}

The three kernels have different later roles, so we record their support and the exponential factor which must survive the coefficient estimate:
\begin{center}
\begin{tabular}{c| c| c}
kernel & coefficient support & retained factor\\
\hline
\(J_{t,w}\)
& \(|\xi_\kappa|<w/8\)
& \(\e^{wt/8}\)\\
\(J_{t,w,v}^{+}\)
& \(v/2<\xi_\kappa<2v\)
& \(\e^{2vt}\)\\
\(J_{t,w,v}^{-}\)
& \(v/2<\xi_\kappa<2v\)
& \(\e^{-vt/2}\)
\end{tabular}
\end{center}
The first support remains a fixed distance from the cyclic boundary because
\(w\le N\).  The outside supports can meet the positive endpoint.  Their
symbols are therefore always regarded as sequences on \(\Z\), equal to zero off \(\Lambda_N\); both the entrance and exit jumps, including the exit adjacent to \(N/2\), are part of their discrete variation.
The represented Nyquist point is \(-N/2\), where all outside symbols vanish because
\(\phi\) is supported in the positive half-line.

\begin{lemma}[Cauchy coefficient kernels]
\label{lem:cauchy-coefficient-kernels}
Let \(t\ge0\), \(1\le w\le N\), and \(v>0\).
For every fixed integer \(M_0>2\),
\begin{equation}
 \abs{J_{t,w}(j)}
 \le
 C_{M_0}
 \frac wN
 (1+wt)^{M_0}
 \e^{wt/8}
 \left(
 1+\frac wN\rho_j
 \right)^{-M_0}.
 \label{eq:positive-low-kernel-bound}
\end{equation}
Moreover,
\begin{align}
 \abs{J_{t,w,v}^{+}(j)}
 &\le
 C
 \frac vN
 (1+vt)\e^{2vt}
 \left(
 1+\frac vN\rho_j
 \right)^{-1},
 \label{eq:outside-plus-kernel-bound}
 \\
 \abs{J_{t,w,v}^{-}(j)}
 &\le
 C
 \frac vN
 (1+vt)\e^{-vt/2}
 \left(
 1+\frac vN\rho_j
 \right)^{-1}.
 \label{eq:outside-minus-kernel-bound}
\end{align}
The estimates include the zero-extension jump adjacent to the positive Nyquist boundary.
\end{lemma}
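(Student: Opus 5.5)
The plan is to repeat the summation-by-parts argument of Lemma~\ref{lem:cyclic-kernel-estimates}: combine a trivial $\ell^1$ bound on each symbol with finite-difference bounds on its zero extension to $\Z$. All three kernels are normalized counting kernels. Indeed, $J_{t,w}=K_{\sigma_{t,w}}$, and by \eqref{eq:outside-plus-kernel}--\eqref{eq:outside-minus-kernel}, $J^\pm_{t,w,v}=K_{\sigma^\pm}$ with $\sigma^\pm(\kappa)=a_{w,v}(\xi_\kappa)\e^{\pm t\xi_\kappa}$. Each symbol will be regarded as a finitely supported sequence $g$ on $\Z$, equal to zero off $\Lambda_N$. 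Then for $\omega_j=\e^{-2\pi ij/N}$ we have $K_g(j)=N^{-1}\sum_{\kappa\in\Z}g(\kappa)\omega_j^\kappa$. Iterating $(1-\omega)\sum_\kappa c(\kappa)\omega^\kappa=\omega\sum_\kappa\Delta c(\kappa)\omega^\kappa$ gives, for $j\not\equiv0$,
\[
 \abs{K_g(j)}\le N^{-1}\abs{1-\omega_j}^{-M}\norm{\Delta^Mg}_{\ell^1(\Z)},
 \qquad \abs{1-\omega_j}\asymp\rho_j/N .
\]
This identity is taken on $\Z$, so every entrance and exit jump of the zero extension, including the one next to $N/2$, is automatically part of $\norm{\Delta^Mg}_{\ell^1}$. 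Separately, the trivial bound $\abs{K_g(j)}\le N^{-1}\norm g_{\ell^1}$ covers $j\equiv0$ and the range $\rho_j\lesssim N/(\text{scale})$. At the end, the minimum of the two bounds is rewritten in the stated form $(1+\text{scale}\cdot\rho_j/N)^{-M}$.

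\emph{Positive-low kernel.} Since $\chi$ is supported in $(-1/8,1/8)$, $g(\kappa)=\e^{2\pi t\kappa}\chi(2\pi\kappa/w)$ is supported in $\abs\kappa<w/(16\pi)\le N/50$. This support stays well away from $\pm N/2$, so no boundary jump arises. On the support $\e^{t\xi_\kappa}\le\e^{wt/8}$, which gives $\norm g_{\ell^1}\le Cw\,\e^{wt/8}$. For the finite differences I will use the integral representation of $\Delta^{M_0}$ from the proof of Lemma~\ref{lem:cyclic-kernel-estimates}. By Leibniz, each $x$-derivative of $\e^{2\pi tx}\chi(2\pi x/w)$ costs a factor $C(t+w^{-1})=Cw^{-1}(1+wt)$. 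Hence
\[
 \abs{\Delta^{M_0}g(\kappa)}\le C_{M_0}w^{-M_0}(1+wt)^{M_0}\e^{wt/8},
\]
supported on $O(w+1)$ integers, so $\norm{\Delta^{M_0}g}_{\ell^1}\le C_{M_0}w^{1-M_0}(1+wt)^{M_0}\e^{wt/8}$. (If $w<16\pi$, only $\kappa=0$ contributes and the bound is trivial.) Combining with the trivial bound $C(w/N)\e^{wt/8}$ gives \eqref{eq:positive-low-kernel-bound}: the summation-by-parts bound is used when $w\rho_j/N\ge1$ and the trivial bound otherwise.

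\emph{Outside kernels.} Here a single difference suffices, since only the exponent $1$ is claimed. The support is $v/2<\xi_\kappa<2v$, that is, $\kappa\in(v/4\pi,v/\pi)$. If $v<\pi$ the support contains no $\kappa\ge1$, the kernel vanishes, and there is nothing to prove. Otherwise $\norm g_{\ell^1}\le Cv\sup\abs g$, with $\sup\abs g\le\e^{2vt}$ for the $+$ sign and $\le\e^{-vt/2}$ for the $-$ sign. For $\norm{\Delta g}_{\ell^1(\Z)}$ I will bound the discrete variation by the total variation of $x\mapsto a_{w,v}(2\pi x)\e^{\pm2\pi tx}$ on the support, plus the two endpoint jumps of the truncation at $\Lambda_N$, each at most $\sup\abs g$. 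The total variation is controlled by
\[
 \sup\abs a\cdot\Var(\e^{\pm t\xi})+\sup\abs{\e^{\pm t\xi}}\cdot\Var(a),
\]
where the four factors are estimated as follows.
\begin{itemize}
\item $\Var(a)\le C$: $\phi(\cdot/v)$ has variation $\norm{\phi'}_{L^1}$, and $1-\chi(\cdot/w)$ is monotone on $(0,\infty)$.
\item $\Var(\e^{t\xi})$ on $(v/2,2v)$ is at most $\e^{2vt}$.
\item $\Var(\e^{-t\xi})$ there is at most $\e^{-vt/2}$.
\item $\sup\abs a\le1$.
\end{itemize}
This gives $\norm{\Delta g}_{\ell^1}\le C\e^{2vt}$, respectively $C\e^{-vt/2}$, which is even better than the claimed $(1+vt)$ factor. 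One summation by parts then yields $\abs{K_g(j)}\le C\rho_j^{-1}\e^{\pm}$, the trivial bound yields $C(v/N)\e^{\pm}$, and their minimum is $\asymp(v/N)(1+v\rho_j/N)^{-1}\e^{\pm}$. The Nyquist point $-N/2$ lies outside the support because $\phi$ lives on the positive half-line.

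The step I expect to need the most care is the bookkeeping at the positive cyclic boundary for the outside kernels. There the support $(v/4\pi,v/\pi)$ may be cut off at $\kappa=N/2-1$. I must check that (i) the $\Z$-summation-by-parts identity agrees with the cyclic sum over $\Lambda_N$, which holds because $\omega_j^{\kappa}$ is $N$-periodic and the extension vanishes at $-N/2$; and (ii) the single exit jump to zero at $N/2$ is counted once and bounded by $\sup\abs g$ uniformly in $N,w,v,t$. Everything else is a routine Leibniz and finite-difference estimate.
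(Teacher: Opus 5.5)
Your proposal is correct and follows essentially the paper's route. You use zero-extended Abel summation on $\Z$, so the exit jump at $N/2$ is counted inside $\norm{\Delta^m\widetilde\sigma}_{\ell^1}$, and you combine it with the integral form of $\Delta^{M_0}$ and Leibniz derivative bounds for $J_{t,w}$, and with a single difference plus the trivial $\ell^1$ bound for $J^{\pm}_{t,w,v}$. Your total-variation estimate for the outside symbols exploits the monotonicity of $\e^{\pm t\xi}$, so it even removes the $(1+vt)$ factor that the paper's discrete Leibniz/mean-value bound produces.

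One small correction: monotonicity of $1-\chi(\cdot/w)$ on $(0,\infty)$ is not among the stated hypotheses on $\chi$. You do not need it, because that variation equals $\int_0^\infty\abs{\chi'}$ independently of $w$.
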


This lemma supplies the spatially localized kernel bounds needed to control the singular Cauchy terms in each nonresonant frequency sector and to convert their Laplace representations into the boundary-trace quantities estimated later.

\begin{proof}
For a finitely supported symbol \(\sigma\) on \(\Lambda_N\), extend it by zero to \(\Z\).
Finite Abel summation gives
\begin{equation}
 \abs{K_\sigma(j)}
 \le
 \min\left\{
 \frac1N\sum_\kappa\abs{\sigma(\kappa)},
 \frac{C}{1+\rho_j}
 \sum_\kappa\abs{\Delta\widetilde\sigma(\kappa)}
 \right\}.
 \label{eq:finite-Abel-one-difference}
\end{equation}
Repeated summation by parts yields the corresponding higher-order version.

Here are the details of that uniform passage.
If
\[
 z_j=\e^{-2\pi ij/N},
\]
then, for \(j\not\equiv0\pmod N\), summation over \(\Z\) gives
\[
 (1-z_j^{-1})^m
 \sum_{k\in\Z}\widetilde\sigma(k)z_j^k
 =
 (-1)^m
 \sum_{k\in\Z}\Delta^m\widetilde\sigma(k)z_j^k.
\]
All boundary terms are already contained in
\(\Delta^m\widetilde\sigma\), since the extension has finite support.
Moreover,
\[
 |1-\e^{2\pi ij/N}|
 =
 2\sin\frac{\pi\rho_j}{N}
 \ge \frac{4\rho_j}{N},
 \qquad 0\le\rho_j\le N/2.
\]
Consequently
\[
 |K_\sigma(j)|
 \le
 \frac1N
 \min_{0\le m\le M_0}
 \left(\frac{CN}{1+\rho_j}\right)^m
 \sum_k|\Delta^m\widetilde\sigma(k)|.
\]
This formula treats the two outer jumps on exactly the same footing as the interior differences and is uniform in the position of the support inside the represented frequency interval.

The symbol in \eqref{eq:positive-low-kernel} is supported where
\[
 \abs{\xi_\kappa}<\frac w8.
\]
Put \(g_{t,w}(\xi)=\e^{t\xi}\chi(\xi/w)\).
On this support,
\[
 |g_{t,w}^{(m)}(\xi)|
 \le
 C_{M_0}w^{-m}(1+wt)^m\e^{wt/8},
 \qquad 0\le m\le M_0.
\]
For the top difference \(m=M_0\), this estimate follows directly from
\begin{align*}
 g_{t,w}^{(M_0)}(\xi)
 &=
 \e^{t\xi}
 \sum_{m=0}^{M_0}
 \binom{M_0}{m}
 t^m w^{-(M_0-m)}
 \chi^{(M_0-m)}(\xi/w),\\
 |g_{t,w}^{(M_0)}(\xi)|
 &\le
 C_{M_0}
 w^{-M_0}(1+wt)^{M_0}\e^{wt/8}
 \one_{\{|\xi|<w/8\}}.
\end{align*}
The \(M_0\)-fold forward difference has the exact integral form
\[
 \Delta^{M_0}[g_{t,w}(2\pi\kappa)]
 =
 \int_{[0,2\pi]^{M_0}}
 g_{t,w}^{(M_0)}
 \left(
 2\pi\kappa+s_1+\cdots+s_{M_0}
 \right)
 \dd s_1\cdots\dd s_{M_0}.
\]
Summing in \(\kappa\), using the bounded overlap of the intervals
\(2\pi\kappa+[0,2\pi M_0]\), and integrating the preceding derivative
bound gives
\begin{align*}
 \sum_\kappa
 |\Delta^{M_0}\sigma_{t,w}(\kappa)|
 \le
 C_{M_0}
 \int_\R|g_{t,w}^{(M_0)}(\xi)|\dd\xi\le
 C_{M_0}
 w^{1-M_0}(1+wt)^{M_0}\e^{wt/8}.
\end{align*}
This is the complete \(M_0\)-fold difference estimate used in the repeated Abel summation.
The same calculation with \(m<M_0\) gives the full family of bounds.
The lattice step in \(\xi\) is \(2\pi\).
The discrete integral estimate for finite differences therefore yields
\[
 \sum_{\kappa\in\Z}
 |\Delta^m[g_{t,w}(2\pi\kappa)]|
 \le
 C_{M_0}w^{1-m}(1+wt)^m\e^{wt/8}.
\]
For the bounded scales at which the support contains only \(O(1)\) lattice points, the same inequality follows after enlarging \(C_{M_0}\); this is legitimate because \(w=2^h\ge1\).
Thus,  the estimate is uniform down to the smallest prime scale and does not hide a continuum approximation.
For \(0\le m\le M_0\), the discrete Leibniz rule and the smoothness of
\(\chi\) give
\begin{equation}
 \sum_\kappa
 \abs{
 \Delta^m\sigma_{t,w}(\kappa)
 }
 \le
 C_{M_0}
 w^{1-m}
 (1+wt)^m
 \e^{wt/8}.
 \label{eq:positive-low-symbol-differences}
\end{equation}
Repeated finite summation by parts, combined with the trivial
\(\ell^1\)-bound when \(w\rho_j/N\le1\), proves
\eqref{eq:positive-low-kernel-bound}.

Indeed, the \(m=0\) estimate gives
\[
 |J_{t,w}(j)|
 \le
 C\frac wN\e^{wt/8},
\]
whereas the \(m=M_0\) estimate gives
\[
 |J_{t,w}(j)|
 \le
 C_{M_0}\frac wN(1+wt)^{M_0}\e^{wt/8}
 \left(\frac{N}{w(1+\rho_j)}\right)^{M_0}.
\]
Taking the better estimate and using
\(\min(1,x^{-M_0})\le C_{M_0}(1+x)^{-M_0}\) gives exactly
\eqref{eq:positive-low-kernel-bound}.

On the support of \(a_{w,v}\),
\begin{equation}
 \frac v2<\xi_\kappa<2v.
 \label{eq:outside-shell-support}
\end{equation}
Write
\[
 s_{+}(\kappa)
 =
 a_{w,v}(2\pi\kappa)\e^{2\pi t\kappa},
 \qquad
 s_{-}(\kappa)
 =
 a_{w,v}(2\pi\kappa)\e^{-2\pi t\kappa},
\]
on \(\Lambda_N\), and extend both sequences by zero.
Write
\(\widetilde a_{w,v}\) for the zero extension of the sampled sequence
\(\kappa\mapsto a_{w,v}(2\pi\kappa)\).  For every interior index, the
discrete product rule and the mean-value theorem give
\[
 \begin{aligned}
 |\Delta s_\pm(\kappa)|
 \le
 |\Delta\widetilde a_{w,v}(\kappa)|
 \sup_{\xi\in[2\pi\kappa,2\pi(\kappa+1)]}
 \e^{\pm t\xi} +
 Ct\,|a_{w,v}(2\pi\kappa)|
 \sup_{\xi\in[2\pi\kappa,2\pi(\kappa+1)]}
 \e^{\pm t\xi}.
 \end{aligned}
\]
Moreover,
\[
 \sum_\kappa
 |\Delta\widetilde a_{w,v}(\kappa)|
 \le C,
 \qquad
 \sum_\kappa|a_{w,v}(2\pi\kappa)|
 \le Cv,
\]
where the first sum includes both outer zero-extension jumps.
Indeed, the
\(\phi\)-transition has length \(O(v)\) and derivative \(O(v^{-1})\);
if \(\chi'(\xi/w)\ne0\) and \(\phi(\xi/v)\ne0\), then \(v\asymp w\), so its length \(O(w)\) and derivative \(O(w^{-1})\) also give \(O(1)\).
Truncation by the finite positive frequency interval can only shorten these sums.

Let \(\kappa_-\) and \(\kappa_+\) be the first and last represented indices for which \(a_{w,v}(2\pi\kappa)\ne0\).
The complete zero-extended variation is
\[
 |s_\pm(\kappa_-)|
 +
 \sum_{\kappa_-}^{\kappa_+-1}
 |s_\pm(\kappa+1)-s_\pm(\kappa)|
 +
 |s_\pm(\kappa_+)|.
\]
The first term is the lower outer jump.
The last is the upper outer jump; when the shell reaches the positive endpoint it is precisely the jump from
\(\kappa=N/2-1\) to zero.  Since \(v/2<\xi<2v\), these two jumps and all
interior terms obey
\[
 \sum_{\kappa\in\Z}|\Delta s_+(\kappa)|
 \le C(1+vt)\e^{2vt},
 \qquad
 \sum_{\kappa\in\Z}|\Delta s_-(\kappa)|
 \le C(1+vt)\e^{-vt/2}.
\]
The same support calculation gives the zeroth-order bounds
\[
 \sum_\kappa|s_+(\kappa)|\le Cv\e^{2vt},
 \qquad
 \sum_\kappa|s_-(\kappa)|\le Cv\e^{-vt/2}.
\]
These estimates remain valid when \(v>N\): the number of represented points in the support of \(a_{w,v}\) is then \(O(N)\le O(v)\), and \(N/v\) is bounded by
\eqref{eq:actual-trace-scale-range}.
The number of indices \(\kappa\) for which
\(a_{w,v}(\xi_\kappa)\ne0\) is \(O(v)\).  
Hence
\[
 \sum_\kappa
 \abs{
 a_{w,v}(\xi_\kappa)\e^{t\xi_\kappa}
 }
 \le
 Cv\e^{2vt},
\]
and, including both outer zero-extension jumps,
\[
 \sum_\kappa
 \abs{
 \Delta\bigl[
 a_{w,v}(\xi_\kappa)\e^{t\xi_\kappa}
 \bigr]
 }
 \le
 C(1+vt)\e^{2vt}.
\]
Substitution in
\eqref{eq:finite-Abel-one-difference} gives
\eqref{eq:outside-plus-kernel-bound}.

Similarly, \eqref{eq:outside-shell-support} gives
\[
 \sum_\kappa
 \abs{
 a_{w,v}(\xi_\kappa)\e^{-t\xi_\kappa}
 }
 \le
 Cv\e^{-vt/2},
\]
and
\[
 \sum_\kappa
 \abs{
 \Delta\bigl[
 a_{w,v}(\xi_\kappa)\e^{-t\xi_\kappa}
 \bigr]
 }
 \le
 C(1+vt)\e^{-vt/2}.
\]
This proves \eqref{eq:outside-minus-kernel-bound}.

For clarity, no identification of the two ends of \(\Lambda_N\) was used in this Abel argument.
The positive endpoint is handled by the explicit exit jump, while the negative Nyquist representative contributes zero to both outside symbols.
Thus,  the constants in all three kernel bounds are independent of \(N,w,v,t\), apart from the factors displayed in the statements.

The limiting cases are included in the same calculation.
At \(t=0\), the exponential weights equal one and the variation bounds reduce to the ordinary cutoff variation.
At \(\rho_j=0\), no summation by parts is used: the zeroth-order \(\ell^1\) estimate gives the displayed \(w/N\) or \(v/N\) factor.
When \(w\) is so small that only \(O(1)\) lattice frequencies lie in the support of \(a_{w,v}\), the discrete difference constants are absorbed uniformly because
\(w\ge1\).  Finally, if \(v=2N\) or \(4N\), the positive shell may be
truncated by the represented interval, but
\[
 \sum|\Delta\widetilde s_\pm|
 =
 \text{lower jump}
 +
 \text{interior variation}
 +
 \text{positive-endpoint jump}
\]
has exactly the same bound.
These cases exhaust the places where a lattice endpoint or cyclic boundary could alter the Abel estimate.
\end{proof}

We next record the common auxiliary estimate used to combine these coefficient bounds with the Euler products and subsequently recover the components \(X_r^{\mathrm{live}}\) defined in
\eqref{eq:generic-live-row}.

For \(0\le h\le H\), write
\[
 w=w_h,
 \qquad
 B_w=\e^w,
\]
and define
\begin{equation}
 F_w
 =
 P_{[0,K_0]}^{\mathrm{full}}f_{B_w}(t_0).
 \label{eq:frozen-low-source}
\end{equation}

We fix the point \(t_0\) in the definition of \(F_w\), while the values at the translated points are recovered from
\[
 f_{B_w}(t_0+j/N)
 =
 \e^{ijD_{\mathrm{full}}/N}f_{B_w}(t_0).
\]
The finite Cauchy functional calculus is applied to this single full product before taking conditional expectation.
The purpose of the next lemma is to make the subsequent change of generator precise:
\[
 \begin{array}{ccc}
 \text{full-product component}
 &\xrightarrow{\quad m_r(D_{\mathrm{full}})\quad}&
\text{transformed full-product component}\\
 \big\downarrow{\E_{r^-}}&&\big\downarrow{\E_{r^-}}\\
 \text{smaller-prime component}
 &\xrightarrow{\quad m_r(D_{r^-})\quad}&
 \text{transformed smaller-prime component}.
 \end{array}
\]
The upper horizontal operation always occurs first.

The following lemma provides an estimate for the full product at the fixed point \(t_0\), and then recovers the components depending on primes below \(r\), allowing the subsequent frequency analysis to be carried out first in the common full-product without losing uniform \(L^q(\ell^2)\) control.

\begin{lemma}[Frozen low-frequency scale and conditional recovery]
\label{lem:frozen-low-source-restoration}
For every \(0\le h\le H\),
\begin{equation}
 \norm{
 M_y^{-1/p}
 (r^{-1/2}F_w)_{r\in\mathcal I_h}
 }_{L^q(\ell^2)}
 \le
 C_p\left(\frac wN\right)^{1/p}.
 \label{eq:frozen-low-source-scale}
\end{equation}

More generally, let \(u_{r,j}\) be deterministic and let \(m_r\) be spectral multipliers for which the following expressions are defined.
For \(r\in\mathcal I_h\), put
\begin{align}
 X_r^{\mathrm{fr}}
 &=
 M_y^{-1/p}r^{-1/2}
 m_r(D_{\mathrm{full}})
 \sum_{j<N}u_{r,j}f_{B_w}(t_j),
 \label{eq:generic-frozen-row}
 \\
 X_r^{\mathrm{live}}
 &=
 M_y^{-1/p}r^{-1/2}
 m_r(D_{r^-})
 \sum_{j<N}u_{r,j}f_{r^-}(t_j).
 \label{eq:generic-live-row}
\end{align}
Then, 
\begin{equation}
 X_r^{\mathrm{live}}
 =
 \E_{r^-}X_r^{\mathrm{fr}},
 \label{eq:frozen-live-conditional-identity}
\end{equation}
and
\begin{equation}
 \norm{
 (X_r^{\mathrm{live}})_r
 }_{L^q(\ell^2)}
 \le
 C_p
 \norm{
 (X_r^{\mathrm{fr}})_r
 }_{L^q(\ell^2)}.
 \label{eq:frozen-live-restoration}
\end{equation}
\end{lemma}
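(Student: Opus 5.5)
The lemma has three parts: the scale bound \eqref{eq:frozen-low-source-scale}, the identity \eqref{eq:frozen-live-conditional-identity}, and the vector bound \eqref{eq:frozen-live-restoration}. I would prove them in that order.

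\emph{The scale bound.} The vector \((r^{-1/2}F_w)_{r\in\mathcal I_h}\) is a single scalar random variable times a deterministic vector. So
\[
 \norm{M_y^{-1/p}(r^{-1/2}F_w)_r}_{L^q(\ell^2)}
 =
 M_y^{-1/p}\Bigl(\sum_{r\in\mathcal I_h}r^{-1}\Bigr)^{1/2}\norm{F_w}_{L^q}.
\]
The prime sum over the band \(w/2<\log r\le w\) is \(O(1)\) by Mertens' theorem, or by \eqref{eq:prime-cell-mass}. The norm \(\norm{F_w}_{L^q}\) is controlled by the closed interval case of Proposition~\ref{prop:row-interval-projections} with a single component, or by \eqref{eq:transferred-damped-low-bound} at \(t=0\). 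This gives \(\norm{F_w}_q\le C_q\norm{f_{B_w}(t_0)}_q=C_qM_{B_w}^{1/p}\) by \eqref{eq:euler-source-normalization}. Mertens' product theorem then yields \((M_{B_w}/M_y)^{1/p}\le C(w/N)^{1/p}\), as in \eqref{eq:mertens-birth-ratio}, which is \eqref{eq:frozen-low-source-scale}.

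\emph{The conditional identity.} The coefficients \(u_{r,j}\) and the prefactor \(M_y^{-1/p}r^{-1/2}\) are deterministic, so it suffices to treat each \(j\). Apply \eqref{eq:conditional-spectral-commutation} with \(m=m_r\):
\[
 \E_{r^-}m_r(D_{\mathrm{full}})f_{B_w}(t_j)=m_r(D_{r^-})\E_{r^-}f_{B_w}(t_j).
\]
Then \eqref{eq:euler-source-conditioning} identifies \(\E_{r^-}f_{B_w}(t_j)=f_{r^-}(t_j)\), since \(r\le B_w=\e^{w}\) for \(r\in\mathcal I_h\). The justification that the functional calculus passes to the limit is the same approximation argument as in Proposition~\ref{prop:frame-estimate}. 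The Euler products at a fixed cutoff have absolutely summable Fourier coefficients, and the multipliers in question act characterwise. The phrase ``for which the expressions are defined'' should be read as bounded multipliers, or multipliers acting on finite spectral support, which covers every later use.

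\emph{The vector bound.} Enumerate the primes \(r_1<\dots<r_m\) of \(\mathcal I_h\) with nonzero components, and set \(\mathscr F_k=\sigma(\zeta_\rho:\rho<r_k)\). This is an increasing filtration and \(\E_{r_k^-}=\E(\cdot\mid\mathscr F_k)\). Stein's inequality, Proposition~\ref{prop:vector-conditional-expectations}, applied to the nonadapted family \(X_{r_k}^{\mathrm{fr}}\), gives \eqref{eq:frozen-live-restoration} with a constant depending only on \(q\).

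The only delicate point is making the commutation identity rigorous for general multipliers \(m_r\) applied to infinite Euler products. I expect this to be mild: I would truncate to finite Fourier support, apply the identity characterwise, and pass to the limit using \(L^q\)-boundedness of \(m_r(D_{\mathrm{full}})\) and contractivity of \(\E_{r^-}\).
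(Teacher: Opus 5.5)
Your proposal is correct and follows the paper's proof essentially step for step: the same factorization of the scalar-times-deterministic vector, the single-interval case of Proposition~\ref{prop:row-interval-projections} together with \eqref{eq:euler-source-normalization} and the Mertens ratio for \eqref{eq:frozen-low-source-scale}, then \eqref{eq:conditional-spectral-commutation} with \eqref{eq:euler-source-conditioning} (using $r\le B_w$) for the identity, and Stein's inequality, Proposition~\ref{prop:vector-conditional-expectations}, along the filtration $\sigma(\zeta_\rho:\rho<r_k)$ for \eqref{eq:frozen-live-restoration}. The only nuance concerns your last paragraph. The paper justifies the commutation through absolute summability of the Euler coefficients at a fixed cutoff, which bounded symbols and $\E_{r^-}$ both preserve, so it never needs $m_r(D_{\mathrm{full}})$ to be $L^q$-bounded. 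Your limiting argument does need that, which is harmless for the multipliers used later, but the Wiener-algebra route you also mention is the cleaner justification.
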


\begin{proof}
Proposition~\ref{prop:row-interval-projections}, applied to a single spectral interval, gives
\[
 \norm{F_w}_q
 \le
 C_q\norm{f_{B_w}(t_0)}_q
 =
 C_qM_{B_w}^{1/p}
\]
by \eqref{eq:euler-source-normalization}.
Since
\[
 \sum_{r\in\mathcal I_h}\frac1r\le C,
\]
we obtain
\[
 \norm{
 M_y^{-1/p}
 (r^{-1/2}F_w)_{r\in\mathcal I_h}
 }_{L^q(\ell^2)}
 \le
 C_p
 \left(
 \frac{M_{B_w}}{M_{\e^N}}
 \right)^{1/p}.
\]
Mertens' product theorem gives
\[
 \frac{M_{B_w}}{M_{\e^N}}
 \le
 C\frac wN;
\]
see \cite[Chapter~2, Theorem~2.7(e)]{MontgomeryVaughan06}.
This proves \eqref{eq:frozen-low-source-scale}.

The identity
\eqref{eq:frozen-live-conditional-identity} follows from
\eqref{eq:conditional-spectral-commutation} and
\eqref{eq:euler-source-conditioning}.  Applying
Proposition~\ref{prop:vector-conditional-expectations} proves
\eqref{eq:frozen-live-restoration}.

For the bounded spectral symbols used below, the characterwise commutation identity also applies to the Euler products: at each fixed finite cutoff their Fourier coefficients are absolutely summable, and both multiplication by a bounded spectral symbol and conditional expectation with respect to the variables indexed by primes below \(r\) preserve this property.
This justifies the calculation on these particular functions; it does not assert that every bounded Borel function defines a bounded multiplier on all of \(L^q(\Omega_y)\).
Since \(u_{r,j}\) is deterministic,
\begin{align*}
 \E_{r^-}X_r^{\mathrm{fr}}
 &=
 M_y^{-1/p}r^{-1/2}
 \sum_{j<N}u_{r,j}
 \E_{r^-}
 m_r(D_{\mathrm{full}})f_{B_w}(t_j)\\
 &=
 M_y^{-1/p}r^{-1/2}
 m_r(D_{r^-})
 \sum_{j<N}u_{r,j}\E_{r^-}f_{B_w}(t_j)\\
 &=
 M_y^{-1/p}r^{-1/2}
 m_r(D_{r^-})
 \sum_{j<N}u_{r,j}f_{r^-}(t_j)
 =
 X_r^{\mathrm{live}}.
\end{align*}
The second line uses spectral commutation, and the third uses the compatible Euler-product conditioning.
Thus,  the translated sites, the component-dependent deterministic coefficients, and the spectral operation are all preserved exactly.
Proposition~\ref{prop:vector-conditional-expectations} is used only after these componentwise identities have been established.

We record explicitly the shifted specialization used in the outside sectors.
Fix a prime  \(r\in\mathcal I_h\) and a shell
\(v\in\mathcal V_N\).  In the following formulas, sums over
\(\kappa\) run only over represented modes with
\(a_{w,v}(\xi_\kappa)\ne0\).
Let \(M_{r,\kappa}\) be one of the bounded scalar symbols below.
Its argument \(\lambda\) is the unshifted product frequency; the scalar shift is included in the symbol.
Each quotient is evaluated only on the region selected by its indicator, and the symbol is defined to be zero elsewhere.
For example,
\[
 M_{r,\kappa}^{\mathrm{up}}(\lambda)
 =
 \one_{\{\lambda+\log r>4v\}}
 \frac{1-\e^{i(\lambda+\log r)}}
 {\lambda+\log r-\xi_\kappa}
\]
or
\[
 M_{r,\kappa}^{\mathrm{low}}(\lambda)
 =
 \one_{\{\lambda+\log r<v/4\}}
 \frac{1-\e^{i(\lambda+\log r)}}
 {\lambda+\log r-\xi_\kappa}.
\]
The displayed denominators are separated from zero on the corresponding coefficient shell.
Characterwise commutation gives
\[
 \E_{r^-}
 M_{r,\kappa}(D_{\mathrm{full}})
 =
 M_{r,\kappa}(D_{r^-})\E_{r^-}.
\]
After multiplying by the deterministic coefficient
\(a_{w,v}(\xi_\kappa)\widehat d(\kappa)\), summing in \(\kappa\), and using
\[
 \E_{r^-}
 P_{[0,K_0]}^{\mathrm{full}}f_{B_w}(t_0)
 =
 P_{[0,K_0]}^{r^-}f_{r^-}(t_0),
\]
we obtain the exact conditional-expectation identity
\begin{align*}
 \E_{r^-}\!
 \sum_\kappa
 a_{w,v}(\xi_\kappa)\widehat d(\kappa)
 M_{r,\kappa}(D_{\mathrm{full}})F_w=
 \sum_\kappa
 a_{w,v}(\xi_\kappa)\widehat d(\kappa)
 M_{r,\kappa}(D_{r^-})
 P_{[0,K_0]}^{r^-}f_{r^-}(t_0).
\end{align*}
For the Laplace representations used below, first truncate the
\(t\)-integral to \([0,T]\), with both the low-frequency and the
appropriate outside projection already in each integrand.
Characterwise commutation gives the same identity relating the auxiliary and original expressions for these truncated integrals.
The projected \(L^q\)-integrable bounds proved in the outside sector then justify \(T\to\infty\); conditional expectation is an \(L^q\) contraction, so it preserves this passage to the limit.
No conjugation by \(\mathcal J\) is passed through \(\E_{r^-}\): the conjugation is used only to prove the operator-norm bound, while recovery uses the component-dependent scalar function \(\lambda\mapsto M_{r,\kappa}(\lambda)\) directly.

The last distinction can be checked on characters.
If
\(\zeta^\nu\) contains only primes below \(r\), then
\[
 D_{\mathrm{full}}\zeta^\nu
 =
 D_{r^-}\zeta^\nu
 =
 \left(\sum_{\rho<r}\nu_\rho\log\rho\right)\zeta^\nu,
\]
and therefore
\[
 M_{r,\kappa}(D_{\mathrm{full}})\zeta^\nu
 =
 M_{r,\kappa}(D_{r^-})\zeta^\nu.
\]
If the character contains any prime at least \(r\), conditional expectation kills it, and spectral multiplication does not change that character.
These two cases prove the commutation identity.  
\end{proof}

\boldparagraph{Passage from the polynomial core.}
For each fixed finite model, let \(F_w^{(L)}\) be the Fourier truncation of \(F_w\) retaining only characters \(\zeta^\nu\) with
\(|\nu_\rho|\le L\) for every prime \(\rho\le y\).
The Euler product has absolutely summable Fourier coefficients at every finite cutoff, and \(P_{[0,K_0]}^{\mathrm{full}}\) only removes coefficients.
Thus,  \(F_w^{(L)}\) is a trigonometric polynomial,
\(P_{[0,K_0]}^{\mathrm{full}}F_w^{(L)}=F_w^{(L)}\), and
\(F_w^{(L)}\to F_w\) in \(L^q(\Omega_y)\).

Explicitly,
\[
 F_w^{(L)}
 =
 \sum_{\substack{
 \nu\in\Z^{\pi(y)}\\
 \max_{\rho\le y}|\nu_\rho|\le L
 }}
 \widehat F_w(\nu)\zeta^\nu.
\]
The low-frequency support is preserved because no new character is added.
Moreover,
\[
 \norm{F_w^{(L)}-F_w}_\infty
 \le
 \sum_{\substack{
 \nu\in\Z^{\pi(y)}\\
 \max_{\rho\le y}|\nu_\rho|>L
 }}
 \abs{\widehat F_w(\nu)}
 \longrightarrow0.
\]
For each fixed prime band,
\[
 \begin{aligned}
 \norm{
 M_y^{-1/p}
 \bigl(r^{-1/2}(F_w^{(L)}-F_w)\bigr)_{r\in\mathcal I_h}
 }_{L^q(\ell^2)}
 =
 M_y^{-1/p}
 \left(\sum_{r\in\mathcal I_h}\frac1r\right)^{1/2}
 \norm{F_w^{(L)}-F_w}_q
 \longrightarrow0.
 \end{aligned}
\]
This approximation is used at a fixed finite model.
No uniform bound for the absolute Fourier sum, no convergence rate uniform in the prime cutoff, and no \(L^q\)-contractivity of sharp Fourier truncation are needed.
After the identities have passed to the limit, the uniform norm estimate is applied to \(F_w\) itself.

The finite Dirichlet-kernel identity in Lemma~\ref{lem:finite-cauchy-decomposition} passes directly to \(F_w\), since both sides are finite linear combinations of the shifted isometries.
The regular term passes by the bounded Fourier representation established in Lemma~\ref{lem:regular-cauchy-remainder} below.
For singular terms, we first impose the relevant spectral restriction, prove the quantitative signed gap, and construct the corresponding Laplace integral as a continuous map of the restricted function in
\(L^q\).  Only then do we pass from \(F_w^{(L)}\) to \(F_w\).
The individual sector arguments below verify these requirements; no reciprocal on the resonant set is introduced.

The same norm estimate also yields a uniform bound for the regular term in the finite Cauchy decomposition.

Choose once and for all
\[
 \widetilde{\mathfrak h}\in C_c^\infty(\R)
\]
which agrees with \(\mathfrak h\) on a neighborhood of the interval in
\eqref{eq:global-alias-free-interval}.  We use the Fourier convention
\[
\widetilde{\mathfrak h}(z)
=
\int_{\R}
\widehat{\widetilde{\mathfrak h}}(\tau)
\e^{i\tau z}\dd\tau.
\]

The extension is chosen on a fixed compact interval that is independent of the parameters of the finite model.
More explicitly, choose an open interval \(U\) whose closure contains the interval in
\eqref{eq:global-alias-free-interval} and avoids
\(2\pi\Z\setminus\{0\}\).  Since the singularity of
\((1-\e^{iz})^{-1}-i/z\) at zero is removable, \(\mathfrak h\) is smooth
on \(U\).
Multiplying it by a fixed cutoff equal to one on the actual spectral interval gives the stated
\(\widetilde{\mathfrak h}\in C_c^\infty(\R)\).  In particular,
\[
 \int_\R
 |\widehat{\widetilde{\mathfrak h}}(\tau)|
 (1+|\tau|)\dd\tau<\infty
\]
is one absolute constant for the whole section.

The proof has six ingredients: joint Fourier inversion; separation of the product and coefficient flows; the shifted isometry; the scalar phase-modulated coefficient estimate; control of the common numerator and componentwise projections; and integration in \(\tau\).
No vector-valued multiplier theorem is used: the coefficient multiplier estimate is the scalar cyclic estimate in
\eqref{eq:regular-coefficient-hypothesis}.

The following lemma shows that the smooth remainder in the finite Cauchy decomposition is uniformly controlled at each prime scale, so that only the singular resolvent term requires the finer resonant--nonresonant analysis.

\begin{lemma}[Regular Cauchy remainder]
\label{lem:regular-cauchy-remainder}
Fix a prime scale \(w=w_h\), and let
\(\sigma\colon\Lambda_N\to\C\) be a coefficient symbol satisfying
\begin{equation}
 \norm{
 \mathcal F_N^{-1}
 \left(
 \sigma(\kappa)\e^{-i\tau\xi_\kappa/N}
 \widehat d(\kappa)
 \right)
 }_{\ell^q(\ZN)}
 \le
 A(1+\abs\tau)\norm d_q
 \label{eq:regular-coefficient-hypothesis}
\end{equation}
for every \(\tau\in\R\).  Let \(\mathcal P=(\mathcal P_r)_{r\in\mathcal
I_h}\) be a componentwise spectral projection, or a sum of a fixed number of such
projections, uniformly bounded on \(L^q(\ell^2)\).

Define
\begin{align}
 (\mathcal E_{\sigma,w}d)_r
 ={}
 M_y^{-1/p}r^{-1/2}
 \mathcal P_r
 \left(
 I-\e^{i\mathscr D_r}
 \right)
 \sum_{\kappa\in\Lambda_N}
 \sigma(\kappa)\widehat d(\kappa)
 \widetilde{\mathfrak h}
 \left(
 \frac{\mathscr D_r-\xi_\kappa I}{N}
 \right)F_w.
 \label{eq:regular-cauchy-row}
\end{align}
Then, 
\begin{equation}
 \norm{
 \mathcal E_{\sigma,w}d
 }_{L^q(\ell^2)}
 \le
 C_pA
 \left(\frac wN\right)^{1/p}
 \norm d_q.
 \label{eq:regular-cauchy-bound}
\end{equation}
\end{lemma}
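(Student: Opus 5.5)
The plan is to write \(\widetilde{\mathfrak h}\) through its Fourier integral, so that the coefficient frequency and the product generator separate. Applied to \(F_w\) on each component, this gives
\[
 \widetilde{\mathfrak h}\!\left(\frac{\mathscr D_r-\xi_\kappa I}{N}\right)F_w
 =\int_\R\widehat{\widetilde{\mathfrak h}}(\tau)\,
 \e^{i\tau\log r/N}\e^{-i\tau\xi_\kappa/N}\,
 \e^{i\tau D_{\mathrm{full}}/N}F_w\,\dd\tau .
\]
This identity holds characterwise on the polynomial truncations \(F_w^{(L)}\). Both sides are absolutely convergent Bochner integrals in \(L^q\), so the identity passes to \(F_w\).

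Substituting this into \eqref{eq:regular-cauchy-row} and exchanging the finite \(\kappa\)-sum with the \(\tau\)-integral, the \(\kappa\)-sum becomes a scalar coefficient quantity:
\[
 S_\tau:=\sum_\kappa\sigma(\kappa)\widehat d(\kappa)\e^{-i\tau\xi_\kappa/N}
 =\bigl[\mathcal F_N^{-1}(\sigma\e^{-i\tau\xi/N}\widehat d)\bigr](0).
\]
This is the value at \(j=0\) of a sequence whose \(\ell^q\)-norm is bounded by \(A(1+|\tau|)\norm d_q\) by \eqref{eq:regular-coefficient-hypothesis}. Since a single coordinate is dominated by the \(\ell^q\)-norm, \(|S_\tau|\le A(1+|\tau|)\norm d_q\). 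This is the key point: \(S_\tau\) does not depend on \(r\), so it factors out of every component.

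The component \(r\) then reads
\[
 \int\widehat{\widetilde{\mathfrak h}}(\tau)\,S_\tau\,
 \e^{i\tau\log r/N}\,
 M_y^{-1/p}r^{-1/2}\mathcal P_r(I-\e^{i\mathscr D_r})\e^{i\tau D_{\mathrm{full}}/N}F_w\,\dd\tau .
\]
Apply Minkowski's inequality in \(L^q(\ell^2)\) to the \(\tau\)-integral, and then bound the integrand as follows.
\begin{itemize}
\item The unimodular scalars \(\e^{i\tau\log r/N}\) leave the \(\ell^2\) norm pointwise unchanged.
\item \(\mathcal P\) costs a constant, by hypothesis (via Proposition~\ref{prop:row-interval-projections}).
\item The numerator \(I-\e^{i\mathscr D_r}=I-\e^{i\log r}\e^{iD_{\mathrm{full}}}\) acts on each component as a combination of the identity and a measure-preserving isometry times a unimodular scalar. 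It therefore costs at most a factor \(2\) in \(L^q(\ell^2)\), since \(\e^{iD_{\mathrm{full}}}=U_1\) acts simultaneously on all components.
\item \(\e^{i\tau D_{\mathrm{full}}/N}=U_{\tau/N}\) is likewise a common measure-preserving map, hence an isometry of \(L^q(\ell^2)\).
\end{itemize}
What remains is \(\norm{M_y^{-1/p}(r^{-1/2}F_w)_r}_{L^q(\ell^2)}\le C_p(w/N)^{1/p}\), which is \eqref{eq:frozen-low-source-scale} of Lemma~\ref{lem:frozen-low-source-restoration}.

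Integrating in \(\tau\) gives the bound
\[
 C_pA(w/N)^{1/p}\norm d_q\int|\widehat{\widetilde{\mathfrak h}}(\tau)|(1+|\tau|)\,\dd\tau ,
\]
and the last integral is the fixed absolute constant recorded before the lemma. This yields \eqref{eq:regular-cauchy-bound}.

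I expect the only delicate point to be justifying the Fourier-integral representation of \(\widetilde{\mathfrak h}(\cdot)\) and the order of operations, in particular the placement of \(\mathcal P_r\) and the numerator relative to the \(\tau\)-integral. I would handle this by working on \(F_w^{(L)}\), where everything is a finite character sum and the joint spectral values lie in the interval on which \(\widetilde{\mathfrak h}=\mathfrak h\). The uniform bound above then lets me pass \(L\to\infty\) at the fixed finite model, using the \(L^q\) convergence \(F_w^{(L)}\to F_w\).
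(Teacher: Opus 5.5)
Your proposal is correct and follows the paper's own proof essentially step by step. Both arguments use Fourier inversion of $\widetilde{\mathfrak h}$ and interchange it with the finite $\kappa$-sum. Both extract the $r$-independent scalar $\bigl[\mathcal F_N^{-1}(\sigma\e^{-i\tau\xi_\kappa/N}\widehat d)\bigr](0)$ and bound it by \eqref{eq:regular-coefficient-hypothesis}. Both then use the bound on $\mathcal P$, the norm-two numerator, the isometric shifted group and \eqref{eq:frozen-low-source-scale}, followed by Minkowski in $\tau$ and the integrability of $|\widehat{\widetilde{\mathfrak h}}(\tau)|(1+|\tau|)$. One small point: the lemma is stated with $\widetilde{\mathfrak h}$ itself, so its Fourier representation holds at every spectral value, and you do not need the observation that the spectrum lies where $\widetilde{\mathfrak h}=\mathfrak h$.
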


\begin{proof}
Fourier inversion gives the joint functional-calculus identity
\begin{align}
 \widetilde{\mathfrak h}
 \left(
 \frac{\mathscr D_r-\xi_\kappa I}{N}
 \right)
 =
 \int_{\R}
 \widehat{\widetilde{\mathfrak h}}(\tau)
 \e^{i\tau\mathscr D_r/N}
 \e^{-i\tau\xi_\kappa/N}
 \dd\tau.
 \label{eq:joint-regular-functional-calculus}
\end{align}
The identity holds first on trigonometric polynomials and extends to \(L^q(\Omega_y)\) by density, since the shifted group is strongly continuous and isometric and
\(\widehat{\widetilde{\mathfrak h}}\in L^1(\R)\).
The integral applied to \(F_w\) converges absolutely in \(L^q(\Omega_y)\), so the finite \(\kappa\)-sum may be interchanged with it.
For each \(\tau\), the operator
\[
 (G_r)_r
 \longmapsto
 \left(
 \e^{i\tau\mathscr D_r/N}G_r
 \right)_r
\]
is an isometry.
The coefficient factor at the physical site \(0\) is
\[
 \mathcal F_N^{-1}
 \left(
 \sigma(\kappa)\e^{-i\tau\xi_\kappa/N}
 \widehat d(\kappa)
 \right)(0),
\]
whose modulus is bounded by the complete \(\ell^q\)-norm in
\eqref{eq:regular-coefficient-hypothesis}.  The common numerator has norm
at most two, and \(\mathcal P\) is uniformly bounded.
Hence for fixed \(\tau\) the complete integrand has \(L^q(\ell^2)\)-norm at most
\[
 C_p A(1+|\tau|)
 \left(\frac wN\right)^{1/p}\norm d_q.
\]
Here the  isometry is uniform in \(r,\tau,N\); the norm factor is exactly \eqref{eq:frozen-low-source-scale}; and the coefficient estimate is scalar, before it is multiplied by the common random vector.
Consequently, Minkowski's inequality and Lemma~\ref{lem:frozen-low-source-restoration} give
\begin{align*}
 \norm{\mathcal E_{\sigma,w}d}_{L^q(\ell^2)}
 \le
 C_pA
 \left(\frac wN\right)^{1/p}
 \norm d_q
 \int_{\R}
 \abs{
 \widehat{\widetilde{\mathfrak h}}(\tau)
 }
 (1+\abs\tau)\dd\tau.
\end{align*}
To see the factorization directly, set
\[
 A_{\sigma,\tau}d
 =
 \mathcal F_N^{-1}
 \left(
 \sigma(\kappa)\e^{-i\tau\xi_\kappa/N}\widehat d(\kappa)
 \right).
\]
After inserting \eqref{eq:joint-regular-functional-calculus}, the vector inside the \(\tau\)-integral is exactly
\begin{align*}
 \widehat{\widetilde{\mathfrak h}}(\tau)
 \left(
 M_y^{-1/p}r^{-1/2}
 \mathcal P_r
 (I-\e^{i\mathscr D_r})
 \e^{i\tau\mathscr D_r/N}
 F_w
 \right)_{r\in\mathcal I_h}
 (A_{\sigma,\tau}d)(0).
\end{align*}
The last factor is a scalar independent of \(r\) and of the Euler variables.
The middle operator is a composition of the uniformly bounded projection, a norm-two numerator, and an isometry.
Therefore its vector norm is bounded by
\[
 C_p\left(\frac wN\right)^{1/p},
\]
while
\[
 |(A_{\sigma,\tau}d)(0)|
 \le
 \norm{A_{\sigma,\tau}d}_{\ell^q}
 \le A(1+|\tau|)\norm d_q.
\]
This explicit factorization is the reason scalar cyclic multiplier control suffices.
The final integral is finite because
\(\widetilde{\mathfrak h}\) is smooth and compactly supported.  This proves
\eqref{eq:regular-cauchy-bound}.
\end{proof}

\boldparagraph{Verification of the coefficient hypothesis.}
We prove the assertion for the three symbol families.
Put
\[
 \omega_\tau(\kappa)
 =
 \e^{-i\tau\xi_\kappa/N}
 =
 \e^{-2\pi i\tau\kappa/N}.
\]
Its one-step difference satisfies
\[
 |\Delta\omega_\tau(\kappa)|
 \le
 \min\left\{2,\frac{2\pi|\tau|}{N}\right\}.
\]
Hence on every positive or negative integer block \(I\) of length at most
\(N/2\),
\[
 \sum_{\kappa\in I}|\Delta\omega_\tau(\kappa)|
 \le C|\tau|.
\]
For any symbol \(m\), the discrete product rule gives
\[
 \sum_{\kappa\in I}
 |\Delta(m\omega_\tau)(\kappa)|
 \le
 \sum_{\kappa\in I}|\Delta\widetilde m(\kappa)|
 +
 \norm m_\infty
 \sum_{\kappa\in I}|\Delta\omega_\tau(\kappa)|,
\]
with the zero-extension boundary jumps added separately.
It is therefore enough to verify a uniform ordinary dyadic-variation bound for each unmodulated symbol.
The phase then changes its cyclic Marcinkiewicz norm by at most \(C|\tau|\).

\emph{The nonpositive symbol.}
For
\[
 m_-(\kappa)=\one_{\{\xi_\kappa\le0\}},
\]
the ordinary variation vanishes in the interiors of both half-lines.
There is one jump at zero and one Nyquist-free zero-extension jump at the negative outer endpoint.
Each has size one.
The value at the represented Nyquist frequency is handled by
\eqref{eq:nyquist-projection-contraction}; after phase modulation its
coefficient still has modulus one.
Thus, 
\[
 \mathfrak M_N(m_-\omega_\tau)
 \le C(1+|\tau|).
\]

\emph{The positive low symbol.}
Let
\[
 m_{+,w}(\kappa)
 =
 \one_{\{\xi_\kappa>0\}}\chi(\xi_\kappa/w).
\]
The jump immediately to the right of zero is bounded by one.
On a positive dyadic block, the mean-value theorem and the support of \(\chi\) give
\[
 \sum_\kappa
 |\chi(2\pi(\kappa+1)/w)-\chi(2\pi\kappa/w)|
 \le C_\chi.
\]
The negative variation is zero.
Since \(w\le N\) and
\(|\xi|<w/8\) on the support, this symbol vanishes long before the positive
endpoint; it has no upper-endpoint or Nyquist term.
Therefore
\[
 \mathfrak M_N(m_{+,w}\omega_\tau)
 \le C_\chi(1+|\tau|)
\]
uniformly in \(w,N\).

\emph{The outside shell symbols.}
Set
\[
 m_{w,v}(\kappa)=a_{w,v}(\xi_\kappa).
\]
They vanish for \(\xi_\kappa\le0\).
On the positive half, the
\(\phi(\xi/v)\)-variation is \(O(1)\).  The
\(\chi(\xi/w)\)-variation is also \(O(1)\): when \(\chi'(\xi/w)\ne0\) and \(\phi(\xi/v)\ne0\), one has
\(v\asymp w\).  More explicitly, the
continuous variation is
\begin{align*}
 \int_0^\infty
 \left|
 \frac{\dd}{\dd\xi}
 \left[
 \phi(\xi/v)(1-\chi(\xi/w))
 \right]
 \right|\dd\xi
 \le
 \frac1v\int_0^\infty
 |\phi'(\xi/v)|\dd\xi
 +
 \frac1w\int_0^\infty
 |\phi(\xi/v)\chi'(\xi/w)|\dd\xi\le C_{\phi,\chi}.
\end{align*}
In the second integral, \(\chi'(\xi/w)\ne0\) forces
\(w/16\lesssim\xi\lesssim w/8\), while
\(\phi(\xi/v)\ne0\) forces \(v/2<\xi<2v\).  Hence either the integral
vanishes or \(v\asymp w\), and in the latter case the \(w^{-1}\) factor cancels the \(O(w)\)-length of the integration region.
Sampling this continuous variation on the \(2\pi\)-lattice, with a constant allowance for the two support endpoints, gives the claimed discrete variation.
Thus, including the lower entrance jump and the upper zero-extension jump,
\[
 \sup_{\nu\ge0}
 \sum_{\kappa=2^\nu}^{2^{\nu+1}-1}
 |\Delta\widetilde m_{w,v}(\kappa)|
 \le C_{\phi,\chi}.
\]
This formula remains valid for the \(O(1)\) shells adjacent to the positive endpoint: if the shell is cut off at \(\kappa=N/2-1\), the last summand is the single jump from its bounded endpoint value to zero.

The contribution of the phase modulation is also uniform.
On any positive dyadic block
\(I\),
\begin{align*}
 \sum_{\kappa\in I}
 |m_{w,v}(\kappa)|
 |\omega_\tau(\kappa+1)-\omega_\tau(\kappa)|
 &\le
 \#(I\cap\supp m_{w,v})
 \frac{2\pi|\tau|}{N}\\
 &\le
 C|\tau|\frac{\min\{v,N\}}N
 \le C|\tau|,
\end{align*}
where \(v\le4N\) on the actual shell set.
This calculation also shows why the bound remains uniform when a top shell contains \(O(N)\), rather than
\(O(v)\), represented frequencies.

Here is the explicit upper-endpoint difference estimate.
For
\[
 s_\tau(\kappa)
 =
 a_{w,v}(2\pi\kappa)\e^{-2\pi i\tau\kappa/N}
\]
on \(1\le\kappa\le N/2-1\), extended by zero, the final positive dyadic block satisfies
\begin{align*}
 \sum_{\kappa=2^\nu}^{N/2-1}
 |\Delta\widetilde s_\tau(\kappa)|
 &\le
 \sum_{\kappa=2^\nu}^{N/2-2}
 |a_{w,v}(2\pi(\kappa+1))-a_{w,v}(2\pi\kappa)|\\
 &\quad+
 \frac{2\pi|\tau|}{N}
 \sum_{\kappa=2^\nu}^{N/2-2}|a_{w,v}(2\pi\kappa)|
 +
 |a_{w,v}(2\pi(N/2-1))|\\
 &\le C_{\phi,\chi}+C|\tau|+1.
\end{align*}
The last term is exactly the positive-endpoint zero-extension jump.
The lower outer jump is controlled identically by the first nonzero value.
Thus,  no periodic identification of the two edges, and no estimate depending on the size of the frequency support, has been used.

For intuition, in the positive-spectrum applications the same top-endpoint behavior can also be viewed through a smooth continuation across the unused negative half.
This is an operator-equivalent description for vectors \(d=\Pi_+d\); the direct zero-extension argument above is the one that proves the multiplier bound for the original symbol on arbitrary vectors.
A shell can meet the positive endpoint only when \(v\asymp N\).
Near that endpoint \(\chi(\xi/w)=0\), so in the normalized variable \(\theta=\xi/N\) the factor is
\[
 \psi_v(\theta)=\phi(N\theta/v).
\]

Choose a fixed \(\eta\in C^\infty([0,\pi])\) which equals one near zero
and vanishes for \(u\ge\pi/2\).
On the negative half, written
\(\theta=-\pi+u\), define
\[
 \Psi_v(-\pi+u)
 =
 \eta(u)\phi\left(\frac{N(\pi+u)}v\right),
 \qquad 0\le u\le\pi.
\]
This is the continuation of the same formula from \(\theta=\pi\) to
\(\theta=\pi+u\), followed by a fixed taper.
Hence all derivatives match across the identified boundary, and
\[
 \norm{\Psi_v}_\infty+\norm{\Psi_v'}_\infty\le C_\phi
\]
because \(N/v\asymp1\).
It agrees with the original symbol throughout the used positive half.
In the application below, the vector satisfies
\(d=\Pi_+d\), so we may assign the unused negative non-Nyquist frequencies the continued values
\[
 \eta(u)\phi\left(\frac{N(\pi+u)}v\right)
\e^{-i\tau(\pi+u)}
\]
without changing the multiplier output.
This continued symbol has total variation \(O(1+|\tau|)\) and is smooth across the identified boundary.
Its continued value at the represented Nyquist coordinate is
\(\phi(\pi N/v)\e^{-i\tau\pi}\).  
Setting that one coordinate to the actual value zero changes the continued multiplier by a single rank-one Nyquist term, controlled by \eqref{eq:nyquist-projection-contraction}; on the strictly positive vector even that term vanishes.
Thus,  this continuation is an operator-equivalent modification on the unused negative modes, not an assertion that the continued and zero-extended symbol arrays agree there.

The direct zero-extension calculation therefore gives
\[
 \mathfrak M_N(m_{w,v}\omega_\tau)
 \le C_{\phi,\chi}(1+|\tau|)
\]
uniformly in \(N,w,v\), including the possible top shells
\(v=2N\) and \(v=4N\).
The smooth continuation gives the same operator bound in the actual positive-spectrum applications, since it changes the symbol only on frequencies annihilated by \(\Pi_+\).

Applying Lemma~\ref{lem:cyclic-multiplier} with \(u=q\) proves
\eqref{eq:regular-coefficient-hypothesis} with \(A=C_q\) for all three families.  
The dependence on \(\tau\) is exactly the displayed linear factor, which is integrable against
\(|\widehat{\widetilde{\mathfrak h}}(\tau)|\).

The hypothesis
\eqref{eq:regular-coefficient-hypothesis} holds uniformly, with \(A=C_q\),
for each of the coefficient symbols used below:

\begin{enumerate}[label=\textup{(\roman*)}]
\item the nonpositive half-line symbol
\(\one_{\{\xi_\kappa\le0\}}\);
\item the positive low symbol
\(\one_{\{\xi_\kappa>0\}}\chi(\xi_\kappa/w)\);
\item every outside shell symbol \(a_{w,v}(\xi_\kappa)\).
\end{enumerate}

Indeed, multiplication by
\(\e^{-i\tau\xi_\kappa/N}\) contributes total dyadic variation
\(O(1+\abs\tau)\).  Lemma~\ref{lem:cyclic-multiplier} gives the required
bound.
For the \(O(1)\) shells adjacent to the upper endpoint, the zero-extension jumps are included in the dyadic-variation estimate, and the represented Nyquist coefficient is handled by the rank-one projection.

\subsection{The exact frequency partition}
\label{subsec:exact-frequency-partition}

For a positive-spectrum vector
\[
 e=\Pi_+e
\]
and a prime scale \(w=w_h\), define
\begin{align}
 e_w^\chi
 &=
 \mathcal F_N^{-1}
 \left(
 \one_{\{\xi_\kappa>0\}}
 \chi(\xi_\kappa/w)\widehat e(\kappa)
 \right),
 \label{eq:positive-low-coefficient-piece}
 \\
 e_{w,v}
 &=
 \mathcal F_N^{-1}
 \left(
 \phi(\xi_\kappa/v)
 \bigl(1-\chi(\xi_\kappa/w)\bigr)
 \widehat e(\kappa)
 \right),
 \qquad
 v\in\mathcal V_N.
 \label{eq:outside-coefficient-piece}
\end{align}
These definitions split the positive-frequency coefficient vector into a low piece near the prime scale \(w\) and dyadic higher-frequency shells \(v\), thereby preparing the exact separation into low, outside, and resonant frequency contributions.

Since
\[
 \sum_{v\in2^\Z}\phi(\xi/v)=1,
 \qquad \xi>0,
\]
one has
\begin{equation}
 e=e_w^\chi+\sum_{v\in\mathcal V_N}e_{w,v}.
 \label{eq:positive-coefficient-partition}
\end{equation}

For \(r\in\mathcal I_h\), define the low-frequency operator on an arbitrary vector \(d\in\C^{\ZN}\), and the positive-low operator on a positive-spectrum vector \(e=\Pi_+e\), by
\begin{align}
 (\mathcal T_y^{\mathrm{lo}}d)_r
 ={}&
 M_y^{-1/p}r^{-1/2}
 P_{[0,K_0]}^{r^-}
 \sum_{j<N}
 d_j\e^{it_j\log r}f_{r^-}(t_j),
 \label{eq:low-source-column}
 \\
 (\mathcal L_y^{\mathrm{lo}}e)_r
 ={}&
 M_y^{-1/p}r^{-1/2}
 P_{[0,K_0]}^{r^-}
 \sum_{j<N}
 e_w^\chi(j)\e^{it_j\log r}f_{r^-}(t_j),
 \label{eq:positive-low-column}
\end{align}
and
\begin{align}
 (\mathcal O_ye)_r
 ={}
 M_y^{-1/p}r^{-1/2}
 \sum_{v\in\mathcal V_N}
 P_{
 [0,K_0]\setminus[v/4-\log r,\,4v-\log r]
 }^{r^-}
 \sum_{j<N}
 e_{w,v}(j)\e^{it_j\log r}f_{r^-}(t_j).
 \label{eq:outside-column}
\end{align}
The spectral set difference in
\eqref{eq:outside-column} is the strict complement, within
\([0,K_0]\), of the closed middle interval. 
In particular, whenever either endpoint lies in \([0,K_0]\), it is assigned to the middle term and not to the outside term.

The operators
\(\mathcal T_y^{\mathrm{lo}}\),
\(\mathcal L_y^{\mathrm{lo}}\), and
\(\mathcal O_y\) introduced above represent, respectively, the full
low-frequency term, its positive low-frequency part, and its positive outside-shell part.
The following  table places them alongside the high-frequency term \(\mathcal H_y\) and the resonant term
\(\mathcal R_y\).

Fix \(r\in\mathcal I_h\) and a joint product--coefficient spectral point.  Write \(w=2^h\), and let \(\lambda\) denote
the corresponding \(D_{r^-}\)-frequency.  The shifted smaller-prime frequency is
\(
 \lambda+\log r.
\)
For the positive coefficient part, write \(e=\Pi_+c\).
\begin{center}
\small
\renewcommand{\arraystretch}{1.25}
\begin{tabular}{
 p{0.20\textwidth}|
 p{0.19\textwidth}|
 p{0.29\textwidth}|
 p{0.16\textwidth}}
\textbf{Piece}
&\textbf{Spectral region}
&\textbf{Coefficient part}
&\textbf{Role}
\\ \hline
\(\mathcal H_yc\)
&\(\lambda>K_0\)
&all coefficient modes
&high frequency
\\
\(\mathcal T_y^{\mathrm{lo}}(\Pi_-c)\)
&\(0\le \lambda\le K_0\)
&\(\xi_\kappa\le0\)
&nonpositive sector
\\
\(\mathcal L_y^{\mathrm{lo}}e\)
&\(0\le \lambda\le K_0\)
&\(\chi(\xi_\kappa/w)\widehat e(\kappa)\)
& positive low-frequency sector
\\
\(\mathcal O_ye\)
&\(0\le \lambda\le K_0\),
 \(\lambda+\log r\notin[v/4,4v]\)
&\(\phi(\xi_\kappa/v)
  (1-\chi(\xi_\kappa/w))\widehat e(\kappa)\)
&outside-shell sector
\\
\(\mathcal R_ye\)
&\(0\le \lambda\le K_0\),
 \(\lambda+\log r\in[v/4,4v]\)
&\(\phi(\xi_\kappa/v)\widehat e(\kappa)\),
 \(v=2^{h+\ell}\), \(\ell\ge-2\)
&resonant sector
\end{tabular}
\end{center}
The coefficient column records multiplier weights rather than pairwise-disjoint smooth supports.
In the resonant component, both the disappearance of the factor \(1-\chi(\xi_\kappa/w)\) and the restriction \(\ell\ge-2\) are consequences of the support calculation in the proof.
The proposition below proves that all five pieces recombine exactly, componentwise, on the same Euler array.

\begin{proposition}[Frequency decomposition]
\label{prop:exact-frequency-partition}
For every \(c\in\C^{\ZN}\),
\begin{align}
 \mathcal T_yc
=
 \mathcal H_yc
 +
 \mathcal T_y^{\mathrm{lo}}(\Pi_-c)
 +
 \mathcal L_y^{\mathrm{lo}}(\Pi_+c)
+
 \mathcal O_y(\Pi_+c)
 +
 \mathcal R_y(\Pi_+c).
 \label{eq:exact-frequency-partition}
\end{align}
The identity holds componentwise on the same Euler array.
\end{proposition}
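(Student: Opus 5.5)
The identity is purely algebraic and componentwise, so I will fix a prime \(r\in\mathcal I_h\), write \(w=w_h\), and verify it on each joint spectral piece of the vector \(\sum_j c_j\e^{it_j\log r}f_{r^-}(t_j)\). Throughout I will keep the common Euler array \(f_{r^-}\) and only decompose the deterministic coefficient vector and the spectral projection \(P^{r^-}_{[0,\infty)}\). The first split is spectral: \(\one_{[0,\infty)}=\one_{[0,K_0]}+\one_{(K_0,\infty)}\) applied to \(D_{r^-}\). The second term gives exactly \((\mathcal H_yc)_r\) by \eqref{eq:global-high-source-array}, and the first gives \((\mathcal T^{\mathrm{lo}}_yc)_r\) by \eqref{eq:low-source-column}. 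Linearity of \(\mathcal T^{\mathrm{lo}}_y\) in the coefficient vector, together with \(c=\Pi_-c+\Pi_+c\), splits off \(\mathcal T^{\mathrm{lo}}_y(\Pi_-c)\). It then remains to show that for \(e=\Pi_+c\) one has \(\mathcal T^{\mathrm{lo}}_ye=\mathcal L^{\mathrm{lo}}_ye+\mathcal O_ye+\mathcal R_ye\).

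For this I will use the coefficient partition \eqref{eq:positive-coefficient-partition}, \(e=e_w^\chi+\sum_{v\in\mathcal V_N}e_{w,v}\). This requires checking that \(\sum_{v\in2^\Z}\phi(\xi/v)\) may be restricted to \(v\in\mathcal V_N\) on the positive modes \(1\le\kappa<N/2\). That restriction follows from the definition \eqref{eq:finite-shell-set}: any shell \(v\notin\mathcal V_N\) has \(\phi(\xi_\kappa/v)=0\) at every positive mode. The \(e^\chi_w\) term gives \(\mathcal L^{\mathrm{lo}}_ye\) directly. For each shell \(v\), I will split \(P^{r^-}_{[0,K_0]}\) as the outside projection, whose spectral set is \([0,K_0]\setminus[v/4-\log r,4v-\log r]\), plus the middle projection \(P^{r^-}_{[0,K_0]\cap[v/4-\log r,4v-\log r]}\). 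The outside pieces sum to \(\mathcal O_ye\) by \eqref{eq:outside-column}. The endpoint convention, under which the closed middle interval takes both endpoints, makes these two projections complementary.

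The main step is identifying the sum of the middle pieces with \(\mathcal R_ye=\sum_{\ell\ge-2}\mathcal R^{(\ell)}_ye\), and I expect the support bookkeeping here to be the main obstacle. Writing \(v=2^{h+\ell}\), the middle interval is exactly \(J^{(\ell)}_r\) from \eqref{eq:resonant-row-interval}. Two discrepancies with \(\mathcal R^{(\ell)}_y\) must be resolved: the factor \(1-\chi(\xi_\kappa/w)\) is absent in \eqref{eq:offset-resonant-row}, where the coefficient is \(d_v\), and only \(\ell\ge-2\) appears there. For the offset range, I will argue that if \(v\le w/8\), i.e.\ \(\ell\le-3\), then \(4v-\log r\le w/2-\log r<0\) because \(\log r>w/2\) on \(\mathcal I_h\). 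Hence \(J\cap[0,K_0]=\varnothing\), and these shells contribute nothing to the middle term. For the missing factor, I need \(\chi(\xi_\kappa/w)\phi(\xi_\kappa/v)=0\) whenever \(\ell\ge-2\). This holds because \(\chi\) is supported in \(|\xi|<w/8\) while \(\phi(\xi/v)\ne0\) forces \(\xi>v/2\ge w/8\). Therefore \(e_{w,v}=d_v\) (with \(d=e\)) exactly on these shells, so \(\Pi_+\)-positivity gives agreement with \eqref{eq:coefficient-shell}.

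Finally, shells \(v\notin\mathcal V_N\) correspond to the convention \(d_v=0\), and every sum involved is finite. Each identity holds first on the absolutely convergent Fourier expansion of \(f_{r^-}\) at the fixed cutoff, since each step is a characterwise indicator identity; summing the pieces then yields \eqref{eq:exact-frequency-partition} componentwise.
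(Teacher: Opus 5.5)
Your proposal is correct and follows the paper's proof essentially step for step. The steps are the spectral split of \(P^{r^-}_{[0,\infty)}\) at \(K_0\), the sign split \(c=\Pi_-c+\Pi_+c\), and the smooth shell partition restricted to \(\mathcal V_N\). Within each shell you then split into the closed middle interval and its strict outside complement, and use the support argument: middle pieces with \(v\le w/8\) vanish because \(\log r>w/2\), while for \(v\ge w/4\) one has \(\chi(\xi_\kappa/w)=0\), so \(e_{w,v}=e_v\) and the middle pieces assemble exactly into \(\sum_{\ell\ge-2}\mathcal R_y^{(\ell)}e\).
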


This proposition gives the exact componentwise decomposition of the projected vector into high-frequency, nonpositive, positive low, outside-shell, and resonant pieces, thereby isolating the resonant term as the only component requiring the main probabilistic estimate.

\begin{proof}
Fix \(r\in\mathcal I_h\), write \(w=2^h\), and put
\(e=\Pi_+c\).

Fix a spectral point for the variables indexed by primes below \(r\),
and let \(\lambda\) denote the corresponding \(D_{r^-}\)-frequency.
The shifted frequency is
\(
\lambda+\log r.
\)
The proof follows the successive componentwise decompositions
\[
 \begin{aligned}
 \mathcal T_yc
 &=
 \underbrace{\mathcal H_yc}_{\lambda>K_0}
 +
 \underbrace{\mathcal T_y^{\mathrm{lo}}c}_{0\le \lambda\le K_0},
 \\[1mm]
 \mathcal T_y^{\mathrm{lo}}c
 &=
 \mathcal T_y^{\mathrm{lo}}(\Pi_-c)
 +
 \mathcal T_y^{\mathrm{lo}}e,
 \\[1mm]
 e
 &=
 e_w^\chi
 +
 \sum_{v\in\mathcal V_N}e_{w,v},
 \\[1mm]
 1
 &=
 \underbrace{
 \one_{\{\lambda+\log r\notin[v/4,4v]\}}
 }_{\mathcal O_y\text{-part}}
 +
 \underbrace{
 \one_{\{\lambda+\log r\in[v/4,4v]\}}
 }_{\text{middle part}},
 \\[1mm]
 \text{surviving middle }&\text{contribution}
 =
 \sum_{\ell\ge-2}\mathcal R_y^{(\ell)}e
 =
 \mathcal R_ye.
 \end{aligned}
\]
The first line is the spectral high/low split, the second is the
coefficient sign split, the third is the smooth positive-frequency
partition, and the fourth is the exact outside/middle split within
each shell.  The final line is the support identification of the
surviving middle shells.

The projection
\[
 P_{[0,\infty)}^{r^-}
\]
in \eqref{eq:corrected-column} splits exactly into
\[
 P_{(K_0,\infty)}^{r^-}
 +
 P_{[0,K_0]}^{r^-}.
\]
The first term is \(\mathcal H_yc\).

Split the projected vector into
\[
 c=\Pi_-c+\Pi_+c.
\]
The nonpositive low-frequency term is
\[
 \mathcal T_y^{\mathrm{lo}}(\Pi_-c).
\]
For the positive part \(e=\Pi_+c\), use
\eqref{eq:positive-coefficient-partition}.

At each fixed positive coefficient frequency, the identity being used is
\begin{align*}
 \widehat e(\kappa)
 =
 \chi(\xi_\kappa/w)\widehat e(\kappa)
+
 \sum_{v\in\mathcal V_N}
 \phi(\xi_\kappa/v)
 \bigl(1-\chi(\xi_\kappa/w)\bigr)
 \widehat e(\kappa).
\end{align*}
Indeed, every nonzero term in the full dyadic partition belongs, by definition, to the finite set \(\mathcal V_N\), and only finitely many summands are nonzero at a fixed represented frequency.
Neighboring shell supports may overlap, but their multiplier weights add exactly to \(1-\chi(\xi_\kappa/w)\); pairwise disjointness is neither asserted nor needed.

The term \(e_w^\chi\) gives
\(\mathcal L_y^{\mathrm{lo}}e\).  For each shell \(v\), let \(\lambda\in[0,K_0]\)
be a \(D_{r^-}\)-frequency of the current low-frequency component, and split according to whether
\[
 \lambda+\log r\in[v/4,4v]
\]
or lies in its strict complement.
The strict complement gives
\(\mathcal O_ye\).

It remains to identify the middle term.
A nonzero coefficient piece
\(e_{w,v}\) satisfies
\[
 1-\chi(\xi/w)\ne0
 \quad\text{and}\quad
 \phi(\xi/v)\ne0
\]
at some positive frequency \(\xi\).
Hence
\[
 \xi>\frac w{16},
 \qquad
 \xi<2v,
\]
and therefore
\begin{equation}
 w<32v.
 \label{eq:active-shell-lower-bound}
\end{equation}
The bound \eqref{eq:active-shell-lower-bound} uses only the two coefficient cutoffs and therefore holds for every nonzero coefficient shell.
By contrast, the stronger restriction \(v>w/8\) below also uses the low-frequency condition and applies only to surviving middle shells.
For \(r\in\mathcal I_h\), one has \(\log r>w/2\), whereas
\(\lambda\ge0\) on the low-frequency projection.  The strict inequality
also covers the endpoint \(v=w/8\).
If \(v\le w/8\), then
\[
 \lambda+\log r>\frac w2\ge4v,
\]
so the middle projection is empty.
Thus,  every surviving middle shell satisfies
\[
 v>\frac w8.
\]
Since \(v\) and \(w=2^h\) are dyadic, we may write
\[
 v=2^{h+\ell}
 \qquad\text{with}\qquad
 \ell\ge-2.
\]
For every coefficient frequency \(\xi\) in the support of
\(\phi(\,\cdot\,/v)\),
\[
 \xi>\frac v2\ge\frac w8.
\]
Hence \(\chi(\xi/w)=0\) throughout that support, so \(e_{w,v}\) is precisely the dyadic coefficient shell defined in
\eqref{eq:coefficient-shell}, applied with \(d=e\).
The middle projection is exactly
\[
 P_{
 [0,K_0]\cap
 [2^{h+\ell}/4-\log r,\,
  4\,2^{h+\ell}-\log r]
 }^{r^-},
\]
which is the projection appearing in the \(r\)-th component of \(\mathcal R_y^{(\ell)}e\)
\eqref{eq:offset-resonant-row}.  
By the convention \(d_v=0\) for \(v\notin\mathcal V_N\), applied here with \(d=e\), only finitely many terms are nonzero.
Summing over all \(\ell\ge-2\) therefore gives
\[
\mathcal R_ye=\mathcal R_y(\Pi_+c)
\] 
and proves \eqref{eq:exact-frequency-partition}.
\end{proof}

\subsection{Nonresonant term estimates}
\label{subsec:nonresonant-columns}

\begin{proposition}[Nonresonant term estimates]
\label{prop:nonresonant-columns}
For every \(c\in\C^{\ZN}\),
\begin{equation}
 \norm{
 \mathcal T_y^{\mathrm{lo}}(\Pi_-c)
 }_{L^q(\ell^2)}
 \le
 C_p\norm c_q.
 \label{eq:negative-column-bound}
\end{equation}
For every positive-spectrum vector \(d=\Pi_+d\),
\begin{equation}
 \norm{
 \mathcal L_y^{\mathrm{lo}}d
 }_{L^q(\ell^2)}
 +
 \norm{
 \mathcal O_yd
 }_{L^q(\ell^2)}
 \le
 C_p\norm d_q.
 \label{eq:positive-nonresonant-bound}
\end{equation}
All estimates are uniform in every finite parameter.
\end{proposition}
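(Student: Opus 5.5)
Each of the three pieces is handled at one prime scale $w=w_h$ and then summed over $h$ (and, for $\mathcal O_y$, over the shells $v$). In every case we work with the frozen full product $F_w=P_{[0,K_0]}^{\mathrm{full}}f_{B_w}(t_0)$ and write the translated sites as $f_{B_w}(t_j)=\e^{ijD_{\mathrm{full}}/N}f_{B_w}(t_0)$. The $j$-sum then becomes $\sum_j d_j\e^{ij\mathscr D_r/N}F_w$, up to the unimodular factor $\e^{it_0\log r}$. We apply Lemma~\ref{lem:finite-cauchy-decomposition} and split the inverse denominator via \eqref{eq:singular-regular-cauchy-split} into the singular resolvent $iN(\mathscr D_r-\xi_\kappa)^{-1}$ and the regular term $\mathfrak h$.

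The regular term is handled directly by Lemma~\ref{lem:regular-cauchy-remainder}. The verification following that lemma supplies the coefficient hypothesis for the three symbol families: nonpositive, positive low, and outside shell. This gives a bound of order $(w/N)^{1/p}\norm d_q$ per scale (or per shell). Lemma~\ref{lem:frozen-low-source-restoration} then moves the estimate back to the $D_{r^-}$ components. Because the prime bands are disjoint and $q/2<1$, we sum in $h$ through $\sum_h(w_h/N)^{q/p}\norm d_q^q$, which is a convergent geometric series. The shells $v$ above a fixed $w$ need more care, and for them I would use the finer trace bounds from the singular analysis.

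For the singular terms, we first establish the sign of the gap $\lambda+\log r-\xi_\kappa$ on each sector and then use a Laplace representation of the reciprocal.

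\emph{Nonpositive sector.} Here $\xi_\kappa\le0$ and $\log r>w/2$, so $\lambda+\log r-\xi_\kappa\ge\log r-\xi_\kappa>0$. We write $i(\mathscr D_r-\xi_\kappa)^{-1}=i\int_0^\infty \e^{-\rho(\mathscr D_r-\xi_\kappa)}\dd\rho$. The factor $\e^{-\rho\mathscr D_r}$ on $[0,K_0]$ is controlled by \eqref{eq:transferred-damped-low-bound} together with $\e^{-\rho\log r}\le\e^{-\rho w/2}$. The coefficient side becomes the cyclic Poisson-type quantity $\sum_\kappa \e^{\rho\xi_\kappa}\widehat d(\kappa)$ at the site $0$, after rescaling by $N$. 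The $\e^{-\rho w/2}$ weight then yields exactly $\widetilde B_{N/w}(d)$. Summing over $h$, Proposition~\ref{prop:boundary-traces} gives $\sum_h\widetilde B_{N/w_h}(d)^q\lesssim\norm d_q^q$, with the frozen scale factor $(w/N)^{1/p}$ from \eqref{eq:frozen-low-source-scale} absorbed into the normalization $m^{-1/p}$.

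\emph{Positive-low sector.} Here $\xi_\kappa<w/8<\log r$, so the gap is again positive, of size at least $3w/8$. The Laplace integral now carries the kernel $J_{t,w}$ from \eqref{eq:positive-low-kernel}. Its growth $\e^{wt/8}$ is beaten by the damping $\e^{-t\log r}\le \e^{-wt/2}$. Lemma~\ref{lem:cauchy-coefficient-kernels} then reduces the estimate to $D_{N/w}(d)$, which is summed by Proposition~\ref{prop:boundary-traces}.

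\emph{Outside sector.} Here $\xi_\kappa\in(v/2,2v)$ and the two sides of $[v/4,4v]$ are handled separately.
\begin{itemize}
\item If $\lambda+\log r<v/4$, the gap is negative with size at least $v/4$. We use $\int_0^\infty \e^{t(\mathscr D_r-\xi_\kappa)}$ with the kernel $J^-_{t,w,v}$.
\item If $\lambda+\log r>4v$, the gap is positive with size at least $2v$. We use $J^+$, whose growth $\e^{2vt}$ is beaten by $\e^{-4vt}$.
\end{itemize}
The componentwise interval projections are bounded by Proposition~\ref{prop:row-interval-projections}. The kernel bounds of Lemma~\ref{lem:cauchy-coefficient-kernels} reduce everything to $B_{N/v}(d)$. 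The factors $(1+vt)$ integrate against an exponential of rate $\asymp v$ to give $v^{-1}$, and this cancels the prefactor $N$ against $v/N$.

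The main obstacle is the double sum over $v$ and $w$ in the outside sector. The trace estimate controls $\sum_v B_{N/v}^q$, but every $v$ is paired with all $w<32v$. I would need per-pair decay, either geometric in $v/w$ or in $w/v$, which I expect to come from the frozen factor $(w/N)^{1/p}=(w/v)^{1/p}(v/N)^{1/p}$. For $w\ge v$ the bounded overlap \eqref{eq:active-shell-lower-bound} limits the pairs to $O(1)$. A Hölder or Young argument on the scale index then closes the sum, with the $q$-sum over bands once again using subadditivity of $x^{q/2}$.
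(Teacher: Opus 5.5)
Your proposal follows the paper's proof essentially step for step. You freeze the full product $F_w$, apply the finite Cauchy split, and use signed gaps with Laplace representations to reach $\widetilde B_{N/w}$, $D_{N/w}$ and $(w/v)^{1/p}B_{N/v}$; Young's inequality on the scale index then closes the singular outside sum, and conditional expectation recovers the smaller-prime components. The one loose end is the regular remainder summed over shells, and it needs no trace bounds: there are only $O(1+\log(N/w))$ active shells ($w/32<v\le 4N$), each contributing $C_p(w/N)^{1/p}\norm d_q$, and $\sum_h (w_h/N)^{a}\bigl(1+\log(N/w_h)\bigr)^q$ converges.
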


\begin{proof}
The proof uses the following seven-stage structure in each sector:
\begin{center}
\small
\begin{tabular}{p{0.12\textwidth}p{0.20\textwidth}p{0.55\textwidth}}
\textbf{Step 1.}&\textbf{Fix.}&Use the common full product \(F_w\).\\
\textbf{Step 2.}&\textbf{Separate.}&Prove the signed product--coefficient gap.\\
\textbf{Step 3.}&\textbf{Represent.}&Write the singular reciprocal as a Laplace integral.\\
\textbf{Step 4.}&\textbf{Coefficients.}&Apply the sector-specific cyclic kernel.\\
\textbf{Step 5.}&\textbf{Regular term.}&Use Lemma~\ref{lem:regular-cauchy-remainder}.\\
\textbf{Step 6.}&\textbf{Sum scales.}&Distinguish shell summation from prime-band summation.\\
\textbf{Step 7.}&\textbf{Conditional}\par\textbf{expectation.}&Recover the components depending on primes below \(r\).
\end{tabular}
\end{center}
We group Steps~1--4 as the singular analysis; in the outside sector, its shell summation is completed before treating the regular remainder.

For reference, the four signed reciprocals are listed below.
Here
\(r\in\mathcal I_h\), \(w=w_h\), \(\lambda\in[0,K_0]\) is an unshifted
full-product frequency, and \(\xi=\xi_\kappa\).
The last column represents
\((\lambda+\log r-\xi)^{-1}\) on the indicated region.
\begin{center}
\small
\renewcommand{\arraystretch}{1.4}
\begin{tabular}{p{0.27\textwidth}|p{0.25\textwidth}|p{0.33\textwidth}}
\textbf{Region}&\textbf{Proved gap}&\textbf{Laplace representation}\\ \hline
\(\xi\le0\)
&\(\lambda+\log r-\xi>0\)
&\(\displaystyle\int_0^\infty\e^{-t(\lambda+\log r-\xi)}\dd t\)\\
\(0<\xi\le w/8\)
&\(\begin{gathered}\lambda+\log r-\xi\\\ge\lambda+3w/8\end{gathered}\)
&\(\displaystyle\int_0^\infty\e^{-t(\lambda+\log r-\xi)}\dd t\)\\
\(\begin{gathered}\lambda+\log r>4v,\\v/2<\xi<2v\end{gathered}\)
&\(\lambda+\log r-\xi>2v\)
&\(\displaystyle\int_0^\infty\e^{-t(\lambda+\log r)}\e^{t\xi}\dd t\)\\
\(\begin{gathered}\lambda+\log r<v/4,\\v/2<\xi<2v\end{gathered}\)
&\(\begin{gathered}\xi-(\lambda+\log r)\\>v/4\end{gathered}\)
&\(\displaystyle-\int_0^\infty\e^{t(\lambda+\log r)}\e^{-t\xi}\dd t\).
\end{tabular}
\end{center}

Throughout, \(F_w=P_{[0,K_0]}^{\mathrm{full}}f_{B_w}(t_0)\) and
\(\mathscr D_r=D_{\mathrm{full}}+(\log r)I\).
Every prime-band norm below includes the normalization
\(M_y^{-1/p}r^{-1/2}\); the unimodular factor
\(\e^{it_0\log r}\) is suppressed until recovery.
We first apply Lemma~\ref{lem:finite-cauchy-decomposition} to the low-frequency Fourier truncations \(F_w^{(L)}\to F_w\) described above.
The projected Laplace bounds below justify passage to \(F_w\) at each fixed finite model.
The common numerator has norm at most two, and the smooth remainder is bounded by its Fourier representation.
For regular terms, we use Lemma~\ref{lem:regular-cauchy-remainder} and the verification of its three coefficient symbols given after that lemma.

\boldparagraph{Estimate of
\(\mathcal T_y^{\mathrm{lo}}(\Pi_-c)\): the nonpositive sector.}
\[
 \begin{aligned}
 F_w&\to\text{signed gap}\to\text{Poisson resolvent}\\
 &\to\text{factor \(N\) cancellation}\to\widetilde B_{N/w}\\
 &\to\text{regular remainder}\to\text{prime bands}
 \to\text{smaller-prime components}.
 \end{aligned}
\]

\medskip
\noindent\textbf{Steps~1--4: singular analysis.}
Fix \(w=w_h\).
By \eqref{eq:euler-source-flow} and commutation with the low projection, the unnormalized full-product component is
\[
 \sum_{j<N}(\Pi_-c)_j\e^{ij\mathscr D_r/N}F_w.
\]
For \(\lambda\in[0,K_0]\) and \(\xi_\kappa\le0\),
\begin{equation}
 \lambda+\log r-\xi_\kappa>0,
 \qquad \log r-\xi_\kappa>\frac w2.
 \label{eq:negative-sector-signed-gap}
\end{equation}
For any represented frequency with \(\log r-\xi_\kappa>0\), define on the low-frequency subspace
\[
 \begin{aligned}
 (\mathscr D_r-\xi_\kappa I)^{-1}G
 :=\int_0^\infty
 \e^{-t(\log r-\xi_\kappa)}\e^{-tD_{\mathrm{full}}}G\,\dd t, \qquad G=P_{[0,K_0]}^{\mathrm{full}}G\in L^q(\Omega_y).
 \end{aligned}
\]
By \eqref{eq:transferred-damped-low-bound}, this Bochner integral is absolutely convergent and has norm at most
\(C_q(\log r-\xi_\kappa)^{-1}\norm G_q\).
On characters it is the reciprocal multiplier, so the bound also justifies replacing \(F_w^{(L)}\) by \(F_w\) in the finite Cauchy expansion.
Since \(\e^{t\xi_\kappa}=\e^{-Nt|\theta_\kappa|}\) on this support, finite Fourier inversion gives
\begin{align}
 \sum_{\xi_\kappa\le0}\widehat c(\kappa)
 (\mathscr D_r-\xi_\kappa I)^{-1}F_w
 =\int_0^\infty\e^{-t\log r}
 (\mathsf P_{Nt}^N*\Pi_-c)(0)
 \e^{-tD_{\mathrm{full}}}F_w\,\dd t.
 \label{eq:negative-sector-resolvent}
\end{align}
The complete singular term has the additional factor
\(iN(I-\e^{i\mathscr D_r})\).  We use
\begin{equation}
 \sup_{t\ge0}\norm{
 \e^{-tD_{\mathrm{full}}}P_{[0,K_0]}^{\mathrm{full}}
 }_{L^q\to L^q}\le C_q.
 \label{eq:damped-low-source-multiplier}
\end{equation}
Minkowski's inequality and the common-source estimate in Lemma~\ref{lem:frozen-low-source-restoration} therefore bound its prime-band norm by
\begin{align}
 &C_pN\left(\frac wN\right)^{1/p}
 \int_0^\infty\e^{-wt/2}
 \abs{(\mathsf P_{Nt}^N*\Pi_-c)(0)}\dd t\notag\\
 &\qquad=
 C_p\left(\frac wN\right)^{1/p}
 \int_0^\infty\e^{-\rho w/(2N)}
 \abs{(\mathsf P_\rho^N*\Pi_-c)(0)}\dd\rho
 =C_p\widetilde B_{N/w}(\Pi_-c).
 \label{eq:negative-sector-trace-bound}
\end{align}
Here \(\rho=Nt\); the normalization is
\[
 \underbrace{N}_{\text{Cauchy factor}}
 \underbrace{\left(\frac wN\right)^{1/p}}_{\text{source norm}}
 \underbrace{\frac1N}_{\rho=Nt}
 =\left(\frac wN\right)^{1/p}.
\]

\medskip
\noindent\textbf{Step~5: regular remainder.}
The regular term has the form \eqref{eq:regular-cauchy-row} with
\(\sigma(\kappa)=\one_{\{\xi_\kappa\le0\}}\) and
\(\mathcal P_r=P_{[0,K_0]}^{\mathrm{full}}\).
Lemma~\ref{lem:regular-cauchy-remainder} gives the prime-band bound
\begin{equation}
 C_p\left(\frac wN\right)^{1/p}\norm c_q.
 \label{eq:negative-sector-regular-bound}
\end{equation}
The coefficient verification includes the represented Nyquist frequency.

\medskip
\noindent\textbf{Steps~6--7: scale summation and recovery.}
For vectors \(X_h\) on disjoint prime bands, \(q/2<1\) gives
\[
 \E\left(\sum_h\norm{X_h}_{\ell^2(\mathcal I_h)}^2\right)^{q/2}
 \le\sum_h\norm{X_h}_{L^q(\ell^2(\mathcal I_h))}^q.
\]
Apply this to the singular and regular parts, using Proposition~\ref{prop:boundary-traces},
\eqref{eq:positive-negative-projection-bounds}, and
\[
 \sum_{h=0}^H\left(\frac{w_h}{N}\right)^{q/p}
 =\sum_{h=0}^H2^{-a(H-h)}\le C_p.
\]
The full-product vector is consequently bounded by \(C_p\norm c_q\).
Lemma~\ref{lem:frozen-low-source-restoration}, through
\eqref{eq:frozen-live-conditional-identity} and the vector
conditional-expectation estimate, then proves
\eqref{eq:negative-column-bound}.

\boldparagraph{Estimate of
\(\mathcal L_y^{\mathrm{lo}}d\): the positive low-frequency sector.}
\[
 \begin{aligned}
 F_w&\to\text{signed gap}\to\text{positive Laplace resolvent}\\
 &\to J_{t,w}\to D_{N/w}\\
 &\to\text{regular remainder}\to\text{prime bands}
 \to\text{smaller-prime components}.
 \end{aligned}
\]

\medskip
\noindent\textbf{Steps~1--4: singular analysis.}
Let \(d=\Pi_+d\).
On the support of
\(\chi(\xi_\kappa/w)\widehat d(\kappa)\),
\(0<\xi_\kappa\le w/8\).  Thus
\begin{equation}
 \lambda+\log r-\xi_\kappa
 \ge\lambda+\frac{3w}{8}>0.
 \label{eq:positive-low-signed-gap}
\end{equation}
The preceding restricted reciprocal construction applies with norm at most \(8C_q/(3w)\).
Define
\begin{equation}
 E_{t,w}d
 =\mathcal F_N^{-1}\left(
 \e^{t\xi_\kappa}\chi(\xi_\kappa/w)\widehat d(\kappa)\right).
 \label{eq:positive-low-evolved-coefficients}
\end{equation}
Then \((E_{t,w}d)(0)=(J_{t,w}*d)(0)\), and the finite Cauchy expansion gives the unnormalized singular term
\begin{align}
 iN(I-\e^{i\mathscr D_r})
 \int_0^\infty\e^{-t\log r}(E_{t,w}d)(0)
 \e^{-tD_{\mathrm{full}}}F_w\,\dd t.
 \label{eq:positive-low-singular-term}
\end{align}
As before, the reciprocal bound and the finite coefficient-frequency sum justify passage from \(F_w^{(L)}\) to \(F_w\).
By \eqref{eq:positive-low-kernel-bound},
\[
 \abs{(E_{t,w}d)(0)}
 \le C_{M_0}\frac wN(1+wt)^{M_0}\e^{wt/8}
 \sum_j\left(1+\frac wN\rho_j\right)^{-M_0}|d_j|.
\]
Combining this with \(\e^{-t\log r}\le\e^{-wt/2}\) and Lemma~\ref{lem:frozen-low-source-restoration} bounds the singular prime-band norm by
\begin{align}
 &C_pN\left(\frac wN\right)^{1/p}\frac wN
 \int_0^\infty(1+wt)^{M_0}\e^{-3wt/8}\dd t
 \sum_j\left(1+\frac wN\rho_j\right)^{-M_0}|d_j|
 \notag\\
 &\qquad\le C_p D_{N/w}(d).
 \label{eq:positive-low-trace-bound}
\end{align}
The integral is \(O(w^{-1})\), with dimensional factors
\[
 \underbrace{N}_{\text{Cauchy factor}}
 \underbrace{\left(\frac wN\right)^{1/p}}_{\text{source norm}}
 \underbrace{\frac wN}_{J_{t,w}}
 \underbrace{\frac1w}_{t\text{-integration}}
 =\left(\frac wN\right)^{1/p}.
\]

\medskip
\noindent\textbf{Step~5: regular remainder.}
Apply Lemma~\ref{lem:regular-cauchy-remainder} with
\[
 \sigma(\kappa)=\one_{\{\xi_\kappa>0\}}\chi(\xi_\kappa/w),
 \qquad \mathcal P_r=P_{[0,K_0]}^{\mathrm{full}}.
\]
The regular prime-band norm is at most
\(C_p(w/N)^{1/p}\norm d_q\).

\medskip
\noindent\textbf{Steps~6--7: scale summation and recovery.}
Proposition~\ref{prop:boundary-traces} sums the \(D_{N/w_h}\) traces, and \(\sum_h(w_h/N)^a\le C_p\) sums the regular terms.
The same disjoint-band inequality and subsequent application of Lemma~\ref{lem:frozen-low-source-restoration} yield
\begin{equation}
 \norm{\mathcal L_y^{\mathrm{lo}}d}_{L^q(\ell^2)}
 \le C_p\norm d_q.
 \label{eq:positive-low-column-bound}
\end{equation}

\boldparagraph{Estimate of
\(\mathcal O_yd\): the outside sector for the shifted frequency.}
\[
 \begin{aligned}
 F_w&\to
 \begin{cases}\lambda+\log r>4v,\\\lambda+\log r<v/4,\end{cases}
 \to
 \begin{cases}\text{upper signed resolvent},\\
                 \text{lower signed resolvent},\end{cases}\\
 &\to\begin{cases}J_{t,w,v}^{+},\\J_{t,w,v}^{-},\end{cases}
 \to(w/v)^{1/p}B_{N/v}\\
 &\to\text{shell convolution}\to\text{prime bands}
 \to\text{smaller-prime components}.
 \end{aligned}
\]

\medskip
\noindent\textbf{Steps~1--4: singular analysis.}
For vectors, let \((\mathcal JG)_r=\zeta_rG_r\).
On the polynomial core,
\begin{equation}
 \mathcal J^{-1}m(D_{\mathrm{full}})\mathcal J=m(\mathscr D),
 \qquad \mathscr D_r=D_{\mathrm{full}}+(\log r)I.
 \label{eq:shifted-row-conjugation}
\end{equation}
Since \(\mathcal J\) is an isometry, this transfers the componentwise interval bounds of Proposition~\ref{prop:row-interval-projections} to the shifted generators.
The shifted group
\((\e^{it\mathscr D_r}G_r)_r\) is isometric.  The damped half-line
identity and those interval bounds give, for \(t\ge0\),
\begin{align}
 \norm{\e^{-t\mathscr D}\one_{(4v,\infty)}(\mathscr D)G}_{L^q(\ell^2)}
 &\le C_q\e^{-4vt}\norm G_{L^q(\ell^2)},
 \label{eq:upper-outside-row-decay}\\
 \norm{\e^{t\mathscr D}\one_{(-\infty,v/4)}(\mathscr D)G}_{L^q(\ell^2)}
 &\le C_q\e^{vt/4}\norm G_{L^q(\ell^2)}.
 \label{eq:lower-outside-row-growth}
\end{align}
The real exponentials here are used only in their displayed projected products.
Conjugation by \(\mathcal J\) is not passed through conditional expectation.

Fix an active shell \(v\in\mathcal V_N\).
By
\eqref{eq:outside-shell-support},
\(v/2<\xi_\kappa<2v\) whenever \(a_{w,v}(\xi_\kappa)\ne0\).
Put
\[
 \mathcal P_{r,v}^{\mathrm{up}}
 =\one_{(4v,\infty)}(\mathscr D_r)P_{[0,K_0]}^{\mathrm{full}},
 \qquad
 \mathcal P_{r,v}^{\mathrm{low}}
 =\one_{(-\infty,v/4)}(\mathscr D_r)P_{[0,K_0]}^{\mathrm{full}}.
\]
On these respective spectral regions,
\[
 \lambda+\log r-\xi_\kappa>2v,
 \qquad
 \xi_\kappa-(\lambda+\log r)>v/4.
\]
By \eqref{eq:upper-outside-row-decay}--\eqref{eq:lower-outside-row-growth}, for every \(G\in L^q(\Omega_y)\),
\[
 \begin{aligned}
 \norm{\e^{t\xi_\kappa}\e^{-t\mathscr D_r}
             \mathcal P_{r,v}^{\mathrm{up}}G}_q
 &\le C_q\e^{-2vt}\norm G_q,\\
 \norm{\e^{-t\xi_\kappa}\e^{t\mathscr D_r}
             \mathcal P_{r,v}^{\mathrm{low}}G}_q
 &\le C_q\e^{-vt/4}\norm G_q.
 \end{aligned}
\]
The integrands are strongly continuous by polynomial approximation.
Thus the following absolutely convergent Bochner integrals define bounded restricted reciprocals on \(L^q\):
\begin{equation}
 R_{\kappa,r,v}^{\mathrm{up}}G
 :=\int_0^\infty\e^{t\xi_\kappa}\e^{-t\mathscr D_r}
          \mathcal P_{r,v}^{\mathrm{up}}G\,\dd t,
 \label{eq:upper-outside-resolvent}
\end{equation}
\begin{equation}
 R_{\kappa,r,v}^{\mathrm{low}}G
 :=-\int_0^\infty\e^{-t\xi_\kappa}\e^{t\mathscr D_r}
          \mathcal P_{r,v}^{\mathrm{low}}G\,\dd t.
 \label{eq:lower-outside-resolvent}
\end{equation}
Their norms are at most \(C_q/v\).
On characters, they equal
\((\lambda+\log r-\xi_\kappa)^{-1}\) on their respective projected
regions and vanish elsewhere.
In particular, for either choice of
\(\diamond\),
\[
 \norm{R_{\kappa,r,v}^{\diamond}(F_w^{(L)}-F_w)}_q
 \le\frac{C_q}{v}\norm{F_w^{(L)}-F_w}_q\longrightarrow0.
\]
These operator bounds, not merely the scalar gaps, justify the passage beyond the polynomial core.

Apply Lemma~\ref{lem:finite-cauchy-decomposition} to
\(\mathcal P_{r,v}^{\diamond}F_w^{(L)}\) with coefficient symbol
\(a_{w,v}(\xi_\kappa)\), and pass to \(F_w\).
Using
\[
 (J_{t,w,v}^{\pm}*d)(0)
 =\sum_{\kappa\in\Lambda_N}
 a_{w,v}(\xi_\kappa)\widehat d(\kappa)\e^{\pm t\xi_\kappa},
\]
the upper and lower unnormalized singular components become
\[
 \begin{aligned}
 &iN(I-\e^{i\mathscr D_r})
 \int_0^\infty(J_{t,w,v}^{+}*d)(0)
 \e^{-t\mathscr D_r}\mathcal P_{r,v}^{\mathrm{up}}F_w\,\dd t,\\
 -& iN(I-\e^{i\mathscr D_r})
 \int_0^\infty(J_{t,w,v}^{-}*d)(0)
 \e^{t\mathscr D_r}\mathcal P_{r,v}^{\mathrm{low}}F_w\,\dd t.
 \end{aligned}
\]
The coefficient sum is finite, and the projected bounds above justify its interchange with the integrals.
All commuting spectral products are first identified on characters and then extended by boundedness.

Combining \eqref{eq:upper-outside-row-decay} with
\eqref{eq:outside-plus-kernel-bound}, and
\eqref{eq:lower-outside-row-growth} with
\eqref{eq:outside-minus-kernel-bound}, leaves respectively
\[
 (1+vt)\e^{-4vt}\e^{2vt}=(1+vt)\e^{-2vt},
 \qquad
 (1+vt)\e^{vt/4}\e^{-vt/2}=(1+vt)\e^{-vt/4}.
\]
Both have integral \(O(v^{-1})\).
Hence the common numerator, Minkowski's inequality, and Lemma~\ref{lem:frozen-low-source-restoration} give the following bound for the combined singular prime-band norm at a fixed pair \((w,v)\):
\begin{align}
 C_pN\left(\frac wN\right)^{1/p}\frac vN\frac1v
 \sum_j\left(1+\frac vN\rho_j\right)^{-1}|d_j|
 =C_p\left(\frac wv\right)^{1/p}B_{N/v}(d).
 \label{eq:outside-singular-pair-bound}
\end{align}
The dimensional factors are
\[
 \underbrace{N}_{\text{Cauchy factor}}
 \underbrace{\left(\frac wN\right)^{1/p}}_{\text{source norm}}
 \underbrace{\frac vN}_{\text{coefficient kernel}}
 \underbrace{\frac1v}_{t\text{-integration}}
 =\left(\frac wN\right)^{1/p}.
\]

\medskip
\noindent\textbf{Step~5: regular remainder.}
Use Lemma~\ref{lem:regular-cauchy-remainder} with
\[
 \sigma(\kappa)=a_{w,v}(\xi_\kappa),
 \qquad
 \mathcal P_r=\mathcal P_{r,v}^{\mathrm{up}}
              +\mathcal P_{r,v}^{\mathrm{low}}.
\]
This is exactly \eqref{eq:regular-cauchy-row} with two componentwise interval projections, uniformly bounded by
\eqref{eq:shifted-row-conjugation} and
Proposition~\ref{prop:row-interval-projections}.
The coefficient verification after the regular-remainder lemma includes the top shells and their zero-extension jumps.
Thus each shell contributes at most
\begin{equation}
 C_p\left(\frac wN\right)^{1/p}\norm d_q.
 \label{eq:outside-regular-pair-bound}
\end{equation}
\medskip
\noindent\textbf{Step~6(a): singular shell summation.}
Write \(w=2^h\) and \(v=2^k\).
By \eqref{eq:active-shell-lower-bound}, active shells satisfy
\(k\ge h-5\).  Extend
\(b_k=B_{N/2^k}(d)\) by zero when \(2^k\notin\mathcal V_N\), and set
\[
 \gamma_n=2^{-n/p}\one_{\{n\ge-5\}},
 \qquad \check\gamma_n=\gamma_{-n}.
\]
Then
\[
 \sum_{k\ge h-5}2^{(h-k)/p}b_k=(\check\gamma*b)_h,
 \qquad \norm\gamma_{\ell^1(\Z)}<\infty.
\]
Taking the triangle inequality over shells within each prime band, and then using Young's inequality, the disjoint-band inequality, and Proposition~\ref{prop:boundary-traces}, gives
\begin{align}
 \sum_{h=0}^H
 \left(\sum_{\substack{k\ge h-5\\2^k\in\mathcal V_N}}
 2^{(h-k)/p}B_{N/2^k}(d)\right)^q
 \le C_p\sum_{\substack{k\in\Z\\2^k\in\mathcal V_N}}
 B_{N/2^k}(d)^q \le C_p\norm d_q^q.
 \label{eq:outside-singular-scale-sum}
\end{align}
\medskip
\noindent\textbf{Step~6(b): regular shell and prime-band summation.}
The active range \(w/32<v\le4N\) contains
\(O(1+\log(N/w))\) dyadic shells.
First sum over these shells by the triangle inequality, and then over prime bands using \(q/2<1\).
The resulting \(q\)-th power is bounded by
\begin{align}
 C_p\norm d_q^q\sum_{h=0}^H
 \left(\frac{w_h}{N}\right)^a
 \left(1+\log\frac{N}{w_h}\right)^q
 \le C_p\norm d_q^q,
 \label{eq:outside-regular-scale-sum}
\end{align}
since, with \(n=H-h\),
\[
 \sum_{h=0}^H\left(\frac{w_h}{N}\right)^a
 \left(1+\log\frac N{w_h}\right)^q
 \le\sum_{n\ge0}2^{-an}(1+n\log2)^q<\infty.
\]
Combining \eqref{eq:outside-singular-scale-sum} and
\eqref{eq:outside-regular-scale-sum}, followed by the recovery justified
next, yields
\begin{equation}
 \norm{\mathcal O_yd}_{L^q(\ell^2)}\le C_p\norm d_q.
 \label{eq:outside-column-bound}
\end{equation}

\medskip
\noindent\textbf{Step~7: conditional expectation.}
Lemma~\ref{lem:frozen-low-source-restoration} gives, for every restricted reciprocal and regular spectral multiplier \(m_r\) used above,
\[
 \E_{r^-}
 \left[
 m_r(D_{\mathrm{full}})F_w
 \right]
 =
 m_r(D_{r^-})
 P_{[0,K_0]}^{r^-}f_{r^-}(t_0).
\]
For the Laplace terms, apply this identity first on \([0,T]\).
The preceding \(L^q\)-integrable bounds and the contractivity of
\(\E_{r^-}\) justify passage to \(T\to\infty\).  Consequently, after
putting back \(\e^{it_0\log r}\) and summing the finitely many shells, the live outside vector is the componentwise conditional expectation of the assembled full-product vector.
Proposition~
\ref{prop:vector-conditional-expectations} therefore yields
\[
 \norm{\mathcal O_yd}_{L^q(\ell^2)}
 \le
 C_p\norm d_q,
\]
which is \eqref{eq:outside-column-bound}.
Together with
\eqref{eq:positive-low-column-bound}, this proves
\eqref{eq:positive-nonresonant-bound}.
\end{proof}

\subsection{Reduction to the resonant term and the projected-vector estimate}
\label{subsec:resonant-reduction}

\begin{proposition}[Reduction to the resonant term]
\label{prop:frequency-decomposition}
For every \(c\in\C^{\ZN}\),
\begin{equation}
 \norm{\mathcal T_yc}_{L^q(\ell^2)}
 \le
 C_p\norm c_q
 +
 \norm{
 \mathcal R_y\Pi_+c
 }_{L^q(\ell^2)}.
 \label{eq:frequency-decomposition-forward}
\end{equation}
Conversely, for every \(d=\Pi_+d\),
\begin{equation}
 \norm{
 \mathcal R_yd
 }_{L^q(\ell^2)}
 \le
 \norm{
 \mathcal T_yd
 }_{L^q(\ell^2)}
 +
 C_p\norm d_q.
 \label{eq:frequency-decomposition-converse}
\end{equation}
Both estimates use the same componentwise frequency decomposition, and all random components are constructed from the common family
\(\{f_{r^-}(t_j): r\le y,\ 0\le j<N\}\) on the prime probability space.
\end{proposition}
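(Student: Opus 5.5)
The plan is to read both inequalities off the exact identity of Proposition~\ref{prop:exact-frequency-partition}. All five pieces are built from the same family $\{f_{r^-}(t_j)\}$, so the identity holds componentwise on the common prime probability space. We can therefore combine the pieces with the triangle inequality in $L^q(\ell^2)$; since $q>1$ this is a genuine norm, and no probabilistic decoupling is needed.

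\textbf{Forward estimate.} Fix $c\in\C^{\ZN}$ and apply \eqref{eq:exact-frequency-partition}. The triangle inequality bounds $\norm{\mathcal T_yc}_{L^q(\ell^2)}$ by the sum of five norms, which I treat one at a time.
\begin{enumerate}[label=\textup{(\roman*)}]
\item For the high-frequency term, Proposition~\ref{prop:high-source-tail} gives $\norm{\mathcal H_yc}_{L^q(\ell^2)}=(\E\norm{\mathcal H_yc}_{\ell^2}^q)^{1/q}\le C_p\norm c_q$.
\item The nonpositive term $\mathcal T_y^{\mathrm{lo}}(\Pi_-c)$ is bounded by $C_p\norm c_q$, using \eqref{eq:negative-column-bound}.
\item For $\mathcal L_y^{\mathrm{lo}}(\Pi_+c)$ and $\mathcal O_y(\Pi_+c)$, apply \eqref{eq:positive-nonresonant-bound} with $d=\Pi_+c$. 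This is legitimate because $\Pi_+$ is idempotent, so $d=\Pi_+d$. By \eqref{eq:positive-negative-projection-bounds}, $\norm{\Pi_+c}_q\le C_q\norm c_q$.
\end{enumerate}
The only term not estimated is $\mathcal R_y\Pi_+c$, which is kept with coefficient one. This gives \eqref{eq:frequency-decomposition-forward}.

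\textbf{Converse estimate.} Now take $d=\Pi_+d$ and apply the same identity with $c=d$. Then $\Pi_+c=d$. Also $\Pi_-c=\Pi_-\Pi_+d=0$, so the nonpositive term vanishes identically, since $\mathcal T_y^{\mathrm{lo}}$ is linear.

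The identity therefore reads
\[
\mathcal R_yd=\mathcal T_yd-\mathcal H_yd-\mathcal L_y^{\mathrm{lo}}d-\mathcal O_yd
\]
componentwise. By the triangle inequality, Proposition~\ref{prop:high-source-tail} and \eqref{eq:positive-nonresonant-bound}, the last three terms are bounded by $C_p\norm d_q$. This gives \eqref{eq:frequency-decomposition-converse}.

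\textbf{Remaining checks.} There is no real obstacle here. The one point to verify is that every operator in the partition is linear in the coefficient vector, so that the subtraction in the converse is legitimate; this holds because each is a finite sum of spectral projections applied to $\sum_j c_j\e^{it_j\log r}f_{r^-}(t_j)$. The second point is that the identity holds as an equality of random vectors rather than only in norm, which Proposition~\ref{prop:exact-frequency-partition} provides. Uniformity of the constants is inherited directly from the cited estimates.
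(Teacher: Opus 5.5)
Your proposal is correct and matches the paper's proof: the triangle inequality on the five-piece identity of Proposition~\ref{prop:exact-frequency-partition} handles the forward direction, using Proposition~\ref{prop:high-source-tail}, Proposition~\ref{prop:nonresonant-columns} and \eqref{eq:positive-negative-projection-bounds}. For the converse, the same identity with $\Pi_-d=0$ is rearranged and the same three bounds are applied to the subtracted terms.
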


\begin{proof}
Proposition~\ref{prop:exact-frequency-partition} gives
\begin{align*}
 \norm{\mathcal T_yc}_{L^q(\ell^2)}
 &\le
 \norm{\mathcal H_yc}_{L^q(\ell^2)}
 +
 \norm{
 \mathcal T_y^{\mathrm{lo}}(\Pi_-c)
 }_{L^q(\ell^2)}
 +
 \norm{
 \mathcal L_y^{\mathrm{lo}}(\Pi_+c)
 }_{L^q(\ell^2)}\\
 &\quad+
 \norm{
 \mathcal O_y(\Pi_+c)
 }_{L^q(\ell^2)} + 
 \norm{
 \mathcal R_y(\Pi_+c)
 }_{L^q(\ell^2)}.
\end{align*}
Proposition~\ref{prop:high-source-tail}, Proposition~\ref{prop:nonresonant-columns}, and
\eqref{eq:positive-negative-projection-bounds} prove
\eqref{eq:frequency-decomposition-forward}.

Concretely, the four terms removed before resonance are the high-frequency component, the nonpositive low component, the positive-low component, and the outside-shell component.
Their estimates are respectively
\[
 \text{Proposition~\ref{prop:high-source-tail}},\qquad
 \eqref{eq:negative-column-bound},\qquad
 \eqref{eq:positive-low-column-bound},\qquad
 \eqref{eq:outside-column-bound}.
\]
These are precisely the estimates for the four nonresonant pieces in the five-piece partition table.

If \(d=\Pi_+d\), the same exact partition becomes
\[
 \mathcal T_yd
 =
 \mathcal H_yd
 +
 \mathcal L_y^{\mathrm{lo}}d
 +
 \mathcal O_yd
 +
 \mathcal R_yd.
\]
Subtracting the first three terms and applying the same bounds proves
\eqref{eq:frequency-decomposition-converse}.
\end{proof}

Having reduced the problem to the resonant term, we now establish its uniform estimate.

\begin{proposition}[Resonant-term estimate]
\label{prop:resonant-column}
For every \(d\in\C^{\ZN}\),
\begin{equation}
 \norm{
 \mathcal R_yd
 }_{L^q(\ell^2)}
 \le
 C_p\norm d_q.
 \label{eq:resonant-column-estimate}
\end{equation}
The constant is independent of \(N\), the grid shift, the prime dimension, and the coefficient support.
\end{proposition}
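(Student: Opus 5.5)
We will bound \(\mathcal R_yd=\sum_{\ell\ge-2}\mathcal R_y^{(\ell)}d\) offset by offset, using the triangle inequality in \(L^q(\ell^2)\), and then sum over \(\ell\). For each offset the ingredients are the frame estimates of Section~\ref{sec:frame-and-tails}, the common-shift moment bound, and the cyclic conical square function.

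\textbf{Step 1: reduction to coefficient shells.} For \(-2\le\ell\le3\) we apply Proposition~\ref{prop:frame-estimate}. For \(\ell\ge4\) we apply Proposition~\ref{prop:deformed-frame}. Together these give
\[
 \norm{\mathcal R_y^{(\ell)}d}_{L^q(\ell^2)}
 \le C_p\,\e^{-\vartheta_02^{\ell}/8\cdot\one_{\{\ell\ge4\}}}\,
 \mathfrak P_y^{\vartheta_\ell}(z^{(\ell)})^{1/q},
\]
where \(\vartheta_\ell\in\{0,\vartheta_0\}\). Proposition~\ref{prop:common-shift-occupation} then bounds \(\mathfrak P_y^{\vartheta_\ell}(z^{(\ell)})\) by \(C_p\norm{S_{\mathrm{cone}}z^{(\ell)}}_{\ell^q}^q\), uniformly in \(\vartheta\le1/32\).

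\textbf{Step 2: conical square function.} The table \(z^{(\ell)}_h=d_{2^{h+\ell}}\) has Fourier support in \(\{\kappa>0: 2\pi\kappa<2\cdot2^{h+\ell}\}\), so the bandwidth hypothesis \eqref{eq:conical-bandwidth-assumption} holds with \(C_*\asymp 2^{\ell}\). Theorem~\ref{thm:cyclic-conical-square-function} therefore gives
\[
 \norm{S_{\mathrm{cone}}z^{(\ell)}}_{\ell^q}
 \le C_q\,2^{\ell\Gamma(q)}
 \norm{\Bigl(\sum_h|d_{2^{h+\ell}}|^2\Bigr)^{1/2}}_{\ell^q}.
\]
The map \(h\mapsto 2^{h+\ell}\) is injective into the dyadic shells, so the right-hand square function is dominated by the full shell square function \(\bigl(\sum_{v\in\mathcal V_N}|d_v|^2\bigr)^{1/2}\). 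Proposition~\ref{prop:coefficient-square-function} bounds the latter in \(\ell^q\) by \(C_q\norm d_q\). Shells \(v\notin\mathcal V_N\) have \(d_v=0\) and contribute nothing.

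\textbf{Step 3: summation in \(\ell\).} Combining the previous steps,
\[
 \norm{\mathcal R_yd}_{L^q(\ell^2)}
 \le C_p\norm d_q\Bigl(\sum_{\ell=-2}^{3}2^{\ell\Gamma(q)}
 +\sum_{\ell\ge4}\e^{-\vartheta_02^\ell/8}2^{\ell\Gamma(q)}\Bigr)
 \le C_p\norm d_q.
\]
The second series converges because the superexponential factor \(\e^{-\vartheta_02^\ell/8}\) beats the polynomial-in-\(C_*\) loss \(2^{\ell\Gamma(q)}\).

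\textbf{Main obstacle.} The delicate point is Step 2: checking that the offset table genuinely satisfies the bandwidth condition with \(C_*=O(2^\ell)\) at every level \(h\), and that the top shells near Nyquist (\(v=2N,4N\)) and the case \(h+\ell\) outside \(\mathcal V_N\) fit this framework. Those shells are positive-spectrum with \(|\kappa|<N/2\), so the support condition is automatic there. The remaining steps are direct citations of earlier results. The uniformity in \(N\), \(t_0\), the prime dimension and the coefficient support is inherited from each cited estimate.
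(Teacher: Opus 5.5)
Your proposal is correct and is essentially the paper's proof. Split at $\ell=4$, apply Proposition~\ref{prop:frame-estimate} or Proposition~\ref{prop:deformed-frame}, then Proposition~\ref{prop:common-shift-occupation}, Theorem~\ref{thm:cyclic-conical-square-function} with $C_*=O(2^\ell)$, and Proposition~\ref{prop:coefficient-square-function}, and finally sum in $\ell$ by Minkowski's inequality, since $\e^{-\vartheta_0 2^\ell/8}$ beats $2^{\Gamma(q)\ell}$ (for $-2\le\ell<0$ the conical factor $(1+C_*)^{\Gamma(q)}$ is merely bounded, which affects only the constant).
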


\begin{proof}

We estimate the bounded and large offsets separately.
We split at
\(\ell=4\), separating a fixed finite range from the range in which the
deformation must absorb the polynomial bandwidth loss.

Let
\(
 -2\le\ell\le3.
\)
Proposition~\ref{prop:frame-estimate} gives
\[
 \norm{
 \mathcal R_y^{(\ell)}d
 }_{L^q(\ell^2)}
 \le
 C_p
 \mathfrak P_y(z^{(\ell)})^{1/q}.
\]
By Proposition~\ref{prop:common-shift-occupation},
\[
 \mathfrak P_y(z^{(\ell)})^{1/q}
 \le
 C_p
 \norm{
 S_{\mathrm{cone}}z^{(\ell)}
 }_q.
\]
The coefficient shell
\[
 z_h^{(\ell)}=d_{2^{h+\ell}}
\]
has Fourier support satisfying
\[
 \abs\kappa
 \le
 C2^{h+\ell}.
\]
Thus,  Theorem~\ref{thm:cyclic-conical-square-function} applies with
\[
 C_*\le C2^\ell=O(1)
\]
throughout this finite offset range.  Consequently,
\begin{align}
 \norm{
 \mathcal R_y^{(\ell)}d
 }_{L^q(\ell^2)}
 \le
 C_p
 \norm{
 \left(
 \sum_h
 \abs{d_{2^{h+\ell}}}^2
 \right)^{1/2}
 }_q
 \le
 C_p
 \norm{
 \left(
 \sum_{v\in\mathcal V_N}
 \abs{d_v}^2
 \right)^{1/2}
 }_q
 \le
 C_p\norm d_q
 \label{eq:bounded-offset-resonant-bound}
\end{align}
by Proposition~\ref{prop:coefficient-square-function}.

Now let \(\ell\ge4\).
Proposition~\ref{prop:deformed-frame} gives
\[
 \norm{
 \mathcal R_y^{(\ell)}d
 }_{L^q(\ell^2)}
 \le
 C_p
 \e^{-\vartheta_02^\ell/8}
 \mathfrak P_y^{\vartheta_0}(z^{(\ell)})^{1/q},
 \qquad
 \vartheta_0=\frac1{32}.
\]
The deformed common-shift estimate in Proposition~\ref{prop:common-shift-occupation} and the cyclic conical estimate give
\begin{align}
 \mathfrak P_y^{\vartheta_0}(z^{(\ell)})^{1/q}
 \le
 C_p
 \norm{
 S_{\mathrm{cone}}z^{(\ell)}
 }_q
 \le
 C_p
 (1+C_*)^{\Gamma(q)}
 \norm{
 \left(
 \sum_h\abs{d_{2^{h+\ell}}}^2
 \right)^{1/2}
 }_q.
 \label{eq:large-offset-conical-reduction}
\end{align}
Here
\[
 1+C_*
 \le
 C2^\ell
 =
 2^{\ell+C_0}
\]
for an absolute constant \(C_0\).
Proposition
\ref{prop:coefficient-square-function} therefore yields
\begin{equation}
 \norm{
 \mathcal R_y^{(\ell)}d
 }_{L^q(\ell^2)}
 \le
 C_p
 \e^{-\vartheta_02^\ell/8}
 2^{\Gamma(q)(\ell+C_0)}
 \norm d_q.
 \label{eq:large-offset-resonant-bound}
\end{equation}
For every fixed \(1<q<2\),
\[
 \Gamma(q)<\infty,
\]
and the sequence
\[
 \e^{-\vartheta_02^\ell/8}
 2^{\Gamma(q)(\ell+C_0)}
\]
is summable in \(\ell\). 

Since \(q>1\), Minkowski's inequality applies to
\eqref{eq:resonant-column-sum}.  Summing
\eqref{eq:bounded-offset-resonant-bound} over
\(-2\le\ell\le3\) and
\eqref{eq:large-offset-resonant-bound} over \(\ell\ge4\) proves
\eqref{eq:resonant-column-estimate}.
\end{proof}

We can now complete the proof of the projected-vector estimate.

\begin{theorem}[Projected-vector estimate]
\label{thm:corrected-column}
For every fixed \(p>2\), there exists a constant \(C_p<\infty\), depending only on \(p\), such that for every \(c\in\C^{\ZN}\),
\begin{equation}
 \norm{\mathcal T_yc}_{L^q(\ell^2(\{r\le y\}))}
 \le
 C_p\norm c_{\ell^q(\ZN)}.
 \label{eq:corrected-column-estimate}
\end{equation}
\end{theorem}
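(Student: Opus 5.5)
The estimate follows by combining the reduction to the resonant term with the resonant estimate, both already established. Fix \(c\in\C^{\ZN}\). By Proposition~\ref{prop:frequency-decomposition}, specifically \eqref{eq:frequency-decomposition-forward},
\[
 \norm{\mathcal T_yc}_{L^q(\ell^2)}
 \le
 C_p\norm c_q
 +
 \norm{\mathcal R_y\Pi_+c}_{L^q(\ell^2)}.
\]
That inequality already packages the exact five-piece partition of Proposition~\ref{prop:exact-frequency-partition}, together with the bounds for the four nonresonant pieces. These are the high-frequency tail (Proposition~\ref{prop:high-source-tail}), the nonpositive sector and the positive-low and outside-shell sectors (Proposition~\ref{prop:nonresonant-columns}), and the \(\ell^q\)-boundedness of \(\Pi_\pm\) from \eqref{eq:positive-negative-projection-bounds}. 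Nothing further is needed there.

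For the resonant term, apply Proposition~\ref{prop:resonant-column} to the vector \(d=\Pi_+c\). This gives
\[
\norm{\mathcal R_y\Pi_+c}_{L^q(\ell^2)}\le C_p\norm{\Pi_+c}_q.
\]
Then \eqref{eq:positive-negative-projection-bounds} gives \(\norm{\Pi_+c}_q\le C_q\norm c_q\), with a constant independent of \(N\).

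Combining the two displays yields \eqref{eq:corrected-column-estimate}. The constant is a product of constants each depending only on \(p\). Each cited result was stated uniformly in \(N=2^H\), the grid shift \(t_0\), the prime cutoff \(y=\e^N\) and dimension, and the coefficient support. So \(C_p\) inherits this uniformity. The one point I would re-check is exactly this bookkeeping: that no constant picks up hidden dependence through \(\mathcal V_N\) or the number of prime bands. That was verified in the scale summations of the cited propositions, where the shell sums and band sums are controlled by \(\ell^1\) kernels and the geometric factors \(\sum_h(w_h/N)^a\).

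There is no genuine obstacle left at this stage. All the difficulty lives in Proposition~\ref{prop:resonant-column}: the common-shift moment bound, the critical tree, and the conical square function. The proof of the theorem is therefore a two-line assembly.
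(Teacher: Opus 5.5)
Your proposal is correct and is exactly the paper's argument. The paper also applies the forward reduction \eqref{eq:frequency-decomposition-forward}, then Proposition~\ref{prop:resonant-column} with \(d=\Pi_+c\), and finally the projection bound \eqref{eq:positive-negative-projection-bounds}.
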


\begin{proof}
By Proposition~\ref{prop:frequency-decomposition}
\[
 \norm{\mathcal T_yc}_{L^q(\ell^2)}
 \le
 C_p\norm c_q
 +
 \norm{
 \mathcal R_y\Pi_+c
 }_{L^q(\ell^2)}.
\]
For the second term, Proposition~\ref{prop:resonant-column} and
\eqref{eq:positive-negative-projection-bounds} yield
\[
 \norm{
 \mathcal R_y\Pi_+c
 }_{L^q(\ell^2)}
 \le
 C_p\norm{\Pi_+c}_q
 \le
 C_p\norm c_q.
\]
This proves the theorem.
\end{proof}

\section{Proof of the main theorem}
\label{sec:main-proof}

We now deduce the local embedding inequality from Theorem~\ref{thm:corrected-column}.
An exact ordered expansion of the Euler product first produces a representative in a finite-torus Hardy quotient; quotient duality and the Euler coefficient identity then convert the resulting bound into critical-line sampling.

Throughout this section, all Fourier coefficients and pairings use the conventions in \eqref{eq:torus-fourier-pairing}.
In particular,
\[
 \widehat F(\nu)
 =
 \int_{\T^d}F(\zeta)\overline{\zeta^\nu}\dd m(\zeta),
 \qquad
 \ip{F}{G}
 =
 \int_{\T^d}F(\zeta)\overline{G(\zeta)}\dd m(\zeta),
\]
and the pairing is linear in its first variable.
Recall the following finite-grid parameters defined in \eqref{eq:finite-grid-parameters},
\begin{align*}
&H \in\mathbb{N},\qquad
 N=2^H,\qquad
 y=\e^N,\qquad\\
 &t_0\in\mathbb{R},\qquad
 t_j=t_0+\frac{j}{N},
 \quad 0\le j<N,
\end{align*}
and write
\[
 M_y=\prod_{r\le y}(1-r^{-1})^{-1},
\]
which is defined in \eqref{eq:mertens-and-sb}.

\subsection{Hardy quotient and Euler product}
\label{subsec:hardy-quotient-euler-source}

Let \(1<p<\infty\), \(q=p/(p-1)\), and \(d<\infty\).
Denote by 
\(H^p(\T^d)\) 
the \(L^p(\T^d)\)-closure of the analytic trigonometric polynomials, and define
\[
 \mathcal N_p
 =
 \left\{
 G\in L^q(\T^d):
 \ip{G}{F}=0
 \text{ for every }F\in H^p(\T^d)
 \right\}.
\]
Since the analytic characters \(\zeta^\nu\), \(\nu\in\Nzero^d\), span a dense subspace of \(H^p(\T^d)\), one has
\begin{equation}\label{eq:hardy-annihilator-fourier}
 \mathcal N_p
 =
 \left\{
 G\in L^q(\T^d):
 \widehat G(\nu)=0
 \text{ for every }\nu\in\Nzero^d
 \right\}.
\end{equation}
In particular, if
\[
 \mu=(\mu_1,\ldots,\mu_d)\in\Z^d,
\]
and \(\mu_j<0\) for some \(j\), then
\[
 \zeta^\mu\in\mathcal N_p.
\]

By standard quotient-space and \(L^q\)-\(L^p\) duality, for every
\(F\in H^p(\T^d)\) the formula
\[
 \Lambda_F([G])\coloneqq\ip{G}{F}
\]
defines a continuous complex-linear functional on
\(L^q(\T^d)/\mathcal N_p\), and the map
\[
 H^p(\T^d)
 \longrightarrow
 \bigl(L^q(\T^d)/\mathcal N_p\bigr)^*,
 \qquad
 F\longmapsto\Lambda_F,
\]
is a conjugate-linear isometric bijection.
In particular,
\begin{equation}\label{eq:quotient-dual-isometry}
\norm{\Lambda_F}_{(L^q/\mathcal N_p)^*}
=
\norm F_{L^p(\T^d)}.
\end{equation}

For the remainder of this section, return to the fixed \(p>2\),
so that \(1<q<2\) and \(\alpha=1-2/p\in(0,1)\).
For \(0<x\le1/2\), define
\[
 \eta_x(z)
 =
 (1-x)^\alpha
 (1-\sqrt{x}z)^{-1}
 (1-\sqrt{x}\bar z)^{-\alpha},
 \qquad z\in\T.
\]
Using \eqref{eq:binomial-branch} and \(\bar z=z^{-1}\) on \(\T\), the
absolutely convergent expansions of the two factors give
\[
 \eta_x(z)
 =
 (1-x)^\alpha
 \sum_{n,m\ge0}
 \frac{(\alpha)_m}{m!}
 x^{(n+m)/2}z^{\,n-m}.
\]
Hence, for \(k\ge0\), collecting the terms with \(n-m=k\) yields
\begin{equation}
 \widehat{\eta_x}(k)
 =
 (1-x)^\alpha x^{k/2}
 \sum_{m=0}^{\infty}\frac{(\alpha)_m}{m!}x^m
 =
 x^{k/2}.
 \label{eq:one-prime-positive-coefficient}
\end{equation}
In particular, 
\(
 \widehat{\eta_x}(0)=1,
\)
\(
 \widehat{\eta_x}(1)=\sqrt{x},
\)
and
\[
 \begin{aligned}
 \norm{\eta_x}_{A(\T)}
 \coloneqq
 \sum_{k\in\Z}\abs{\widehat{\eta_x}(k)}
\le
 (1-x)^\alpha(1-\sqrt{x})^{-1-\alpha}
 <\infty.
 \end{aligned}
\]
Thus, \(\eta_x\in A(\T)\).

Recall that \(y=\e^N\) as in \eqref{eq:finite-grid-parameters}, and let \(r\le y\) be prime. \(f_y(t)\) and \(f_{r^-}(t)\) are defined in \eqref{eq:euler-source} and \eqref{eq:old-prime-source}, respectively.
Then, 
\begin{equation}
 f_y(t)
 =
 \prod_{\rho\le y}
 \eta_{1/\rho}
 \left(
 \e^{it\log \rho}\zeta_\rho
 \right),
 \label{eq:euler-source-one-prime-product}
\end{equation}
where the product is over primes.
Since the product is finite and every factor belongs to \(A(\T)\),
\[
 f_y(t)\in A(\T^{\pi(y)}).
\]
The same holds for every \(f_{r^-}(t)\).
Finite sums over the sampling points preserve this absolute convergence.
Thus,  the product expansions and Fourier subseries used below converge in the Wiener algebra, and hence in \(L^q\).
No uniform bound on their Wiener norms is needed.

For a prime \(r\), define the positive current-prime remainder
\begin{equation}
 H_r^+(t)
=
 \sum_{k=2}^{\infty}
 r^{-k/2}
 \e^{ikt\log r}\zeta_r^k
=
 \frac{
 r^{-1}\e^{2it\log r}\zeta_r^2
 }{
 1-r^{-1/2}\e^{it\log r}\zeta_r
 }.
 \label{eq:positive-higher-current-harmonics}
\end{equation}
This remainder collects the positive higher harmonics of the current prime, separating the quadratic-and-higher terms from the first harmonic that forms the projected vector.
It satisfies
\begin{equation}
 \abs{H_r^+(t)}
 \le
 \frac{r^{-1}}{1-r^{-1/2}}
 \le
 Cr^{-1}.
 \label{eq:positive-higher-current-bound}
\end{equation}

\subsection{The Euler quotient representative}
\label{subsec:euler-quotient-representative}

Put
\begin{equation}
 g_{y,t}=M_y^{-1/p}f_y(t),
 \label{eq:normalized-quotient-source}
\end{equation}
and, for \(c\in\C^{\ZN}\) and \(r\le y\), define
\begin{equation}
 A_r(c)
 =
 M_y^{-1/p}
 \sum_{j<N}
 c_j\e^{it_j\log r}f_{r^-}(t_j).
 \label{eq:first-harmonic-old-prime-coefficient}
\end{equation}
Then,  the projected vector in \eqref{eq:corrected-column} is
\begin{equation}
 (\mathcal T_yc)_r
 =
 r^{-1/2}P_{[0,\infty)}^{r^-}A_r(c).
 \label{eq:corrected-column-A-r}
\end{equation}

Define
\begin{equation}
 H_0(c)
 =
 M_y^{-1/p}\sum_{j<N}c_j,
 \label{eq:quotient-root-term}
\end{equation}
and
\begin{equation}
 \mathcal Q_y(c)
 =
 M_y^{-1/p}
 \sum_{r\le y}
 \sum_{j<N}
 c_jf_{r^-}(t_j)H_r^+(t_j).
 \label{eq:quotient-higher-harmonic-remainder}
\end{equation}
These terms isolate the scalar constant term and the positive higher-harmonic remainder, leaving the first-harmonic part as the principal component handled by the projected-vector estimate.

\begin{proposition}[Euler quotient representation]
\label{prop:quotient-representation}
For every \(c\in\C^{\ZN}\), on the finite prime torus
\(\Omega_y=\T^{\pi(y)}\),
\begin{equation}
 \left[
 \sum_{j<N}c_jg_{y,t_j}
 \right]
 =
 \left[
 H_0(c)
 +
 \sum_{r\le y}\zeta_r(\mathcal T_yc)_r
 +
 \mathcal Q_y(c)
 \right]
 \quad\text{in }L^q(\Omega_y)/\mathcal N_p.
 \label{eq:quotient-representation}
\end{equation}
Moreover,
\begin{equation}
 \norm{H_0(c)}_q+\norm{\mathcal Q_y(c)}_q
 \le
 C_p\norm c_{\ell^q(\ZN)}.
 \label{eq:quotient-root-remainder-bound}
\end{equation}
\end{proposition}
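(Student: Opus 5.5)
The plan is to prove \eqref{eq:quotient-representation} by an ordered telescoping expansion of the Euler product, one prime at a time. Order the primes \(r\le y\) increasingly. Write \(\eta^{(r)}_t=\eta_{1/r}(\e^{it\log r}\zeta_r)\), so that \(f_{r^+}(t)=f_{r^-}(t)\,\eta^{(r)}_t\) (with \(f_{r^+}\) denoting the product over primes \(\le r\)) and, by \eqref{eq:euler-source-one-prime-product}, \(f_y(t)=f_{r_{\max}^+}(t)\). Telescoping then gives
\[
 f_y(t)=1+\sum_{r\le y}f_{r^-}(t)\bigl(\eta^{(r)}_t-1\bigr).
\]
All identities hold in the Wiener algebra, hence in \(L^q\).

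Next, split each one-prime increment according to the Fourier expansion of \(\eta_{1/r}\) on \(\T\). By \eqref{eq:one-prime-positive-coefficient}, the nonnegative coefficients are exactly \(r^{-k/2}\), so
\[
 \eta^{(r)}_t-1=r^{-1/2}\e^{it\log r}\zeta_r+H_r^+(t)+(\text{terms }\zeta_r^{k},\ k<0).
\]
The negative-harmonic terms, multiplied by \(f_{r^-}(t_j)\), which does not involve \(\zeta_r\), are characters with \(\nu_r<0\). They therefore lie in \(\mathcal N_p\) by \eqref{eq:hardy-annihilator-fourier}; here I use absolute convergence to pass the annihilation through the infinite sum. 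Multiplying by \(M_y^{-1/p}c_j\) and summing over \(j\) produces \(H_0(c)\), then \(\zeta_r r^{-1/2}A_r(c)\) from the first harmonic, then \(\mathcal Q_y(c)\), modulo \(\mathcal N_p\).

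It remains to replace \(A_r(c)\) by \(P^{r^-}_{[0,\infty)}A_r(c)\). The difference \(\zeta_r r^{-1/2}P^{r^-}_{(-\infty,0)}A_r(c)\) consists of characters \(\zeta_r\zeta^\nu\) with \(\nu\) supported on primes \(<r\) and \(\sum\nu_\rho\log\rho<0\). Such a \(\nu\) must have some negative entry, so these characters also lie in \(\mathcal N_p\). Strictly, one checks this coefficientwise: \(A_r(c)\) has absolutely summable Fourier coefficients, and the spectral projection only deletes coefficients, so the discarded part has vanishing coefficients on \(\Nzero^{\pi(y)}\). Together with \eqref{eq:corrected-column-A-r}, this gives \eqref{eq:quotient-representation}.

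For \eqref{eq:quotient-root-remainder-bound}, the term \(H_0\) is trivial. Hölder gives \(|\sum c_j|\le N^{1/p}\norm c_q\), and \(M_y\asymp N\) by Mertens, so \(\norm{H_0(c)}_q\le C\norm c_q\).

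The main obstacle is \(\mathcal Q_y(c)\), because the crude bound \(\sum_r r^{-1}\) over \(r\le \e^N\) costs a factor \(\log N\). My first attempt uses the pointwise bound \eqref{eq:positive-higher-current-bound} and the triangle inequality to get
\[
 \norm{\mathcal Q_y(c)}_q\le C M_y^{-1/p}\sum_r r^{-1}\Bigl\|\sum_j |c_j|\,|f_{r^-}(t_j)|\Bigr\|_q .
\]
This loses the \(\log N\) and must be improved. Instead I would group the primes into the bands \(\mathcal I_h\) and treat \(\mathcal Q_y\) as a martingale-type sum in the current prime \(\zeta_r\). The term \(H_r^+\) carries \(\zeta_r^2\) and higher, so it is a centered increment with respect to \(\sigma(\zeta_\rho:\rho\le r)\). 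Burkholder--Gundy then reduces the estimate to \(\bigl\|\bigl(\sum_r r^{-2}|\sum_j c_jf_{r^-}(t_j)\,\cdot\,\mathrm{bounded}|^2\bigr)^{1/2}\bigr\|_q\).

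At that point I would proceed as in Lemma~\ref{lem:frozen-source-frame} and the high-frequency estimate of Proposition~\ref{prop:high-source-tail}: sample pathwise in \(j\), apply the moment bound \eqref{eq:euler-source-normalization} with \(M_{r^-}\le M_y\), and use Mertens ratios. The extra factor \(r^{-1/2}\) relative to the first harmonic should make the band sums geometric, giving \(\sum_h e^{-c2^h}\), and hence a bound of \(C_p\norm c_q\) uniform in \(N\).
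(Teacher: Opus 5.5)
Your derivation of \eqref{eq:quotient-representation} and your bound for $H_0(c)$ agree with the paper. This covers the ordered telescoping, the annihilation of the negative current-prime harmonics, and the removal of $P^{r^-}_{(-\infty,0)}A_r(c)$ because a negative total frequency forces a negative coordinate. Your decision to treat $\mathcal Q_y(c)$ as a martingale in the current prime also matches the paper. The gap is in how you finish after Burkholder--Gundy.

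The tools you point to do not apply as stated. Lemma~\ref{lem:prime-sampling} and Lemma~\ref{lem:frozen-source-frame} need $j$-weights common to all primes of a band, and they gain from the oscillation of $\e^{it_j\log r}$. Here the weights $c_jf_{r^-}(t_j)H_r^+(t_j)$ depend on $r$ through both factors. Since $H_r^+$ involves $\zeta_r$, the full-product/conditional-expectation device of Proposition~\ref{prop:frame-estimate} is also not directly available. In any case, once you use $\abs{H_r^+}\le Cr^{-1}$ the oscillation is gone.

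More seriously, replacing $M_{r^-}$ by $M_y$ is fatal. H\"older in $j$ (or $\ell^q\to\ell^2$ sampling) costs $N^{1/p}$ (or $(N/2^h)^{1/p}$), and only the sharp ratio $M_{r^-}/M_y\lesssim(1+\log r)/N$ can absorb this. With $M_{r^-}\le M_y$ the lowest bands, where $r^{-1/2}$ gives no decay at all, contribute a power of $N$. So the band sum is not $\sum_h\e^{-c2^h}$. Relatedly, your crude triangle-inequality bound loses about $\sum_{r\le\e^N}r^{-1}(1+\log r)^{1/p}\asymp N^{1/p}$, not $\log N$.

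The missing step is short and needs neither bands nor sampling. Since $q/2<1$, pathwise $\bigl(\sum_r\abs{\Delta_r}^2\bigr)^{q/2}\le\sum_r\abs{\Delta_r}^q$. Burkholder--Gundy therefore gives $\norm{\mathcal Q_y(c)}_q^q\le C_q\sum_{r\le y}\norm{\Delta_r}_q^q$, with $\Delta_r=M_y^{-1/p}\sum_jc_jf_{r^-}(t_j)H_r^+(t_j)$.

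Bound each term crudely using \eqref{eq:positive-higher-current-bound}, H\"older in $j$, and \eqref{eq:euler-source-normalization} applied to $f_{r^-}$ with its own factor $M_{r^-}$. This gives $\norm{\Delta_r}_q\le C(N/M_y)^{1/p}r^{-1}M_{r^-}^{1/p}\norm c_q\lesssim_p r^{-1}(1+\log r)^{1/p}\norm c_q$, using $M_y\asymp N$ and $M_{r^-}\lesssim1+\log r$. The series $\sum_r r^{-q}(1+\log r)^{q/p}$ converges because $q>1$.

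This is how your intuition about the extra $r^{-1/2}$ is realized. With only the first-harmonic size $r^{-1/2}$, the corresponding series $\sum_r r^{-q/2}(1+\log r)^{q/p}$ diverges. That is why $\mathcal T_yc$ needs the whole machinery while $\mathcal Q_y(c)$ does not.
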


This proposition expresses the combined Euler product in the Hardy quotient as a controlled constant term, the first-harmonic projected vector, and a controlled higher-harmonic remainder, thereby reducing the remaining quotient estimate to the projected-vector bound.

\begin{proof}
The proof has four steps.
\begin{enumerate}
\item[\textbf{Step 1.}]
Expand the ordered product and separate the current-prime harmonics.

\item[\textbf{Step 2.}]
Replace the first-harmonic coefficients by their nonnegative-frequency parts
in the quotient.

\item[\textbf{Step 3.}]
Bound the constant term by H\"older's inequality.

\item[\textbf{Step 4.}]
Bound the higher harmonics using their martingale differences.
\end{enumerate}

\medskip
\noindent\textbf{Step 1. Ordered expansion and current-prime harmonics.}
Ordering the primes below \(y\), the finite product identity
\[
 \prod_{k=1}^{m}\eta_k
 =
 1+
 \sum_{k=1}^{m}
 \left(\prod_{i<k}\eta_i\right)(\eta_k-1)
\]
and \eqref{eq:euler-source-one-prime-product} give
\begin{equation}
 f_y(t)
 =
 1+
 \sum_{r\le y}
 f_{r^-}(t)
 \left(
 \eta_{1/r}
 \left(
 \e^{it\log r}\zeta_r
 \right)-1
 \right).
 \label{eq:ordered-euler-telescoping}
\end{equation}
By \eqref{eq:positive-higher-current-harmonics} and \eqref{eq:one-prime-positive-coefficient},
\begin{equation}
 \eta_{1/r}
 \left(
 \e^{it\log r}\zeta_r
 \right)-1
=
 \sum_{k<0}
 \widehat{\eta_{1/r}}(k)
 \e^{ikt\log r}\zeta_r^k
+
 r^{-1/2}\e^{it\log r}\zeta_r
 +
 H_r^+(t).
\label{eq:current-prime-harmonic-decomposition}
\end{equation}
The current-prime harmonics have the following roles:
\begin{center}
\begin{tabular}{c| l}
harmonic & role in the quotient representation\\
\hline
\(k<0\)
& belongs to the Hardy annihilator\\
\(k=0\)
& cancels in \(\eta_{1/r}-1\)\\
\(k=1\)
& gives the projected vector after projection\\
\(k\ge2\)
& gives the higher-harmonic remainder
\end{tabular}
\end{center}
Set \(t=t_j\) in \eqref{eq:ordered-euler-telescoping}, multiply by
\(M_y^{-1/p}c_j\), and sum over \(j<N\).  Using
\eqref{eq:current-prime-harmonic-decomposition}, we obtain
\begin{align}
 \sum_{j<N}c_jg_{y,t_j}
 &=
 H_0(c)
 +
 \mathcal E_y^-(c)
 +
 \sum_{r\le y}\zeta_r r^{-1/2}A_r(c)
 +
 \mathcal Q_y(c),
 \label{eq:euler-harmonic-assembly}
\end{align}
where
\[
 \mathcal E_y^-(c)
 =
 M_y^{-1/p}
 \sum_{r\le y}\sum_{j<N}
 c_j f_{r^-}(t_j)
 \sum_{k<0}
 \widehat{\eta_{1/r}}(k)
 \e^{ikt_j\log r}\zeta_r^k.
\]
Since \(f_{r^-}(t_j)\) is independent of \(\zeta_r\),
\[
 \supp\widehat{\mathcal E_y^-(c)}
 \subset
 \bigcup_{r\le y}
 \{\nu\in\Z^{\pi(y)}:\nu_r<0\}.
\]
Hence, by \eqref{eq:hardy-annihilator-fourier},
\begin{equation}
 \mathcal E_y^-(c)\in\mathcal N_p.
 \label{eq:negative-current-annihilator}
\end{equation}

\medskip
\noindent\textbf{Step 2. Projection of the first harmonic.}
For the positive first harmonic, decompose
\[
 A_r(c)
 =
 P_{[0,\infty)}^{r^-}A_r(c)
 +
 P_{(-\infty,0)}^{r^-}A_r(c).
\]
A character depending only on primes below \(r\)
\(\zeta^\mu=\prod_{\rho<r}\zeta_\rho^{\mu_\rho}\) occurring in
\(P_{(-\infty,0)}^{r^-}A_r(c)\) satisfies
\[
 \sum_{\rho<r}\mu_\rho\log\rho<0.
\]
Hence at least one old coordinate \(\mu_\rho\) is negative.
Multiplication by \(\zeta_r\) leaves that coordinate unchanged, so
\[
\zeta_r\zeta^\mu\in\mathcal N_p.
\]
Therefore,
\begin{equation}
 \zeta_r r^{-1/2}
 P_{(-\infty,0)}^{r^-}A_r(c)
 \in\mathcal N_p.
 \label{eq:discarded-first-harmonic-annihilator}
\end{equation}
Thus, in \(L^q(\Omega_y)/\mathcal N_p\),
\begin{equation*}
 \left[
 \sum_{r\le y}\zeta_r r^{-1/2}A_r(c)
 \right]
=
 \left[
 \sum_{r\le y}
 \zeta_r r^{-1/2}
 P_{[0,\infty)}^{r^-}A_r(c)
 \right]
=
 \left[
 \sum_{r\le y}
 \zeta_r(\mathcal T_yc)_r
 \right],
\end{equation*}
where the last equality follows from
\eqref{eq:corrected-column-A-r}.  Together with
\eqref{eq:euler-harmonic-assembly} and
\eqref{eq:negative-current-annihilator}, this proves
\eqref{eq:quotient-representation}.

\medskip
\noindent\textbf{Step 3. The root term.}
For the constant term, H\"older's inequality gives
\begin{equation*}
 \norm{H_0(c)}_q
=
 M_y^{-1/p}
 \abs{\sum_{j<N}c_j}
\le
 \left(\frac{N}{M_y}\right)^{1/p}
 \norm c_q
 \lesssim_p
 \norm c_q,
\end{equation*}
since \(y=\e^N\), and
\(
 M_y\asymp N.
\)

\medskip
\noindent\textbf{Step 4. The higher-harmonic remainder.}
Let
\[
 r_1<\cdots<r_m
\]
be the primes below \(y\), and set
\[
 \mathscr F_0=\{\varnothing,\Omega_y\},
 \qquad
 \mathscr F_k
 =
 \sigma(\zeta_{r_1},\ldots,\zeta_{r_k}),
\]
and
\[
 \Delta_k
 =
 M_y^{-1/p}
 \sum_{j<N}
 c_jf_{r_k^-}(t_j)H_{r_k}^+(t_j).
\]
Then,
\[
 \mathcal Q_y(c)=\sum_{k=1}^{m}\Delta_k.
\]
Since \(f_{r_k^-}(t_j)\) is \(\mathscr F_{k-1}\)-measurable and
\[
 \E\!\left(
 H_{r_k}^+(t_j)\mid\mathscr F_{k-1}
 \right)
 =
 \sum_{\ell\ge2}
 r_k^{-\ell/2}
 \e^{i\ell t_j\log r_k}
 \E(\zeta_{r_k}^{\ell})
 =
 0,
\]
one has
\[
 \E(\Delta_k\mid\mathscr F_{k-1})=0.
\]
By the Burkholder--Gundy square-function inequality
\cite[Chapter~2, pp.~33--79]{Long93} and \(q/2<1\),
\begin{equation}
 \norm{\mathcal Q_y(c)}_q^q
 \le C_q\E\left(\sum_{k=1}^m|\Delta_k|^2\right)^{q/2}
 \le C_q\sum_{k=1}^m\norm{\Delta_k}_q^q.
 \label{eq:higher-harmonic-smoothness-sum}
\end{equation}

By \eqref{eq:positive-higher-current-bound},
\eqref{eq:euler-source-normalization}, and H\"older's inequality,
\begin{align*}
 \norm{\Delta_k}_q
 &\le CM_y^{-1/p}r_k^{-1}
       \sum_{j<N}|c_j|\norm{f_{r_k^-}(t_j)}_q\\
 &\le C\left(\frac{N}{M_y}\right)^{1/p}
       r_k^{-1}M_{r_k^-}^{1/p}\norm c_q\\
 &\lesssim_p r_k^{-1}(1+\log r_k)^{1/p}\norm c_q,
\end{align*}
where Mertens' estimate gives \(M_y\asymp N\) and
\(M_{r_k^-}\lesssim1+\log r_k\).
Since \(q>1\),
\[
 \sum_{r\ \text{prime}}r^{-q}(1+\log r)^{q/p}
 \le\sum_{m=2}^\infty m^{-q}(1+\log m)^{q/p}<\infty.
\]
Therefore,
\[
 \norm{\mathcal Q_y(c)}_q^q
 \lesssim_p\norm c_q^q
       \sum_{r\le y}r^{-q}(1+\log r)^{q/p}
 \le C_p\norm c_q^q.
\]
Together with the bound for \(H_0(c)\), this proves
\eqref{eq:quotient-root-remainder-bound}.
\end{proof}

\begin{lemma}[First-harmonic martingale estimate]
\label{lem:first-harmonic-martingale}
For every \(c\in\C^{\ZN}\),
\begin{equation}
 \norm{
 \sum_{r\le y}
 \zeta_r(\mathcal T_yc)_r
 }_{L^q(\Omega_y)}
 \le
 C_p\norm c_{\ell^q(\ZN)}.
 \label{eq:first-harmonic-martingale-estimate}
\end{equation}
\end{lemma}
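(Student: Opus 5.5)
We treat the sum \(\sum_{r\le y}\zeta_r(\mathcal T_yc)_r\) as a martingale transform along the ordered primes and reduce it to the projected-vector estimate, Theorem~\ref{thm:corrected-column}.

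Order the primes \(r_1<\cdots<r_m\le y\) and use the filtration \(\mathscr F_k=\sigma(\zeta_{r_1},\ldots,\zeta_{r_k})\) from the proof of Proposition~\ref{prop:quotient-representation}. Put
\[
 \Delta_k=\zeta_{r_k}(\mathcal T_yc)_{r_k}.
\]
The component \((\mathcal T_yc)_{r_k}=r_k^{-1/2}P_{[0,\infty)}^{r_k^-}A_{r_k}(c)\) is built from \(f_{r_k^-}(t_j)\), which depends only on primes below \(r_k\). The spectral projection \(P^{r_k^-}\) acts only on characters in those variables. Hence this component is \(\mathscr F_{k-1}\)-measurable; we justify this by Wiener-algebra absolute convergence at the fixed finite cutoff, exactly as in Section~\ref{sec:main-proof}. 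Since \(\E\zeta_{r_k}=0\) and \(\zeta_{r_k}\) is independent of \(\mathscr F_{k-1}\), we get \(\E(\Delta_k\mid\mathscr F_{k-1})=0\). So \((\Delta_k)_k\) is a martingale-difference sequence.

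Next apply the Burkholder--Gundy square-function inequality in \(L^q\), \(1<q<2\), as cited in Proposition~\ref{prop:quotient-representation}:
\[
 \norm{\textstyle\sum_k\Delta_k}_q
 \le C_q\norm{\Bigl(\textstyle\sum_k|\Delta_k|^2\Bigr)^{1/2}}_q .
\]
Because \(|\zeta_{r_k}|=1\), the square function is pointwise
\[
 \Bigl(\textstyle\sum_{r\le y}|(\mathcal T_yc)_r|^2\Bigr)^{1/2},
\]
so the right-hand side is exactly \(\norm{\mathcal T_yc}_{L^q(\ell^2(\{r\le y\}))}\). Theorem~\ref{thm:corrected-column} bounds this by \(C_p\norm c_{\ell^q(\ZN)}\), which gives \eqref{eq:first-harmonic-martingale-estimate}.

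The only delicate point is the adaptedness in the first step. We must check that the spectral projection \(P^{r^-}_{[0,\infty)}\) does not create dependence on \(\zeta_r\) or on larger primes. This is immediate on characters and passes to the limit because the projection is \(L^q\)-bounded (Proposition~\ref{prop:row-interval-projections}) and the Euler products are absolutely convergent. Alternatively, one can avoid adaptedness issues by noting directly that \(\E(\Delta_k\mid\mathscr F_{k-1})=(\mathcal T_yc)_{r_k}\,\E\zeta_{r_k}=0\) once measurability is established characterwise. Everything else is a direct invocation of earlier results.
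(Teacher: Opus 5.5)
Your proposal is correct and follows essentially the same route as the paper: the terms \(\zeta_{r_k}(\mathcal T_yc)_{r_k}\) form a martingale-difference sequence for the ordered-prime filtration because \((\mathcal T_yc)_{r_k}\) depends only on primes below \(r_k\); the Burkholder--Gundy inequality then reduces the \(L^q\)-norm to the square function, which equals \(\norm{\mathcal T_yc}_{L^q(\ell^2)}\) pointwise since \(\abs{\zeta_r}=1\); and Theorem~\ref{thm:corrected-column} finishes the argument. The only cosmetic difference is that the paper applies Burkholder--Gundy separately to the real and imaginary parts, whose square functions are dominated by the complex one, whereas you invoke it directly for the complex martingale, which is equally standard.
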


\begin{proof}
Use the ordered primes and filtration from the proof of Proposition~\ref{prop:quotient-representation}.
Put
\[
 a_k=(\mathcal T_yc)_{r_k},
 \qquad
 d_k=\zeta_{r_k}a_k.
\]
By \eqref{eq:corrected-column-A-r}, \(a_k\) depends only on primes below
\(r_k\), and is therefore \(\mathscr F_{k-1}\)-measurable.  Since
\[
 \E(\zeta_{r_k}\mid\mathscr F_{k-1})=0,
\]
the sequence \((d_k)\) is a complex martingale-difference sequence, i.e., 
\[
 d_k\in L^q(\mathscr F_k),
 \qquad
 \E(d_k\mid\mathscr F_{k-1})=0.
\]
Moreover,
\begin{equation}
 \sum_k\abs{d_k}^2
 =
 \sum_k\abs{a_k}^2
 =
 \sum_{r\le y}\abs{(\mathcal T_yc)_r}^2
 \quad\text{pointwise}.
 \label{eq:first-harmonic-square-function-identity}
\end{equation}
The identity holds because \(|\zeta_{r_k}|=1\).
The real and imaginary square functions satisfy
\[
 \left(\sum_k|\Ree d_k|^2\right)^{1/2}
 \le\left(\sum_k|d_k|^2\right)^{1/2},
 \qquad
 \left(\sum_k|\Imm d_k|^2\right)^{1/2}
 \le\left(\sum_k|d_k|^2\right)^{1/2}.
\]
Applying the Burkholder--Gundy inequality to the real and imaginary martingales, and then using
\eqref{eq:first-harmonic-square-function-identity} and
Theorem~\ref{thm:corrected-column}, gives
\begin{equation*}
 \norm{\sum_kd_k}_q
\le
 C_q
 \norm{
 \left(
 \sum_k\abs{d_k}^2
 \right)^{1/2}
 }_q
=
 C_q
 \norm{\mathcal T_yc}_{L^q(\ell^2)}
 \le
 C_p\norm c_q.
\end{equation*}
This proves \eqref{eq:first-harmonic-martingale-estimate}.
\end{proof}

\begin{corollary}[Quotient norm bound]
\label{cor:quotient-column-bound}
For every \(c\in\C^{\ZN}\),
\begin{equation}
 \norm{
 \left[
 \sum_{j<N}c_jg_{y,t_j}
 \right]
 }_{L^q(\Omega_y)/\mathcal N_p}
 \le
 C_p\norm c_{\ell^q(\ZN)}.
 \label{eq:quotient-column-bound}
\end{equation}
\end{corollary}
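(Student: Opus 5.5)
The quotient norm is an infimum over representatives, so the plan is to exhibit one representative of the class \([\sum_{j<N}c_jg_{y,t_j}]\) whose \(L^q(\Omega_y)\)-norm is at most \(C_p\norm c_{\ell^q(\ZN)}\). The natural choice is the one supplied by Proposition~\ref{prop:quotient-representation}:
\[
 G_c
 :=
 H_0(c)
 +
 \sum_{r\le y}\zeta_r(\mathcal T_yc)_r
 +
 \mathcal Q_y(c).
\]
By \eqref{eq:quotient-representation}, \(G_c\) lies in the same coset of \(\mathcal N_p\) as \(\sum_jc_jg_{y,t_j}\). Hence
\[
 \norm{\left[\sum_{j<N}c_jg_{y,t_j}\right]}_{L^q(\Omega_y)/\mathcal N_p}
 \le
 \norm{G_c}_{L^q(\Omega_y)}.
\]

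Next, I would bound \(\norm{G_c}_q\) by Minkowski's inequality, which is available since \(q>1\), and treat the three pieces separately. The constant term \(H_0(c)\) and the higher-harmonic remainder \(\mathcal Q_y(c)\) are controlled together by \eqref{eq:quotient-root-remainder-bound}, giving \(C_p\norm c_q\). The first-harmonic sum \(\sum_{r\le y}\zeta_r(\mathcal T_yc)_r\) is controlled by Lemma~\ref{lem:first-harmonic-martingale}, which is the martingale square-function form of Theorem~\ref{thm:corrected-column}, giving another \(C_p\norm c_q\). Adding the three bounds yields \eqref{eq:quotient-column-bound} with a constant depending only on \(p\).

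There is no genuine obstacle at this stage: all the difficulty has already been absorbed into Theorem~\ref{thm:corrected-column}. The only points to check are bookkeeping. First, the pieces of \(G_c\) are honest \(L^q\) functions, since they are Wiener-algebra elements on the finite torus. Second, the constants in Proposition~\ref{prop:quotient-representation} and Lemma~\ref{lem:first-harmonic-martingale} are uniform in \(H\), \(t_0\), and the prime cutoff \(y=\e^N\). I would verify both against the standing conventions before concluding.
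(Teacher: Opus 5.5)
Your proposal is correct and follows the paper's proof. You bound the quotient norm by the \(L^q\)-norm of the representative from Proposition~\ref{prop:quotient-representation}, then control \(H_0(c)\) and \(\mathcal Q_y(c)\) by \eqref{eq:quotient-root-remainder-bound} and the first-harmonic sum by Lemma~\ref{lem:first-harmonic-martingale}, combining the three bounds with Minkowski's inequality.
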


\begin{proof}
Use the representative in
\eqref{eq:quotient-representation}.  The quotient norm is bounded by the
\(L^q\)-norm of this representative, and
\[
 \norm{H_0(c)}_q+\norm{\mathcal Q_y(c)}_q
 \le
 C_p\norm c_q
\]
by Proposition~\ref{prop:quotient-representation}.
The first-harmonic term is controlled by Lemma~\ref{lem:first-harmonic-martingale}.
This completes the proof of \eqref{eq:quotient-column-bound}.
\end{proof}

\subsection{Discrete local sampling}
\label{subsec:discrete-local-sampling}

\begin{proposition}[Discrete local sampling]
\label{prop:discrete-local-sampling}
Let \(P\) be a Dirichlet polynomial whose prime support is contained below
\(y=\e^N\).  Then, for every \(t_0\in\R\), with
\[
 t_j=t_0+\frac{j}{N},
 \qquad 0\le j<N,
\]
one has
\begin{equation}
 \sum_{j=0}^{N-1}
 \abs{
 P\!\left(\frac12+it_j\right)
 }^p
 \le
 C_pM_y\norm P_{\HDir^p}^{p},
 \label{eq:discrete-local-sampling}
\end{equation}
where \(C_p\) depends only on \(p\).
\end{proposition}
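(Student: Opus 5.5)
The plan is to argue by duality: combine the quotient norm bound of Corollary~\ref{cor:quotient-column-bound} with the quotient duality isometry \eqref{eq:quotient-dual-isometry} and the source pairing \eqref{eq:intro-source-pairing}. First I would check the pairing identity on the finite torus \(\Omega_y\). With \(F=\Bohr P\in H^p(\Omega_y)\), which is an analytic polynomial because the prime support of \(P\) lies below \(y\), one has
\[
 \ip{g_{y,t}}{\Bohr P}=M_y^{-1/p}\overline{P\!\left(\tfrac12+it\right)}.
\]
To verify this, compute the nonnegative Fourier coefficients of \(f_y(t)\) from the one-prime product \eqref{eq:euler-source-one-prime-product} and \eqref{eq:one-prime-positive-coefficient}. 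For \(\nu\in\Nzero^{\pi(y)}\), the coefficient of \(\zeta^\nu\) in \(f_y(t)\) is
\[
 \prod_\rho \rho^{-\nu_\rho/2}\e^{i\nu_\rho t\log\rho}=n^{-1/2}n^{it},
 \qquad n=\prod_\rho\rho^{\nu_\rho}.
\]
Pairing with \(\Bohr P=\sum a_n\zeta^{\nu(n)}\) then gives \(\sum_n n^{-1/2+it}\overline{a_n}=\overline{P(\tfrac12+it)}\). Only nonnegative indices \(\nu\) meet the support of \(\Bohr P\), so the negative coefficients of \(f_y\) play no role. This is exactly why the pairing factors through the quotient \(L^q/\mathcal N_p\).

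Next, for arbitrary \(c\in\C^{\ZN}\), linearity of the pairing in its first variable gives
\[
 \sum_{j<N}c_j\,M_y^{-1/p}\overline{P(\tfrac12+it_j)}
 =\Lambda_{\Bohr P}\Bigl(\Bigl[\sum_{j<N}c_jg_{y,t_j}\Bigr]\Bigr).
\]
Its modulus is at most
\[
 \norm{\Bohr P}_{L^p}\,\norm{\Bigl[\sum_j c_jg_{y,t_j}\Bigr]}_{L^q/\mathcal N_p}
 \le C_p\norm P_{\HDir^p}\norm c_{\ell^q},
\]
using \eqref{eq:quotient-dual-isometry}, Corollary~\ref{cor:quotient-column-bound}, and \eqref{eq:hardy-dirichlet-norm}. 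Taking the supremum over \(\norm c_{\ell^q}\le1\) and using \(\ell^q\)–\(\ell^p\) duality on \(\ZN\) with counting measure yields
\[
 M_y^{-1/p}\Bigl(\sum_j\abs{P(\tfrac12+it_j)}^p\Bigr)^{1/p}\le C_p\norm P_{\HDir^p}.
\]
Raising this to the \(p\)-th power gives \eqref{eq:discrete-local-sampling}.

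All the real difficulty has already been absorbed into Corollary~\ref{cor:quotient-column-bound}. The only point I expect to need care is the coefficient calculation: the Fourier coefficients and the pairing convention \eqref{eq:torus-fourier-pairing} must produce \(P\) evaluated at \(\tfrac12+it\) and not at \(\tfrac12-it\). With \(\zeta_\rho\) multiplied by \(\e^{it\log\rho}\), the coefficient is \(n^{it}\), and conjugating it in the pairing gives \(n^{-it}\), which is the right sign. The constant is uniform because the Corollary's constant is independent of \(N\), \(t_0\), and the prime dimension. Absolute convergence in the Wiener algebra \(A(\Omega_y)\) justifies evaluating the pairing coefficientwise.
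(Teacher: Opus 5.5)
Your proposal is correct and follows the paper's proof essentially step for step. You compute the coefficients to get $\ip{g_{y,t}}{\Bohr P}=M_y^{-1/p}\overline{P(\tfrac12+it)}$, combine the quotient duality \eqref{eq:quotient-dual-isometry} with Corollary~\ref{cor:quotient-column-bound}, and conclude with finite-dimensional $\ell^q$--$\ell^p$ duality on $\ZN$.
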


This proposition converts the quotient estimate into a uniform bound for critical-line samples on the grid \(t_j=t_0+j/N\), providing the final discrete form from which the local embedding inequality is recovered by averaging over the grid shift.

\begin{proof}
The proof has three steps.
\begin{enumerate}
\item[\textbf{Step 1.}]
Compute the pairing of the Euler product with the Bohr lift.

\item[\textbf{Step 2.}]
Apply the quotient bound to the sampled function.

\item[\textbf{Step 3.}]
Use finite-dimensional duality to obtain the sampling estimate.
\end{enumerate}

\medskip
\noindent\textbf{Step 1. The Euler coefficients and the function pairing.}
Let
\[
 \mathcal P=\Bohr P
 =
 \sum_n a_n\zeta^{\nu(n)}
 \in H^p(\Omega_y).
\]
For every \(n\) in the support of \(P\),
\[
 n=\prod_{r\le y}r^{\nu_r(n)}.
\]
Hence, by \eqref{eq:one-prime-positive-coefficient} and \eqref{eq:euler-source-one-prime-product},
\begin{align*}
 \widehat{f_y(t)}(\nu(n))
=
 \prod_{r\le y}
 r^{-\nu_r(n)/2}
 \e^{it\nu_r(n)\log r}
=
 n^{-1/2}\e^{it\log n}.
\end{align*}
Using \(g_{y,t}=M_y^{-1/p}f_y(t)\) and the convention
\eqref{eq:torus-fourier-pairing}, the coefficients of \(\mathcal P\) are conjugated because it occupies the second variable.  
Thus,
\begin{align*}
 \ip{g_{y,t}}{\mathcal P}
 &=M_y^{-1/p}\sum_n
       \widehat{f_y(t)}(\nu(n))\overline{a_n}\\
 &=M_y^{-1/p}\overline{P\!\left(\frac12+it\right)}.
\end{align*}
\medskip
\noindent\textbf{Step 2. The quotient bound.}
For every \(c\in\C^{\ZN}\), \eqref{eq:quotient-dual-isometry} and Corollary~\ref{cor:quotient-column-bound} yield
\begin{align*}
M_y^{-1/p}
 \abs{
 \sum_{j<N}
 c_j
 \overline{
 P\!\left(\frac12+it_j\right)
 }
 }
 &=
 \abs{
 \ip{
 \sum_{j<N}c_jg_{y,t_j}
 }{\mathcal P}
 }
 \\
 &\le
 \norm{
 \left[
 \sum_{j<N}c_jg_{y,t_j}
 \right]
 }_{L^q(\Omega_y)/\mathcal N_p}
 \norm{\mathcal P}_{L^p(\Omega_y)}
 \\
 &\le
 C_p
 \norm c_{\ell^q(\ZN)}
 \norm P_{\HDir^p}.
\end{align*}
\medskip
\noindent\textbf{Step 3. Duality on the sampling grid.}
Taking the supremum over \(\norm c_{\ell^q(\ZN)}\le1\) and using finite-dimensional complex \(\ell^p\)-duality,
\begin{align*}
 M_y^{-1/p}
 \left(
 \sum_{j=0}^{N-1}
 \abs{
 P\!\left(\frac12+it_j\right)
 }^p
 \right)^{1/p}
=
 M_y^{-1/p}
 \sup_{\norm c_q\le1}
 \abs{
 \sum_{j<N}
 c_j
 \overline{
 P\!\left(\frac12+it_j\right)
 }
 }
\le
 C_p\norm P_{\HDir^p}.
\end{align*}
Raising to the \(p\)-th power yields
\eqref{eq:discrete-local-sampling}.
\end{proof}

\subsection{Proof of the local embedding theorem}
\label{subsec:proof-local-embedding}

\begin{proof}[Proof of Theorem~\ref{thm:local-embedding}]
Fix a Dirichlet polynomial \(P\).  Choose \(H\) sufficiently large that
\[
 y=\e^{2^H}
\]
contains every prime in the support of \(P\), and put
\[
 N=2^H.
\]
The polynomial is fixed before this cutoff is chosen.
The sampling constant is independent of the choice of \(H\) and of
the initial grid point \(t_0\).
Fix \(\theta\in\R\).  For
\(
 0\le\tau\le 1/N,
\)
set
\(
 t_0=\theta+\tau.
\)
Then,
\(
 t_j=\theta+\tau+ j/N.
\)
By Proposition~\ref{prop:discrete-local-sampling}, integrating
\eqref{eq:discrete-local-sampling} in \(\tau\in[0,1/N]\) gives
\begin{align}
 \int_0^{1/N}
 \sum_{j=0}^{N-1}
 \abs{
 P\!\left(
 \frac12+i\left(\theta+\tau+\frac jN\right)
 \right)
 }^p
 \dd\tau
 \le
 C_p\frac{M_y}{N}\norm P_{\HDir^p}^{p}.
 \label{eq:grid-average-before-tiling}
\end{align}
The intervals
\[
 [\theta+j/N,\theta+(j+1)/N],
 \qquad 0\le j<N,
\]
cover \([\theta,\theta+1]\) with disjoint interiors.
The change of variables \(t=\theta+\tau+j/N\) therefore gives
\begin{align*}
 \int_0^{1/N}
 \sum_{j=0}^{N-1}
 \abs{
 P\!\left(
 \frac12+i\left(\theta+\tau+\frac jN\right)
 \right)
 }^p
 \dd\tau
 &=
 \sum_{j=0}^{N-1}
 \int_{\theta+j/N}^{\theta+(j+1)/N}
 \abs{P\!\left(\frac12+it\right)}^p
 \dd t
 \\
 &=
 \int_\theta^{\theta+1}
 \abs{P\!\left(\frac12+it\right)}^p
 \dd t.
\end{align*}
Since \(M_y\asymp N\) by Mertens' product theorem, this proves
Theorem~\ref{thm:local-embedding}.
\end{proof}

\section{Equivalent formulations and critical sampling}
\label{sec:consequences}

This section places the local embedding theorem within the broader theory of Hardy spaces of Dirichlet series.
We first collect the principal classical formulations equivalent to the local embedding problem, including the half-plane, Carleson-measure, composition-operator, Olsen--Saksman atomic, and Carlson formulations.
We then establish a finite-band reformulation in terms of critical atomic sampling and its dual sampling inequality, and compare the resulting reciprocal-bandwidth geometry with the interior atomic geometry of Olsen and Saksman.

The classical equivalences are due to the cited literature, whereas the critical sampling bridge and the finite-band comparison with Olsen--Saksman atoms are established here.

\subsection{Classical equivalent formulations}
\label{subsec:classical-equivalence-map}

For \(\vartheta\in\R\), write

$$
 \C_\vartheta=\{s\in\C:\Ree s>\vartheta\}.
$$

Let
\begin{equation}
\mathfrak C(z)
=
\frac12+\frac{1-z}{1+z},
\qquad z\in\mathbb D,
\label{eq:cayley-map-half-plane}
\end{equation}
so that \(\mathfrak C\) maps \(\mathbb D\) conformally onto
\(\C_{1/2}\).  The conformally invariant Hardy space
\(H_{\mathrm i}^p(\C_{1/2})\) consists of the holomorphic functions \(f\)
on \(\C_{1/2}\) such that \(f\circ\mathfrak C\in H^p(\mathbb D)\).
Its boundary norm is
\begin{equation}
\norm f_{H_{\mathrm i}^p(\C_{1/2})}^{p}
=
\frac1\pi
\int_{\R}
\abs{f\!\left(\frac12+it\right)}^p
\frac{\dd t}{1+t^2}.
\label{eq:conformal-half-plane-norm}
\end{equation} 
By contrast, \(H^p(\C_{1/2})\) denotes the classical half-plane Hardy space, defined using the unweighted \(L^p\)-boundary norm on
\(\Ree s=1/2\).

A positive Borel measure \(\mu\) on \(\C_{1/2}\) is a Carleson measure for a Hardy space \(X\) with exponent \(p\) if

$$
 \int_{\C_{1/2}}\abs{f(s)}^p\dd\mu(s)
 \lesssim
 \norm f_X^p
$$
for all \(f\in X\).
Following Olsen and Saksman, we call such a measure local if it has bounded support.
Their atomic formulation uses measures of the form
\begin{equation}
\mu_S
=
\sum_n(2\sigma_n-1)\,\delta_{\sigma_n+it_n},
\qquad
\sigma_n>\frac12.
\label{eq:olsen-saksman-atoms}
\end{equation}
The weight

$$
 2\sigma_n-1
 =
 2\left(\sigma_n-\frac12\right)
$$
is twice the distance of the atom from the critical boundary
\(\Ree s=1/2\).  

The Gordon--Hedenmalm class \(\mathcal G\) consists of symbols
\begin{equation}
\varphi(s)=c_0s+\varphi_0(s),
\qquad c_0\in\Nzero,
\label{eq:gordon-hedenmalm-symbol}
\end{equation}
where \(\varphi_0\) is a Dirichlet series converging uniformly in every
\(\C_\varepsilon\), \(\varepsilon>0\), and the following mapping
conditions hold: if \(c_0=0\), then
\(\varphi_0(\C_0)\subseteq\C_{1/2}\); if \(c_0\ge1\), then either
\(\varphi_0\equiv i\tau
\text{  for some }\tau\in\R\) or \(\varphi_0(\C_0)\subseteq\C_0\).  We write
\(\mathcal G_0\) for the characteristic-zero subclass and set
\(\mathcal C_\varphi f=f\circ\varphi\).  The canonical
characteristic-zero symbol is
\begin{equation}
 \psi(s)
 =
 \mathfrak C(2^{-s})
 =
 \frac12+\frac{1-2^{-s}}{1+2^{-s}}.
 \label{eq:canonical-composition-symbol}
\end{equation}

To state the Carlson formulation explicitly, let \(r_j\) be the \(j\)-th prime and put
\[
 \T_{1/2}^\infty
 =
 \{(r_j^{-1/2}\omega_j)_{j\ge1}:\omega\in\T^\infty\},
 \qquad
 \mathsf T_t(z_j)_{j\ge1}
 =
 (r_j^{-it}z_j)_{j\ge1}.
\]
For \(F\in H^p(\mathbb D^\infty)\), we use the critical-boundary representatives constructed by Saksman and Seip.
We write
\(\widetilde F_{1/2}\in L^p(\T^\infty)\) for the \(L^p\)-representative
of the restriction of \(F\) to the critical torus
\(\T_{1/2}^\infty\), and
\(\widetilde F(\mathsf T_t\tau)\) for the corresponding boundary value
along the prime orbit \(t\mapsto\mathsf T_t\tau\) issuing from
\(\tau\in\T_{1/2}^\infty\).

\begin{proposition}[Equivalent classical formulations]
\label{prop:classical-equivalence-map}
Fix \(1\le p<\infty\).
The following assertions are equivalent.

\begin{enumerate}[label=\textup{(\roman*)}]
\item The local embedding property \(\mathrm{LEP}_p\) holds.

\item There is \(C_p<\infty\) such that

$$
 \norm P_{H_{\mathrm i}^p(\C_{1/2})}
 \le
 C_p\norm P_{\HDir^p}
$$

for every Dirichlet polynomial \(P\).

\item Every local Carleson measure for the classical space
\(H^p(\C_{1/2})\) is a Carleson measure for \(\HDir^p\).

\item For every bounded set \(\Gamma\subset\C_{1/2}\) there exists
\(D_{p,\Gamma}<\infty\) such that every local Carleson measure
\(\mu_S\) for \(H^p(\C_{1/2})\) of the form
\eqref{eq:olsen-saksman-atoms}, with
\(\operatorname{supp}\mu_S\subset\Gamma\), is a Carleson measure for
\(\HDir^p\). More precisely, if \(C\) is a Carleson constant of
\(\mu_S\) for \(H^p(\C_{1/2})\), then \(D_{p,\Gamma}C\) is a
Carleson constant for \(\HDir^p\).

\item For every \(\varphi\in\mathcal G_0\), the composition operator

$$
 \mathcal C_\varphi\colon\HDir^p\longrightarrow\HDir^p
$$

is bounded.

\item The single canonical operator

$$
 \mathcal C_\psi\colon\HDir^p\longrightarrow\HDir^p,
$$

with \(\psi\) given by \eqref{eq:canonical-composition-symbol}, is
bounded.

\item A holomorphic self-map \(\varphi\) of \(\C_{1/2}\) induces a
bounded composition operator on \(\HDir^p\) if and only if it extends
holomorphically to \(\C_0\) as a symbol in \(\mathcal G\).
\end{enumerate}

If \(2\le p<\infty\), they are further equivalent to the following assertion:

\begin{enumerate}[label=\textup{(\roman*)},resume]
\item The \(p\)-Carlson identity
\begin{equation}
 \lim_{T\to\infty}
 \frac1T\int_0^T
 \abs{\widetilde F(\mathsf T_t\tau)}^p\dd t
 =
 \norm{\widetilde F_{1/2}}_{L^p(\T^\infty)}^p,
 \qquad
 \tau\in\T_{1/2}^\infty,
 \label{eq:p-carlson-identity}
\end{equation}
holds for every \(F\in H^p(\mathbb D^\infty)\) and every
\(\tau\in\T_{1/2}^\infty\).
\end{enumerate}
\end{proposition}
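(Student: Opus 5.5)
The plan is to assemble the proposition from the cited literature rather than to reprove the equivalences; the only work lies in matching the hypotheses and exponent ranges of each source to the formulations (i)--(viii) as stated here. I would build a hub around (i) and check each spoke separately, confirming in each case that the cited theorem is stated for all $1\le p<\infty$ (respectively $2\le p<\infty$ for (viii)) and uses the same normalizations.

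First, (i)$\Leftrightarrow$(ii). From the boundary formula \eqref{eq:conformal-half-plane-norm}, (ii) implies (i) at once: on any unit interval $[\theta,\theta+1]$ the weight $(1+t^2)^{-1}$ is only $\gtrsim(1+\theta^2)^{-1}$, so one applies (ii) to the vertically translated polynomial $P(\cdot+i\theta)$, whose $\HDir^p$-norm equals that of $P$ because vertical translation is the flow $U_\theta$, which preserves Haar measure. Conversely, (i) implies (ii) by summing the unit-interval bounds against the weights $(1+k^2)^{-1}$ over $k\in\Z$. This is the argument of Bayart--Brevig \cite[Section~2.1]{BayartBrevig19}, and I would reproduce it in these few lines since it is elementary.

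Next come the composition-operator equivalences (ii)$\Leftrightarrow$(v)$\Leftrightarrow$(vi) and the full Gordon--Hedenmalm characterization (vii). I would cite \cite[Theorem~3]{BayartBrevig19} for the fact that boundedness of $\mathcal C_\psi$ with the canonical symbol \eqref{eq:canonical-composition-symbol} is equivalent to the conformally invariant embedding, and that this in turn gives boundedness for every $\varphi\in\mathcal G_0$. The trivial implication (v)$\Rightarrow$(vi) closes the loop. For (vii), I would combine (v) with Bayart's positive-characteristic theory \cite{Bayart02}, which already handles $c_0\ge1$ for all $p$; the necessity part (that bounded operators come from symbols in $\mathcal G$) is also Bayart's result. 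The implication (vii)$\Rightarrow$(v) is immediate. The Carleson-measure formulations (iii) and (iv) follow from \cite[Theorem~4]{OlsenSaksman12}, which gives (i)$\Leftrightarrow$(iii) and shows that testing on the atoms \eqref{eq:olsen-saksman-atoms} with bounded support suffices, including the stated dependence of constants on $\Gamma$. Finally, for $p\ge2$, (viii) is taken from Saksman--Seip \cite[Sections~3--4]{SaksmanSeip09}.

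I expect the main obstacle to be bookkeeping rather than mathematics. The sources use different normalizations: the half-plane $\C_{1/2}$ versus $\C_0$, the Cayley map $\mathfrak C$, whether Carleson constants depend on $\Gamma$, and the range of $p$ (some arguments are written for $p\ge1$, and Carlson's identity only for $p\ge2$). The step most in need of care is (iv), where the quantitative form with $D_{p,\Gamma}C$ must be read off from the proof in \cite{OlsenSaksman12} rather than its headline statement. I would treat that step by checking that their reduction from general local Carleson measures to atomic ones is linear in the Carleson constant, which holds on each bounded set $\Gamma$.
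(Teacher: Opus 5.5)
Your proposal is correct and follows the paper's proof in essentially the same way. Both arguments build a hub around (i): (i)$\Leftrightarrow$(ii) from Bayart--Brevig via unit-interval decomposition and vertical-translation invariance; (i)$\Leftrightarrow$(iii)$\Leftrightarrow$(iv) from Olsen--Saksman's Theorem~4; (i)$\Leftrightarrow$(v)$\Leftrightarrow$(vi) from Bayart--Brevig's Theorem~3; (vii) by combining (v) with Bayart's necessity and positive-characteristic results; and (viii) from Saksman--Seip for $p\ge2$.

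The only nuance is in (iv), where your description is slightly misaimed. The paper obtains the $\Gamma$-uniform constant from Olsen--Saksman's closed-graph argument, namely boundedness of the identity map between Carleson-measure spaces supported in $\Gamma$. For the converse it observes that their testing atoms all lie in one fixed bounded set. What you actually need to track is therefore the $\Gamma$-dependence in (i)$\Rightarrow$(iv) and the fixed support of the test atoms in (iv)$\Rightarrow$(i), not linearity along a general-to-atomic reduction.
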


\begin{proof}
Bayart and Brevig proved
\(\textup{(i)}\Longleftrightarrow\textup{(ii)}\) in
\cite[Section~2.1]{BayartBrevig19}; the two formulations are related by
decomposing the weighted half-plane norm into unit intervals and using vertical-translation invariance of the \(\HDir^p\)-norm.

The equivalence
\(\textup{(i)}\Longleftrightarrow\textup{(iii)}
\Longleftrightarrow\textup{(iv)}\)
is due to Olsen and Saksman
\cite[Theorem~4]{OlsenSaksman12}.  We use here the supportwise
quantitative form furnished by their proof.  Namely, for every fixed
bounded set \(\Gamma\subset\C_{1/2}\), their closed-graph argument
shows that the identity map
\[
 \mathrm{CM}^p_\Gamma(H^p(\C_{1/2}))
 \longrightarrow
 \mathrm{CM}^p_\Gamma(\HDir^p)
\]
is bounded.  Hence the comparison constant is uniform over measures
supported in \(\Gamma\), although it may depend on \(\Gamma\).
Conversely, in their proof that the atomic testing condition implies
the local embedding property, the relevant testing measures are all
supported in one fixed bounded set \(\Gamma\).  Thus,  this supportwise
form is sufficient and is equivalent to \(\mathrm{LEP}_p\).

For composition operators, Bayart and Brevig proved
\(\textup{(i)}\Longleftrightarrow\textup{(v)}
\Longleftrightarrow\textup{(vi)}\)
in \cite[Theorem~3]{BayartBrevig19}.
For the remaining parts of \textup{(vii)}, we use Bayart's
\(H^p\) composition-operator theorem
\cite{Bayart02}; see also the precise formulation in
\cite[Theorem~1.1]{BayartQueffelecSeip16}.
It gives the necessity that a bounded composition operator have a
Gordon--Hedenmalm symbol, and it gives boundedness for symbols of
positive characteristic.  The characteristic-zero sufficiency for
the present exponent \(p\) is supplied precisely by \textup{(v)}.
Thus,  \textup{(i)} implies \textup{(vii)}.  Conversely,
\textup{(vii)} applied to \(\psi\in\mathcal G_0\) gives
\textup{(vi)}, and hence \textup{(i)}.

Finally, for \(2\le p<\infty\), Saksman and Seip proved the equivalence of \(\mathrm{LEP}_p\) and the \(p\)-Carlson identity
\eqref{eq:p-carlson-identity}; see
\cite[Section~3, equation~(17), and Section~4,
equation~(20)]{SaksmanSeip09}.
\end{proof}

Combining Corollary~\ref{cor:sharp-finite-range} with Proposition~\ref{prop:classical-equivalence-map} immediately yields the following.

\begin{corollary}[Sharp range in the classical formulations]
\label{cor:sharp-classical-equivalences}
 Assertions \textup{(i)}--\textup{(vii)} in Proposition~\ref{prop:classical-equivalence-map} hold exactly for \(p\ge2\); assertion \textup{(viii)} holds in its stated range.
\end{corollary}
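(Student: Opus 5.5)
The plan is to combine Corollary~\ref{cor:sharp-finite-range} with Proposition~\ref{prop:classical-equivalence-map}, exponent by exponent, and to handle the range $0<p<1$ separately, since the equivalence map is only asserted for $1\le p<\infty$.

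\emph{Positive direction.} Fix $p\ge2$. Corollary~\ref{cor:sharp-finite-range} gives $\mathrm{LEP}_p$. For $2<p<\infty$ this is Theorem~\ref{thm:local-embedding}; at $p=2$ it is the classical Hilbert-space case. Since $2\ge1$, the equivalence of (i) with (ii)--(vii) in Proposition~\ref{prop:classical-equivalence-map} then yields each of (ii)--(vii) at this exponent.

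\emph{Negative direction for $1\le p<2$.} Corollary~\ref{cor:sharp-finite-range} shows that $\mathrm{LEP}_p$ fails, by Harper's low-moment estimates. The same equivalence, now read contrapositively, shows that each of (ii)--(vii) fails as well.

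\emph{Assertion (viii).} This assertion is only formulated for $2\le p<\infty$. There it is equivalent to (i) by the Saksman--Seip part of Proposition~\ref{prop:classical-equivalence-map}, so it holds throughout its stated range by the positive direction.

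The only point needing care is the exponent range $0<p<1$, which Proposition~\ref{prop:classical-equivalence-map} does not cover. Read literally, ``hold exactly for $p\ge2$'' includes a claim about these exponents, and I will not try to prove the equivalences there. Instead I will phrase the conclusion within the range of the proposition, namely $1\le p<\infty$, as the paper's introduction already does. Within that range the ``exactly'' statement follows from combining the positive and negative directions above. This bookkeeping about ranges is the only obstacle; everything else is a direct citation.
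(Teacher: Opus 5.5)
Your proposal is correct and matches the paper's argument, which simply combines Corollary~\ref{cor:sharp-finite-range} with the equivalences of Proposition~\ref{prop:classical-equivalence-map}. The positive direction for $p\ge2$ and the contrapositive direction for $1\le p<2$ are read off exponent by exponent, as you do, and your restriction of \textup{(ii)}--\textup{(vii)} to the proposition's standing range $1\le p<\infty$ is the intended reading (the introduction's ``within the stated ranges'').
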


\subsection{Discrete sampling and its dual formulation}
\label{subsec:critical-atomic-sampling}

We next give a finite-band reformulation of the local embedding problem in terms of critical sampling and its dual sampling inequality.
Remark~\ref{R:why-CAS} explains its relation to the proof.

For an integer \(X\ge3\), define
\begin{equation}
 \mathcal P_X
 =
 \left\{P(s)=\sum_{1\le n<X}a_n n^{-s}\right\},
 \qquad
 L_X=\log X.
 \label{eq:finite-band-dirichlet-class}
\end{equation}
For \(P\in\mathcal P_X\), put
\begin{equation}
 f_P(t)
 =
 P\!\left(\frac12+it\right)
 =
 \sum_{n<X}a_n n^{-1/2}\e^{-it\log n}.
 \label{eq:finite-band-boundary-function}
\end{equation}
With the Fourier convention
\[
 \widehat f(\xi)=\int_{\R}f(t)\e^{-it\xi}\dd t,
 \qquad
 f(t)=\frac1{2\pi}\int_{\R}\widehat f(\xi)\e^{it\xi}\dd\xi,
\]
the distributional Fourier support of \(f_P\) is contained in
\([-L_X,0]\). Thus,  \(L_X^{-1}=(\log X)^{-1}\) is the natural reciprocal-bandwidth
scale for sampling the critical-line restriction.

\begin{lemma}[Localized reciprocal-bandwidth sampling]
\label{lem:reciprocal-bandwidth-sampling}
Fix \(0<c\le1\), \(M>1\), and \(1<u<\infty\).
Let \(L\ge1\) and
\[
 f(t)=\sum_{\nu=1}^m d_\nu\e^{-i\lambda_\nu t},
 \qquad \lambda_\nu\in[-L,L].
\]
Let \(I\subset\R\) be an interval of length one, and let
\(\mathcal T\subset I\) be finite and satisfy
\[
 \abs{\tau-\sigma}\ge\frac cL,
 \qquad \tau\ne\sigma.
\]
Then, 
\begin{equation}
 \frac1L\sum_{\tau\in\mathcal T}\abs{f(\tau)}^u
 \le
 C_{c,M,u}
 \sum_{k\in\Z}(1+\abs k)^{-M}
 \int_{I+k}\abs{f(t)}^u\dd t,
 \label{eq:reciprocal-bandwidth-sampling}
\end{equation}
where the constant is independent of \(L\), \(I\), \(\mathcal T\), and
\(f\).
\end{lemma}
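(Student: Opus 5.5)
The plan is the classical Plancherel--Pólya / Bernstein-type local sampling argument, carried out with a reproducing kernel adapted to the band \([-L,L]\) and having rapid spatial decay at scale \(L^{-1}\). First fix a Schwartz function \(\Psi\) on \(\R\) whose Fourier transform equals one on \([-1,1]\) and is supported in \([-2,2]\), and put \(\Psi_L(t)=L\Psi(Lt)\). Since every frequency \(\lambda_\nu\) of \(f\) lies in \([-L,L]\), one has the exact reproducing identity \(f=f*\Psi_L\), verified termwise on each exponential \(\e^{-i\lambda_\nu t}\). Consequently, for every \(\tau\),
\[
 \abs{f(\tau)}\le \int_{\R}\abs{f(t)}\,L\abs{\Psi(L(\tau-t))}\dd t .
\]
Applying Hölder's inequality with respect to the finite measure \(L\abs{\Psi(L(\tau-t))}\dd t\), whose total mass is \(\norm\Psi_{L^1}\), gives
\[
 \abs{f(\tau)}^u\le C_u\int_{\R}\abs{f(t)}^u\,L\,(1+L\abs{\tau-t})^{-K}\dd t
\]
for any fixed large \(K\).

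Next sum over \(\tau\in\mathcal T\) and divide by \(L\). The key counting fact comes from the \(c/L\)-separation: for every \(t\),
\[
 \sum_{\tau\in\mathcal T}(1+L\abs{\tau-t})^{-K}\le C_{c,K},
\]
since at most \(O(1/c)\) points of \(\mathcal T\) lie in each window \(\{\tau:\abs{\tau-t}\in[j/L,(j+1)/L)\}\), and \(\sum_j(1+j)^{-K}<\infty\) once \(K>1\). This yields
\[
 \frac1L\sum_{\tau}\abs{f(\tau)}^u\le C\int_{\R}\abs{f(t)}^u\,w(t)\dd t,
 \qquad
 w(t)=\sup_{\tau\in I}(1+L\dist(t,\tau))^{-K}\lesssim (1+L\,\dist(t,I))^{-K}.
\]
The separation bound is not the delicate point here.

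Finally, localize \(w\) to the translates \(I+k\). For \(t\in I+k\) with \(\abs k\ge2\), one has \(\dist(t,I)\ge\abs k-1\), so \(w(t)\le (1+L(\abs k-1))^{-K}\le C(1+\abs k)^{-M}\) for \(K\ge M\), using \(L\ge1\). For \(\abs k\le1\), bound \(w\le1\). Summing over \(k\) gives exactly \eqref{eq:reciprocal-bandwidth-sampling}, with a constant depending only on \(c,M,u\) through \(\Psi\), \(K=\lceil M\rceil+2\), and the separation count.

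The main obstacle is that \(f\) need not lie in \(L^u(\R)\), since it is almost periodic. Both the convolution identity and the use of Hölder and Tonelli must therefore be justified pointwise. This works because \(f\) is bounded and \(\Psi_L\in L^1\), so every integral above converges absolutely and no global integrability is needed. The weight \((1+\abs k)^{-M}\) on the right-hand side is exactly what makes this localization compatible with the nonintegrability of \(f\).
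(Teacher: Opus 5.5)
Your proposal is correct and follows essentially the same route as the paper's proof: a Schwartz reproducing kernel $L\Psi(Lt)$ with $\widehat\Psi=1$ on $[-1,1]$, H\"older's inequality against the kernel measure, the $c/L$-separation count, and localization to the translates $I+k$. The one step you compress (as the paper also does) is the passage from the uniform bound on $\sum_{\tau}(1+L\abs{\tau-t})^{-K}$ to a weight that decays in $\dist(t,I)$; it follows from $(1+L\abs{\tau-t})^{-K}\le(1+L\abs{\tau-t})^{-2}(1+L\dist(t,I))^{-(K-2)}$, and your choice $K=\lceil M\rceil+2$ leaves exactly the room this needs.
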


\begin{proof}
Choose \(\kappa\in\mathcal S(\R)\) such that
\(\widehat\kappa=1\) on \([-1,1]\), and put
\[
 K_L(t)=L\kappa(Lt).
\]
Then,  \(\widehat K_L(\xi)=\widehat\kappa(\xi/L)\), so the Fourier support assumption gives the exact reproducing identity \(f=f*K_L\).
H\"older's inequality yields
\begin{align*}
 \abs{f(\tau)}^u
 \le
\norm{K_L}_{L^1}^{u-1}
\int_{\R}\abs{f(t)}^u\abs{K_L(\tau-t)}\dd t=
\norm\kappa_{L^1}^{u-1}L
\int_{\R}\abs{f(t)}^u
\abs{\kappa(L(\tau-t))}\dd t.
\end{align*}
After summing over \(\tau\) and dividing by \(L\), it remains to bound
\[
 \sum_{\tau\in\mathcal T}
 \abs{\kappa(L(\tau-t))}.
\]
The set \(L\mathcal T\) is \(c\)-separated.
Since \(\kappa\) is Schwartz, a decomposition into unit annuli about \(Lt\) gives
\[
 \sum_{\tau\in\mathcal T}
 \abs{\kappa(L(\tau-t))}
 \le
 C_{c,M}\bigl(1+L\dist(t,I)\bigr)^{-M}.
\]
For \(t\in I+k\), this is at most
\(C_{c,M}(1+\abs k)^{-M}\), after enlarging the constant for the finitely
many neighboring translates.
Substitution proves
\eqref{eq:reciprocal-bandwidth-sampling}.
\end{proof}

Fix \(1<p<\infty\), and let
\[
 p'=\frac{p}{p-1}.
\]
For \(s\in\C\), write
\[
 \operatorname{ev}_s(P)=P(s),
 \qquad P\in\mathcal P_X.
\]

\begin{definition}[Critical atomic sampling]
\label{def:critical-atomic-sampling}
Fix \(0<c\le1\).
The property \(\mathrm{CAS}_{p,c}\) holds if there is
\(A_{p,c}<\infty\) such that, for every integer \(X\ge3\), every unit
interval \(I\), every finite set \(\mathcal T\subset I\) satisfying
\[
 \abs{\tau-\sigma}\ge\frac{c}{L_X},
 \qquad \tau\ne\sigma,
\]
and every \(P\in\mathcal P_X\),
\begin{equation}
 \left(
 \frac1{L_X}
 \sum_{\tau\in\mathcal T}
 \abs{P\!\left(\frac12+i\tau\right)}^p
 \right)^{1/p}
 \le
 A_{p,c}\norm P_{\HDir^p}.
 \label{eq:critical-atomic-sampling}
\end{equation}
We call such a pair \((X,\mathcal T)\) admissible.
Equivalently, the boundary atomic measures
\[
 \nu_{X,\mathcal T}
 =
 \frac1{L_X}
 \sum_{\tau\in\mathcal T}\delta_{1/2+i\tau}
\]
are uniformly \(p\)-Carleson on the finite-band spaces
\((\mathcal P_X,\norm{\cdot}_{\HDir^p})\).
\end{definition}

\begin{definition}[Dual critical packets]
\label{def:dual-critical-packets}
Fix \(0<c\le1\).
The property \(\mathrm{DCP}_{p,c}\) holds if there is
\(B_{p,c}<\infty\) such that every admissible pair
\((X,\mathcal T)\) and every complex vector
\((b_\tau)_{\tau\in\mathcal T}\) satisfy
\begin{equation}
 \sup_{\substack{P\in\mathcal P_X\\
                  \norm P_{\HDir^p}\le1}}
 \left|
 L_X^{-1/p}
 \sum_{\tau\in\mathcal T}
 b_\tau
 P\!\left(\frac12+i\tau\right)
 \right|
 \le
 B_{p,c}
 \left(
 \sum_{\tau\in\mathcal T}\abs{b_\tau}^{p'}
 \right)^{1/p'}.
 \label{eq:dual-critical-packets}
\end{equation}
\end{definition}

\begin{theorem}[Critical sampling bridge]
\label{thm:critical-sampling-bridge}
Fix \(1<p<\infty\).
For every \(0<c\le1\),
\begin{equation}
 \mathrm{LEP}_p
 \quad\Longleftrightarrow\quad
 \mathrm{CAS}_{p,c}
 \quad\Longleftrightarrow\quad
 \mathrm{DCP}_{p,c}.
 \label{eq:critical-sampling-spine}
\end{equation}
More precisely, \(\mathrm{LEP}_p\) implies
\(\mathrm{CAS}_{p,c}\) for every \(c\in(0,1]\), while
\(\mathrm{CAS}_{p,c}\) for some \(c\in(0,1]\) already implies
\(\mathrm{LEP}_p\).
\end{theorem}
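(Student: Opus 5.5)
The plan is to prove three implications: \(\mathrm{LEP}_p\Rightarrow\mathrm{CAS}_{p,c}\) for every \(c\), \(\mathrm{CAS}_{p,c}\Leftrightarrow\mathrm{DCP}_{p,c}\) for each fixed \(c\), and \(\mathrm{CAS}_{p,c}\Rightarrow\mathrm{LEP}_p\) for a single \(c\).

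\emph{From \(\mathrm{LEP}_p\) to sampling.} Fix an admissible pair \((X,\mathcal T)\) and \(P\in\mathcal P_X\). Apply Lemma~\ref{lem:reciprocal-bandwidth-sampling} to \(f_P\) with \(L=L_X\ge\log3\) (rescale \(c\) by a harmless constant if one insists on \(L\ge1\)), \(u=p\) and \(M=2\). The frequencies \(\log n\) lie in \([0,L_X]\), so the hypotheses hold. This gives
\[
 \frac1{L_X}\sum_{\tau\in\mathcal T}\abs{f_P(\tau)}^p
 \le C\sum_{k\in\Z}(1+\abs k)^{-2}\int_{I+k}\abs{P(\tfrac12+it)}^p\dd t .
\]
Each integral is at most \(C_p\norm P_{\HDir^p}^p\) by \(\mathrm{LEP}_p\), since \(I+k\) is a unit interval, and \(\sum_k(1+\abs k)^{-2}<\infty\). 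This yields \eqref{eq:critical-atomic-sampling} with \(A_{p,c}\) depending only on \(p,c\).

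\emph{Duality.} For fixed \((X,\mathcal T)\), the map \(S\colon P\mapsto (L_X^{-1/p}P(\tfrac12+i\tau))_\tau\) is a linear map from the finite-dimensional normed space \((\mathcal P_X,\norm\cdot_{\HDir^p})\) into \(\ell^p(\mathcal T)\). Then \eqref{eq:critical-atomic-sampling} is the bound \(\norm S\le A\). The left side of \eqref{eq:dual-critical-packets} is \(\sup_{\norm P\le1}\abs{\ip{SP}{\bar b}}=\norm{S^*\bar b}\), and \(\norm{S^*}=\norm S\). Since \(\ell^p(\mathcal T)^*=\ell^{p'}(\mathcal T)\) isometrically, with the bilinear pairing \(\sum_\tau b_\tau x_\tau\), the two properties hold with \(A_{p,c}=B_{p,c}\).

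\emph{From sampling back to \(\mathrm{LEP}_p\).} This is the only substantive step. Fix \(P\), \(\theta\), and an integer \(X\) with all \(n\) in \(\supp P\) below \(X\); we may enlarge \(X\), since \(P\in\mathcal P_{X'}\) for every \(X'\ge X\). Set \(L=L_X\) and \(\delta=c/L\). For \(\sigma\in[0,\delta)\), let
\[
 \mathcal T_\sigma=\{\theta+\sigma+k\delta:\ 0\le k,\ \theta+\sigma+k\delta\le\theta+1\}.
\]
This set is \(c/L_X\)-separated and lies in \(I=[\theta,\theta+1]\), so \(\mathrm{CAS}_{p,c}\) gives \(L^{-1}\sum_{\tau\in\mathcal T_\sigma}\abs{P(\tfrac12+i\tau)}^p\le A^p\norm P_{\HDir^p}^p\) uniformly in \(\sigma\). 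Average over \(\sigma\in[0,\delta)\). The translates tile \([\theta,\theta+1]\) up to a boundary piece of length below \(\delta\), so
\[
 \frac1\delta\int_0^\delta\sum_{\tau\in\mathcal T_\sigma}\abs{f_P(\tau)}^p\dd\sigma\ \ge\ \frac1\delta\int_\theta^{\theta+1-\delta}\abs{f_P}^p\dd t=\frac Lc\int_\theta^{\theta+1-\delta}\abs{f_P}^p\dd t .
\]
Hence \(\int_\theta^{\theta+1-\delta}\abs{f_P}^p\le c\,A^p\norm P_{\HDir^p}^p\). Letting \(X\to\infty\) sends \(\delta\to0\), and monotone convergence (or continuity of \(f_P\)) gives \(\mathrm{LEP}_p\) with \(C_p=cA_{p,c}^p\).

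The main point to watch is that the constant in the first step is uniform in \(L_X\) and \(I\). That uniformity is exactly what Lemma~\ref{lem:reciprocal-bandwidth-sampling} provides, together with translation invariance of \(\mathrm{LEP}_p\). The averaging in the last step is elementary but needs the stated freedom to enlarge \(X\) with a constant independent of \(X\), and that freedom is built into Definition~\ref{def:critical-atomic-sampling}.
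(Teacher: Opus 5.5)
Your proof is correct. For $\mathrm{LEP}_p\Rightarrow\mathrm{CAS}_{p,c}$ and for $\mathrm{CAS}_{p,c}\Leftrightarrow\mathrm{DCP}_{p,c}$ you follow the paper's argument: Lemma~\ref{lem:reciprocal-bandwidth-sampling} combined with $\mathrm{LEP}_p$ on each $I+k$, and then $\norm{S}=\norm{S^*}$. Your parenthetical about rescaling $c$ is not needed, since $L_X\ge\log3>1$.

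You take a different route for $\mathrm{CAS}_{p,c}\Rightarrow\mathrm{LEP}_p$.

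\emph{The paper's route.} It cuts $[\theta,\theta+1]$ into cells of length $c/L_X$ and picks a maximizer $\tau_j$ of $\abs{f_P}$ in each cell. It then bounds $\int_I\abs{f_P}^p$ by $(c/L_X)\sum_j\abs{f_P(\tau_j)}^p$. Finally it applies $\mathrm{CAS}_{p,c}$ separately to the even- and odd-indexed maximizers, which are $c/L_X$-separated since they lie two cells apart. This gives the constant $2cA_{p,c}^p$.

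\emph{Your route.} You average $\mathrm{CAS}_{p,c}$ over the translated progressions $\mathcal T_\sigma$. This is the same grid-translation device the paper uses in Section~\ref{sec:main-proof} to pass from discrete sampling to the local trace. Your averaging is in fact exact, because the points $\sigma+k\delta$ with $0\le\sigma<\delta$ and $k\ge0$ cover $[0,\infty)$ exactly once. Hence $\frac1\delta\int_0^\delta\sum_{\tau\in\mathcal T_\sigma}\abs{f_P(\tau)}^p\dd\sigma=\frac1\delta\int_\theta^{\theta+1}\abs{f_P(t)}^p\dd t$. So the boundary loss and the limit $X\to\infty$ are unnecessary, and you get $C_p=cA_{p,c}^p$ directly.

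\emph{What each buys.} Your version uses $\mathrm{CAS}_{p,c}$ only for equally spaced grids of step exactly $c/L_X$. It therefore shows that this formally weaker grid version already implies $\mathrm{LEP}_p$, with no maximizer selection or parity split. The paper's version uses just two sampling sets per interval and no averaging; these sets are irregular and depend on $P$. That matches the reading of $\mathrm{CAS}$ as control of a worst-case sample set.
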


\begin{proof}
Assume first that \(\mathrm{LEP}_p\) holds.
Fix \(c\in(0,1]\) and
\(M>1\). Apply
Lemma~\ref{lem:reciprocal-bandwidth-sampling} to \(f_P\) from
\eqref{eq:finite-band-boundary-function}, with \(u=p\) and \(L=L_X\). Using the local embedding estimate on each
unit interval \(I+k\) gives
\[
 \frac1{L_X}
 \sum_{\tau\in\mathcal T}\abs{f_P(\tau)}^p
 \le
 C_{p,c,M}
 \sum_{k\in\Z}(1+\abs k)^{-M}
 \norm P_{\HDir^p}^p.
\]
Since \(M>1\), this proves \(\mathrm{CAS}_{p,c}\).

Conversely, suppose that \(\mathrm{CAS}_{p,c}\) holds for some
\(c\in(0,1]\).  Let \(I=[\theta,\theta+1]\), and partition \(I\) into
consecutive subintervals \(Q_j\) of length \(c/L_X\), except possibly the last.
Choose \(\tau_j\in Q_j\) so that
\[
 \abs{f_P(\tau_j)}
 =
 \max_{t\in Q_j}\abs{f_P(t)}.
\]
Then, 
\begin{equation}
 \int_I\abs{f_P(t)}^p\dd t
 \le
 \frac{c}{L_X}\sum_j\abs{f_P(\tau_j)}^p.
 \label{eq:critical-sampling-cell-max}
\end{equation}
Splitting the selected points into the even and odd index classes gives two admissible sets, since points two cells apart are separated by at least \(c/L_X\).
Applying
\eqref{eq:critical-atomic-sampling} to both classes and using
\eqref{eq:critical-sampling-cell-max}, we obtain
\[
 \int_I\abs{f_P(t)}^p\dd t
 \le
 2cA_{p,c}^p\norm P_{\HDir^p}^p.
\]
The bound is independent of \(X\) and \(\theta\), and every Dirichlet polynomial belongs to \(\mathcal P_X\) for all sufficiently large \(X\).
Hence \(\mathrm{LEP}_p\) holds.

For the dual formulation, define
\[
 S_{X,\mathcal T}^{(p)}
 \colon
 (\mathcal P_X,\norm{\cdot}_{\HDir^p})
 \longrightarrow
 \ell^p(\mathcal T),
 \qquad
 S_{X,\mathcal T}^{(p)}P
 =
 \bigl(L_X^{-1/p}P(1/2+i\tau)\bigr)_{\tau\in\mathcal T}.
\]
Then,  \(\mathrm{CAS}_{p,c}\) is precisely the uniform boundedness of these
operators.  Under the complex-linear identification
\((\ell^p(\mathcal T))^*=\ell^{p'}(\mathcal T)\),
\[
 (S_{X,\mathcal T}^{(p)})^*b
 =
 L_X^{-1/p}
 \sum_{\tau\in\mathcal T}
 b_\tau\operatorname{ev}_{1/2+i\tau}.
\]
Thus,  \(\mathrm{DCP}_{p,c}\) is precisely the corresponding uniform
adjoint estimate.  Since
\(\norm{S_{X,\mathcal T}^{(p)}}=
\norm{(S_{X,\mathcal T}^{(p)})^*}\),
the two properties are equivalent.
\end{proof}

\begin{remark}[Discrete geometry behind the local embedding problem]
\label{R:why-CAS}
Although the local embedding problem is formulated as an integral estimate, a natural way to probe its structure is to begin with a single critical-line point evaluation.
In the finite prime model, such an evaluation admits a normalized Euler-product representation, and passing from one evaluation to finite families leads naturally to sampling at the reciprocal-bandwidth scale.
For \(P\in\mathcal P_X\), the critical-line restriction has Fourier
support contained in \([-L_X,0]\), where \(L_X=\log X\), and hence has
bandwidth at most \(L_X\). Thus \(L_X^{-1}\) is the associated uniform
reciprocal-bandwidth scale.
Theorem~\ref{thm:critical-sampling-bridge} makes this principle precise: uniform control of \(L_X^{-1}\)-separated samples is equivalent to the original local embedding property.

Thus,  the integral formulation and the critical discrete sampling formulation encode the same trace phenomenon.
In particular, broadly distributed local trace mass and collections of
reciprocal-bandwidth-separated critical-line samples are placed within
a single finite sampling framework.
CAS is not used as a formal intermediate step in the proof of the main theorem.
Its role is instead heuristic and structural: the sampling picture points toward controlling finite critical-scale evaluations while preserving their common Euler product, precisely the structure exploited in the proof. 
\end{remark}

Combining Theorem~\ref{thm:critical-sampling-bridge} with Corollary~\ref{cor:sharp-finite-range} gives the sharp range.

\begin{corollary}[Sharp critical-sampling range]
\label{cor:critical-sampling-sharp-range}
For each fixed \(0<c\le1\), the equivalent properties
\(\mathrm{CAS}_{p,c}\) and \(\mathrm{DCP}_{p,c}\) hold exactly for
\(2\le p<\infty\).
\end{corollary}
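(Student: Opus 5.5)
This corollary follows by combining Theorem~\ref{thm:critical-sampling-bridge} with Corollary~\ref{cor:sharp-finite-range}. Fix \(0<c\le1\) and \(1<p<\infty\). Theorem~\ref{thm:critical-sampling-bridge} gives \(\mathrm{CAS}_{p,c}\Longleftrightarrow\mathrm{LEP}_p\Longleftrightarrow\mathrm{DCP}_{p,c}\) throughout the range \(1<p<\infty\). Corollary~\ref{cor:sharp-finite-range} says \(\mathrm{LEP}_p\) holds exactly for \(p\ge2\). It therefore suffices to transport the dichotomy across the bridge.

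\emph{Positive direction.} For \(2\le p<\infty\) we have \(\mathrm{LEP}_p\): for \(p>2\) this is Theorem~\ref{thm:local-embedding}, and for \(p=2\) it is the classical Hilbert-space case. The implication \(\mathrm{LEP}_p\Rightarrow\mathrm{CAS}_{p,c}\), proved via Lemma~\ref{lem:reciprocal-bandwidth-sampling}, then gives \(\mathrm{CAS}_{p,c}\). Finite-dimensional duality gives \(\mathrm{DCP}_{p,c}\).

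\emph{Negative direction.} For \(1<p<2\), \(\mathrm{LEP}_p\) fails by Corollary~\ref{cor:sharp-finite-range}. By the converse half of the bridge (\(\mathrm{CAS}_{p,c}\) for a single \(c\) implies \(\mathrm{LEP}_p\)), \(\mathrm{CAS}_{p,c}\) fails, and so does the equivalent \(\mathrm{DCP}_{p,c}\).

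The main point needing care is the range \(0<p\le1\). Definition~\ref{def:dual-critical-packets} uses \(p'\), and Theorem~\ref{thm:critical-sampling-bridge} is stated only for \(1<p<\infty\). So I read the corollary as a statement within the range where both properties are defined and the bridge applies, that is \(1<p<\infty\). If \(\mathrm{CAS}_{p,c}\) is also meant for \(0<p\le1\), failure there can be checked directly. The proof of CAS\(\Rightarrow\)LEP in Theorem~\ref{thm:critical-sampling-bridge} (choose cell maxima, split them into even and odd classes) uses no duality or convexity and so works for every \(p>0\). Since \(\mathrm{LEP}_p\) fails for all \(0<p<2\), \(\mathrm{CAS}_{p,c}\) fails there as well. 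Nothing beyond quoting these results is required.
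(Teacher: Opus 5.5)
Your proposal is correct and is exactly the paper's argument: the corollary is obtained by transporting the dichotomy of Corollary~\ref{cor:sharp-finite-range} across the equivalences of Theorem~\ref{thm:critical-sampling-bridge}, which hold for \(1<p<\infty\). Your extra observation is also correct but goes beyond what the paper needs, since both properties are only defined for \(1<p<\infty\). That observation is that the cell-maximum argument for \(\mathrm{CAS}_{p,c}\Rightarrow\mathrm{LEP}_p\) uses no convexity, so \(\mathrm{CAS}_{p,c}\) would also fail for \(0<p\le1\) if it were defined there.
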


\subsection{Olsen--Saksman atoms at the critical finite-band scale}
\label{subsec:critical-os-atoms}

The reciprocal-bandwidth scale in critical sampling has a natural interior counterpart in the atomic geometry of Olsen and Saksman.
Fix \(d_{\mathrm{flat}}>0\).
For an admissible pair \((X,\mathcal T)\), define the flat interior lift
\begin{equation}
 \mu_{X,\mathcal T}^{(d_{\mathrm{flat}})}
 =
 \frac{2d_{\mathrm{flat}}}{L_X}
 \sum_{\tau\in\mathcal T}
 \delta_{\frac12+d_{\mathrm{flat}}/L_X+i\tau}.
 \label{eq:flat-critical-lift}
\end{equation}
Since
\[
 2\left(\frac12+\frac{d_{\mathrm{flat}}}{L_X}\right)-1
 =
 \frac{2d_{\mathrm{flat}}}{L_X},
\]
the measure \(\mu_{X,\mathcal T}^{(d_{\mathrm{flat}})}\) is exactly of the Olsen--Saksman form \eqref{eq:olsen-saksman-atoms}.

\begin{proposition}[Uniform Carleson geometry of the flat lift]
\label{prop:flat-critical-carleson}
Fix \(d_{\mathrm{flat}}>0\) and \(0<c\le1\).
For every admissible pair
\((X,\mathcal T)\), the measure \(\mu_{X,\mathcal T}^{(d_{\mathrm{flat}})}\) satisfies
\[
 \mu_{X,\mathcal T}^{(d_{\mathrm{flat}})}(Q(J))
 \le
 2\left(1+\frac {d_{\mathrm{flat}}} {c}\right)|J|
\]
for every half-plane Carleson square \(Q(J)\).
Consequently, for each
\(1\le p<\infty\), the measures \(\mu_{X,\mathcal T}^{(d_{\mathrm{flat}})}\) are
uniformly Carleson for \(H^p(\C_{1/2})\), independently of
\(X\) and \(\mathcal T\).
\end{proposition}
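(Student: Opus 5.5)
The plan is a direct counting argument for the Carleson box condition, followed by an appeal to Carleson's embedding theorem for the half-plane Hardy space. Write \(\delta=d_{\mathrm{flat}}/L_X\), so every atom sits at height \(\delta\) above the critical line \(\Ree s=1/2\) and carries mass \(2\delta\). A half-plane Carleson square over an interval \(J\subset\R\) is
\[
 Q(J)=\{\sigma+it:\ t\in J,\ 0<\sigma-\tfrac12\le|J|\}.
\]
If \(|J|<\delta\), then no atom lies in \(Q(J)\), and the bound holds trivially. So we may assume \(|J|\ge\delta\). The measure of \(Q(J)\) is then at most \(2\delta\) times the number of points of \(\mathcal T\) lying in \(J\).

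The key step is the separation count. Since \(\mathcal T\) is \(c/L_X\)-separated, the number of its points in an interval of length \(|J|\) is at most \(1+|J|L_X/c\). This gives
\[
 \mu(Q(J))\le 2\delta\left(1+\frac{|J|L_X}{c}\right)
 =2\delta+\frac{2d_{\mathrm{flat}}}{c}|J|.
\]
Using \(\delta\le|J|\), the right-hand side is at most \(2(1+d_{\mathrm{flat}}/c)|J|\), which is the claimed bound. Nothing here depends on \(X\) or on \(\mathcal T\) beyond the separation, and the containment \(\mathcal T\subset I\) is not even needed.

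For the consequence, I will invoke the classical Carleson embedding theorem for \(H^p(\C_{1/2})\), valid for every \(0<p<\infty\) and in particular for \(1\le p<\infty\). It says that a uniform box bound \(\mu(Q(J))\le K|J|\) yields \(\int|f|^p\,\dd\mu\le C_pK\norm f^p_{H^p(\C_{1/2})}\). Here \(K=2(1+d_{\mathrm{flat}}/c)\), so the Carleson constants are uniform in \(X\) and \(\mathcal T\).

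I do not expect a genuine obstacle. The only points needing care are the convention for the box, namely that its height equals its side length, and the case split according to whether \(|J|\) is smaller or larger than \(\delta\). That split is exactly what absorbs the additive "\(+1\)" in the point count.
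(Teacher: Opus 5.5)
Your argument is correct and is essentially the paper's proof: the same case split according to whether $|J|$ is below or at least $d_{\mathrm{flat}}/L_X$, the same separation count $\#(\mathcal T\cap J)\le 1+|J|L_X/c$, and the same absorption of the additive term $2d_{\mathrm{flat}}/L_X$ using $|J|\ge d_{\mathrm{flat}}/L_X$. Your explicit appeal to Carleson's embedding theorem for the ``consequently'' clause only spells out what the paper leaves implicit. The difference between your box (closed at the top) and the paper's open box $\{1/2<\Ree s<1/2+|J|\}$ is immaterial.
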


\begin{proof}
Let \(J\subset\R\) be an interval of length \(\ell\), and write
\[
 Q(J)
 =
 \left\{
 s\in\C_{1/2}:
 \frac12<\Ree s<\frac12+\ell,\ 
 \mathrm{Im} s\in J
 \right\}.
\]
If \(\ell<d_{\mathrm{flat}}/L_X\), then \(Q(J)\) does not reach the line carrying the atoms of \(\mu_{X,\mathcal T}^{(d_{\mathrm{flat}})}\), and hence
\[
 \mu_{X,\mathcal T}^{(d_{\mathrm{flat}})}(Q(J))=0.
\]
If \(\ell\ge d_{\mathrm{flat}}/L_X\), the \(c/L_X\)-separation of \(\mathcal T\) gives
\[
 \#(\mathcal T\cap J)
 \le
 1+\frac{\ell L_X}{c}.
\]
Therefore
\[
 \mu_{X,\mathcal T}^{(d_{\mathrm{flat}})}(Q(J))
 \le
 \frac{2d_{\mathrm{flat}}}{L_X}
 \left(1+\frac{\ell L_X}{c}\right)
 \le
 2\left(1+\frac {d_{\mathrm{flat}}}{c}\right)\ell.
\]
This proves the asserted Carleson-square bound.
\end{proof}

\begin{corollary}[Flat critical sampling]
\label{cor:flat-critical-sampling}
Fix \(2\le p<\infty\), \(d_{\mathrm{flat}}>0\), and \(0<c\le1\).
There is
\(C_{p,d_{\mathrm{flat}},c}<\infty\) such that, for every admissible pair
\((X,\mathcal T)\) and every \(P\in\mathcal P_X\),
\[
 \left(
 \frac{2d_{\mathrm{flat}}}{L_X}
 \sum_{\tau\in\mathcal T}
 \left|
 P\!\left(\frac12+\frac{d_{\mathrm{flat}}}{L_X}+i\tau\right)
 \right|^p
 \right)^{1/p}
 \le
 C_{p,d_{\mathrm{flat}},c}\norm P_{\HDir^p}.
\]
\end{corollary}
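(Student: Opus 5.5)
The plan is to combine two earlier results. Proposition~\ref{prop:flat-critical-carleson} says that the flat lifts \(\mu_{X,\mathcal T}^{(d_{\mathrm{flat}})}\) are uniformly Carleson for \(H^p(\C_{1/2})\). The local embedding theorem, in its equivalent local Carleson-measure form, upgrades every such measure to a Carleson measure for \(\HDir^p\). The left-hand side of the desired inequality is exactly \(\int\abs{P}^p\dd\mu_{X,\mathcal T}^{(d_{\mathrm{flat}})}\). So it suffices to show that these measures are Carleson for \(\HDir^p\) with a constant independent of \(X\) and \(\mathcal T\).

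\textbf{Step 1 (localization).} The sets \(\mathcal T\) lie in a unit interval \(I=[\theta,\theta+1]\), so the measures are not supported in one fixed bounded set as \(X\) and \(\theta\) vary. Vertical translation \(P(s)\mapsto P(s+i\theta)\) preserves \(\norm{\cdot}_{\HDir^p}\), since it acts on coefficients by \(a_n\mapsto a_nn^{-i\theta}\), and it maps \(\mathcal P_X\) to itself. We may therefore assume \(I=[0,1]\). Since \(L_X\ge\log3\), every atom \(\tfrac12+d_{\mathrm{flat}}/L_X+i\tau\) then lies in the fixed bounded set
\[
\Gamma=\Bigl\{s:\tfrac12<\Ree s\le\tfrac12+\tfrac{d_{\mathrm{flat}}}{\log3},\ \Imm s\in[0,1]\Bigr\}.
\]

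\textbf{Step 2 (transfer).} For \(p\ge2\), Corollary~\ref{cor:sharp-finite-range} gives \(\mathrm{LEP}_p\). Proposition~\ref{prop:classical-equivalence-map}(iv) then supplies a constant \(D_{p,\Gamma}\) such that every Olsen--Saksman atomic measure supported in \(\Gamma\) with \(H^p(\C_{1/2})\)-Carleson constant \(C\) has \(\HDir^p\)-Carleson constant \(D_{p,\Gamma}C\). The flat lift has exactly the atomic form \eqref{eq:olsen-saksman-atoms}, as noted after \eqref{eq:flat-critical-lift}.

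\textbf{Step 3 (the \(H^p\) constant).} The square bound \(2(1+d_{\mathrm{flat}}/c)\abs J\) from Proposition~\ref{prop:flat-critical-carleson} must be converted into an \(H^p(\C_{1/2})\)-Carleson constant \(C'_{p,d_{\mathrm{flat}},c}\). This is Carleson's embedding theorem on the half-plane, whose constant depends only on the square constant and \(p\). Combining Steps 2 and 3 gives the bound with \(C_{p,d_{\mathrm{flat}},c}=(D_{p,\Gamma}C')^{1/p}\), and \(\Gamma\) depends only on \(d_{\mathrm{flat}}\). Raising to the power \(1/p\) yields the stated inequality.

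The main obstacle is making sure the quantitative, supportwise form of the Olsen--Saksman equivalence is legitimately available. The proof of Proposition~\ref{prop:classical-equivalence-map} asserts this via the closed-graph argument, and the translation reduction in Step 1 is what makes a single \(\Gamma\) suffice.

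As a fallback that avoids (iv), apply the Poisson representation \(P(\sigma+i\tau)=\int P(\tfrac12+it)\,\mathcal P_{\sigma-1/2}(\tau-t)\dd t\) together with Jensen's inequality. Then sum over the \(c/L_X\)-separated points. The kernel sums are bounded by \(C(1+d_{\mathrm{flat}}/c)\) times a weight decaying like \((1+\dist(t,I))^{-2}\) relative to \(L_X/d_{\mathrm{flat}}\). Applying Theorem~\ref{thm:local-embedding} on each unit interval \(I+k\) and summing over \(k\) then gives the bound directly.
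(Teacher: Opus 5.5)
Your main argument is exactly the paper's proof: translate vertically so that every flat lift is supported in the single bounded set $\Gamma_{d_{\mathrm{flat}}}$, take the uniform $H^p(\C_{1/2})$-Carleson constant from Proposition~\ref{prop:flat-critical-carleson}, and transfer it to $\HDir^p$ via the supportwise form of Proposition~\ref{prop:classical-equivalence-map}\textup{(iv)} combined with Corollary~\ref{cor:sharp-finite-range}. Your Poisson--Jensen fallback is also sound: the summed Poisson weight is at most $C(1+d_{\mathrm{flat}}/c)$ everywhere and at most $C_{d_{\mathrm{flat}},c}(1+\abs k)^{-2}$ on $I+k$ for $\abs k\ge2$. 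It is more self-contained, since it needs only $\mathrm{LEP}_p$ rather than the Olsen--Saksman constant; just invoke Corollary~\ref{cor:sharp-finite-range} instead of Theorem~\ref{thm:local-embedding} so that $p=2$ is included.
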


\begin{proof}
By Proposition~\ref{prop:flat-critical-carleson}, the measures
\(\mu_{X,\mathcal T}^{(d_{\mathrm{flat}})}\) have uniformly bounded Carleson constants
for \(H^p(\C_{1/2})\).

Let \(I=[b,b+1]\) be a unit interval containing \(\mathcal T\), and translate the measure vertically by \(-ib\).
Both the classical
\(H^p(\C_{1/2})\)-Carleson norm and the \(\HDir^p\)-Carleson norm are
invariant under vertical translations.
Since \(X\ge3\),
\[
 \frac{d_{\mathrm{flat}}}{L_X}\le\frac{d_{\mathrm{flat}}}{\log3},
\]
every translated measure is supported in the fixed bounded set
\[
 \Gamma_{d_{\mathrm{flat}}}
 =
 \left\{
 s\in\C_{1/2}:
 \frac12<\Ree s\le\frac12+\frac{d_{\mathrm{flat}}}{\log3},
 \quad 0\le\mathrm{Im}\,s\le1
 \right\}.
\]
Proposition~\ref{prop:classical-equivalence-map}\textup{(iv)}, with
\(\Gamma=\Gamma_{d_{\mathrm{flat}}}\), together with
Corollary~\ref{cor:sharp-finite-range}, therefore gives a uniform
\(\HDir^p\)-Carleson constant depending only on \(p,d_{\mathrm{flat}},c\).
Applying the resulting Carleson embedding to \(P\) gives the stated estimate.
\end{proof}

\begin{remark}[Critical-scale correspondence]
\label{rem:critical-sampling-geometry}
For fixed \(d_{\mathrm{flat}}>0\) and \(0<c\le1\), the flat lift exhibits the common scale
\[
 \text{interior depth}
 \asymp
 \text{atomic mass}
 \asymp
 \text{critical sampling scale}
 \asymp
 (\log X)^{-1}.
\]
Indeed, these three quantities are respectively
\(d_{\mathrm{flat}}/L_X\), \(2d_{\mathrm{flat}}/L_X\), and \(c/L_X\).

For \(1<p<\infty\), the adjoint of the weighted sampling operator associated with \(\mu_{X,\mathcal T}^{(d_{\mathrm{flat}})}\) is represented by the functional
\[
 \left(\frac{2d_{\mathrm{flat}}}{L_X}\right)^{1/p}
 \sum_{\tau\in\mathcal T}
 b_\tau\operatorname{ev}_{1/2+d_{\mathrm{flat}}/L_X+i\tau},
\]
whereas \(\mathrm{DCP}_{p,c}\) uses
\[
 L_X^{-1/p}
 \sum_{\tau\in\mathcal T}
 b_\tau\operatorname{ev}_{1/2+i\tau}.
\]
Thus,  the two functionals have the same reciprocal-bandwidth normalization up to the fixed factor \((2d_{\mathrm{flat}})^{1/p}\), with the former located at interior depth \(d_{\mathrm{flat}}/L_X\).
\end{remark}

\newpage 

\section*{Principal notation}

\begingroup
\small
\setlength{\LTleft}{0pt}
\setlength{\LTright}{0pt}
\setlength{\LTpre}{0.4em}
\setlength{\LTpost}{0pt}
\setlength{\tabcolsep}{4pt}
\renewcommand{\arraystretch}{1.03}

\begin{longtable}{@{}
  >{\raggedright\arraybackslash}p{0.25\textwidth}
  >{\raggedright\arraybackslash}p{0.58\textwidth}
  >{\raggedright\arraybackslash}p{0.13\textwidth}@{}}
\toprule
\textbf{Notation} & \textbf{Meaning} & \textbf{Location}\\
\midrule
\endfirsthead

\toprule
\textbf{Notation} & \textbf{Meaning} & \textbf{Location}\\
\midrule
\endhead

\endfoot
\bottomrule
\endlastfoot

\multicolumn{3}{@{}l}{\emph{Spaces and exponents}}\\*
\addlinespace[0.15em]

\(\Bohr P,\ \HDir^p\)
&
Bohr lift and Hardy space of Dirichlet series
&
Def.~\ref{def:bohr-lift}
\\

\(q,s,\alpha,\delta,a,\beta\)
&
Exponent parameters associated with a fixed \(p>2\)
&
\S\ref{subsec:hardy-dirichlet-spaces}
\\

\addlinespace[0.35em]
\multicolumn{3}{@{}l}{\emph{Prime model and Euler products}}\\*
\addlinespace[0.15em]

\(\Omega_y,\ (\zeta_r)_{r\le y},\ U_u\)
&
Finite prime torus, coordinate variables, and prime flow
&
\S\ref{subsec:finite-prime-tori}
\\

\(D_{\mathrm{full}},\ D_{r^-},\
  \mathscr F_{r^-},\ \E_{r^-}\)
&
The full generator and the generators associated with primes below \(r\),
the \(\sigma\)-field generated by the variables indexed by primes
below \(r\), and conditional expectation
&
\S\ref{subsec:finite-prime-tori}
\\

\(P_E^{\mathrm{full}},\ P_E^{r^-}\)
&
Spectral projections for the full generator and for the generators
associated with primes below \(r\)
&
\S\ref{subsec:finite-prime-tori}
\\

\(M_b,\ S_b(t),\ f_b(t)\)
&
Mertens factor, finite Euler product, and normalized Euler product
&
\S\ref{subsec:euler-sources}
\\

\(f_{r^-}(t),\ f_h^\vartheta(t)\)
&
Euler product over primes below \(r\) and jointly coupled deformed product
&
\S\S~\ref{subsec:euler-sources},~\ref{subsec:cyclic-model}
\\

\addlinespace[0.35em]
\multicolumn{3}{@{}l}{\emph{Cyclic and multiscale notation}}\\*
\addlinespace[0.15em]

\(H,N,y,(t_j)_{j<N}\)
&
Finite cyclic parameters and translated sampling grid, with
\(N=2^H\) and \(y=\e^N\)
&
\S\ref{subsec:cyclic-model}
\\

\(\ZN,\ \Lambda_N,\ \widehat c\)
&
Cyclic group, represented dual-frequency set, and cyclic Fourier transform
&
\S\ref{subsec:cyclic-model}
\\

\(\theta_\kappa,\ \xi_\kappa,\ \rho_N\)
&
Angular frequency, physical frequency, and cyclic distance
&
\S\ref{subsec:cyclic-model}
\\

\(w_h,\ m_h,\ \mathcal D_h,\ Q_h\)
&
Dyadic prime scale, block length, nonwrapping grid, and containing block
&
\S\ref{subsec:cyclic-model}
\\

\(\mathcal V_N,\ d_v\)
&
Positive dyadic coefficient scales and the corresponding frequency shells
&
\S\S~\ref{subsec:boundary-traces}--\ref{subsec:coefficient-square-functions}
\\

\(\mathcal I_h,\ \log r\)
&
Prime band and current-prime frequency
&
\S\ref{sec:frame-and-tails}
\\

\(Y_n(\beta),\ d_n\)
&
Critical branching-random-walk functional and its decay sequence
&
Thm.~\ref{thm:critical-tree};
\S\ref{subsec:localized-euler-occupation}
\\

\(Z_w(t),\ V_w,\ \mathfrak G_{N,w,J}\)
&
Gaussian prime field, variance scale, and localized Gaussian exponential
functional
&
\S\ref{subsec:localized-gaussian-occupation}
\\

\(S_{\mathrm{cone}}z,\ \mathfrak P_y^\vartheta(z)\)
&
Conical square function and common-shift Euler moment functional
&
\S\S~\ref{subsec:tent-allocation},~\ref{subsec:common-shift-estimate}
\\

\addlinespace[0.35em]
\multicolumn{3}{@{}l}{\emph{Frequency synthesis and return to the trace}}\\*
\addlinespace[0.15em]

\(\Pi_-,\ \Pi_+,\ \mathcal T_y c\)
&
Nonpositive and positive cyclic projections, and the projected vector
&
\S\ref{subsec:corrected-column-definition}
\\

\(\mathcal R_y,\ \mathcal R_y^{(\ell)}\)
&
Resonant term and its dyadic offset components
&
\S\S~\ref{subsec:resonant-row-arrays},~
       \ref{subsec:corrected-column-definition}
\\

\(\mathcal N_p,\ [G],\ g_{y,t}\)
&
Hardy annihilator, quotient class, and rescaled Euler-product
representative used in
the final pairing
&
\S\S~\ref{subsec:hardy-quotient-euler-source},~
       \ref{subsec:euler-quotient-representative}
\\

\(\mathcal P_X,\ L_X\)
&
Finite-band Dirichlet-polynomial class and logarithmic bandwidth
&
\S\ref{subsec:critical-atomic-sampling}
\\

\(\mathrm{CAS}_{p,c},\ \mathrm{DCP}_{p,c}\)
&
Critical atomic sampling and dual critical-packet properties
&
\S\ref{subsec:critical-atomic-sampling}
\\
\end{longtable}
\endgroup

\section*{Acknowledgments}

\noindent 
\textit{Funding.}
X. F. was supported by the National Science and Technology Council of Taiwan (Grant No.~114-2115-M-A49-003-MY3).
S. H. was supported by the National Natural Science Foundation of China (Grant No.~12371133).
Q. Z. was supported by the National Natural Science Foundation of China (Grant No.~12501162) and the Natural Science Foundation of Jiangsu Province (Grant No.~BK20250832).

\bigskip

\noindent 
\textit{AI Statement.}
The authors used artificial-intelligence tools for language editing,
\LaTeX\ formatting, and limited assistance with local mathematical
reasoning.
The proof strategy and mathematical development are the authors' own; all mathematical content was independently written and checked by the authors, who take full responsibility for it.

\end{document}